\newtheorem{theorem}{Theorem}[section]
\newtheorem{lemma}[theorem]{Lemma}
\newtheorem{prop}[theorem]{Proposition}
\newtheorem{corollary}[theorem]{Corollary}
\theoremstyle{definition}
\newtheorem{defn}[theorem]{Definition}
\newtheorem{remark}[theorem]{Remark}
\newtheorem{example}[theorem]{Example}
\newcommand{\Q}{\mathbb Q}
\newcommand{\C}{\mathbb C}
\newcommand{\R}{\mathbb R}
\newcommand{\D}{\mathbb D}
\newcommand{\Z}{\mathbb Z}
\newcommand{\N}{\mathbb N}
\newcommand{\cN}{\mathcal N}
\newcommand{\cM}{\mathcal M}
\newcommand{\cO}{\mathcal O}
\def\Dom{\textnormal{Dom}}
\def\Im{\textnormal{Im}}
\def\hra#1{\overset{#1}{\lhook\joinrel\longrightarrow}}
\def\lra#1{\overset{#1}{\longrightarrow}}
\newlist{HF}{enumerate}{1}
\setlist[HF]{label=(HF\arabic*)}
\newlist{PB}{enumerate}{1}
\setlist[PB]{label=(PB\arabic*)}
\newlist{CZ}{enumerate}{1}
\setlist[CZ]{label=(CZ\arabic*)}
\newlist{S1HF}{enumerate}{1}
\setlist[S1HF]{label=(S1HF\arabic*)}
\numberwithin{equation}{section}
\title{Floer Cohomology, Multiplicity and the Log Canonical Threshold.}
\author{Mark McLean}
\begin{document}

\begin{abstract}
Let $f$ be a polynomial over the complex numbers with an isolated singularity at $0$.
We show that the multiplicity and the log canonical threshold of $f$ at $0$
are invariants of the link of $f$ viewed as a contact submanifold of the sphere.

This is done by first constructing a spectral sequence converging to the fixed point Floer cohomology of any iterate of the Milnor monodromy map whose $E^1$ page is explicitly described in terms of a log resolution of $f$.
This spectral sequence is a generalization of a formula by A'Campo.
By looking at this spectral sequence, we get a purely Floer theoretic description of the multiplicity and log canonical threshold of $f$.
\end{abstract}

\maketitle

\bibliographystyle{halpha}

\tableofcontents

\section{Introduction}

Let $f : \C^{n+1} \to \C$ be a polynomial with an isolated singular point at $0$ where $n \geq 1$. Let $S_\epsilon \subset \C^{n+1}$ be the sphere of radius $\epsilon$ centered at $0$.
The {\it link} of $f$ at $0$ is the submanifold $L_f \equiv f^{-1}(0) \cap S_\epsilon \subset S_\epsilon$
where $\epsilon>0$ is sufficiently small.
One can ask the following question: what is the relationship between the link of $f$ and various algebraic properties of $f$?
For instance
Zariski in \cite{zariski:openquestions}
asked whether the multiplicity of $f$ at $0$
depends only on the embedding $L_f \subset S_\epsilon$.
Another important invariant is the {\it log canonical threshold} (see \cite{atiyahresolutiondistributions}, \cite{mustata:logcanonicalthreshold} or Definition \ref{defn:logresolution}).
Again one can ask if this is an invariant of $L_f \subset S_\epsilon$
(see \cite[Section 1.6]{budursingularityinvariants}).
We will answer weaker versions of these questions.
If $\epsilon>0$ is small enough, it turns out that $L_f$ is naturally a contact submanifold of $S_\epsilon$
(See \cite{Varchenko:isolatedsingularities}).
If $g : \C^{n+1} \lra{} \C$ is another polynomial with isolated singularity at $0$ then we say that $f$ and $g$ have {\it embedded contactomorphic links} if there is a contactomorphism $\Phi : S_\epsilon \lra{} S_\epsilon$ sending $L_f$ to $L_g$.
Varchenko showed in \cite{Varchenko:isolatedsingularities}
that if there is a holomorphic change of coordinates sending $f$ to $g$ then they have embedded contactomorphic links.
One of the goals of this paper is to prove the following theorem.
\begin{theorem} \label{theorem:lctmultiplicitycontactinvariance}
Suppose that $f, g : \C^{n+1} \lra{} \C$ are two polynomials with isolated singular points at $0$
with embedded contactomorphic links.
Then the multiplicity and the log canonical threshold
of $f$ and $g$ are equal.
\end{theorem}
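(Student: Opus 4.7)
The plan is to exploit the Floer-theoretic machinery announced in the abstract. The first step is to show that the symplectic isotopy class of the Milnor monodromy $\phi \colon F \to F$ is determined by the embedded contact topology of $L_f \subset S_\epsilon$: the open book supporting the contact structure on $S_\epsilon$ with binding $L_f$ can be recovered from the contact embedding, so any contactomorphism of $S_\epsilon$ sending $L_f$ to $L_g$ conjugates $\phi$ to the Milnor monodromy $\psi$ of $g$ up to symplectic isotopy. Hence the fixed point Floer cohomology $HF^\ast(\phi^k)$ of each iterate is a contact invariant of the embedded link, and must agree for $f$ and $g$ when their links are embedded contactomorphic.

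Second, I would feed this into the spectral sequence promised by the abstract. For each $k \geq 1$ the $E^1$ page is explicitly assembled from a log resolution $\pi \colon Y \to \C^{n+1}$ of $f$: it is built from contributions indexed by those prime exceptional components $E_i$ of $\pi^{-1}(f^{-1}(0))$ whose multiplicity $m_i$ divides $k$, graded by shifts determined by the discrepancies $k_i$. Since the spectral sequence converges to the contact invariant $HF^\ast(\phi^k)$, the collection of pairs $\{(m_i,k_i)\}$ governs Floer-theoretically observable quantities such as the lowest nontrivial grading, the total rank, and their behaviour as $k$ varies.

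Third, I would extract the two numerical invariants. For the log canonical threshold, use the classical formula $\mathrm{lct}(f) = \min_i (1 + k_i)/m_i$: the minimizing ratio should be detected as the smallest slope at which $HF^\ast(\phi^k)$ acquires classes of a specified degree as $k$ grows, giving a Floer-theoretic formula for $\mathrm{lct}(f)$ in terms only of the contact invariants $HF^\ast(\phi^k)$. For the multiplicity $m = \mathrm{mult}_0 (f)$, I would choose a log resolution beginning with the blow-up of $\C^{n+1}$ at $0$, whose exceptional divisor $E_0$ has $(m_{E_0}, k_{E_0}) = (m, n)$, and isolate the contribution of $E_0$ by its grading: for each $k$ divisible by $m$ this contributes classes in a predictable degree, which singles out $m$ among the resolution data.

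The main obstacle is the extraction step. One has to rule out the possibility that differentials in the spectral sequence cancel the classes carrying information about the minimizing divisor for $\mathrm{lct}$ or about $E_0$ for the multiplicity, since only the limit $HF^\ast(\phi^k)$, not the $E^1$ page, is visibly a contact invariant. I expect this to require a judicious choice of iterate $k$ (perhaps along a subsequence along which only the relevant divisors contribute) together with action or grading bounds that prevent the distinguished classes from being hit by differentials, so that the final numerical formulas for $\mathrm{lct}(f)$ and $\mathrm{mult}_0(f)$ involve only the abutment and are therefore manifestly contact invariants of $L_f \subset S_\epsilon$.
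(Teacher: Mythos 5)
Your overall architecture coincides with the paper's: contact invariance of $HF^*(\phi^m,+)$ (Lemma \ref{lemma:invarianceforlinks}), the spectral sequence of Theorem \ref{theorem:mainspectralsequence}, and extraction of the two numerical invariants from its $E^1$ page. But the step you flag as the ``main obstacle'' --- showing that the distinguished classes survive the differentials --- is exactly the content you have not supplied, and it is the heart of the proof. The paper closes it not by a judicious choice of iterate or by action estimates, but by a parity (lacunarity) observation: in each column $p$ the nonzero entries are $H_{n-(p+q)-2k_i(a_i+1)}(\widetilde{E}^o_i;\Z)$, so the top total degree $k_p$ of that column has the form $n-2k_i(a_i+1)$ and is congruent to $n$ modulo $2$; since the overall maximum $n-2\mu_m$ is also congruent to $n$, no column has top degree equal to $(n-2\mu_m)-1$. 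This is precisely the hypothesis of Lemma \ref{lemma:spectralsequencelemma}, which then shows the top-degree class (coming from $H_0$ of a nonempty cover) can neither receive nor emit a nonzero differential, yielding the exact equality $\sup\{\alpha : HF^\alpha(\phi^m,+)\neq 0\} = n-2\mu_m$ for every $m$. Note also that this requires the resolution to be multiplicity $m$ separating (so only single divisors, not deeper strata, contribute to the $E^1$ page), and one must check that the resulting number $\mu_m$ does not depend on the choice of such resolution (Lemmas \ref{lemma:existenceofmultiplicitymseparating} and \ref{lemma:multipicitymresolutionindepcedence}); your sketch addresses neither point.

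Your proposed extraction of the multiplicity is also off-track. Isolating the contribution of the first blow-up's exceptional divisor ``by its grading'' is neither necessary nor clearly possible: other divisors may contribute in the same degree, and the degree of that contribution is not visibly a contact invariant. The paper's route is simpler: the $E^1$ page vanishes identically if and only if $S_m=\emptyset$, i.e.\ no $\text{ord}_f(E_i)$ divides $m$, so (using the survival argument above for the converse direction) $HF^*(\phi^m,+)\neq 0$ exactly when some $\text{ord}_f(E_i)$ divides $m$, and the smallest such $m$ is $\min_i \text{ord}_f(E_i)$, which is the multiplicity. Your formula for the log canonical threshold is correct in spirit; the precise contact-invariant statement is the $\liminf$ formula of Corollary \ref{corollary:multiplicityandlct}, obtained from $\nu_m = n-2\mu_m$ by letting $m$ range over all positive integers rather than over a cleverly chosen subsequence.
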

We will prove this theorem by finding formulas for the multiplicity and log canonical threshold in terms of a sequence Floer cohomology groups.
The key technical result of this paper proving the above theorem will be a natural generalization of a formula by A'Campo \cite{acampo:monodromyzeta}.

For all $\epsilon > 0$ small enough,
there is a smooth fibration
$$\arg(f) : S_\epsilon - f^{-1}(0) \lra{} \R/2\pi\Z, \quad \arg(f)(z) \equiv \arg(f(z))$$
called the {\it Milnor fibration} associated to $f$ (see \cite[Chapter 4]{milnorsingular}).
A fiber $M_f \equiv \arg(f)^{-1}(0)$ is called the {\it Milnor fiber} of $f$.
By choosing an appropriate connection on this fibration, there is a natural compactly supported diffeomorphism
$\phi : M_f \lra{} M_f$ given by parallel transporting
around the circle $\R/2\pi\Z$ called the {\it Milnor monodromy map}. 
The
 {\it Lefschetz number $\Lambda(\phi^m)$} of $\phi^m$ is defined to be
 $$\Lambda(\phi^m) \equiv \sum_{j=0}^\infty (-1)^j\text{Tr}(\phi^m_* : H_j(M_f;\Z) \lra{} H_j(M_f;\Z)),$$
and this is
an invariant of the embedding $L_f \subset S_\epsilon$
 for each $m > 0$.
A'Campo in \cite{acampo:monodromyzeta} computed these numbers in the following way. 
Let $\pi : Y \lra{} \C^{n+1}$ be a log resolution of the pair $(\C^{n+1},f^{-1}(0))$ at $0$.
Let $(E_j)_{j \in \check{S}}$ be the prime exceptional divisors of this resolution and define $E_{\star_S}$
to be the proper transform
$\overline{\pi^{-1}(f^{-1}(0)-0)}$
of $f^{-1}(0)$.
Let $S \equiv \check{S} \sqcup \{\star_S\}$.
%
%
Define $E_j^0 \equiv E_j - \cup_{i \in S-\{j\}} E_i$ for all $j \in S$
and define $S_m \equiv \{i \in \check{S} \ : \text{ord}_f(E_i) \ \text{divides} \  m\}$ for all $m > 0$.
A'Campo showed that
\begin{equation} \label{eqn:acamposformula}
\Lambda(\phi^m) = \sum_{i \in  S_m} \text{ord}_f(E_i) \chi(E_i^o), \quad \forall m>0.
\end{equation}

The key technical result of this paper (Theorem \ref{theorem:mainspectralsequence})
is a spectral sequence
converging to a group whose Euler characteristic is naturally equal to the left hand side of
(\ref{eqn:acamposformula}) multiplied by $(-1)^n$ and so that the Euler characteristic of the $E^1$ page is naturally equal to the right had side multiplied by $(-1)^n$. We will now explain this result.

For $\epsilon>0$ small enough, the Milnor fiber $M_f$ is naturally a symplectic manifold and $\phi$ can be made to be a compactly supported symplectomorphism 
(see Section \ref{section liouville domains etc}).
For any compactly supported symplectomorphism $\psi$ satisfying some additional properties, one can assign a group $HF^*(\psi,+)$
called the {\it Floer cohomology} group of $\psi$ (see \cite[Section 4]{Seidel:morevanishing} or Section \ref{section fixed point floer cohomology definition} of this paper). The Euler characteristic of this group is $(-1)^n$ multiplied by the Lefschetz number of $\psi$
(See property \ref{HF property Euler characteristic} in Section \ref{section fixed point floer cohomology definition} of this paper).
As a result, we have a sequence of groups $HF^*(\phi^m,+)$ whose Euler characteristic is $(-1)^n\Lambda(\phi^m)$ for all $m > 0$.
All of these groups are invariants of the link of $f$ up to embedded contactomorphism (See Lemma \ref{lemma:invarianceforlinks}).
%
The log resolution $\pi : Y \lra{} \C^{n+1}$ is called a {\it multiplicity $m$ separating resolution} if $\text{ord}_f(E_i) + \text{ord}_f(E_j) > m$ for all $i,j \in S$ satisfying $i \neq j$ and $E_i \cap E_j \neq \emptyset$.

\begin{theorem} \label{theorem:mainspectralsequence}
Suppose that $\pi : Y \lra{} \C^{n+1}$ is a multiplicity $m$ separating resolution
for some $m \in \N_{>0}$.
Let $(w_i)_{i \in \check{S}}$ be positive integers so that $-\sum_{i \in \check{S}} w_i E_i$ is ample.
Let $a_i$ be the discrepancy of $E_i$ (see Definition \ref{defn:logresolution}) and define $k_i \equiv \frac{m}{\text{ord}_f(E_i)}$ for all $i \in S_m$ .
Then there is a cohomological spectral sequence converging to $HF^*(\phi^m,+)$ with $E^1$ page
$$E_1^{p,q} = 
\bigoplus_{\{ i \in S_m \ : \ k_i w_i  = -p\}} H_{n - (p+q) - 2k_i(a_i+1) }(\widetilde{E}^o_i;\Z) 
$$
where $\widetilde{E}^o_i$ is an $m_i$-fold cover of $E^o_i$ for all $i \in S_m$.
The cover $\widetilde{E}^o_i$ is constructed as follows:
Let $U_i$ be a neighborhood of $E_i^o$
inside $Y - \cup_{j \in S - i}E_i$ which deformation retracts on to $E_i$, let $\iota_i : U_i - E_i^o \lra{} U_i$ be the natural inclusion map and define
$$f_i : U_i - E_i^o \lra{} \C^*, \ f_i(x) \equiv f(\pi(x)).$$
Then $\widetilde{E}_i^o$ is disjoint union of connected covers corresponding to normal subgroup
$$G_i := (\iota_i)_*(\ker((f_i)_*)) \subset \pi_1(U_i) = \pi_1(E_i^o)$$
and the number of such covers is $m_i$ divided by the index of $G_i$ in $\pi_1(E^o_i)$.

\end{theorem}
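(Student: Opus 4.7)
The plan is to work on a symplectic model built from the log resolution itself, and to extract the spectral sequence as the action filtration spectral sequence of a carefully chosen representative of $\phi^m$. First I would use the ample divisor $-\sum_{i \in \check{S}} w_i E_i$ to produce a Kähler form $\omega_\pi$ on a neighborhood of the exceptional divisor in $Y$ whose class is Poincaré dual to $-\sum w_i E_i$, so that the symplectic area of (a curve meeting) $E_i$ is proportional to $w_i$. Pulling back $f$, I would then construct a compactly supported symplectomorphism $\widetilde{\phi}$ of the pulled-back Milnor fiber which agrees up to Hamiltonian isotopy with $\phi$ (hence computes the same $HF^*$), and which in a neighborhood of each $E_i^o$ is modeled on the natural fibration of $f \circ \pi$: since $f \circ \pi$ vanishes to order $\text{ord}_f(E_i)$ along $E_i$, after iterating $m$ times $\widetilde{\phi}^m$ rotates the normal fibers by angle $2\pi k_i = 2\pi m / \text{ord}_f(E_i)$, so for $i \in S_m$ the fixed locus is Morse-Bott and is precisely the $m_i$-fold cover $\widetilde{E}_i^o$ described in the statement (the cover being forced by how the normal rotations splice together along $E_i^o$ via the map to $\C^*$).

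Next I would introduce an action filtration on $CF^*(\widetilde{\phi}^m)$ whose value on a generator lying near $E_i^o$ is essentially $-k_i w_i$: the factor $w_i$ comes from the symplectic area picked up in $\omega_\pi$ near $E_i$, and the factor $k_i$ is the normal rotation number of $\widetilde{\phi}^m$ along $E_i$. Because $-\sum w_i E_i$ is ample, a maximum principle / neck-stretching argument on Floer trajectories shows that this action is actually a filtration (trajectories respect the ordering by $k_i w_i$), and the multiplicity $m$ separating hypothesis $\text{ord}_f(E_i) + \text{ord}_f(E_j) > m$ for intersecting $E_i, E_j$ is exactly what guarantees that no extra fixed loci appear near the strata $E_i \cap E_j$, so the $E_1$ page decomposes as a direct sum over single indices $i \in S_m$.

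Given this filtration, the $E_1$ page is the Morse-Bott Floer cohomology contribution of each fixed locus $\widetilde{E}_i^o$. In the normal bundle to $E_i^o$ (rank $n - \dim_\C E_i^o = 1$ complex in most strata) the Conley-Zehnder index of the rotation by $2\pi k_i$ contributes $-2k_i$, while trivializing the canonical bundle of the Milnor fiber against $\pi^* K_{\C^{n+1}}$ introduces an additional twist of $-2k_i a_i$ coming from the discrepancy $a_i$ (which is by definition the order of vanishing of the Jacobian of $\pi$ along $E_i$). Combining and accounting for the convention that $HF^*$ is graded so that the Milnor fiber constant orbits sit in degree $n$, the Morse-Bott formula yields the contribution $H_{n - (p+q) - 2k_i(a_i + 1)}(\widetilde{E}_i^o; \Z)$ at filtration index $p = -k_i w_i$, which is exactly the claimed $E_1^{p,q}$.

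The hardest step will be the Conley-Zehnder index calculation with the discrepancy shift $2k_i(a_i + 1)$: one must pin down the correct trivialization of $\det_\C(TM_f)$ along $\widetilde{E}_i^o$ (pulled up from $Y$ via $\pi$) and compare the actual linearized rotation of $\widetilde{\phi}^m$ normal to $E_i^o$ against that trivialization. The secondary difficulty will be constructing the specific $\widetilde{\phi}$ whose fixed locus is precisely a Morse-Bott model governed by the resolution — this requires a careful choice of connection on the Milnor fibration and a parametrix for $f \circ \pi$ near each $E_i^o$ — together with the verification that the filtration really is preserved by all Floer trajectories. Once these local-model and index computations are in place, the assembly into a spectral sequence converging to $HF^*(\phi^m, +)$ is formal.
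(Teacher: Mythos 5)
Your overall strategy is the same as the paper's: build a symplectic model on the resolution using the ample divisor $-\sum_i w_i E_i$, arrange the monodromy of $f\circ\pi$ into a Morse--Bott normal form near each $E_i^o$, use the separating hypothesis to rule out fixed loci over deeper strata $E_I$ with $|I|>1$, compute the Conley--Zehnder shift $2k_i(a_i+1)$ via a trivialization of the canonical bundle twisted by the discrepancy, and read off the $E^1$ page from the action filtration. All of that matches Sections 5--6 and Appendix C of the paper (up to a sign in your CZ computation, which you correctly flag as convention-dependent and as the step requiring the most care).

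The genuine gap is the sentence asserting that your modified symplectomorphism $\widetilde{\phi}$ ``agrees up to Hamiltonian isotopy with $\phi$ (hence computes the same $HF^*$).'' This is exactly the step the paper says cannot be done directly. The Milnor monodromy $\phi$ is a compactly supported exact symplectomorphism of the Milnor fiber $M_f\subset S_\epsilon$ with its induced Liouville form, whereas $\widetilde{\phi}$ is the monodromy of a different fibration ($\pi_\Phi=\Phi_2\circ s_{(m_i)}$ over a tube around the exceptional locus) on a different Liouville domain carrying a deformed symplectic form built from the ample class; there is no compactly supported Hamiltonian isotopy between them on a common domain. Fixed-point Floer cohomology is not a priori invariant under deformations of the Liouville structure or under replacing the page of an open book, so ``hence computes the same $HF^*$'' needs an argument. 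The paper supplies one by showing that the two abstract contact open books have graded contactomorphic associated contact pairs (via Giroux's correspondence, Lemma \ref{lemma:openbookcontactomorphism} and Lemma \ref{lemma:farlinksofmodelresolutions}) and then proving property \ref{item:HFOBD} --- that $HF^*(\cdot,+)$ depends only on the graded contact pair --- through the $S^1$-equivariant symplectic cohomology of mapping cylinders in Appendix B. Without some substitute for this invariance statement, your reduction of $HF^*(\phi^m,+)$ to the resolution-side model does not go through. A secondary, smaller omission: your perturbation must also eliminate the fixed component lying over the proper transform $E_{\star_S}$ (the boundary region of the page), which is why the paper works with a small positive slope perturbation rather than $\phi^m$ itself.
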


The covers $\widetilde{E}^o_i$ are described in an explicit algebraic way in \cite[Section 2.3]{denefloeserlefschetz}.
Intuitively, we should think of $\widetilde{E}^o_i$ in the following way:
if $U_i$ was a `nice' tubular neighborhood of $E_i^o$ and we had a `nice' projection map $Q_i : U_i \lra{} E_i^o$
then $Q_i : (\pi \circ f)^{-1}(\epsilon) \cap U_i \lra{} E_i^o$ would be a covering map on to its image homotopic to $\widetilde{E}^o_i$ for $\epsilon>0$ small enough (see the proof of Lemma \ref{lemma:indexdividesmi}).

\begin{tikzpicture}

	\draw[cyan, fill=cyan, rounded corners]  (1.4664,-0.0188) rectangle (0.026,-1.0072);
	
	\draw[cyan, fill=cyan, rounded corners]  (-3.7,-0.0188) rectangle (-0.0175,-1.0072);

	\draw[ultra thick] (0,1) -- (0,-2.5);

	\draw[ultra thick] (1,-0.5) -- (-3,-0.5);

	\draw[ultra thick] (-0.5,-2) -- (3,-2);

	\draw[thick]  plot[smooth, tension=.7] coordinates {(3,-1.8) (0.6,-1.7) (0.3,-0.8) (1.2,-0.7) (1.3,-0.4) (0.2,-0.3) (0.2,1) (-0.2,1) (-0.4,-0.3) (-3.1,-0.3) (-3.2,-0.6) (-0.5,-0.7) (-0.2,-1.7) (-0.8,-1.9) (-0.3,-2.3) (-0.3,-2.6) (0.2,-2.6) (0.7,-2.2) (3,-2.2)};

	\draw[purple, ultra thick]  plot[smooth, tension=.7] coordinates {(-0.1489,0.0114) (-0.1952,-0.1157) (-0.2992,-0.2371) (-0.3975,-0.3007) (-0.6229,-0.3527) (-0.9466,-0.399) (-1.4033,-0.3874) (-1.9698,-0.3527) (-2.2588,-0.3238) (-2.6229,-0.3065) (-3.0796,-0.3065) (-3.2992,-0.3412) (-3.5305,-0.4163) (-3.4785,-0.5204) (-3.3224,-0.5839) (-3.12,-0.6128) (-2.8252,-0.6302) (-2.5132,-0.6244) (-2.1836,-0.6244) (-1.8195,-0.6013) (-1.4726,-0.6013) (-1.0738,-0.6013) (-0.6865,-0.6475) (-0.4091,-0.7342) (-0.2183,-0.8556) (-0.1605,-1.0232)};

	\draw[purple, ultra thick]  plot[smooth, tension=.7] coordinates {(0.1054,-0.0059) (0.1285,-0.1446) (0.2846,-0.3643) (0.5041,-0.4221) (1.2037,-0.3874) (1.3886,-0.503) (1.3019,-0.6417) (1.1054,-0.7227) (0.7701,-0.7111) (0.4869,-0.6996) (0.2672,-0.8267) (0.2094,-1.0059)};

	\draw [->](2.562,-0.7227) -- (2.0591,-1.7169);

	\node at (3.0291,-0.5261) {$(\pi \circ f)^{-1}(\epsilon)$};

	\draw [->](-1.1085,-2.3065) -- (-0.3917,-2.0637);

	\node at (-1.3397,-2.347) {$E_{\star_S}$};

	\draw [->](0.9204,-1.2718) -- (0.0823,-1.5088);

	\node at (1.1575,-1.295) {$E_{j}$};

	\draw [->](-2.8599,-1.3643) -- (-2.7732,-0.5377);

	\node at (-2.8773,-1.5955) {$E_{i}$};

	\draw [purple,-triangle 45,thick](-2.0333,0.4334) -- (-2.2357,-0.2198);

	\node[purple] at (-2.0854,0.6588) {$\widetilde{E}_{i}$};

	\draw [cyan,-triangle 45](-3.4148,0.7513) -- (-3.357,0.0692);

	\node[cyan] at (-3.594,0.9999) {$U_i$};

\end{tikzpicture}

By Lemma \ref{lemma:existenceofmultiplicitymseparating} combined with \cite{hironaka:resolution},
a resolution satisfying the properties stated in the above Theorem exists for each $m>0$.
By looking at this spectral sequence above, one gets the following corollary:
\begin{corollary} \label{corollary:spectralsequencenumbers}
For each $m > 0$, define $\nu_m \equiv \sup\{\alpha : HF^\alpha(\phi^m,+) \neq 0 \}$
and define
$\mu_m \equiv \inf \{k_i (a_i + 1) \ : \ i \in S_m \}$ where $k_i$ and $a_i$ are defined as in Theorem \ref{theorem:mainspectralsequence} above.
Then $$\nu_m = n-2\mu_m \quad \forall \ m > 0.$$
In particular $HF^*(\phi^m,+)$ vanishes if and only if $\mu_m = \infty$.
Also the numbers $\mu_m$ are invariants of the link up to embedded contactomorphism since
the groups $HF^*(\phi^m,+)$ are.
\end{corollary}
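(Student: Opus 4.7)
The plan is to read off $\nu_m = n - 2\mu_m$ directly from the shape of the $E^1$ page of the spectral sequence provided by Theorem \ref{theorem:mainspectralsequence}. The upper bound $\nu_m \leq n - 2\mu_m$ is immediate: a summand $H_{n-(p+q)-2k_i(a_i+1)}(\widetilde{E}^o_i;\Z)$ in $E_1^{p,q}$ is nonzero only when $n-(p+q)-2k_i(a_i+1) \geq 0$, and since $k_i(a_i+1) \geq \mu_m$ for every $i \in S_m$ this forces $p+q \leq n-2\mu_m$. Convergence of the spectral sequence then gives $HF^\alpha(\phi^m,+) = 0$ for $\alpha > n-2\mu_m$. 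The degenerate case $\mu_m = \infty$ (equivalently $S_m = \emptyset$) makes $E_1$ identically zero, so $HF^*(\phi^m,+) = 0$ and, with the convention $\sup \emptyset = -\infty$, the equality holds trivially; from now on assume $\mu_m$ is finite.

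For the lower bound $\nu_m \geq n - 2\mu_m$ I will exhibit a class at top total degree that no differential can touch. Let $T := \{i \in S_m : k_i(a_i+1) = \mu_m\}$. A short degree count shows that the only nonzero contributions to $E_1^{p,q}$ with $p+q = n-2\mu_m$ are summands $H_0(\widetilde{E}^o_i)$ for $i \in T$, each placed at column $p = -k_i w_i$, and that at total degree $n-2\mu_m-1$ the only nonzero summands are $H_1(\widetilde{E}^o_i)$ for $i \in T$ (here I use that $k_i(a_i+1)$ is an integer, so $H_{1-2(k_i(a_i+1)-\mu_m)} \neq 0$ already forces $k_i(a_i+1) = \mu_m$). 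Now choose $i_0 \in T$ with $k_{i_0} w_{i_0}$ maximal over $T$, and consider the position $(p_0, q_0) := (-k_{i_0}w_{i_0},\, n - 2\mu_m + k_{i_0}w_{i_0})$. Outgoing differentials $d_r$ from $(p_0, q_0)$ land in total degree $n-2\mu_m+1$, which lies in the zero region of $E_1$, so they all vanish. An incoming differential $d_r$ would originate at $(p_0 - r, q_0 + r - 1)$ with $r \geq 1$, i.e.\ in column $p_0 - r$ and total degree $n-2\mu_m-1$; the previous sentence then forces the source index $j$ to lie in $T$ with $k_j w_j = k_{i_0}w_{i_0} + r > k_{i_0}w_{i_0}$, contradicting the maximality of $i_0$ within $T$. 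Hence the summand $H_0(\widetilde{E}^o_{i_0}) \neq 0$ survives intact to $E_\infty^{p_0, q_0}$, proving $HF^{n-2\mu_m}(\phi^m,+) \neq 0$.

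The remaining statements are formal: $HF^*(\phi^m,+) = 0$ iff $\nu_m = -\infty$ iff $\mu_m = \infty$, and the invariance of $\mu_m$ under embedded contactomorphism follows because $\nu_m$ is read off from the contactomorphism-invariant group $HF^*(\phi^m,+)$ via Lemma \ref{lemma:invarianceforlinks}. The main obstacle is precisely the no-incoming-differentials step: it depends both on the sharp integrality count confining total degree $n-2\mu_m-1$ to indices in $T$, and on choosing $i_0$ to be maximal within $T$ (maximizing over all of $S_m$ would not be enough, as then incoming differentials from $T$-indices in larger-$kw$ columns could remain). Once those two points are pinned down the rest is bookkeeping.
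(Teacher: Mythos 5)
Your proof is correct and is essentially the paper's argument: the paper packages the key step as the abstract Lemma \ref{lemma:spectralsequencelemma} (no column's top total degree can equal $m-1$, verified by the same parity/integrality observation you use, and the surviving class sits in the leftmost column attaining the top total degree, which is exactly your $p_0=-\max_{i\in T}k_iw_i$), and then cites the resolution-independence lemmas of Section \ref{section multiplicities and discrepancies}. No gaps; the only cosmetic difference is that you inline the spectral-sequence lemma and specialize it to the explicit $E^1$ page.
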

We will prove this corollary in Section \ref{section:proofofmainresult}.
We have an immediate corollary of this corollary proving a conjecture of Seidel \cite{seidel:singularitieslecture} regarding the multiplicity of a singularity.
\begin{corollary} \label{corollary:multiplicityandlct}
The multiplicity of $f$ is the smallest $m > 0$
so that $HF^*(\phi^m,+) \neq 0$.
The log canonical threshold of $f$ at $0$ is
$$\text{lct}_0(f) = \liminf_{m \to \infty} \left(\inf \left\{ \ -\frac{\alpha}{2m} \ : \  HF^\alpha(\phi^m,+) \neq 0 \ \text{or} \ -\frac{\alpha}{2m} = 1 \right\}\right).$$
\end{corollary}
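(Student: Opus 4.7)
The plan is to derive this as a direct consequence of Corollary \ref{corollary:spectralsequencenumbers} combined with the standard log-resolution formulas
\[\text{mult}_0(f) = \min_{i \in \check{S}} \text{ord}_f(E_i), \qquad \text{lct}_0(f) = \min\Bigl(1,\ \min_{i \in \check{S}}\tfrac{a_i+1}{\text{ord}_f(E_i)}\Bigr),\]
where in the second formula the explicit $1$ arises from the proper transform $E_{\star_S}$ (on which the discrepancy is $0$ and $\text{ord}_f$ is $1$) and in the first the minimum is realized on the exceptional divisor of the blowup of $0$ (which lies in $\check{S}$ after factoring the chosen resolution through that blowup).

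For the multiplicity statement, Corollary \ref{corollary:spectralsequencenumbers} tells us that $HF^*(\phi^m,+) \neq 0$ if and only if $\mu_m < \infty$, if and only if $S_m \neq \emptyset$, if and only if $\text{ord}_f(E_i) \mid m$ for some $i \in \check{S}$. The smallest such $m > 0$ is $\min_{i \in \check{S}} \text{ord}_f(E_i) = \text{mult}_0(f)$.

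For the log canonical threshold I would unwind the equality $\nu_m = n - 2\mu_m$. When $S_m \neq \emptyset$, using that $\nu_m = \sup\{\alpha : HF^\alpha(\phi^m,+) \neq 0\}$ and that $-\alpha/(2m)$ is minimized by maximizing $\alpha$,
\[\inf\Bigl\{-\tfrac{\alpha}{2m} : HF^\alpha(\phi^m,+) \neq 0\Bigr\} \;=\; -\frac{\nu_m}{2m} \;=\; \inf_{i \in S_m}\frac{a_i+1}{\text{ord}_f(E_i)} - \frac{n}{2m},\]
where the last equality uses $k_i = m/\text{ord}_f(E_i)$. The clause ``or $-\alpha/(2m) = 1$'' caps this value by $1$ and also supplies the value $1$ when $S_m = \emptyset$. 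Writing $v_m$ for the resulting infimum, the bound $v_m \geq \text{lct}_0(f) - n/(2m)$ follows immediately from the two pieces of the lct formula, giving $\liminf_m v_m \geq \text{lct}_0(f)$.

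For the matching upper bound I would split into cases. If the inner minimum in $\text{lct}_0(f)$ is attained at some $i^* \in \check{S}$, then restricting to the subsequence of positive multiples of $\text{ord}_f(E_{i^*})$ forces $i^* \in S_m$, so the inner infimum equals $\min_{i \in \check{S}}(a_i+1)/\text{ord}_f(E_i)$ and $v_m \to \text{lct}_0(f)$ along this subsequence. If instead $\text{lct}_0(f) = 1$ is forced purely by the $E_{\star_S}$ contribution, the ``or'' clause already supplies a constant sequence of value $1$, which handles the liminf directly. The only real subtlety I anticipate is the bookkeeping of the $-n/(2m)$ correction (which is precisely why the statement needs $\liminf$ rather than $\min$ or $\inf$) and cleanly separating the two cases above; both are essentially routine once the computation of $-\nu_m/(2m)$ is in hand.
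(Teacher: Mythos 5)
Your proposal is correct and is exactly the intended argument: the paper offers no separate proof of Corollary \ref{corollary:multiplicityandlct}, declaring it an immediate consequence of Corollary \ref{corollary:spectralsequencenumbers}, and your unwinding of $\nu_m = n-2\mu_m$ into $-\nu_m/(2m) = \inf_{i\in S_m}(a_i+1)/\text{ord}_f(E_i) - n/(2m)$, together with the two-sided liminf estimate and the role of the ``or'' clause as the $E_{\star_S}$ contribution, is precisely what that immediacy amounts to. The only detail worth making explicit is that when you pass to multiples of $\text{ord}_f(E_{i^*})$ you should take the multiplicity $m$ separating resolution to be a further blowup of your fixed resolution, so that Equation (\ref{eqn:propertransformdiscrepancy}) guarantees the proper transform of $E_{i^*}$ still lies in $S_m$ with unchanged multiplicity and discrepancy.
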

Since $HF^*(\phi^m,+)$ is an invariant of the link of $f$ up to embedded contactomorphism by Lemma \ref{lemma:invarianceforlinks} we get that Theorem \ref{theorem:lctmultiplicitycontactinvariance}
follows immediately from Corollary \ref{corollary:multiplicityandlct}.

For each $m > 0$, Denef and Loeser in \cite{denefloeserlefschetz} constructed natural spaces $\chi_{m,1}$ whose Euler characteristic is $\Lambda(\phi^m)$.
Therefore it is natural to ask, what is the relationship between these spaces and the groups $HF^*(\phi^m,+)$, if any? Such a question was considered by Seidel (see \cite[Remark 2.7]{denefloeserlefschetz}).
It might be interesting to see if there is a similar spectral sequence converging to $H_*(\chi_{m,1};\Z)$ since these spaces admit a natural stratification induced by the strata of the log resolution $\pi$.
A possible proof would
exploit the spectral sequence
\cite[Formula (3)]{petersen2016spectral}
combined with \cite[Lemma 2.2]{denefloeserlefschetz} (see also the calculations in the proof of \cite[Lemma 2.5]{denefloeserlefschetz}).

\subsection{Sketch of the proof of Theorem \ref{theorem:mainspectralsequence}}

We will now state one of the key properties of the group $HF^*(\psi,+)$ that will be used in this proof.
This property is stated precisely in \ref{item:spectralsequenceproperty} Section
\ref{section fixed point floer cohomology definition} and proven in Appendix C.

{\it Spectral Sequence Property}: Suppose that the set of fixed points of $\psi$
is a disjoint union of connected codimension $0$ submanifolds
$B_1,\cdots,B_l$ with boundary and corners and suppose that $\psi$ behaves in a particular way near the boundary of $B_i$ for each $i$.
Then there is a grading
$CZ(B_i) \in \Z$ for each $B_i$ and
there is a specific function $\iota : \{1,\cdots,l\} \lra{} \N$ 
so that there is a spectral sequence converging to $HF^*(\psi,+)$
with $E^1$ page equal to
$$E_1^{p,q} = 
\bigoplus_{ \{i \in \{1,\cdots,l\} \ : \ \iota(i) = p\} } H_{n-(p+q)-CZ(\phi,B_i)}(B_i;\Z) 
$$

The spectral sequence above is an example of a {\it Morse-Bott spectral sequence} (See \cite[Corollary 2]{bottstableclassical} and \cite[Section 6.4]{hutchingslecturenotesonfloer} for other similar examples).
Therefore in order to prove Theorem \ref{theorem:mainspectralsequence} it would be sufficient for us to deform the monodromy symplectomorphism $\phi^m$ so that the set of fixed points is a union
of codimension $0$ submanifolds homotopic to
$\widetilde{E}^o_{i_p}$ for each $i \in \{1,\cdots,l\}$.
The problem is that we cannot quite do this, but we can construct a new symplectomorphism with the required fixed point sets without changing $HF^*(\phi^m,+)$.
Also, Theorem \ref{theorem:mainspectralsequence} really requires a specific ordering of the submanifolds
$\widetilde{E}^o_{i_p}$
corresponding to the sequence of positive integers $(w_j)_{j \in S}$ but we will ignore this detail here, as the main applications of this paper do not need such an ordering. We will now explain how to modify $\phi^m$ without changing $HF^*(\phi^m,+)$ so that it has this fixed point property.
This is done in Section \ref{section constructing a nice contact open book}.

We have a natural symplectic form $\omega_Y$ on $Y$
coming from the ample divisor $-\sum_{i \in S} w_i E_i$.
This symplectic form gives us a natural Ehresmann connection on $\pi^* f$ away form $(\pi^*f)^{-1}(0)$
and hence gives us a monodromy map.
First of all, we deform $\omega_Y$ so that it behaves well with respect to $\pi^* f$
(See Sections \ref{section some preliminary definitions} and \ref{section trivialization line bundles}).
The key idea is that since $\pi^* f$ locally looks like $\prod_{i = 1}^m z_i^m$, we can deform $\omega_Y$ so that it basically looks like the standard symplectic form in these local charts (with a few modifications).
The next step is to show that the corresponding monodromy map $\psi$ satisfies $HF^*(\psi^m,+) = HF^*(\phi^m,+)$ (See sections \ref{section links of divisors and open books}, \ref{section making the monodromy nice} and Appendix A).
Here we are using the fact that these Floer cohomology groups are invariants of the mapping tori of $\phi^m$ and $\psi^m$
respectively 
along with an additional contact structure on these tori and some additional data.
Finally we need to compute the fixed points of the monodromy map so that we can apply our spectral sequence property (See section \ref{section good dynamical properties}).

\subsection{Plan of the paper}

In Section \ref{section multiplicities and discrepancies} we construct algebraic invariants of $(\C^{n+1},f^{-1}(0))$ which will be used to tell us the smallest non-vanishing degree of $HF^*(\phi^m,+)$ for each $m$.
These invariants are constructed by looking at the multiplicities and discrepancies of the prime exceptional divisors $(E_i)_{i \in S}$ of a resolution.

In Section \ref{section liouville domains etc} we give some basic definitions of the main objects in symplectic and contact geometry that will be used in this paper. These include Liouville domains, (abstract) open books, contact mapping cylinders and gradings.
In Section \ref{section fixed point floer cohomology definition} we give a definition Floer cohomology of a symplectomorphism. We also state the three main properties \ref{HF property Euler characteristic} - \ref{item:spectralsequenceproperty} of the Floer cohomology group $HF^*(\psi,+)$ that will be needed for this paper. These properties will be proven in Appendix B and C.

Section \ref{section constructing a nice contact open book} is the largest section of the paper. This section is used to construct a monodromy symplectomorphism nice enough so that we can use the properties from Section \ref{section fixed point floer cohomology definition} to prove  Theorem \ref{theorem:mainspectralsequence}.
This section heavily relies on results and notation from \cite{McLeanTehraniZinger:smoothing}.
Section \ref{section:proofofmainresult}
contains a proof of Theorem \ref{theorem:mainspectralsequence}
and Corollary \ref{corollary:spectralsequencenumbers}.

Appendix A is deals with gradings. It enables us to compute the quantities $CZ(B_i)$ stated in the spectral sequence property in the sketch of the proof of Theorem \ref{theorem:mainspectralsequence} earlier.
Appendix B proves that the groups $HF^*(\phi^m,+)$ only depend on the link $L_f \subset S_\epsilon$ as a contact submanifold. This relies heavily on results of \cite{McLean:monodromy}.
In Appendix C we prove the spectral sequence property of $HF^*(\psi,+)$ described above.

\subsection{Conventions}
If $(M,\omega)$ is a symplectic manifold and $\theta$ is a $1$-form
then its {\it $\omega$-dual} $X^\omega_\theta$ is the unique vector field satisfying $\omega(X^\omega_\theta,Y) = \theta(Y)$ for all vectors $Y$.
Sometimes we just write $X_\theta$ instead of $X^\omega_\theta$ if it is clear from the context that the symplectic form we are using is $\omega$.
For a smooth function $H : M \lra{} \R$, we define $X_H \equiv X_{-dH}$.
The time $t$ flow of $X_{-dH}$ will be denoted by $\phi^H_t : M \lra{} M$ (this is called the {\it time $t$ Hamiltonian flow of $H$}).

Also if $f : B' \lra{} B$ is a smooth map and $\pi : V \lra{} B$ is a vector bundle then
we will write elements of the pullback bundle $f^*(V)$ as pairs $(b',v) \in B' \times V$
satisfying $f(b') = \pi(v)$.
For any fiber bundle $\pi : E \lra{} B$ and any subsets $N \subset E, \ C \subset B$, we define $N|_C \equiv N \cap \pi^{-1}(C)$.
To avoid cluttered notation, we will not distinguish between an element
of a set and a subset of size $1$ when the context is clear (e.g. $i$ will quite often mean $\{i\}$).
We also write $\Dom(f)$ and $\Im(f)$ for the domain and image of a map $f$.
For any set $I$, we define
$\N_{>0}^I$ to be the set
of tuples $(k_i)_{i \in I}$
where $k_i \in \N_{>0}$.

\bigskip

{\bf Acknowledgments}: Many thanks to Mircea~Musta\c{t}\v{a} for suggesting the connection with log canonical threshold. Also many thanks to Paul Seidel for answering some of my questions. This paper is supported by the NSF grant DMS-1508207.

\section{Multiplicities and Discrepancies of Exceptional Divisors.} \label{section multiplicities and discrepancies}

In this section we will introduce some of the basic tools that are needed from algebraic geometry.
We will define the multiplicity and log canonical threshold of an isolated hypersurface singularity as well as some more general invariants.
We will also explain how to compute these invariants in terms of certain resolutions called multiplicity $m$ separating resolutions and show how such computational techniques do not depend on the choice of resolution.

Let $f : \C^{n+1} \to \C$ be a polynomial with an isolated singular point at $0$.

\begin{defn} \label{defn:logresolution}
A {\it log resolution at $0$} of the pair $(\C^{n+1},f^{-1}(0))$ is a proper holomorphic map $\pi : Y \to \C^{n+1}$ from a complex manifold $Y$ so that
there is some open set $U \subset \C^{n+1}$ containing $0$ satisfying:
\begin{enumerate}
\item $\pi^{-1}(f^{-1}(0)) \cap U$ is a finite union of smooth transversally intersecting hypersurfaces $(E_i)_{i \in S}$.
We will call such divisors {\it resolution divisors}.
Each $E_j$ satisfying $\pi(E_j) = \{0\}$ is called a {\it prime exceptional divisor}. We require that the prime exceptional divisors are connected.
We also require that there is a unique element $\star_S \in S$ where $E_{\star_S} = \overline{\pi^{-1}(f^{-1}(0) - 0)}$
($E_{\star_S}$ need not be connected).
We call $E_{\star_S}$ the {\it proper transform of $f^{-1}(0)$}.
\item $\pi|_{\pi^{-1}(U \setminus \{0\})} : \pi^{-1}(U \setminus \{0\}) \to U \setminus \{0\}$
is a biholomorpihsm.
\end{enumerate} 

Since the only singularities in this paper will be at $0 \in \C^{n+1}$,
we will just call a log resolution at $0$ of $f$ a {\it log resolution} of $(\C^{n+1},f^{-1}(0))$.

The {\it multiplicity of $f$ along $E_j$},
denoted by $\text{ord}_f(E_j)$, is the order of $\pi^* f$ along $E_j$.
In other words, choose some local coordinate chart $z_1,\cdots,z_n$ centered at a generic point of $E_j$
so that $E_j = \{z_1 = 0\}$
and define $\text{ord}_f(E) \equiv k$ where $k \in \Z$ satisfies $\pi^* f = g z_1^k$ in this coordinate system for some holomorphic function $g$ satisfying $g(0) \neq 0$.

The {\it discrepancy} of $E_j$ denoted by $a(E_j)$ is calculated as follows:
choose local holomorphic coordinates $y_1,\cdots,y_n$ on $Y$ centered at a point on $E_j$
and holomorphic coordinates $x_1,\cdots,x_n$ on $\C^{n+1}$ centered at $0$.
Then $a(E_j)$ is the order of the Jacobian determinant  of $f$ along $E_j$ expressed in these coordinates.
This quantity does not depend on the choices of such holomorphic coordinates.
The {\it multiplicity} of $f$ at $0$ is $\min \{\text{ord}_f(E_j) \ : \ j \in S - \star_S \}$
and the {\it log canonical threshold} is $\min \left\{\frac{a(E_j) + 1}{\text{ord}_f(E_j)} \ : \ j \in S \right\}$.

Throughout this paper, we will define $E_I$ to be $\cap_{j \in I} E_j$
for each $I \subset S$. If $I$ is the empty set then $E_I$ is the entire manifold $Y$.
\end{defn}

\begin{defn} \label{defn:minimalmultiplicitydiscrepancy}
Let $\pi : Y \to \C^{n+1}$ be a log resolution of $(\C^{n+1},f^{-1}(0))$ as above.
For each $m \in \N_{>0}$, we define the {\it minimal multiplicity $m$ discrepancy} to be:
\[\text{md}_m(\pi,f) \equiv \text{inf} \left\{ \sum_{j \in I} k_j a(E_j)  : \ I \subset S, \ I \neq \star_S, \ (k_j)_{j \in I} \in \N_{>0}^I, \  E_I \neq \emptyset, \ \sum_{j \in I} k_j \text{ord}_f(E_j) = m \right\}.\]
Our convention here is that infimum of the empty set is $\infty$.
Later on in Lemma \ref{lemma:multipicitymresolutionindepcedence} we will show that
$\text{md}_m(\pi,f)$ does not depend on $\pi$ and hence we can define $\text{md}_m(f) \equiv \text{md}_m(\pi,f)$ for some choice of log resolution $\pi$.

A morphism $\pi : Y \to \C^{n+1}$ is a {\it multiplicity $m$ separating resolution} if it is a log resolution of $(\C^{n+1},f^{-1}(0))$
such that for any two resolution divisors $E$ and $F$ of $\pi$ satisfying $E \cap F \neq \emptyset$,
the sum of the multiplicities of $f$ along $E$ and $F$ is greater than $m$.
\end{defn}

Multiplicity $m$ separating resolutions make it much easier for us to compute the minimal multiplicity $m$ discrepancy.

\begin{lemma} \label{lemma:multiplicitymresolutiondiscrepancycalculation}
	If $\pi : Y \to \C^{n+1}$ is a multiplicity $m$ separating resolution of $(\C^{n+1},f^{-1}(0))$
	and $(E_j)_{j \in S}$ are its resolution divisors then
	\[\text{md}_m(\pi,f) = \text{inf} \left\{ ka(E_j) \ : \ k \in \N_{>0}, \ j \in S-\star_S, \ k\text{ord}_f(E_j) = m\right\}.\]
\end{lemma}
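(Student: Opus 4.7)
The plan is to establish the equality by proving two inequalities. One direction is essentially immediate: every singleton datum $(I, k) = (\{j\}, k)$ with $j \in S - \star_S$ and $k\cdot \text{ord}_f(E_j) = m$ is admissible in Definition \ref{defn:minimalmultiplicitydiscrepancy} (in particular $E_{\{j\}} = E_j \neq \emptyset$, and $\{j\} \neq \{\star_S\}$), so $\text{md}_m(\pi,f) \leq k\, a(E_j)$. Taking infimum on the right shows $\text{md}_m(\pi,f)$ is bounded above by the singleton infimum.

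The main content is the reverse inequality, for which the strategy is to use the separating hypothesis to force every admissible $I$ to be a singleton. Suppose $(I, (k_j)_{j \in I})$ is admissible in the definition of $\text{md}_m(\pi,f)$. First, $I$ is nonempty, for otherwise $\sum_{j \in I} k_j \text{ord}_f(E_j) = 0 \neq m$. Next I will rule out $|I| \geq 2$: choose any two distinct $i_1, i_2 \in I$; since $\emptyset \neq E_I \subseteq E_{i_1} \cap E_{i_2}$, the multiplicity $m$ separating hypothesis yields $\text{ord}_f(E_{i_1}) + \text{ord}_f(E_{i_2}) > m$. Combined with $k_j \geq 1$ for every $j \in I$, this gives
\[\sum_{j \in I} k_j \,\text{ord}_f(E_j) \;\geq\; k_{i_1}\text{ord}_f(E_{i_1}) + k_{i_2}\text{ord}_f(E_{i_2}) \;\geq\; \text{ord}_f(E_{i_1}) + \text{ord}_f(E_{i_2}) \;>\; m,\]
contradicting $\sum_{j \in I} k_j \text{ord}_f(E_j) = m$. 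Hence $|I| = 1$.

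Therefore every admissible datum has the form $I = \{j\}$ for a single $j \in S$, with $j \neq \star_S$ coming from the condition $I \neq \star_S$ (read via the singleton convention of Section 1.4), and with $k_j\, \text{ord}_f(E_j) = m$; its contribution $\sum_{j \in I} k_j a(E_j) = k_j a(E_j)$ is exactly a term in the right-hand infimum. This proves the reverse inequality and completes the argument. I do not anticipate any substantive obstacle: the proof is a direct unwinding of the separating definition, and the only minor subtlety is to remember that the separating hypothesis is imposed on all pairs of resolution divisors (including pairs involving the proper transform $E_{\star_S}$), so that the dichotomy $|I| = 1$ versus $|I| \geq 2$ really does exhaust the possibilities.
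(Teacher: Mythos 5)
Your proof is correct and follows exactly the paper's argument: the paper's entire proof is the one-line observation that the multiplicity $m$ separating hypothesis forces any admissible $I$ with $E_I \neq \emptyset$ and $\sum_{j \in I} k_j \operatorname{ord}_f(E_j) = m$ to satisfy $|I| = 1$, which is precisely your key step. Your write-up just spells out the two inequalities and the handling of $\star_S$ more explicitly.
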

\proof
This follows from the fact that if
$\sum_{j \in I} k_j \text{ord}_f(E_j) = m$ and $E_I \neq \emptyset$ for some $I \subset S - \star_S$ and $(k_j)_{j \in I} \in \N_{>0}^I$ 
then $|I| = 1$.
\qed

\begin{lemma} \label{lemma:existenceofmultiplicitymseparating}
If we have any log resolution then we can blow such a resolution up along strata of $\cup_{i \in S} E_i$ inside $\pi^{-1}(0)$ so that it becomes a multiplicity $m$ separating resolution.
\end{lemma}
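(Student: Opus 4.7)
The plan is to iteratively blow up intersections of ``bad'' pairs of resolution divisors and show, via a well-ordered monovariant, that the process terminates in a multiplicity $m$ separating resolution. Starting from any log resolution $\pi : Y \to \C^{n+1}$, I call an unordered pair $\{E, F\}$ of distinct intersecting resolution divisors a \emph{bad pair} if $\text{ord}_f(E) + \text{ord}_f(F) \leq m$, and assign to it the \emph{deficit} $d(E,F) := m - \text{ord}_f(E) - \text{ord}_f(F) \in \{0, 1, \ldots, m-2\}$. As long as a bad pair exists, I pick one $\{E_i, E_j\}$ of maximum deficit $d$ and blow up $Y$ along the smooth center $E_i \cap E_j$. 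Since any prime exceptional divisor maps to $\{0\}$ and $E_{\star_S}$ meets any prime exceptional divisor only over $\{0\}$, this center lies inside $\pi^{-1}(0)$ and is a stratum of the required kind. Transversality of the $E_\bullet$ ensures that the blow-up $\pi' : Y' \to \C^{n+1}$ remains a log resolution, introducing exactly one new prime exceptional divisor $E'$.

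Locally at a generic point of the center, in coordinates with $E_i = \{z_1 = 0\}$ and $E_j = \{z_2 = 0\}$, write $\pi^* f = u \cdot z_1^{\alpha} z_2^{\beta}$ where $\alpha := \text{ord}_f(E_i)$, $\beta := \text{ord}_f(E_j)$ and $u$ is a unit. In the chart of the blow-up with $z_2 = z_1 w$, one finds $(\pi')^* f = u \cdot z_1^{\alpha + \beta} w^{\beta}$, so $\text{ord}_f(E') = \alpha + \beta$ and the strict transforms $\tilde E_i, \tilde E_j$ become disjoint (each appearing only in its own chart). Thus the eliminated pair $\{E_i, E_j\}$ is gone, while the only newly created intersecting pairs are of the form $\{\tilde E_k, E'\}$, one for each resolution divisor $E_k$ (including $\tilde E_i$, $\tilde E_j$, and possibly $\tilde E_{\star_S}$) passing through $E_i \cap E_j$. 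Such a pair is bad iff $\text{ord}_f(E_k) \leq d$, and its deficit is then $d - \text{ord}_f(E_k) < d$ since $\text{ord}_f(E_k) \geq 1$. Every other intersecting pair, together with its deficit, is unchanged by the blow-up.

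For termination, let $c_k \in \N$ count the bad pairs of deficit exactly $k$ and form $\mathbf{c} := (c_{m-2}, c_{m-3}, \ldots, c_0) \in \N^{m-1}$. A blow-up step at maximum deficit $d$ leaves $c_k$ unchanged for $k > d$, decreases $c_d$ by one, and possibly increases $c_k$ for $k < d$ by finite amounts; consequently $\mathbf{c}$ strictly decreases in the lexicographic order on $\N^{m-1}$, which is a well-order. The procedure therefore halts after finitely many steps at a resolution with no bad pair, which by definition is multiplicity $m$ separating. The only genuinely delicate point is the local bookkeeping of newly created intersecting pairs and the verification that the simple normal crossings property is preserved under each blow-up; both are standard for blow-ups centered on the intersection of two components of an snc divisor, and I expect no substantive obstacle beyond it.
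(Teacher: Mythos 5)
Your argument is correct, and it follows the same basic strategy as the paper's proof: repeatedly blow up the intersection of a pair of resolution divisors whose multiplicities have minimal sum (equivalently, maximal deficit), note that the new exceptional divisor has multiplicity equal to that sum so that every newly created intersecting pair has strictly larger sum, and conclude by a termination argument. The only genuine difference is the termination bookkeeping, and here your version is the more robust one. The paper tracks the single integer $a_Y - b_Y$, where $a_Y$ is the minimal sum over intersecting pairs and $b_Y$ is the number of pairs realizing it, and asserts that this quantity strictly increases at each step; that is immediate when $b_Y \geq 2$ (the minimum is unchanged and one realizing pair is destroyed), but when $b_Y = 1$ the blow-up raises $a_Y$ while possibly creating many pairs that realize the new minimum, so $b_{Y'}$ can jump and the asserted inequality needs extra justification. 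Your lexicographically ordered vector $(c_{m-2},\dots,c_0)$ of deficit counts sidesteps this entirely: the top nonzero coordinate drops by exactly one, only strictly lower coordinates can grow, and the lexicographic order on $\N^{m-1}$ is a well-order, so termination is automatic with no case analysis. Your supporting checks --- that the center lies in $\pi^{-1}(0)$, that $\text{ord}_f(E') = \alpha + \beta$, and that transversality prevents any pre-existing pair other than $\{E_i,E_j\}$ from being created or destroyed --- are all correct; the one cosmetic point worth noting is that if $E_i \cap E_j$ is disconnected the new exceptional divisor must be split into its connected components to match Definition \ref{defn:logresolution}, which affects nothing since each component has the same multiplicity.
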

\proof
Let $\pi : Y \lra{} \C^{n+1}$ be a resolution with resolution divisors $(E_j)_{j \in S}$.
Define
$$a_Y \equiv \text{min}\left\{ \sum_{j \in I} k_j \text{ord}_f(E_j) \ : \ I \subset S, \ |I| = 2, \ E_I \neq \emptyset, \ (k_j)_{j \in I} \in \N_{>0}^I\right\}.$$
Let $b_Y$ be the number of elements in the set
$$B_Y \equiv \left\{I \subset S \ : \ |I| = 2, \ E_I \neq \emptyset, \ \sum_{j \in I} k_j \text{ord}_f(E_j) = a_Y \right\}.$$
Since $b_Y \geq 1$, choose $I \in B_Y$.
Let $Y'$ be the blowup of $Y$ along $E_I$.
Then $a_Y - b_Y$ is strictly smaller than $a_{Y'} - b_{Y'}$.
Hence by induction we can blow up $Y$ along subsets of the form $E_I$ until we get a log resolution $\pi'' : Y'' \lra{} \C$
of $(\C^{n+1},f^{-1}(0))$
so that $a_{Y''} - b_{Y''} \geq m$.
Since $b_{Y''} \geq 1$, we get that $a_{Y''} > m$.
Hence $\pi''$ is a multiplicity $m$ separating resolution.
\qed

\begin{lemma} \label{lemma:mdmafterblowup}
	Let $\pi : Y \to \C^{n+1}$ be a log resolution of
	 $(\C^{n+1},f^{-1}(0))$
	 and $I \subset S$ a subset satisfying $|I| \geq 2$.
	Let $\check{\pi} : \check{Y} \to \C^{n+1}$ be the log resolution of
		 $(\C^{n+1},f^{-1}(0))$ obtained by blowing up $Y$
		 along $E_I$.
	Then $$md_m(\check{\pi},f) = md_m(\pi,f).$$
\end{lemma}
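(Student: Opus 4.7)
I would prove the equality $md_m(\check\pi,f)=md_m(\pi,f)$ by a direct comparison of the two infima, built on the blowup formulas for the new exceptional divisor $E_\star$ of $\check\pi$:
\[
\text{ord}_f(E_\star)=\sum_{j\in I}\text{ord}_f(E_j),\qquad a(E_\star)=\sum_{j\in I}a(E_j)+(|I|-1),
\]
while for every $j\in S$ the proper transform $\tilde E_j$ retains both $\text{ord}_f(E_j)$ and $a(E_j)$. Both formulas follow from a local chart of the blowup ($z_1=w_1$, $z_j=w_1w_j$ for $j\in I$) where $\pi^*f$ acquires a factor $w_1^{\sum_I\text{ord}_f(E_j)}$ and the blowup Jacobian contributes $w_1^{|I|-1}$. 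The stratum $E_{J'}\subset\check Y$ is nonempty iff either $\star\notin J'$, $E_{J'}\neq\emptyset$ in $Y$, and $I\not\subset J'$; or $J'=J''\cup\{\star\}$ with $I\not\subset J''$ and $E_{J''\cup I}\neq\emptyset$ in $Y$.

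The inequality $md_m(\pi,f)\le md_m(\check\pi,f)$ is the clean direction. Given a $\check\pi$-configuration $(J',(k'_j))$, if $\star\notin J'$ then the same data form a $\pi$-configuration of identical cost. If $\star\in J'$, write $J'=J''\cup\{\star\}$ and define a $\pi$-configuration on $J=J''\cup I$ by $k_j=k'_j$ for $j\in J''\setminus I$, $k_j=k'_j+k'_\star$ for $j\in J''\cap I$, and $k_j=k'_\star$ for $j\in I\setminus J''$. The first blowup formula gives $\sum_J k_j\text{ord}_f(E_j)=m$, and the second gives $\sum_J k_ja(E_j)=\sum_{J'}k'_ja(E_j)-k'_\star(|I|-1)$, so the $\pi$-cost is at most the $\check\pi$-cost.

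For the reverse inequality $md_m(\check\pi,f)\le md_m(\pi,f)$ I would split cases on a $\pi$-configuration $(J,(k_j))$. If $I\not\subset J$ then $E_J\not\subset E_I$, so its proper transform in $\check Y$ is nonempty and the same configuration transfers with unchanged cost. The hard case is $I\subset J$: here the proper transforms $\tilde E_j$ for $j\in I$ do not meet simultaneously in $\check Y$, so the naive transfer is unavailable. My plan is to replace $(J,(k_j))$ by another $\pi$-configuration $(J^\circ,(k^\circ_j))$ with $I\not\subset J^\circ$ of cost no greater than the original; this then transfers to $\check\pi$ by the previous case. The replacement would concentrate the weights on $I$ onto the divisor $j^*\in I$ minimizing $a(E_j)/\text{ord}_f(E_j)$, using divisibility properties of $\text{ord}_f(E_j)$ and, when required for integrality, the proper transform divisor $E_{\star_S}$ (which meets the exceptional locus whenever $n\ge 1$) to absorb residual multiplicity. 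The main obstacle is this integrality step: a continuous relaxation would be straightforward by convex optimization on the polytope $\sum k_j\text{ord}_f(E_j)=m$, $k_j>0$, but in $\N_{>0}$ one must argue combinatorially. An alternative shortcut that I would try in parallel is to reduce, via Lemma \ref{lemma:existenceofmultiplicitymseparating}, to a multiplicity-$m$ separating refinement of $\pi$ and apply Lemma \ref{lemma:multiplicitymresolutiondiscrepancycalculation}, where every configuration already has $|J|=1$ and the troublesome case $I\subset J$ does not occur.
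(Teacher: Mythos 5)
Your first inequality, $md_m(\pi,f)\le md_m(\check\pi,f)$, is exactly the paper's computation: the blowup formulas you quote are Equations (\ref{eqn:propertransformdiscrepancy})--(\ref{eqn:exceptionaldiscrepancy}), and your regrouping of weights ($k'_\star$ on $I\setminus J''$, $k'_j+k'_\star$ on $I\cap J''$, $k'_j$ on $J''\setminus I$) with cost defect $k'_\star(|I|-1)$ is precisely Equations (\ref{eqn:discrepancyinequality}) and (\ref{eqn:multiplicityidentity}). The genuine gap is the reverse inequality in the case $I\subset J$, which you have correctly located but not closed, and neither of your two proposed repairs works. The weight-concentration scheme is unjustified: divisibility by $\mathrm{ord}_f(E_{j^*})$ can fail, $E_{\star_S}$ need not meet the relevant stratum, and there is no reason the continuous optimum is attained at an integral point supported on a nonempty stratum of $\check Y$. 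The shortcut via Lemma \ref{lemma:existenceofmultiplicitymseparating} and Lemma \ref{lemma:multiplicitymresolutiondiscrepancycalculation} is circular: the separating resolution is produced by \emph{further blowing up} $\pi$ along strata, so comparing $md_m$ of $\pi$ with that of its refinement is exactly the content of the present lemma --- indeed that is precisely how Lemma \ref{lemma:multipicitymresolutionindepcedence} uses it.

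The paper closes the reverse direction not by modifying the $\pi$-configuration but by running the same correspondence backwards: a configuration $(J,(k_j))$ with $I\subset J$ is the image of the $\check\pi$-configuration that puts weight $k=\min_{j\in I}k_j$ on the new divisor $E$ and $k_j-k$ on $\check E_j$ (dropping the $j$ where this vanishes); the support then no longer contains $I$, and the stratum is nonempty since it lies over $E_J$. The subtlety you should check is the cost comparison under this backward transfer. With the bare discrepancy $\sum_j k_j\,a(E_j)$, your own formula shows the $\check\pi$-cost \emph{exceeds} the $\pi$-cost by $k(|I|-1)$, so this direction does not close as written. The defect disappears exactly when the cost is measured with the log discrepancy $\sum_j k_j(a(E_j)+1)$: since $a(E)+1=\sum_{j\in I}(a(E_j)+1)$ and the total weight changes from $k|I|+\sum k_j$ to $k+\sum k_j$, the correspondence becomes cost-preserving in both directions, and this is the quantity that actually feeds into Corollary \ref{corollary:spectralsequencenumbers} via $\mu_m=\inf k_i(a_i+1)$. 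So the missing step in your argument is not a clever replacement of configurations but the observation that the natural backward transfer is an exact cost bijection once the bookkeeping is done with $a(E_j)+1$; redo your second paragraph with that normalization and both inequalities follow from the same two identities.
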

\proof
Let $(E_j)_{j \in S}$ be the resolutions divisors of $\pi$.
Let $\check{E}_j$ be the proper transform of $E_j$ in $\check{Y}$
for all $j \in S$.
Then 
\begin{equation} \label{eqn:propertransformdiscrepancy}
a(\check{E}_j) = a(E_j) \ \text{and} \ \text{ord}_f(\check{E}_j) = \text{ord}_f(E_j).
\end{equation}

Let $E$ be the exceptional divisor of the blowdown map
$\check{Y} \lra{} Y$.
Then by looking at a local model of the blowdown
map and using the chain rule we get
\begin{equation} \label{eqn:exceptionaldiscrepancy}
a(E) = |I|-1 + \sum_{j \in I} a(E_j), \quad \text{ord}_f(E) = \sum_{j \in I} \text{ord}_f(E_j).
\end{equation}
Suppose for some $\check{I} \subset S$ satisfying $E_{\check{I}} \neq \emptyset$,
some $k \in \N_{\geq 0}$ and $(k_j)_{j \in \check{I}} \in \N_{>0}^{\check{I}}$
we have
$$k\text{ord}_f(E) + \sum_{j \in \check{I}} k_j \text{ord}_f(\check{E}_j)  = m.$$
Then by Equations (\ref{eqn:propertransformdiscrepancy}) and (\ref{eqn:exceptionaldiscrepancy})
 we have
\begin{equation} \label{eqn:discrepancyinequality}
ka(E) + \sum_{j \in \check{I}} k_j a(\check{E}_j)  
= k(|I|-1) + \sum_{j \in I-\check{I}} ka(E_j) + \sum_{j \in I \cap I'} (k+k_j)a(E_j) + \sum_{j \in \check{I}-I} k_j a(E_j).
\end{equation}
Also Equations 
(\ref{eqn:propertransformdiscrepancy}) and (\ref{eqn:exceptionaldiscrepancy})
tell us
that
\begin{eqnarray} \label{eqn:multiplicityidentity}
k\text{ord}_f(E) + \sum_{j \in \check{I}} k_j \text{ord}_f(\check{E}_j)  = \nonumber \\
\sum_{j \in I-\check{I}} k\text{ord}_f(E_j) + \sum_{j \in I \cap \check{I}} (k+k_j)\text{ord}_f(E_j) + \sum_{j \in \check{I}-I} k_j \text{ord}_f(E_j).
\end{eqnarray}
Equations 
(\ref{eqn:discrepancyinequality}) and
(\ref{eqn:multiplicityidentity})
tell us that
$md_m(\check{\pi},f) = md_m(\pi,f)$.
\qed

\begin{lemma} \label{lemma:multipicitymresolutionindepcedence}
The minimal multiplicity $m$ discrepancy does not depend on the choice of log resolution $\pi : Y \to \C^{n+1}$ of $(\C^{n+1},f^{-1}(0))$.
\end{lemma}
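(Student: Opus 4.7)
The plan is to reduce to Lemma \ref{lemma:mdmafterblowup} by dominating both resolutions by a common refinement. Given two log resolutions $\pi_i : Y_i \to \C^{n+1}$ of $(\C^{n+1}, f^{-1}(0))$ for $i=1,2$, Hironaka's theorem \cite{hironaka:resolution} produces a third log resolution $\pi_3 : Y_3 \to \C^{n+1}$ together with proper birational morphisms $g_i : Y_3 \to Y_i$ satisfying $\pi_i \circ g_i = \pi_3$. It therefore suffices to show that $md_m(\pi_3,f) = md_m(\pi_i,f)$ for $i=1,2$, since then $md_m(\pi_1,f) = md_m(\pi_3,f) = md_m(\pi_2,f)$.

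To compare $md_m$ across the factorization $g_i$, I would show that each $g_i$ may be realized as a finite composition of blowups along smooth centers of the form $E_I$ (intersections of resolution divisors) in a sequence of intermediate log resolutions. Since each $g_i$ is a proper birational morphism between smooth varieties which is an isomorphism over the smooth locus of $\pi_i$ (both being log resolutions of the same pair, coinciding away from the exceptional locus), Hironaka's theorem factors it through blowups whose centers lie in the complement of that locus, i.e.\ within the simple normal crossings divisor formed by the resolution divisors. Iterative application of Lemma \ref{lemma:mdmafterblowup} along this factorization then yields the claim.

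The main obstacle lies in the step that asks the smooth centers in the Hironaka factorization to be of the special form $E_I$: a priori they are only smooth subvarieties with simple normal crossings to the existing divisors, which may be proper subvarieties of some $E_I$. I would handle this in one of two ways. Approach (i): first refine by blowing up along $E_I$-type strata repeatedly (using Lemma \ref{lemma:existenceofmultiplicitymseparating}-style arguments) until each Hironaka center coincides with an intersection stratum of the further-refined resolution divisors; at each step Lemma \ref{lemma:mdmafterblowup} preserves $md_m$. Approach (ii), which is cleaner: extend Lemma \ref{lemma:mdmafterblowup} to blowups along arbitrary smooth SNC centers $Z \subseteq E_I$ contained in the exceptional locus, where the new exceptional divisor $E$ still satisfies $\text{ord}_f(E) = \sum_{j \in I}\text{ord}_f(E_j)$ and an analogue of the discrepancy formula (\ref{eqn:exceptionaldiscrepancy}) with an additional codimension contribution from $Z \subset E_I$. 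The algebraic identity used in Lemma \ref{lemma:mdmafterblowup}, namely that Equations (\ref{eqn:propertransformdiscrepancy}) and (\ref{eqn:exceptionaldiscrepancy}) match the contribution to $md_m$ of any combination involving $E$ with a corresponding combination on $Y$, goes through with minor bookkeeping modifications.

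As an auxiliary simplification, I would first use Lemma \ref{lemma:existenceofmultiplicitymseparating} together with Lemma \ref{lemma:mdmafterblowup} to assume without loss of generality that each $\pi_i$ is multiplicity $m$ separating. In that regime Lemma \ref{lemma:multiplicitymresolutiondiscrepancycalculation} reduces $md_m(\pi_i,f)$ to the quantity $\inf\{k\,a(E_j) : k\,\text{ord}_f(E_j)=m\}$ over individual resolution divisors, and the desired equality can be rephrased intrinsically in terms of divisorial valuations on $\C^{n+1}$ centered at $0$ with $v(f)$ dividing $m$. Each such valuation is realized by a unique divisor (up to proper transform) on any sufficiently fine log resolution, and the pair $\bigl(\text{ord}_f(E), a(E)\bigr)$ depends only on the valuation, not on the resolution in which it is realized; this gives a quick independent verification of the lemma in the multiplicity $m$ separating case and corroborates the factorization argument above.
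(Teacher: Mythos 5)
Your opening reduction (use Lemmas \ref{lemma:existenceofmultiplicitymseparating} and \ref{lemma:mdmafterblowup} to assume both resolutions are multiplicity $m$ separating) is exactly the paper's first step and is fine. The problem is what comes next. Your main route requires factoring the proper birational morphisms $g_i : Y_3 \to Y_i$ as compositions of blowups along smooth centers. Hironaka's theorem does not give this: resolving the indeterminacy of $Y_1 \dashrightarrow Y_2$ by smooth blowups of $Y_1$ makes $g_1$ a tower of smooth blowups, but then $g_2$ is just some proper birational morphism, and factoring \emph{it} into smooth blowups is the strong factorization problem, open in dimension $\geq 3$. (Weak factorization would give a zigzag through intermediate models, but that is a much heavier tool, you would still have to check the intermediate models are log resolutions of the pair, and it is not what you cited.) So the step ``each $g_i$ may be realized as a finite composition of blowups along smooth centers'' cannot be carried out for both $i$ simultaneously, independently of the further (real, but secondary) issue that the centers need not be strata $E_I$.

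Your ``auxiliary'' valuation argument also has a gap, and it is precisely the gap the paper's proof is designed to fill. It is not true that every divisorial valuation centered over $0$ with $v(f)$ dividing $m$ is realized on every sufficiently fine log resolution: each resolution realizes only a finite set of such valuations, and two multiplicity $m$ separating resolutions generally realize \emph{different} finite sets. Knowing that $(\text{ord}_f(E), a(E))$ is an invariant of the valuation therefore does not show the two infima agree. What the paper actually proves is an inequality between divisors on the two models: writing $\pi = \check{\pi}\circ\Phi$ and taking $E_j$ on $Y$ with $\text{ord}_f(E_j)\mid m$, the multiplicity $m$ separating property of $\check{\pi}$ forces $\Phi(E_j)$ to lie in exactly one $\check{E}_k$ (via Equation (\ref{eqn:orderequation})), and a local chain-rule computation gives $\text{ord}_f(E_j)=\kappa\,\text{ord}_f(\check{E}_k)$ together with $a(E_j)\geq\kappa\,a(\check{E}_k)$; this yields $\text{md}_m(\pi,f)\geq\text{md}_m(\check{\pi},f)$, and equality follows by symmetry. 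Note in particular that the comparison is an inequality, not an identification of valuations: $E_j$ need not appear as a divisor on $\check{Y}$ at all. If you want to salvage your proposal, this direct comparison is the missing ingredient; neither of your two routes supplies it.
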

\proof
Let $\pi : Y \to \C^{n+1}$ and $\check{\pi} : \check{Y} \to \C^{n+1}$ be two such resolutions.
Lemma \ref{lemma:mdmafterblowup} tells us that blowing up along strata does not change the minimal multiplicity $m$ discrepancy.
Hence by Lemma \ref{lemma:existenceofmultiplicitymseparating}, we can assume that
$\pi$ and $\check{\pi}$ are multiplicity $m$ separating resolutions.

Since $\pi$ and $\check{\pi}$ are birational morphisms, we have that
there is a birational morphism $\Phi : Y \dashrightarrow \check{Y}$ so that $\pi = \check{\pi} \circ \Phi$.
Let $(E_j)_{j \in S}$ be the resolution divisors of $\pi$ and $(\check{E}_j)_{j \in \check{S}}$ the resolution divisors of $\check{\pi}$.
Suppose that $\text{ord}_f(E_j)$ divides $m$ for some $j \in S - \star_S$.
Let $\check{I} \subset \check{S}$ be the largest subset satisfying $\Phi(E_j) \subset \check{E}_{\check{I}}$.

Since $\Phi$ is well defined outside a subvariety of codimension $\geq 2$, we have a point $p \in E_j$ and holomorphic charts
$y_1,\cdots,y_n$ in $Y$ and $x_1,\cdots,x_n$ in $\check{Y}$ centered at $p$
and $\Phi(p)$ respectively
where $\Phi$ is well defined.
We will also assume that $E_j = \{y_1 = 0\}$
and $\check{E}_k = \{x_{\alpha(k)} = 0\}$ for all $k \in \check{I}$ and for some $\alpha : \check{I} \lra{} \{1,\cdots,n\}$.
Let $J$ be the Jacobian of $\pi(y_1,\cdots,y_n)$,
$\check{J}$ the Jacobian of $\check{\pi}(x_1,\cdots,x_n)$
and $J_\Phi$ the Jacobian of $\Phi(y_1,\cdots,y_n)$.
Then 
\begin{equation} \label{eqn:orderequation}
\text{ord}_f(E_j) = \sum_{k \in \check{I}}\text{ord}_{x_{\alpha(k)} \circ \Phi}(E_k) \text{ord}_f(\check{E}_k)
\end{equation}
and $$\text{ord}_{J}(E_j) = \text{ord}_{J_\Phi}(E_j) + \sum_{k \in \check{I}}\text{ord}_{x_{\alpha(k)} \circ \Phi}(E_k) \text{ord}_{\check{J}}(\check{E}_k).$$
By (\ref{eqn:orderequation})
combined with the fact that $\check{\pi}$ is a multiplicity $m$ separating resolution,
we have that $\check{I} = \{k\}$
for some $k \in \check{S} - \star_{\check{S}}$.
Hence $\text{ord}_f(E_j) = \kappa \text{ord}_f(\check{E}_k)$ and $a(E_j) \geq \kappa a(\check{E}_k)$ where $\kappa = \text{ord}_{x_{\alpha(k)} \circ \Phi}(E_j)$.
Therefore by Lemma \ref{lemma:multiplicitymresolutiondiscrepancycalculation},
$\text{md}_m(\pi,f) \geq \text{md}_m(\check{\pi},f)$.
Similarly $\text{md}_m(\check{\pi},f) \geq \text{md}_m(\pi,f)$ and hence $\text{md}_m(\pi,f) = \text{md}_m(\check{\pi},f)$.
\qed

\section{Liouville domains, Symplectomorphisms and Open Books} \label{section liouville domains etc}

In this section we give basic definitions of Liouville domains
and graded symplectomorphisms and open books.
We will also explain the correspondence between open book decompositions and graded symplectomorphisms of Liouville domains.
All of the material here is contained in \cite{Giroux:openbooks}
with the exception of gradings which is contained
in \cite{Seidel:graded}.
For more details on open book decompositions see \cite{MaxDornerThesis}.

\begin{defn} \label{defn:liouvilledomain}
An {\it exact symplectic manifold} is a pair $(M,\theta_M)$ where $M$ is a manifold and $\theta_M$ is a $1$-form so that $\omega_M \equiv d\theta_M$ is symplectic.
A {\it Liouville domain} is an exact symplectic manifold $(M,\theta_M)$ where $M$ is a compact manifold with boundary
and the $\omega_M$-dual $X_{\theta_M}$ of $\theta_M$ points outwards along $\partial M$.
The $1$-form $\theta_M$ is called the {\it Liouville form}.
The {\it contact boundary} of $M$ is the pair $(\partial M,\alpha_M)$ where
$\alpha_M \equiv \theta_M|_{\partial M}.$
Here $\alpha_M$ is a contact form.
Since $X_{\theta_M}$ points outwards along $\partial M$, we get that the backwards flow
$$(\phi_t : M \hookrightarrow M)_{t \in (-\infty,0]}$$
of $X_{\theta_M}$
exists for all time $t$.
By considering the smooth embeddings
$\phi_{\ln(r_M)}|_{\partial M}$ where $r_M \in (0,1]$,
we can construct a {\it standard collar neighborhood} $(0,1] \times \partial M \subset M$ of $\partial M$ where
$$\theta_M|_{(0,1] \times \partial M} = r_M \alpha_M.$$
Here $r_M$ is the coordinate given by the natural projection
$r_M : (0,1] \times \partial M \twoheadrightarrow (0,1]$ and is called the {\it cylindrical coordinate} on $M$.


An {\it exact symplectomorphism} $\phi : M \lra{} M$ is a diffeomorphism so that $\phi^* \theta_M  = \theta_M + dF_\phi$
for some smooth function $F_\phi : M \to \R$.
Technically we want to think of this as a pair $(\phi,F_\phi)$ but we will suppress $F_\phi$ from the notation and just write $\phi$.
The {\it support} of such an exact symplectomorphism is the region
$$\{ x \in M \ : \ \phi(x) \neq x \quad \text{or} \quad dF_\phi(x) \neq 0\}.$$
\end{defn}

We now need to define graded symplectomorphisms
as in \cite{Seidel:graded}. This is needed so that
we can define their Floer cohomology groups in the next section.

\begin{defn} \label{defn:gradedsymplectomorphism}
We define $(\R^{2n},\Omega_{std})$ to be the standard symplectic vector space.
Let $Sp(2n)$ be the space of linear symplectomorphisms
of $(\R^{2n},\Omega_{std})$
and $\widetilde{Sp}(2n)$ its universal cover.
Let $\pi : E \lra{} V$ be a symplectic vector bundle
with symplectic form $\Omega_E$ whose fibers have dimension $2n$.
Sometimes we will write $(E,\Omega_E)$ or just $E$ for such a symplectic vector bundle when the context is clear.
Define the {\it symplectic frame bundle}
$Fr(E)$ to be an $Sp(2n)$ bundle whose fiber
at $x \in V$ is the 
space of linear symplectomorphisms
from $(\R^{2n},\Omega_{std})$ to $(\pi^{-1}(x),\Omega_E|_x)$.
A {\it grading} on a symplectic vector bundle
$\pi : E \lra{} V$
is an $\widetilde{Sp}(2n)$ bundle
$\widetilde{Fr}(E) \lra{} V$
together with a choice of isomorphism of $\widetilde{Sp}(2n)$ bundles
\begin{equation} \label{defn:grading}
\iota : \widetilde{Fr}(E) \times_{\widetilde{Sp}(2n)} Sp(2n) \cong Fr(E).
\end{equation}
This is just a choice of reduction of the structure group of $E$ from $Sp(2n)$ to $\widetilde{Sp}(2n)$.
A symplectic vector bundle with a choice of grading is called a {\it graded symplectic vector bundle}.
Suppose that $\check{\pi} : \check{E} \lra{} \check{V}$ is a symplectic vector bundle and
$\widetilde{\beta} : \check{E} \lra{} E$ is
a bundle morphism covering a smooth map $\beta : \check{V} \lra{} V$ so that $\widetilde{\beta}$ restricted to each fiber
is a linear symplectomorphism.
Let
$$\widehat{\beta} : \check{E} \lra{} \beta^*(E), \quad \widehat{\beta}(\check{e}) = (\check{\pi}(\check{e}),\widetilde{\beta}(\check{e}))$$
be the natural isomorphism between this bundle and the pullback bundle.
Then
$\check{E}$ has a natural grading
$$\beta^* (\widetilde{Fr}(E)) \lra{} \check{V},$$
$$\check{\iota} : \beta^*(\widetilde{Fr}(E)) \times_{\widetilde{Sp}(2n)} Sp(2n) = 
\beta^* \left(\widetilde{Fr}(E) \times_{\widetilde{Sp}(2n)} Sp(2n)\right) \lra{} Fr(\check{E}),$$
$$ \check{\iota}(\check{v},w) \equiv \widehat{\beta}^{-1}((\check{v},\iota(w))), \ \forall \ \check{v} \in \check{V}, \ w \in \widetilde{Fr}(E) \times_{\widetilde{Sp}(2n)} Sp(2n)$$
called the {\it grading on $E$ pulled back by $\widetilde{\beta}$}.
\end{defn}

The simplest example of a grading
is the natural grading on the trivial bundle
$\check{V} \times \C^k \lra{} \check{V}$ given by
$$\widetilde{Fr}(\check{V} \times \C^k) \equiv \check{V} \times \widetilde{Sp}(2n),$$
$$\iota : \widetilde{Fr}(\check{V} \times \C^k) \times_{\widetilde{Sp}(2n)} Sp(2n) =
\check{V} \times (\widetilde{Sp}(2n) \times_{\widetilde{Sp}(2n)} Sp(2n)) = \check{V} \times Sp(2n) \to Fr(\check{V} \times \C^k).$$
We will call such a grading the {\it trivial grading}.

\begin{remark}
In this paper we are only interested in gradings up to isotopy and so we will sometimes
regard isotopic gradings as the same grading.
If we have a smooth connected family of symplectic vector bundles then
if one of them has a grading then all of them have a natural choice of grading up to isotopy.
For more information about this see Appendix A.	
\end{remark}
\begin{defn}
Suppose that
$$\pi_1 : E_1 \lra{} V_1, \quad \pi_2 : E_2 \lra{} V_2$$
are graded symplectic vector bundles
and $F : E_1 \lra{} E_2$ is a symplectic vector bundle
isomorphism covering a diffeomorphism $V_1 \lra{} V_2$.
Then a {\it grading} on $F$ is an
$\widetilde{Sp}(2n)$ bundle isomorphism
$$\widetilde{F} : \widetilde{Fr}(E_1) \lra{} \widetilde{Fr}(E_2)$$
covering the $Sp(2n)$ bundle morphism
$Fr(E_1) \to Fr(E_2)$
induced by $F$.

Let $(X,\omega_X)$ be a $2n$ dimensional symplectic manifold.
A {\it grading}
on $(X,\omega)$ is a choice of grading on the symplectic vector bundle $TX$.
A symplectic manifold with a choice of grading is called
a {\it graded symplectic manifold}

A {\it grading} of a symplectomorpism 
$\phi$ between two graded symplectic manifolds
$(X_1,\omega_{X_1})$, $(X_2,\omega_{X_2})$
is a choice of grading for the symplectic bundle isomorphism
$d\phi : TX_1 \lra{} TX_2$.
A {\it graded symplectomorphism} 
is a symplectomorphism with a choice of grading.
\end{defn}

We also wish to have a notion of grading for contact manifolds.

\begin{defn}
Recall that a {\it cooriented contact manifold} $(C,\xi_C)$ is a manifold $C$ of dimension $2n-1$ with a cooriented hyperplane distribution
$\xi_C$ with the property that there is a $1$-form $\alpha_C$ whose kernal is $\xi_C$ respecting this coorientation
and so that $\alpha_C \wedge (d\alpha_C)^{n-1}$ is a volume form on $C$.
The $1$-form $\alpha_C$ is called a {\it contact form compatible with $\xi_C$}.
A {\it coorientation preserving contactomorphism} between two cooriented contact manifolds is a diffeomorphism preserving their respective hyperplane distributions
and coorientations.
A {\it contact submanifold} $B \subset C$
is a submanifold so that $(B,\xi_B \equiv \xi_C \cap TB)$ is a cooriented contact manifold with the induced coorientation from $\xi_C$.

A {\it grading} on a contact manifold $(C,\xi_C)$
consists of a choice of contact form $\alpha_C$ compatible
with $\xi_C$ and a choice of grading on the symplectic
vector bundle $(\xi_C,d\alpha_C|_{\xi_C})$.
A cooriented contact manifold with a choice of grading is called
a {\it graded contact manifold.}
Since the space of contact forms compatible with $\xi_C$ is contractible,
we get an induced grading on $(\xi_C, d\alpha|_{\xi_C})$ for any other contact form $\alpha$
compatible with $\xi_C$ which is unique up to isotopy.
Hence from now on we will regard this as the same grading.

A {\it grading} of a coorientation preserving contactomorphism $\phi$ between two graded contact manifolds
$(C_1,\xi_1),(C_2,\xi_2)$
consists of a grading on the symplectic vector bundle isomorpism
$d\phi|_{\xi_{C_1}} : \xi_{C_1} \lra{} \xi_{C_2}$
where the symplectic forms on $\xi_{C_1}$ and $\xi_{C_2}$ come from
contact forms $\alpha_{C_1}$ and $\alpha_{C_2}$  compatible with $\xi_{C_1}$ and $\xi_{C_2}$ 
respectively satisfying $\alpha_{C_1} = \phi^* \alpha_{C_2}$ with induced gradings.
A {\it graded contactomorphism} is a coorientation preserving contactomorphism contactomorphism with a choice of grading. 
\end{defn}

In this paper, we will only deal cooriented contact manifolds and coorientation preserving contactomorphisms. So from now on, a {\it contact manifold} is a cooriented contact manifold and a {\it contactomorphism} is a coorientation preserving contactomorphism.

\begin{defn}
Suppose that $(C,\xi_C)$ is a contact manifold and $B$ is a contact submanifold.
The {\it normal bundle of $B$} is a symplectic vector bundle
$$ \pi_{\cN_C B} : \cN_C B \equiv (TC|_B)/TB \twoheadrightarrow B$$
with symplectic form defined as follows:
Choose a compatible contact form $\alpha_C$ on $(C,\xi_C)$
and define $$T^\perp B \equiv \{ v \in \xi_C|_x \ : \ x \in B, \quad d\alpha_C(v,w) = 0, \quad \forall w \in \xi_C|_x \cap TB|_x.\}$$
Then $T^\perp B$ is a symplectic vector bundle with symplectic form $d\alpha_C|_{T^\perp B}$.
The symplectic structure on $\cN_C B$ is the pushforward of the above symplectic form
under the natural bundle isomorphism $T^\perp B \to \cN_C B$.
\end{defn}

Since the space of compatible contact forms is contractible,
we have that the choice of symplectic form on the $\cN_C B$ is unique up to symplectic bundle isomorphism
and any two choices of such isomorphism are homotopic.
As a result, we will refer to this bundle as {\it the} normal bundle
as we are only concerned with isomorphisms of such bundles up to homotopy.

\begin{defn} \label{defn:contactpair}
A {\it contact pair with normal bundle data} $(B \subset C,\xi_C,\Phi_B)$ consists of a contact manifold $(C,\xi_C)$ where $B$ is a
codimension $2$ contact submanifold along with a symplectic trivialization $$\Phi_B : \cN_C B \to B \times \C$$ of its normal bundle.
A {\it contactomorphism} between two such triples
\begin{equation} \label{eqn:pairofpairs}
(B_1 \subset C_1,\xi_{C_1},\Phi_{B_1}), \quad (B_2 \subset C_2,\xi_{C_2},\Phi_{B_2})
\end{equation}
is a  contactomorphism $\Psi : C_1 \to C_2$ sending $B_1$ to $B_2$
so that the composition
$$\cN_{C_1} B_1 \overset{d\Psi|_{B_1}}{\xrightarrow{\hspace*{0.6cm}}} \cN_{C_2} B_2 \stackrel{\Phi_{B_2}}{\lra{}} B_2 \times \C  \overset{(\Psi|_{B_1})^{-1} \times \text{id}_\C}{\xrightarrow{\hspace*{1.5cm}}}
 B_1 \times \C$$
is homotopic through symplectic bundle trivializations to $\Phi_{B_1}$.
If there exists such a contactomorphism between the pairs
as in Equation (\ref{eqn:pairofpairs}) then we say that they are {\it contactomorphic}.

A contact pair with normal bundle data $(B \subset C,\xi_C,\Phi_B)$ is {\it graded}
if there is a choice of grading on $C - B$.
A {\it graded contactomorphism} between two graded contact pairs with normal bundle data as in Equation (\ref{eqn:pairofpairs})
consists of a contactomorphism $\Psi$
between these contact pairs
and a choice of grading of the contactomorphism $\Psi|_{C_1 - B_1} : C_1 - B_1 \lra{} C_2 - B_2$.
If a graded contactomorphism exists between two
graded contact pairs with normal bundle data
then we say that they are {\it graded contactomorhic}.
\end{defn}

The main example of a contact pair with normal bundle data comes from singularity theory.
\begin{example} \label{example:contactpairoff}
Fix $n > 0$.
Let $f : \C^{n+1} \lra{} \C$ be a holomorphic function with an isolated singularity at $0$
and let $J_0 : T\C^{n+1} \lra{} T\C^{n+1}$ be the standard complex structure on $\C^{n+1}$.
Let $S_\epsilon \subset \C^{n+1}$ be the sphere of radius $\epsilon>0$
and let $\xi_{S_\epsilon} = TS_\epsilon \cap J_0 TS_\epsilon$ be the standard contact structure on $S_\epsilon$.
Define $L_f \equiv f^{-1}(0) \cap S_\epsilon$.
Then a result by Varchenko in \cite{Varchenko:isolatedsingularities}
tells us that for all sufficiently small $\epsilon > 0$,
$L_f \subset S_\epsilon$ is a contact submanifold called the {\it link} of $f$ at $0$.
Also $df$ gives us an induced map $\overline{df} : \cN_{S_\epsilon} L_f \lra{} \C$
and hence a trivialization
$$\Phi_f \equiv (id_{L_f}, \overline{df}) : \cN_{S_\epsilon} L_f \lra{} L_f \times \C.$$
The contact pair with normal bundle data $(L_f \subset S_\epsilon, \xi_{S_\epsilon}, \Phi_f)$
is called the {\it contact pair associated to $f$}.

We also need a grading on this contact pair.
It turns out, by the discussion in Definition
\ref{defn:gradingtrivializationcorrespondence},
that every contact manifold with trivial first and second homology group has a unique
grading up to homotopy.
This means that $\xi_{S_\epsilon}$ has a grading
giving us an induced grading
on the contact pair associated to $f$.
We will call this the {\it standard grading}.
\end{example}

We will now define open book decompositions and also graded open book decompositions.
Let $\D \subset \C$ be the unit disk with polar coordinates $(r,\vartheta)$.

\begin{defn} \label{defn:openbook}
	An {\it open book} is a pair $(C, \pi)$ where 
	
	\begin{itemize}
		\item $C$ is a smooth manifold,
		\item $\pi : C - B \lra{} \R / \Z$ is a smooth fibration where
		$B$ is a codimension $2$ submanifold and
		\item there is a tubular neighborhood $B \times \D \subset C$ of $B = B \times \{0\}$ in $C$
		so that $\pi$ satisfies:
		$$\pi|_{B \times (\D - 0)} : B \times (\D - 0) \lra{} \R / \Z, \quad \pi(x,(r,\theta)) = 2\pi \theta.$$
	\end{itemize}
	The submanifold $B$ is called the {\it binding} of the open book
	and a {\it page} of the open book is the closure of a fiber of $\pi$
	which is a submanifold with boundary equal to $B$.
\end{defn}

We now want open books to be compatible with contact pairs.

\begin{defn}
A contact pair with normal bundle data $(B \subset C,\xi_C,\Phi_B)$
is {\it supported by an open book} $(C, \pi)$
\begin{enumerate}
\item \label{item:openbookliouvilledomaincondition}
if there is a contact form $\alpha_C$ compatible with $\xi_C$
so that $d\alpha_C|_{\pi^{-1}(t)}$ is a symplectic form for all $t \in \R / \Z$,
\item \label{item:openbookorientationcondition}
the trivialization of $\cN_C B$ induced by
the choice of tubular neighborhood from Definition \ref{defn:openbook}
is homotopic through orientation preserving bundle trivializations
to $\Phi_B$
(this does not depend on the choice of such a tubular neighborhood).
\end{enumerate}
We will write $(C, \xi_C, \pi)$ for a contact pair supported by an open book and we will call it a
{\it contact open book}.
Note that $B$ and $\Phi_B$ are not included in the notation as 
$B = C - \Dom(\pi)$ and $\Phi_B$
is determined by the open book
due to the fact that the space of orientation preserving trivializations of $\cN_C B$
is weakly homotopic to the space of symplectic trivializations of $\cN_C B$.
The contact pair
$(B \subset C, \xi_C,\Phi_B)$ is called the {\it contact pair associated to $(C,\xi_C,\pi)$.}
If the contact pair associated to $(C,\xi_C,\pi)$ is graded then we call this
a {\it graded contact open book.}

An {\it isotopy} between two contact open books
$(C_1, \xi_{C_1}, \pi_1)$,
$(C_2, \xi_{C_2}, \pi_2)$
is a contactomorphism $\Phi : C_1 \lra{} C_2$
between the respective contact pairs with normal bundle data
together with a smooth family of maps $(\check{\pi}_t : \Dom(\pi_1) \lra{} \R / \Z)_{t \in [1,2]}$
joining $\pi_1$ and $\pi_2 \circ \Phi$ so that
$(C_1, \xi_{C_1}, \check{\pi}_t)$ is a contact open book for all $t \in [1,2]$.
Such an isotopy is {\it graded} if we have a smooth family of graded contact open books
and $\Phi$ is a graded contactomorphism.
\end{defn}

The main example of a contact open book will come from singularity theory.
\begin{example} \label{example:milnoropenbook}
Let $f : \C^{n+1} \lra{} \C$, $n > 1$ be a holomorphic function with an isolated singularity at $0$
and let $(L_f \subset S_\epsilon, \xi_{S_\epsilon}, \Phi_f)$
be the contact pair associated to $f$ as in Example \ref{example:contactpairoff}.
Let $arg(f) : \C^{n+1} - f^{-1}(0) \lra{} \R / 2\pi\Z$ be the argument of $f$.
Then by \cite[Proposition 3.4]{CaublePopescuNemethi:milnoropen},
we have that
$(S_\epsilon, \xi_{S_\epsilon}, \frac{1}{2\pi}\arg(f)|_{S_\epsilon})$
is a contact open book
for all $\epsilon > 0$ small enough.
This open book is supported by the contact pair associated to $f$ from Example \ref{example:contactpairoff}
and it has a grading coming from the standard grading.
This is a graded contact open book
called the {\it Milnor open book of $f$}.
\end{example}

\begin{defn}
An {\it abstract contact open book} consists of a triple $(M,\theta_M,\phi)$
where $(M,\theta_M)$ is a Liouville domain and $\phi$ is an exact symplectomorphism
with support in the interior of $M$.
A {\it graded abstract contact open book} is an abstract contact open book $(M,\theta_M,\phi)$
with a choice of grading on $(M,d\theta_M)$ and a graded symplectomorphism $\phi$.

An {\it isotopy} between abstract contact open books $(M_1,\theta_{M_1},\phi_1)$, $(M_2,\theta_{M_2},\phi_2)$
consists of a diffeomorphism $\Phi : M_1 \lra{} M_2$,
a smooth family of $1$-forms $(\theta_t)_{t \in [0,1]}$ joining $\theta_{M_1}$ and $\Phi^* \theta_{M_2}$
and a smooth family of diffeomorphisms $(\check{\phi}_t)_{t \in [1,2]}$ joining $\phi_1$ and $\Phi^{-1} \circ \phi_2 \circ \Phi$
so that $(M_1,\theta_t,\check{\phi}_t)$ are all abstract contact open books
and the support of $\check{\phi}_t$ is contained inside a fixed compact subset of the interior of $M$.
If both of our abstract contact open books are graded then
such an isotopy is a {\it graded isotopy} if all the abstract contact open books
$(M_1,\theta_t,\check{\phi}_t)$ are graded so that these gradings smoothly
depend on $t \in [0,1]$
 and the grading on
$(M_0,\theta_0,\check{\phi}_0)$ coincides with the grading on $(M_1,\theta_{M_1},\phi_1)$
and the grading on $(M_1,\theta_1,\check{\phi}_1)$
coincides with the grading on $(M_2,\theta_{M_2},\phi_2)$ pulled back by $\Phi$.
\end{defn}

From an abstract contact open book $(M,\theta_M,\phi)$,
we wish to construct a contact open book.
This construction is referred to as a {\it generalized Thurston-Winkelnkemper construction}
in \cite[Section 2.2.1]{MaxDornerThesis}.
To do this we need the following definition.
\begin{defn} \label{defn:mappingtorusofphi}
	Let $(M,\theta_M,\phi)$ be an abstract contact open book.
	Let $F_\phi : M \lra{} \R$ be the smooth function with support
	in the interior of $M$ satisfying $\phi^* \theta_M = \theta_M + dF_\phi$.
	Let $\rho : [0,1] \lra{} [0,1]$ be a smooth function equal to
	$0$ near $0$ and $1$ near $1$.

	The {\it mapping torus} of $(M,\theta_M,\phi)$ is a smooth map
	$$\pi_{T_\phi} : T_\phi \lra{} \R / \Z, \quad
	T_\phi \equiv M \times [0,1] / \sim$$
	together with a contact form $\alpha_{T_\phi}$ on $T_\phi$
	where
	\begin{itemize}
	\item $\sim$ identifies $(x,1)$ with $(\phi(x),0)$,
	\item $\alpha_{T_\phi} \equiv \theta_M + d(\rho(t)F_\phi) + C dt$
	where $C>0$ is large enough to ensure that $\alpha_{T_\phi}$ is a contact form
	and
	\item
	$\pi_{T_\phi}(x,t) \equiv t \quad \forall (x,t) \in T_\phi = M \times [0,1] / \sim.$
	\end{itemize}
	
	For $\delta>0$ small enough, we have that
	the subset
	$$(1-\delta,1] \times \partial M \subset (0,1] \times \partial M$$
	of the collar neighborhood of $\partial M$
	is disjoint from the support of $\phi$.
	This means that there is a natural embedding:
	$$(1-\delta,1] \times \partial M \times \R / \Z \subset T_\phi$$
	which we will call the {\it standard collar neighborhood of $\partial T_\phi$}.
	Note that $\alpha_{T_\phi}$ is equal to
	$r_M \alpha_M + C dt$ in the standard collar neighborhood where $r_M$ (resp. $t$) is the natural projection map to $(1-\delta,1]$ (resp. $\R / \Z$) and $\alpha_M = \theta_M|_{\partial M}$.
	
	If $(M,\theta_M,\phi)$ is a graded abstract contact open book
	then we get a natural grading on $(T_\phi,\ker(\alpha_{T_\phi}))$
	as follows:
	Since the kernal of $d\alpha_{T_\phi}$
	is transverse to the fibers of $T_\phi$,
	$\xi_{T_\phi} \equiv \ker(\alpha_{T_\phi})$
	is isotopic through hyperplane distributions $H_t, \ t \in [0,1]$ to the vertical tangent space $T^{\text{ver}} T_\phi \equiv \ker(D\pi_\phi)$ of $\pi_{T_\phi}$
	with the property that $d\alpha_{T_\phi}|_{H_t}$
	is non-degenerate for all $t \in [0,1]$.
	Therefore it is sufficient to construct a grading
	for the symplectic vector bundle $(T^{\text{ver}} T_\phi,d\alpha_{T_\phi})$.
	Consider the symplectic vector bundle
	$(pr^* TM,pr^* (d\theta_M))$ where $$pr : M \times [0,1] \lra{} M$$
	is the natural projection map.
	We have that the symplectic vector bundle $(T^{\text{ver}} T_\phi,d\alpha_{T_\phi})$
	is isomorphic to the symplectic vector bundle on
	$(pr^*TM/\sim,pr^* (d\theta_M))$
	where $\sim$ identifies $pr^* TM|_{M \times \{1\}}$ with $pr^* TM|_{M \times \{0\}}$
	using the map $$D\phi : TM = pr^* TM|_{M \times \{1\}} \lra{} TM = pr^* TM|_{M \times \{0\}}.$$
	Since $(TM,d\theta_M)$ has a natural grading,
	we get that $(pr^* TM,pr^* (d\theta_M))$ has an induced grading
	$$\widetilde{Fr}(pr^* TM) \times_{\widetilde{Sp}(2n)} Sp(2n) \cong Fr(pr^* TM)$$	
	pulled back via $pr$.
	The map $D\phi$ gives us an induced map
	$$Fr(pr^* TM)|_{M \times \{1\}} \lra{} Fr(pr^* TM)|_{M \times \{0\}}$$
	and since $\phi$ is a graded symplectomorphism, the map above lifts to a map:
	$$\widetilde{Fr}(pr^* TM)|_{M \times \{1\}} \lra{} \widetilde{Fr}(pr^* TM)|_{M \times \{0\}}.$$
	Hence by using the above gluing map, we get a grading on the quotient
	$(pr^* TM / \sim,pr^* (d\theta_M))$
	and therefore a grading on
	$(T^{\text{ver}} T_\phi,d\alpha_{T_\phi})$.
	In turn this gives us a grading on the contact manifold
	$(T_\phi, \ker(\alpha_{T_\phi}))$.
	We will call this the {\it induced grading on} $T_\phi$.
\end{defn}

We will now construct a contact open book from an abstract contact open book.
Let $\D(\rho) \subset \C$ be the closed disk of radius $\rho>0$
with polar coordinates $(r,\vartheta)$.

\begin{defn} \label{defn:openbookfromabstractone}
	Let $(M,\theta_M,\phi)$ be an abstract contact open book decomposition
	and let $$\pi_{T_\phi} : T_\phi \lra{} \R / \Z$$
	be the associated mapping torus with contact form $\alpha_\phi$ and
	standard collar neighborhood
	$$(1-\delta,1] \times \partial M \times \R / \Z \subset T_\phi$$
	as in Definition \ref{defn:mappingtorusofphi}.
	Define
	$C_\phi \equiv (\partial M \times \D(\delta)) \sqcup T_\phi / \sim$
	where
	$\sim$ identifies
	$(x,(r,\vartheta)) \in \partial M \times \D(\delta)$
	with
	$$(1-r,x,\frac{\vartheta}{2\pi}) \in
	(1-\delta,1] \times \partial M \times \R / \Z \subset T_\phi.$$
	
	We define $B_\phi \equiv \partial M \times \{0\} \subset \partial M \times \D(\delta) \subset C_\phi$
	and
	$$\pi_{C_\phi} : C_\phi - B_\phi = T_\phi - \partial T_\phi \lra{} \R / \Z, \quad
	\pi_{C_\phi} = \pi_{T_\phi}|_{T_\phi - \partial T_\phi} .$$

	Let
	\begin{equation} \label{eqn:alphaequation}
	h_1,h_2 : [0,\delta) \lra{} \R
	\end{equation}
	be a pair of smooth functions so that
	\begin{itemize}
	\item $h_1'(r) < 0$, $h_2'(r) \geq 0$ for all $r > 0$,
	\item $h_1(r) = 1-r^2$ and $h_2(r) = \frac{1}{2} r^2$ for small $r$ and
	\item $h_1(r) = 1-r$ and $h_2(r) = C/2\pi$ for $r \in [\frac{\delta}{2},\delta)$.
	\end{itemize}
	
\begin{tikzpicture}

\draw[<->] (3.5238,-1.0181) -- (-1.5,-1) -- (-1.5,3.5);

\node at (-2,1.5) {$h_2$};
\node at (1.732,-1.4537) {$h_1$};

\node at (2.8046,-1.2387) {$1$};

\node at (0.5,-1.4) {$1-\delta$};

\draw (0.5,-1.2) -- (0.5,-0.8);

\draw (0.9,2.9) node (v1) {} -- (0.5,2.9);

\node at (-2.4,2.9) {$C/2\pi$};

\draw  plot[smooth, tension=.7] coordinates {(v1) (1.4,2.4) (1.6,0.1) (2.2,-0.7)};

\draw (2.2,-0.7) -- (2.8,-1);
\draw (-1.7,2.9) -- (-1.3,2.9);

\end{tikzpicture}
	
	The above conditions ensure that
	$$\alpha_{C_\phi} \equiv
	\left\{
	\begin{array}{cl}
	 h_1(r) \alpha_M +
	h_2(r) d\vartheta & \text{inside} \ \partial M \times \D(\delta/2) \\
	\alpha_{T_\phi} & \text{inside} \ T_\phi - \partial M \times \D(\delta/2).
	\end{array}
	\right.
	$$
	is a contact form whose restriction to $B_\phi$
	is also a contact form and that
	the restriction of $d\alpha_{C_\phi}$ to $\pi^{-1}(t)$
	is symplectic for all $t \in \R/ \Z$.
	Define $\xi_{C_\phi} \equiv \ker(\alpha_{C_\phi}).$
		The tubular neighborhood $\partial M \times \D(\delta/2)$
	of $B_\phi$ gives us a trivialization $\Phi_{B_\phi}$
	of its normal bundle and hence
 we get a contact pair with normal bundle data
	$(B_\phi \subset C_\phi,\xi_{C_\phi},\Phi_{B_\phi})$
	which we will called the {\it contact pair associated to $(M,\theta_M,\phi)$}.
	This contact pair with normal bundle data
	is supported by the open book
	$(C_\phi,\pi_{C_\phi})$.
	Hence
	$$OBD(M,\theta_M,\phi) \equiv (C_\phi,\xi_{C_\phi},\pi_{C_\phi})$$
	is a contact open book which we call
	the {\it open book associated to
	the abstract contact open book $(M,\theta_M,\phi)$}.

	Now suppose that $(M,\theta_M,\phi)$ is a graded abstract contact open book.
	Then since the contact manifold $(C_\phi - B_\phi, \xi_{C_\phi}|_{C_\phi - B_\phi})$
	is isotopic through contact manifolds to $T_\phi$,
	we get that the induced grading on $T_\phi$
	gives us a grading on $(C_\phi - B_\phi, \xi_{C_\phi}|_{C_\phi - B_\phi})$.
	Hence we have a relative grading on $(B_\phi \subset C_\phi, \xi_{C_\phi},\Phi_\phi)$ which we will call the
	{\it induced grading}.
\end{defn}

It is fairly straightforward to show that if two abstract contact open books are (graded) isotopic
then their respective contact open book decompositions are (graded) isotopic.
Hence we get a map
$$OBD : \left\{\begin{array}{c}
\text{(graded) abstract contact} \\
\text{open books}
\end{array}\right\} / \text{isotopy} \lra{} \left\{\begin{array}{c} \text{(graded)  open books }\end{array}\right\} / \text{isotopy}.$$

The Theorem below is a result of Giroux in \cite{Giroux:openbooks}.
\begin{theorem} \label{Theorem:obdisomorphism}
The above map $OBD$ is an bijection.
\end{theorem}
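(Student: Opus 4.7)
The plan is to construct an inverse map to $OBD$ and verify that the two compositions are the identity up to isotopy. Given a (graded) contact open book $(C,\xi_C,\pi)$, I would produce an abstract contact open book $(M,\theta_M,\phi)$ as follows. By the supporting property, there is a contact form $\alpha_C$ compatible with $\xi_C$ whose differential restricts to a symplectic form on each fiber of $\pi$. Let $M$ be the closure of $\pi^{-1}(0)$ equipped with $\theta_M \equiv \alpha_C|_M$. The binding sits as $\partial M$, and the local model near $B$ forced by the supporting condition guarantees that the Liouville vector field $X_{\theta_M}$ points outward along $\partial M$, so $(M,\theta_M)$ is a Liouville domain whose standard collar matches the collar of $T_\phi$ coming from the tubular neighborhood of the binding.

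For the monodromy, I would use the $d\alpha_C$-orthogonal complement inside $\xi_C$ to the vertical tangent bundle of $\pi$ as a horizontal distribution. Parallel transport of this connection around $\R/\Z$ yields a symplectomorphism $\phi : M \to M$ whose exactness is inherited from the fact that $\alpha_C$ is a contact form pulled back along a flow line of a vector field lifting $\partial_t$; a compactly supported perturbation near $\partial M$ (using the collar) makes $\phi$ supported in the interior. In the graded setting, the chosen grading on $C-B$ transports under the isotopy from $\xi_C|_{C-B}$ to the vertical tangent bundle of the mapping torus, yielding a grading on $TM$ and on the bundle map $D\phi$, that is, a graded abstract open book.

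Next I would verify that $OBD(M,\theta_M,\phi)$ is isotopic to $(C,\xi_C,\pi)$. The mapping torus construction in Definition \ref{defn:mappingtorusofphi} reproduces $T_\phi$ as the complement of a tubular neighborhood of $B$ in $C$, and the gluing of $\partial M \times \D(\delta)$ in Definition \ref{defn:openbookfromabstractone} fills in the neighborhood of the binding using the same trivialization of $\cN_C B$ supplied by the open book. Conversely, applying the page/monodromy construction above to $OBD(M,\theta_M,\phi)$ recovers the original data up to isotopy, because the page deformation retracts onto $M$ and parallel transport along the explicit $\alpha_{C_\phi}$ agrees with $\phi$ up to a Hamiltonian isotopy supported in the collar coming from the cutoff $\rho$.

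The main obstacle is showing that none of the auxiliary choices affect the isotopy class of the resulting abstract open book: the compatible contact form $\alpha_C$, the width $\delta$ of the tubular neighborhood, the functions $h_1,h_2$ of Equation (\ref{eqn:alphaequation}), and the cutoff $\rho$ must all vary through contractible spaces of allowable choices so that any two constructions are joined by a smooth family of abstract contact open books. In the graded setting this requires simultaneously propagating the grading through each such family, which reduces to checking that lifts along the covering $\widetilde{Sp}(2n) \to Sp(2n)$ are unique once an initial lift is fixed, together with the observation that the induced grading on $T_\phi$ in Definition \ref{defn:mappingtorusofphi} is precisely the one compatible with the gluing in Definition \ref{defn:openbookfromabstractone}. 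Once these homotopical uniqueness statements are in hand, both well-definedness of the inverse and injectivity of $OBD$ follow simultaneously, completing the bijection.
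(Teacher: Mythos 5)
Your proposal follows essentially the same route as the sources this theorem rests on: the paper itself gives no proof, attributing the result to Giroux \cite{Giroux:openbooks} and deferring the detailed construction of the inverse to \cite[Theorem 3.1.22]{MaxDornerThesis}, and that construction is exactly the one you outline (take the closure of a page with the restricted contact form as Liouville form, take the monodromy of the symplectic connection, normalize near the binding, and check that the auxiliary choices and the grading lifts vary through connected families). You correctly identify the crux --- independence of the choices and the two round-trip isotopies --- which is precisely where the cited references spend their effort.
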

A detailed construction of the inverse of $OBD$ is contained in the proof of \cite[Theorem 3.1.22]{MaxDornerThesis}.
As a result of this Theorem, we have the following definition:
\begin{defn}
	The {\it monodromy map} of a (graded) contact open book
	$(C, \xi_C, \pi)$
	is defined to be the (graded) contactomorphism
	$\phi : M \lra{} M$
	where $(M,\theta_M,\phi)$ is the abstract contact open book
	$OBD^{-1}\left((C, \xi_C, \pi)\right)$.
\end{defn}

Technically this monodromy map is only defined up to isotopy, and so
the monodromy map is really just a choice of representative in this isotopy class.


\section{Fixed Point Floer Cohomology Definition} \label{section fixed point floer cohomology definition}


\begin{defn} \label{defn:definitionfixedpoints}
Let $(M,\theta_M)$ be a Liouville domain.
An almost complex structure $J$ on $M$ is {\it cylindrical near $\partial M$}
if it is compatible with the symplectic form $d\theta_M$ (I.e. $d\theta_M( \cdot, J(\cdot))$ is a Riemannian metric)
and if $dr_M \circ J = -\alpha_M$ near $\partial M$ inside the standard collar neighborhood $(0,1] \times \partial M$.

An exact symplectomorphism $\phi : M \to M$ is {\it non-degenerate}
if for every fixed point $p$ of $\phi$ the linearization of $\phi$ at $p$ as no eigenvalue equal to $1$.
It has {\it small positive slope} if it is equal to the time $1$
Hamiltonian flow of $\delta r_M$ near $\partial M$
where $\delta>0$ is smaller than the period of the smallest
periodic Reeb orbit of $\alpha_M$ (this means that it corresponds to the time $\delta$ Reeb flow near $\partial M$).
If $\phi$ is an exact symplectomorphism, then a {\it small positive slope perturbation} $\check{\phi}$ of $\phi$
is an exact symplectomorphism $\check{\phi}$ equal to the composition of $\phi$ with a $C^\infty$ small Hamiltonian symplectomorphism of small positive slope.
The {\it action} of a fixed point $p$ is $-F_\phi(p)$
where $F_\phi$ is a function satisfying $\phi^* \theta_M = \theta_M + dF_\phi$.
The action depends on a choice of $F_\phi$ which has to be fixed when $\phi$ is defined although usually $F_\phi$ is chosen so that it is zero near $\partial M$ (if possible).
All of the symplectomorphisms coming from isolated hypersurface singularities will have such a unique $F_\phi$.
An {\it isolated family of fixed points}
is a path connected compact subset $B \subset M$ consisting of fixed points of $\phi$ of the same action
and for which there is a neighborhood $N \supset B$ where $N \setminus B$ has no fixed points.
Such an isolated family of fixed points is called a {\it codimension $0$ family of fixed points}
if in addition there is an autonomous Hamiltonian $H : N \to (-\infty,0]$ so that $H^{-1}(0) = B$ is a connected codimension $0$ submanifold with boundary and corners,
the time $t$ flow of $X_H$ is well defined for all $t \in \R$
and $\phi|_N : N \to N$ is equal to the time $1$ flow of $H$.
The {\it action} of an isolated family of fixed points $B \subset M$ is the action of any point $p \in B$.
\end{defn}

Before we define Floer cohomology, we need some definitions so that we can give it a grading.
For any path of symplectic matrices $(A_t)_{t \in [a,b]}$
we can assign an index $CZ(A_t)$
called its {\it Conley-Zehnder index}.
The Conley-Zehnder index of certain paths of symplectic matrices $A_t$ was originally
defined in \cite{ConleyZehnder:Index}. It was defined for a general path of symplectic matrices
in \cite{RobbinSalamon:maslov} and also in \cite{Gutt:GeneralizedConleyZehnder}.
We will not define it here as we will only use the following properties (see \cite[Proposition 8]{Gutt:GeneralizedConleyZehnder}):
\begin{CZ}
\item \label{item:cznormalization}
 $CZ((e^{it})_{t \in [0,2\pi]}) = 2$.
 \item \label{item:signproperty}
  $(-1)^{n-CZ((A_t)_{t \in [0,1]})} = \text{sign det}_\R(\text{id} - A_1)$ for any path of symplectic matricies $(A_t)_{t \in [0,1]}$.
\item \label{item:czadditive}
$CZ(A_t \oplus B_t) = CZ(A_t) + CZ(B_t)$.
\item  \label{item:catenationczproperty}
The Conley-Zehnder index of the catenation of two paths is the sum of their Conley-Zehnder indices.
\item \label{item:homotopyinvariance}
If $A_t$ and $B_t$ are two paths of symplectic matrices which are homotopic relative to their endpoints
then they have the same Conley-Zehnder index.
Also such an index only depends on the path up to orientation
preserving reparameterization.
\end{CZ}

\begin{defn}
Let $(M,\theta_M,\phi)$ be a graded abstract contact open book.
The {\it Conley-Zehnder index} $CZ(p)$ of a fixed point $p$ of the graded symplectomorphism $\phi$
is defined as follows:
Since $(TM,d\theta_M)$ is a graded symplectic vector bundle,
we have an associated
$\widetilde{Sp}(2n)$ bundle
$$\widetilde{Fr}(TM) \lra{} M$$
together with a choice of isomorphism of $\widetilde{Sp}(2n)$ bundles
$$\iota : \widetilde{Fr}(TM) \times_{\widetilde{Sp}(2n)} Sp(2n) \cong Fr(TM).$$
%
Now choose an identification of $\widetilde{Sp}(2n)$ torsors
\begin{equation} \label{eqn:identificationofcovers}
\widetilde{Sp}(2n) = \widetilde{Fr}(TM)|_p.
\end{equation}
The symplectomorphism $\phi$ has a choice of grading giving
us a natural map:
$$\widetilde{\phi} : \widetilde{Fr}(TM)|_p
\lra{} \widetilde{Fr}(TM)|_p$$
and hence by Equation (\ref{eqn:identificationofcovers}),
a map
$$\widetilde{\phi} : \widetilde{Sp}(2n) \lra{} \widetilde{Sp}(2n).$$
Since $\widetilde{Sp}(2n)$ is the universal cover of $Sp(2n)$,
its elements correspond to paths of symplectic matrices starting
from the identity up to homotopy fixing their endpoints and so let
$\beta$ be the path corresponding to 
$\widetilde{\phi}(id) \in \widetilde{Sp}(2n)$.
We define $CZ(\phi,p) \equiv CZ(\beta)$.

If we have an isolated family of fixed points $B \subset X$,
then we define its {\it Conley-Zehnder index} $CZ(\phi,B)$ to be the Conley-Zehnder index
of some $b \in B$. Since $B$ is path connected,
this does not depend on the choice of $b \in B$ by property \ref{item:homotopyinvariance} above.
\end{defn}

In Appendix A, we also have a different way of computing the Conley-Zehnder index by looking at the mapping torus $T_\phi$ of $\phi$. This will be useful in the proof of Theorem \ref{theorem dynamical properties of monodromy}.

\begin{defn}
Let $(M,\theta_M,\phi)$ be an abstract contact open book.
Let $(J_t)_{t \in [0,1]}$ be a smooth family of almost complex structures with the property that
$\phi^* J_0 = J_1$.
A {\it Floer trajectory} of $(\phi,J_t)_{t \in [0,1]})$ joining $p_-,p_+ \in M$ is a smooth map $u : \R \times [0,1] \to M$
so that $\partial_s u + J_t \partial_t u = 0$ where $(s,t)$ parameterizes $\R \times \R/\Z$, $u(s,0) = \phi(u(s,1))$
and so that $\lim_{s \to \pm\infty} u(s,t) = p_{\pm}$ for all $t \in [0,1]$.
We write ${\cM}(\phi,J_t,p_-,p_+)$ for the set of such Floer trajectories
and define $\overline{\cM}(\phi,J_t,p_-,p_+) \equiv {\cM}(\phi,J_t,p_-,p_+)/\R$,
where $\R$ acts by translation in the $s$ coordinate.
\end{defn}

Let $(M,\theta_M,\phi)$ be a graded abstract contact open book.
We will now give a sketch of the definition
of the {\it Floer cohomology} group $HF^*(\phi,+)$
(see \cite{Seidel:morevanishing}).
Let $\check{\phi}$ be a small positive slope perturbation of $\phi$.
%
This can be done so that $\check{\phi}$ is $C^\infty$ close to $\phi$ and so that the fixed points of $\check{\phi}$ are non-degenerate (see \cite[Theorem 9.1]{SZ:morsetheory} in the case where $\check{\phi}$ is Hamiltonian. The general case is similar \cite[Page 586]{DostoglouSalamon:instantonscurves}).
We can also ensure that $\check{\phi}$ is a graded symplectomorphism
due to the fact that it is isotopic to $\phi$ through symplectomorphisms.

We now choose a $C^\infty$ generic family of cylindrical almost
complex structures $(J_t)_{t \in [0,1]}$
satisfying $\phi^* J_0 = J_1$.
The genericity property then tells us that
$\overline{\cM}(\check{\phi},J_t,p_-,p_+)$ is a compact oriented manifold of dimension $0$
for all fixed points $p_-,p_+$ of $\phi$ satisfying $CZ(p_-) - CZ(p_+) = 1$ (\cite[Theorem 3.2]{DostoglouSalamon:instantonscurves}).
We define $\#^\pm\overline{\cM}(\check{\phi},J_t,p_-,p_+)$  to be the signed count of elements of
$\overline{\cM}(\check{\phi},J_t,p_-,p_+)$.
Let $CF^*(\check{\phi})$
be the free abelian group generated by fixed points of $\phi$ and graded by the Conley-Zehnder index {\it taken with negative sign}.
The differential $\partial_{\check{\phi},(J_t)_{t \in [0,1]}}$ on $CF^*(\check{\phi})$ is a $\Z$-linear map
satisfying $\partial_{\check{\phi},(J_t)_{t \in [0,1]}}(p_+) = \sum_{p_-} \#^\pm\overline{\cM}(J_t,p_-,p_+) p_-$ for all fixed points $p_+$ of $\check{\phi}$ where the sum is over all fixed points $p_-$ satisfying $(-CZ(p_-)) - (-CZ(p_+)) = 1$.
Because $(J_t)_{t \in [0,1]}$ is $C^\infty$ generic,
one can show that $\partial_{\check{\phi},(J_t)_{t \in [0,1]}}^2=0$
(\cite[Theorem 3.3(1)]{DostoglouSalamon:instantonscurves})
and we define the resulting homology group to be $HF^*(\check{\phi},(J_t)_{t \in [0,1]})$.
We define $HF^*(\phi,+) \equiv HF^*(\check{\phi},(J_t)_{t \in [0,1]})$.
This does not depend on the choice of perturbation $\check{\phi}$
or on the choice of almost complex structure $(J_t)_{t \in [0,1]}$
(\cite[Theorem 3.3(2)]{DostoglouSalamon:instantonscurves}).
Our conventions then tell us that if $\phi : M \lra{} M$ is the identity map with the trivial grading and $\text{dim}(M) = n$
then $HF^k(\phi,+) = H^{n+k}(M;\Z)$.

We will {\it only} use the following properties of these Floer cohomology groups:
\begin{HF}
\item \label{HF property Euler characteristic}
For a graded abstract contact open book
$(M,\theta_M,\phi)$,
the Lefschetz number $\Lambda(\phi)$ of $\phi$
is equal to the Euler characteristic of $HF^*(\phi,+)$
multiplied by $(-1)^n$
(follows from \ref{item:signproperty}).
\item \label{item:HFOBD}
Suppose that $(M_1,\theta_{M_1},\phi_1)$,
$(M_2,\theta_{M_2},\phi_2)$ are graded abstract
contact open books
so that
the graded contact pairs associated to them are graded contactomorphic.
Then $HF^*(\phi_0,+) = HF^*(\phi_1,+)$ (See the Appendix A).
\item  \label{item:spectralsequenceproperty}
Let $(M,\theta_M,\phi)$ be a graded abstract contact open book where $\text{dim}(M) = 2n$.
Suppose that the set of fixed points of
a small positive slope perturbation
$\check{\phi}$ of $\phi$ is a disjoint union of
codimension $0$ families of fixed points $B_1,\cdots,B_l$ and let $\iota : \{1,\cdots,l\} \lra{} \N$
be a function where
\begin{itemize}
\item $\iota(i) = \iota(j)$ if and only if the action of $B_i$ equals the action of $B_j$ and
\item $\iota(i) < \iota(j)$ if the action of $B_i$ is less than the action of $B_j$.
\end{itemize}
Then there is a cohomological spectral sequence converging to $HF^*(\phi,+)$ whose $E_1$ page is equal to
$$E_1^{p,q} = 
\bigoplus_{ \{i \in \{1,\cdots,l\} \ : \ \iota(i) = p\} }
H_{n-(p+q)-CZ(\phi,B_j)}(B_p;\Z)
$$
(see Appendix C).
\end{HF}

\section{Constructing a Well Behaved Contact Open Book.} \label{section constructing a nice contact open book}

The aim of this section is to modify the Milnor monodromy map so that the set of fixed points is a union of codimension $0$ families of fixed points so that we can apply \ref{item:spectralsequenceproperty} above.
We will do this by constructing such a nice symplectomorphism whose mapping torus is isotopic to the mapping torus of the Milnor monodromy map.

\subsection{Some Preliminary Definitions} \label{section some preliminary definitions}

The aim of this section is to construct a symplectic form on the resolution which behaves well with respect to the resolution divisors.
To do this we need a purely symplectic notion of smooth normal crossing divisor.
We will introduce some notation from
\cite{McLeanTehraniZinger:smoothing}.

If $V \subset X$ is a submanifold of a manifold $X$ then we will use the notation
\begin{equation}\label{eqn:normalbundle}
\pi_{\cN_X V} : {\cN}_X V \equiv \frac{TX|_V}{TV} \twoheadrightarrow V
\end{equation}
for the normal bundle of $V$.
If $(V_i)_{i \in S}$ is a finite collection of submanifolds of $X$
and $I \subset S$, let
\[V_I \equiv \bigcap_{i \in I} V_i. \]
Also by convention we define $V_\emptyset \equiv X$.
The collection $(V_i)_{i \in S}$ intersects {\it transversally} if
for every subset $I \subset S$ and every $x \in V_I$,
\[\text{codim}_{T_xX}\left(\bigcap_{i \in I} T_xV_i\right)
= \sum_{i \in I} \text{codim}_{T_xX} (T_x V_i).\]
If $V \subset X$ is a submanifold and $X$ is oriented
then an orientation on $V$ induces an orientation
on ${\cN}_X V$ and conversely
an orientation on ${\cN}_X V$ induces an orientation
on $V$ using Equation
(\ref{eqn:normalbundle})
(if $V$ is odd dimensional this depends on a sign convention
but we will not need this since the manifolds that we will be dealing with are even dimensional).
If $X$ is oriented and
 $(V_i)_{i \in S}$
is a collection of oriented
transversally intersecting submanifolds,
then the submanifold $V_I$ has a natural orientation
called the {\it intersection orientation}
since ${\cN}_X V_I = \bigoplus_{i \in I} {\cN}_X V_i$
is oriented.

\begin{defn}
Suppose that $(X,\omega)$ is a symplectic manifold.
Then $(V_i)_{i \in S}$ is called a
{\it symplectic crossings divisor} or {\it SC divisor}
if $(V_i)_{i \in S}$ are transversally intersecting codimension $2$
symplectic submanifolds
so that $V_I$ is also symplectic
and so that the symplectic orientation on $V_I$
agrees with its corresponding intersecting orientation
for all $I \subset S$.
We will also assume that $S$ is a finite set.
\end{defn}

We now want to define SC divisors with particularly nice neighborhoods.
We call $$\pi : (L,\rho,\nabla) \to V$$ a {\it Hermitian line bundle}
if $\pi : L \to V$ is a complex line bundle,
$\rho$ is a Hermitian metric and $\nabla$ is a $\rho$-compatible
Hermitian connection on $L$.
We define $\rho^{\R}$ and $\rho^{{\it i}\R}$ to be the real and complex parts of the Hermitian metric $\rho$.
We will also use the notation $\rho$ to denote square of the norm function on $L$.
If we view $L$ as an oriented real vector bundle 
then we can recover the complex structure
${\bf i}_\rho$ from the metric $\rho^{\R}$ using the fact that
for all $x \in V$ and $W \in L|_x - 0$,
${\bf i}_\rho(W)$ is the unique vector making $W, {\bf i}_\rho(W)$ into an oriented orthonormal basis of $L|_x$.
Hence we can define a {\it Hermitian structure} $(\rho,\nabla)$
on any oriented $2$-dimensional real vector bundle $L \twoheadrightarrow V$
to be a pair $(\rho,\nabla)$ making $(L,{\bf i}_\rho)$
into a Hermitian line bundle.

For any such triple $(L,\rho,\nabla)$ we have an associated
Hermitian connection $1$-form $\alpha_{\rho,\nabla} \in \Omega^1(L - V)$. This is the pullback of the associated principal $U(1)$-connection on the unit circle bundle of $(L,\rho)$ (see \cite{boothbywangcontact} or \cite[Appendix A]{McLeanTehraniZinger:smoothing}).
If $(L_i,\rho_i, \nabla^{(i)})_{i \in I}$
is a finite collection of Hermitian line bundles over a symplectic manifold
$(V,\omega)$ and $pr_{I;i} : \bigoplus_{i \in I} L_i \to L_i$
is the natural projection map then we define
\begin{equation} \label{equation:regularizationsymplecticform}
\omega_{(\rho_i,\nabla^{(i)})_{i \in I}} \equiv \pi^* \omega + \frac{1}{2}\sum_{i \in I} pr_{I;i}^* d(\rho \alpha_{\rho_i,\nabla^{(i)}}).
\end{equation}

This is a symplectic form in some small neighborhood of the zero section.
Given a $2$-dimensional symplectic vector bundle $L \twoheadrightarrow V$
with symplectic form $\Omega$,
an {\it $\Omega$-compatible Hermitian structure} on $L$ is a Hermitian structure $(\rho,\nabla)$
where
the complex structure ${\bf i}_\rho$ is compatible with $\Omega$.

Suppose that $\Psi : \check{V} \to V$ is a diffeomorphism
and suppose
 $\pi : (L_i,\rho_i, \nabla^{(i)})_{i \in I} \to V$,
$\check{\pi} : (\check{L}_i,\check{\rho}_i, \check{\nabla}^{(i)})_{i \in I} \to \check{V}$ are two
collections of Hermitian line bundles
then a {\it product Hermitian isomorphism}
is a vector bundle isomorphism
$$\widetilde{\Psi} : \bigoplus_{i \in I} \check{L}_i \to \bigoplus_{i \in I} L_i$$
covering $\Psi$
and respecting the direct sum decomposition
so that the induced map
$\widetilde{\Psi} : (\check{L}_i,\check{\rho}_i,\check{\nabla}^{(i)}) \to (L_i,\rho_i,\nabla^{(i)})$
is an isomorphism of Hermitian line bundles for all $i \in \check{I}$.

\begin{defn} \label{defn:regularization}
	Let $V \subset X$ be a submanifold of a manifold $X$.
	A {\it regularization} is a diffeomorphism
	$\Psi : \check{\cN} \to X$ from a neighborhod
	$\check{\cN} \subset {\cN}_X V$ of the zero section
	onto its image so that $\Psi(x) = x$ for all $x \in V$
	and so that the map
	$$d_x \Psi : \cN_X V|_x \lra{} \cN_X V|_x, \quad d_x\Psi(v) \equiv Q\left(D\Psi\left(\left. \frac{d}{dt}(tv) \right|_{t = 0} \right)\right)
	$$
	is the identity map where
	$Q : TX|_V \lra{} \cN_X V$ is the natural quotient map.

\end{defn}

We also need a notion of regularization compatible with
the symplectic form. Because of this we first need to talk
about connections induced by closed $2$-forms.
Recall that an {\it Ehresmann connection} on a smooth submersion 
$\pi : E \lra{} B$ is a distribution $H \subset TE$ so that
$D\pi|_{H_x} : H_x \lra{} T_{\pi(x)}B$ is an isomorphism for all $x \in E$.
\begin{defn}
	Let $\pi : E \lra{} B$
	be a smooth submersion
	and let $\Omega$ be a $2$-form on $E$
	whose restriction to each fiber is non-degenerate.
	Then the {\it symplectic connection associated to $\Omega$}
	is an Ehresmann connection $H \subset TE$
	where $H$ is the set of vectors which are $\Omega$-orthogonal
	to the fibers of $\pi$.
	In other words,
	$$H = \{ V \in T_xE \ : \ x \in E, \quad \Omega(V,W) = 0 \quad \forall W \in T^{\text{ver}}_x E\}$$
	where $T^{\text{ver}}_x E \equiv \ker(D\pi)$ is the {\it vertical tangent bundle}.
\end{defn}

If $(X,\omega)$ is a symplectic manifold and $V \subset X$ is a symplectic submanifold
then ${\cN}_X V$ is a symplectic vector bundle on $V$.
We write $\omega|_{\cN_X V}$ for the symplectic form on this vector bundle
and $\omega|_L$ for the restriction of $\omega|_{\cN_X V}$ to $L$
where $L$ is any subbundle $L \subset \cN_X V$.
The following definition differs in notation from
\cite[Definition 2.8]{McLeanTehraniZinger:normalcrossings}
but it defines the same object.

\begin{defn} \label{defn:omegaregularizationforonedivisor}
	Let $(X,\omega)$ be a symplectic manifold, $V$ a symplectic submanifold and let
\begin{equation} \label{eqn:splittingequation}
	 \cN_X V \equiv \bigoplus_{i \in I} L_i
\end{equation}	
	be a splitting into $2$-dimsional subbundles so that $\omega|_{L_i}$ is non-degenerate for all $i \in I$.
	A tuple $((\rho_i)_{i \in I},\Psi)$ is called an {\it $\omega$-regularization for $V$ in $X$}
	if $\Psi$ is a regularization for $V$ in $X$ and $\rho_i$ is a map
	$\rho_i : \Im(\Psi) \lra{} [0,\infty)$
	so that there is an $\omega|_{L_i}$-compatible Hermitian structure
	$(\widetilde{\rho}_i,\nabla^{(i)})$ on $L_i$
	satisfying 
	\begin{itemize}
		\item 	$\widetilde{\rho}_i|_{\Dom(\Psi)} = \rho_i \circ \Psi$ for all $i \in I$ where $\widetilde{\rho}_i$ is (by abuse of notation) the pullback of $\widetilde{\rho}_i$ by the natural projection map $\oplus_{j \in I} L_j \lra{} L_i$,
		\item 	$\nabla^{(i)}$ restricted to $\Dom(\Psi) \cap L_i$
			coincides with the symplectic connection associated to $\omega|_{L_i}$
			for all $i \in I$	and
	\begin{equation} \label{equation symplectic form} 
		\llap{\textbullet\hspace{127pt}}
		\Psi^* \omega = \omega_{(\widetilde{\rho}_i,\nabla^{(i)})_{i \in I}}|_{\Dom(\Psi)}
			\end{equation}
	\end{itemize}
	for each $i \in I$.

	The splitting (\ref{eqn:splittingequation}) is called the {\it associated splitting}
	and the $\omega|_{L_i}$-compatible Hermitian structure
	$(\widetilde{\rho}_i,\nabla^{(i)})$
	is called the {\it associated Hermitian structure on $L_i$}.
	This Hermitian structure is uniquely determined by $\rho_i$ and $\Psi$.
	Also since $\widetilde{\rho}_i$ gives us a complex structure on
	$L_i$ for each $i \in I$, we get a natural complex structure on $\cN_X V$
	which we will call the {\it complex structure associated to the $\omega$-regularization $((\rho_i)_{i \in I},\Psi)$}.
\end{defn}

We wish to extend Definitions \ref{defn:regularization} and \ref{defn:omegaregularizationforonedivisor} to transverse collections of submanifolds
and SC divisors respectively.
To do this we need some preliminary notation.
Let $X$ be a manifold and $(V_i)_{i \in S}$ transversely intersecting submanifolds.
%
We have a canonical identification
\begin{equation}  \label{eqn:normalbundleidentification}
\cN_X V_I = \pi_{I;I'}^* (\cN_{V_{I'}}V_I)
\end{equation}
for each  $I' \subset I$
where $$ \pi_{I;I'} : \cN_{V_{I-I'}} V_I \to V_I$$
is the natural projection map.
Note that (\ref{eqn:normalbundleidentification}) is not an identification of bundles
since the base of the left hand bundle is $V_I$ whereas the base of the right hand bundle is $\cN_{V_{I-I'}} V_I$.

\begin{defn} \label{defn:systemofregularizations}
Let $X$ be a manifold and $(V_i)_{i \in S}$ a transverse collection of submanifolds. A {\it system of regularizations} is a tuple
$(\Psi_I)_{I \subset S}$, where $\Psi_I$ is a regularization for $V_I$ so that
\begin{equation} \label{equation:domainimagecompatibility}
\Psi_I(\cN_{V_{I'}}|_{V_I} \cap \Dom(\Psi_I)) = V_{I'} \cap \Im(\Psi_I)
\end{equation}
for all $I' \subset I \subset S$.
\end{defn}


Define
$$\iota : \pi_{I;I'}^* (\cN_{V_{I'}} V_I) \hra{} T\pi_{I;I'}^* (\cN_{V_{I'}} V_I) \stackrel{(\ref{eqn:normalbundleidentification})}{=} T\cN_X V_I, \quad \iota(x,v) \equiv \frac{d}{dt}\left.\left(x,tv\right)\right|_{t = 0}.$$
Using the inclusion map $\iota$ above, define
\begin{eqnarray}
\mathfrak{D}\Psi_{I;I'} : \pi_{I;I'}^* (\cN_{V_{I'}}V_I)|_{\Psi_I^{-1}(V_{I'})} \lra{} \cN_X (V_{I'} \cap \Im(\Psi_I)), \quad
\mathfrak{D}\Psi_{I;I'}(w) \equiv Q(D\Psi_I(\iota(w))) \label{defn:infinitessimalnormalbundleidentification}
\end{eqnarray}
where $Q : TX|_{V_{I'} \cap \Im(\Psi_I)} \lra{} \cN_X(V_{I'} \cap \Im(\Psi_I))$ is the natural projection map.

Using the equality (\ref{eqn:normalbundleidentification}), $\mathfrak{D}\Psi_{I;I'}$ identifies the normal bundle of $\cN_{V_{I-I'}}V_I$ inside $\cN_X V_I$ near $0$ with the normal bundle of $V_{I'}$ near $V_I$
using the derivative of the regularization $\Psi_I$.
The map $\mathfrak{D}\Psi_{I;I'}$ is also a bundle isomorphism covering the diffeomorphism $$\Psi_I|_{\Psi_I^{-1}(V_{I'})} : \Psi_I^{-1}(V_{I'}) \lra{} V_{I'} \cap \Im(\Psi_I).$$
The definition below tells us that $\Psi_{I'}$ and $\Psi_I$ should
be equal under the identification (\ref{defn:infinitessimalnormalbundleidentification}).

\begin{defn} \label{defn:scregularization}
Let $X$ be a manifold and $(V_i)_{i \in S}$ a transverse collection of submanifolds of $X$.
Then a {\it regularization for $(V_i)_{i \in S}$} is a system of regularizations $(\Psi_I)_{I \subset S}$
for $(V_i)_{i \in S}$ so that:
\begin{equation} \label{equation:chartcompatibility}
\mathfrak{D}\Psi_{I;I'}(\Dom(\Psi_I)) = \Dom(\Psi_{I'})|_{V_{I'} \cap \Im(\Psi_I)},\quad \Psi_I = \Psi_{I'} \circ \mathfrak{D} \Psi_{I;I'}|_{\Dom(\Psi_I)}.
\end{equation}
\end{defn}

The following definition differs from \cite[Definition 2.11]{McLeanTehraniZinger:normalcrossings} for the same reasons that
Definition \ref{defn:omegaregularizationforonedivisor} differs
from \cite[Definition 2.8]{McLeanTehraniZinger:normalcrossings}.
Apart from that, this definition is exactly the same. This structure also appears in \cite[Lemma 5.14]{McLean:affinegrowth}
although the regularization maps have particular domains
and it is defined in a slightly different way.
\begin{defn} \label{defn:omegaregularization}
Let $(X,\omega)$ be a symplectic manifold and $(V_i)_{i \in S}$ an SC divisor.
An {\it $\omega$-regularization} is a pair of tuples
$$((\rho_i)_{i \in S}, (\Psi_I)_{I \subset S})$$
where
\begin{enumerate}
	\item $(\Psi_I)_{I \subset S}$ is a regularization for $(V_i)_{i \in S}$
		as in Definition \ref{defn:scregularization} and $$\rho_i : \bigcup_{i \in I \subset S} \Im(\Psi_I) \lra{} [0,\infty)$$ is a smooth map, 
	\item $((\rho_i|_{\Im(\Psi_I)})_{i \in I},\Psi_I)$ is an $\omega$-regularization
	for $V_I$ in $X$ for each $I \subset S$ as in Definition
	\ref{defn:omegaregularizationforonedivisor}
	and 
	\item the maps
	$\mathfrak{D}\Psi_{I;I'}$ from (\ref{defn:infinitessimalnormalbundleidentification})
	are product Hermitian isomorphisms for all $I' \subset I \subset S$ with respect to the natural splittings
	$$\pi_{I;I'}^* (\cN_{V_{I'}}V_I)|_{\Psi_I^{-1}(V_{I'})} = \bigoplus_{i \in I'} \pi_{I;I'}^*(\cN_X V_i|_{V_I})|_{\Psi_I^{-1}(V_{I'})},$$
	$$\cN_X(V_{I'} \cap \Im(\Psi_I)) = \bigoplus_{i \in I'} \cN_X V_i|_{V_{I'} \cap \Im(\Psi_I)}.$$

\end{enumerate}
\end{defn}

We are only interested in regularizations near $(V_i)_{i \in S}$
and so we want a notion of equivalence to reflect this.
\begin{defn} \label{defn:germequivalenctregulariztions}
Two $\omega$-regularizations
$$((\rho_i)_{i \in S}, (\Psi_I)_{I \subset S}), \quad 
((\check{\rho}_i)_{i \in S}, (\check{\Psi}_I)_{I \subset S})
$$
for $(V_i)_{i \in S}$
are {\it germ equivalent}
if there is an open set
$$U_I \subset \Dom(\Psi_I) \cap \Dom(\check{\Psi}_I)$$
containing $V_I$
so that
$\Psi_I|_{U_I} = \check{\Psi}_I|_{U_I}$
and
$\rho_i|_{\Psi_I(U_I)} = \check{\rho}_i|_{\Psi_I(U_I)}$ for each $i \in I \subset S$.
\end{defn}

A real codimension $2$ submanifold with an oriented normal bundle
should be thought of as the differential geometric analogue of a smooth divisor in algebraic geometry.
We wish to construct complex line bundles from such submanifolds in the same way that line bundles are constructed from Cartier divisors in algebraic geometry.
The following line bundle associated to a codimension
$2$ submanifold $V$ of a manifold $X$
will depend on a choice of regularization $\Psi : \check{\cN} \to X$ of $V$
and a complex structure ${\bf i}$ on $\cN_X V$
and it will come with a canonical section $s_V : V \to \cO_X(V)$ whose zero set is $V$.
We define:
\begin{equation} \label{eqn:linebundle}
\cO_X(V) = \left( (\pi_{\cN_X V}^* \cN_X(V))|_{\Dom(\Psi)} \sqcup (X - V) \times \C \right) / \sim, 
\end{equation}
\begin{equation}
(\pi_{\cN_X V}^* \cN_X(V))|_{\Dom(\Psi_I)} \ni (v,cv) \sim (\Psi(v),c) \in (X - V) \times \C, \quad \forall \ v \in \cN_X V - V, \ c \in \C \nonumber.
\end{equation}
The corresponding fibration is defined in the following natural way
$$\pi_{\cO_X(V)} : \cO_X(V) \lra{} X,$$
$$\pi_{\cO_X(V)}(v,w) \equiv \Psi(v), \quad \forall (v,w) \in \pi_{\cN_XV}^*(\cN_XV)$$
$$\pi_{\cO_X(V)}(x,c) \equiv x, \quad \forall (x,c) \in (X - V) \times \C.$$

This is a line bundle satisfying the following important canonical identities:
\begin{equation} \label{equation:restriction}
\cO_X(V)|_V = \cN_X(V), \quad \cO_X(V)|_{X - V} = (X - V) \times \C.
\end{equation}
We will call $\Psi$ the {\it associated regularization}.
The {\it canonical section} $s_V : X \to \cO_X(V)$ of this line bundle is defined as follows:
\begin{equation} \label{equation:section}
s_V(x) \equiv 
\left\{
\begin{array}{lll}
(\Psi^{-1}(x),\Psi^{-1}(x)) & \in (\pi_{\cN_X V}^* \cN_X(V)|_{\Dom(\Psi_I)} & \text{if} \ x \in \Im(\Psi_I) \\
(x,1) & \in (X - V) \times \C & \text{if} \ x \in (X - V).
  \end{array}
  \right.
\end{equation}
We also define $\cO_X(0) \equiv \cO_X \equiv X \times \C$ to be the trivial bundle. This is also $\cO_X(\emptyset)$ where $\emptyset$ is the empty submanifold.

\subsection{Trivializing Line Bundles} \label{section trivialization line bundles}

In the previous section,
we constructed a line bundle from
any codimension $2$ submanifold with oriented boundary.
As a result, we can construct line bundles from any SC divisor.
In this subsection we show that if such a line bundle is trivial and if our SC divisor admits a regularization,
then our line bundle admits a trivialization which behaves well with respect to this regularization.
This trivialization will be used later to construct a map from a neighborhood of our SC divisor to $\C$
with nice parallel transport maps away from the singularities.

Let $(X,\omega)$ be a symplectic manifold and $(V_i)_{i \in S}$
an SC divisor on $X$ admitting an $\omega$-regularization
$${\mathcal R} \equiv ((\rho_i)_{i \in S}, (\Psi_I)_{I \subset S})$$ as in Definition \ref{defn:omegaregularization}.
Let $(m_i)_{i \in S}$ be natural numbers indexed by $S$.
For all $i \in S$, let $\cO_X(V_i)$ be the line bundle with associated regularization $\Psi_i$
and complex structure
associated to the $\omega$-regularization $(\rho_i,\Psi_i)$.
Recall that these have natural sections $s_{V_i} : X \to \cO_X(V_i)$ as in Equation (\ref{equation:section}).
Define $\cO_X(\sum_i m_i V_i) \equiv \otimes_{i \in S} \cO_X(V_i)^{\otimes m_i}$
and let 
\begin{equation} \label{eqn:canonicalsection}
s_{(m_i)_{i \in S}} \equiv \otimes_{i \in S} s_{V_i}^{\otimes m_i}
\end{equation}
be the canonical section of this bundle.
Using the identity (\ref{equation:restriction})
 we have
\begin{equation}
\cO_X(V_i)|_{V_I} = 
\cN_X V_i|_{V_I}, \quad \forall \ i \in I \subset S, \nonumber
\end{equation}
and hence we get the following maps:
\begin{eqnarray}
	\Pi_{V_i;I} : \cN_X V_I = \bigoplus_{j \in I} \cN_X V_j|_{V_I} \lra{} \cO_X(V_i)|_{V_I}, \nonumber \\
	(v_j)_{j \in I} \lra{} \left\{\begin{array}{ll}
	v_i & \text{if} \ i \in I \\
	s_{V_i}(x) & \text{if} \ i \notin I \end{array}\right., \quad \forall \ (v_j)_{j \in I} \in \cN_X V_I|_x, \ x \in V_I. \nonumber
\end{eqnarray}
Therefore we get a (not necessarily fiberwise linear) map:
\begin{eqnarray} \label{equation:projection}
	\Pi_{(m_i)_{i \in S},I} : \cN_X V_I  \lra{} \cO_X(\sum_i m_i V_i)|_{V_I}, \nonumber\\
	\Pi_{(m_i)_{i \in S},I}(v) \equiv \bigotimes_{i \in S} \Pi_{V_i;I}(v)^{\otimes m_i}
\quad \forall \ v \in \cN_X V_I.
\end{eqnarray}

One can think of the above map as a section of $\cO_X(\sum_i m_i V_i)|_{V_I}$ along with non-trivial infinitesimal
information in the normal direction of $V_I$.

Below is a definition of a trivialization of $\cO(\sum_i m_i V_i)$ with the property that locally around each point of $V_I$, 
the canonical section $s_{(m_i)_{i \in S}}$ of $\cO_X(\sum_i m_i V_i)$
looks approximately like the map $(z_1,\cdots,z_n) \to \prod_{i \in I} (z_ia(|z_i|)/|z_i|)^{m_i}$ in some local chart $z_1,\cdots,z_n$
where $I \subset \{1,\cdots,n\}$ and $a : \R \lra{} \R$ has the following graph:

\begin{tikzpicture}[scale = 0.6]
\draw[<->] (6.5,-3) -- (-0.5,-3) node (v1) {} -- (-0.5,1);
\draw (-0.5,-3) -- (0.5,-2) node (v2) {};
\draw  plot[smooth, tension=.7] coordinates {(v2) (3,0) (5.5,0.5)};
\draw (5.5,0.5) -- (6.5,0.5);
\draw (-0.3,0.5) -- (-0.7,0.5);
\node at (-1,0.5) {$1$};
\end{tikzpicture}

We need to use the above function $a$ to ensure that we have good dynamical properties
(see Section \ref{section good dynamical properties}).
See \cite[Definition 3.8]{McLeanTehraniZinger:smoothing} for a related definition.

\begin{defn} \label{defn:compatibletrivialization}
For each $r > 0$,
we define the {\it radius $r$ tube of $V_I$} to be the set
\begin{equation} \label{eqn:radiusrtube}
T_{r,I} \equiv \cap_{i \in I} \{x \in \Im(\Psi_I) \ : \ \rho_i(x) \leq r\}
\end{equation}
over $V_I$.
Let $B \subset X$ be any set.
The {\it tube radius of ${\mathcal R}$ along $B$}
is the largest radius $r$ tube of $V_I$ along $B$ that can `fit'
inside the image of $\Psi_I$ for each $I \subset S$.
More precisely, it is the supremum
of all $r \geq 0$ with the property that $T_{r,I} \cap (\Im(\Psi_I)|_x)$
is a compact subset of $X$ for all $x \in B \cap V_I$ and $I \subset S$.

Let $U \subset X$ be an open set.
Now suppose that the tube radius of ${\mathcal R}$ along $U$
is positive and let $R>0$ be any constant smaller than the tube radius and also smaller than $1$.
We let
$a_R : [0,\infty) \lra{} [0,\infty)$
be a smooth function satisfying:
\begin{enumerate}
	\item $a'_R(x) > 0$ for $x \in [0,3R/4)$,
	\item $a_R(x) = x$ for $x \leq R/4$,
	\item $a_R(x) = 1$ for $x \geq 3R/4$.
\end{enumerate}


A bundle trivialization:
$$\Phi \equiv (\pi,\Phi_2) : \cO_X(\sum_i m_i V_i) \lra{} X \times \C$$
is {\it radius $R$ compatible with ${\mathcal R}$ along $U \cap V_I$}
if
\begin{eqnarray}
\Phi_2(s_{(m_i)_{i \in S}}(x))
=
\Phi_2(\Pi_{(m_i)_{i \in S},I}(\Psi_I^{-1}(x)))
\prod_{i \in I} \left(\frac{\sqrt{a_R(\rho_i(x))}}{\sqrt{\rho_i(x)}}\right)^{m_i}, \label{eqn:phiequalsdefinition}
\\
\forall x \in T_{R,I} \cap (\Im(\Psi_I)|_{V_I \cap U - \cup_{i \in S-I} T_{\frac{3R}{4},i}}). \nonumber
\end{eqnarray}
and where the norm of the linear map:
\begin{equation} \label{eqn:normoflinearmap}
\Phi_2|_{\otimes_{i \in I} (\cN_X V_i|_x)^{\otimes m_i}} : (\otimes_{i \in I} \cN_X V_i|_x)^{\otimes m_i} \lra{} \C
\end{equation}
is equal to $1$
for all $x \in V_I \cap U - \cup_{i \in S-I} T_{\frac{3R}{4},i}$
using the identification (\ref{equation:restriction}).

We say that $\Phi$ is {\it radius $R$ compatible with ${\mathcal R}$ along $U$}
if it is radius $R$ compatible with ${\mathcal R}$ along $U \cap V_I$
for each $I \subset S$.
It is {\it compatible with ${\mathcal R}$ along $U$}
if it is radius $R$ compatible with ${\mathcal R}$ along $U$
for some $R$ smaller than the tube radius of ${\mathcal R}$.

\end{defn}

One should think of Equation (\ref{eqn:phiequalsdefinition})
as saying that the trivialization $\Phi$ identifies the canonical section of $\cO_X(\sum_i m_i V_i)$
with the `infinitesimal' section $\Pi_{(m_i)_{i \in S},I}$ multiplied
by a particular factor near $V_I$.
This particular factor that we are multiplying by is actually equal to $1$
if we are very near $V_I$.
Note also that Equation (\ref{eqn:phiequalsdefinition})
tells us that the norm of
$\Phi_2(s_{(m_i)_{i \in S}}(x))$ only depends on
$(\rho_i(x))_{i \in I}$ and
if $\rho_i(x) \geq 3R/4$ for some $i \in I$ then this norm does not depend on
$\rho_i(x)$ for all $x \in \Im(\Psi_I) - \cup_{i \in S - I} T_{3R/4,i}$.
As a result, the above definition is consistent with the stated norm property
of the linear map (\ref{eqn:normoflinearmap}).

\begin{lemma} \label{lemma:nicetrivialization}
Suppose $\cO_X (\sum_i m_i V_i)$ admits a trivialization $\Phi$
and let $U \subset X$ be a relatively compact open set.
Then there is a trivialization of $ \cO_X(\sum_i m_i V_i)$
compatible with ${\mathcal R}$ along $U$ 
which is homotopic to $\Phi$ through trivializations of $\cO_X (\sum_i m_i V_i)$.
\end{lemma}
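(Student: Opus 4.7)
The plan is to build a compatible trivialization $\Phi^\ast$ by prescribing it stratum by stratum, from the deepest stratum outward, and then to produce an explicit homotopy from $\Phi$ to $\Phi^\ast$ by controlling the ratio $\Phi^\ast/\Phi$. Since $\overline{U}$ is compact and the tube radii $T_{r,I}$ depend continuously on $r$, the tube radius of $\mathcal{R}$ along $\overline{U}$ is strictly positive, so we may fix $R \in (0,1)$ strictly smaller than it; the integer $R$ appearing in Definition \ref{defn:compatibletrivialization} can be taken this small without loss.

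The key observation is that the compatibility condition is rigid in the normal directions. Equation (\ref{eqn:phiequalsdefinition}) together with the unit-norm condition on the linear map (\ref{eqn:normoflinearmap}) completely determine $\Phi^\ast_2$ on the tube $T_{R,I}$ over $V_I \cap U - \bigcup_{i\in S-I} T_{3R/4, i}$ as soon as one has specified a unitary trivialization of $\cO_X(\sum_i m_i V_i)|_{V_I}$. Moreover, the product-Hermitian compatibility in clause (3) of Definition \ref{defn:omegaregularization} guarantees that the prescriptions coming from $I$ and $I'$ match on the overlap of their tubes sitting over $V_{I\cup I'}$: both express the image of $s_{(m_i)_{i\in S}}$ in terms of $\Pi_{(m_i)_{i\in S}, I\cup I'}$ applied to the same normal data. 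Hence we can perform a downward induction on $|I|$: on the deepest stratum choose any unitary frame, extend to its tube by (\ref{eqn:phiequalsdefinition}), and at each next depth pick a unitary frame on $V_I \cap \overline{U}$ whose restriction to the already-treated deeper strata coincides with the induced frame there, then extend radially. Outside the union of these tubes (and outside $U$) we leave $\Phi$ unchanged, using smooth cutoffs in the cylindrical coordinates $\rho_i$ to interpolate in the intermediate shells.

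The main obstacle is the homotopy through trivializations. Two trivializations of the same line bundle differ by a smooth map $g: X \to \C^\ast$, and the homotopy class $[g] \in H^1(X;\Z)$ must vanish. To force $[g] = 0$, I would first apply a preliminary compactly supported, null-homotopic modification to $\Phi$ itself so that on every stratum $V_I \cap \overline{U}$ its restriction is already unitary with respect to the Hermitian structure from $\mathcal{R}$; this is possible because multiplying $\Phi$ by a smooth positive function and a phase $e^{i\theta}$ supported in a neighborhood of $\bigcup_I V_I \cap \overline{U}$ does not change its homotopy class. Having done this, I then choose the phases of the unitary frames defining $\Phi^\ast$ on each $V_I \cap \overline{U}$ to agree with the phase of $\Phi|_{V_I}$ (a genuine $U(1)$-valued function); the inductive matching across depths remains consistent because these phases are now prescribed by $\Phi$ itself on the intersections.

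With these choices, $\Phi^\ast = \Phi$ pointwise on every $V_I \cap \overline{U}$, so the ratio $g = \Phi^\ast/\Phi$ is identically $1$ on the divisors and close to $1$ throughout the interpolation shells provided the cutoff neighborhoods are chosen small enough. The straight-line homotopy $(1-s)\Phi + s\Phi^\ast = ((1-s) + s g)\Phi$ then stays in $\C^\ast \cdot \Phi$ because $(1-s)+sg$ has positive real part throughout, giving a path of trivializations joining $\Phi$ to $\Phi^\ast$. The hardest step to pin down rigorously will be this last uniform estimate: verifying that the cutoff-driven interpolation between the explicit formula (\ref{eqn:phiequalsdefinition}) on inner tubes and the original trivialization $\Phi$ in the outer shells yields a nowhere-vanishing section ratio, which requires choosing the interpolation parameters and the tube scales carefully in tandem with how close $\Phi$ has been brought to unitary form on the strata.
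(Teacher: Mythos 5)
Your overall construction---downward induction on the depth of the strata, prescribing the trivialization on each tube by formula (\ref{eqn:phiequalsdefinition}) together with the unit--norm condition (\ref{eqn:normoflinearmap}), and using the product Hermitian structure of the regularization to match overlapping tubes---is the same as the paper's. The gap is in your homotopy argument. In your construction paragraph you (correctly) require the unitary frame on $V_I$ to restrict to the frame induced from the already-treated deeper strata, but in your homotopy paragraph you additionally require it to equal $\Phi|_{V_I}$ everywhere; these two requirements are incompatible in general. Concretely, for $I' \subsetneq I$ the compatibility condition for the deeper stratum $I$, extended by continuity to the divisors, forces $\Phi^{\ast}_2$ on $V_{I'}\cap T_{R,I}$ to equal a positive function times $\Phi^{\ast}_2|_{V_I}\circ\widehat{\Pi}_{(m_i)_{i \in S};I}$; with $\Phi^{\ast}|_{V_I}=\Phi|_{V_I}$ this differs from $\Phi_2|_{V_{I'}}$ by the (after your norm normalization, $U(1)$-valued) holonomy of $\Phi$ along the fibres of the projection to $V_I$, which your preliminary modification does not kill. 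So $\Phi^{\ast}=\Phi$ cannot hold on every stratum, the identity $g\equiv 1$ on the divisors on which your straight-line homotopy rests fails, and the consistency of your phase choices across depths is exactly the unproven point. This is the gluing problem the paper resolves at each inductive step, by rescaling by the positive function $f$ and then extending over $T_{R,I^*}$ using the deformation retraction of $T_{R,I^*}$ onto $V_{I^*}\cup(T_{R,I^*}\cap L_R^{\prec})$, so that the homotopy is produced step by step rather than all at once at the end.

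A second, smaller error: the estimate that $g=\Phi^{\ast}/\Phi$ is ``close to $1$ throughout the interpolation shells provided the cutoff neighborhoods are chosen small enough'' is false. Formula (\ref{eqn:phiequalsdefinition}) forces the factor $\prod_{i\in I}\bigl(\sqrt{a_R(\rho_i)}/\sqrt{\rho_i}\bigr)^{m_i}$, which near the outer edge of the tube (where $\rho_i$ is close to $R$ and $a_R=1$) is of order $R^{-\sum_{i\in I}m_i/2}$ and therefore blows up, rather than improves, as you shrink $R$. What you actually need is that $g$ takes values in $\C\setminus(-\infty,0]$: the large radial factor is a harmless positive real, and the remaining unitary part can be made uniformly close to $1$ over the compact set $\overline{U}$ for small $R$ because the identifications $\widehat{\Pi}$ converge to the identity there. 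As written, though, the justification is incorrect, and combined with the first issue the homotopy from $\Phi$ to $\Phi^{\ast}$ is not established.
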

\proof
Just as in the proof of \cite[Definition 3.9]{McLeanTehraniZinger:smoothing} we will proceed by induction on the strata of $\cup_j V_j$.
Before we do this though we will need to construct certain natural maps that identify
$\Pi_{(m_i)_{i \in S};I}$ with $s_{(m_i)_{i \in S}}$ near $V_I$.
Define $$N_{i,I} \equiv \Dom(\Psi_i)|_{V_i \cap \Im(\Psi_{I \cup i})} \stackrel{(\ref{equation:chartcompatibility})}{=} {\mathfrak D} \Psi_{I \cup i;i}(\Dom(\Psi_{I \cup i}))$$
$$
W_{i;I} \equiv \Psi_I(\Dom(\Psi_I)|_{V_I - V_i})
$$
$$
D_{i;I} \equiv W_{i;I}  \cup \Psi_i(N_{i;I})
$$
 for all $i \in S, \ I \subset S$.
By Equation (\ref{eqn:linebundle}), we have the natural identification:
\begin{equation}  \label{eqn:neardivisoridentity1}
	\cO_X(V_i)|_{\Psi_i(N_{i;I})} =
	\pi_{\cN_X V_i}^* \cN_X(V_i)|_{N_{i;I}}
\end{equation}
and by (\ref{equation:domainimagecompatibility}) and (\ref{equation:restriction}) we have the identity
\begin{equation} \label{eqn:trivializationidentity}
	\cO_X(V_i)|_{W_{i;I}} =
	W_{i:I} \times \C
\end{equation}
for all $i \in S$ and $I \subset S$.
By using the natural projections
$$Pr_{I;I'} : \cN_X V_I \lra{} \cN_X V_{I'}|_{V_I}, \quad \forall \ I' \subset I \subset S$$
we have a map:
\begin{equation}
	\widehat{\Pi}_{V_i;I} : \cO_X(V_i)|_{D_{I;i}}
	\lra{} \cO_X(V_i)|_{V_I} \nonumber
\end{equation}
for all $i \in S$ and $I \subset S$
whose restriction to $\Psi_i(N_{i;I})$ is defined by the equation
\begin{eqnarray}
	\widehat{\Pi}_{V_i;I}(v,w) \equiv (Pr_{I \cup i;i}(\mathfrak{D}\Psi_{I \cup i;i}^{-1}(v)),Pr_{I \cup i;i}(\mathfrak{D}\Psi_{I \cup i;i}^{-1}(w))) \nonumber \\
	\quad \forall (v,w) \in 
	(\pi_{\cN_X V_i}^* \cN_X(V_i))|_{N_{i;I}} \nonumber
\end{eqnarray}
by using the identity (\ref{eqn:neardivisoridentity1})
and whose restriction to $W_{i;I}$ is defined by
\begin{equation}
\widehat{\Pi}_{V_i;I}(x,c) \equiv (\pi_{\cN_X V_I}(\Psi_I^{-1}(x)),c) \in (V_I- V_i) \times \C, \quad \forall \ (x,c) \in 
W_{i:I} \times \C \nonumber
\end{equation}
by using the identity (\ref{eqn:trivializationidentity}).
One should think of this map as a way of canonically
identifying the bundle $\cO_X(V_i)$ near $V_I$
with its pullback along the natural projection map
from $D_{i;I}$ to $V_I$ induced by $\pi_{\cN_X V_I} \circ \Psi_I^{-1}$.
By (\ref{eqn:linebundle}) and (\ref{equation:section}),
\begin{equation} \label{eqn:liftofinfinitessimalsection}
	\Pi_{V_i;I}|_{\Psi_I^{-1}(D_{i;I})} = \widehat{\Pi}_{V_i;I} \circ s_{V_i} \circ \Psi_I \quad \forall \ i \in S, \ I \subset S.
\end{equation}
Also by (\ref{equation:chartcompatibility})
and the fact that ${\mathfrak D}\Psi_{I;I'}$
is a product Hermitian isomorphism for all $I' \subset I \subset S$,
\begin{equation} \label{eqn:projectioncomposition} 
	\widehat{\Pi}_{V_i;I} \circ \widehat{\Pi}_{V_i;I'}|_{D_{I;i}} = \widehat{\Pi}_{V_i;I} \quad \forall \ i \in S, \ I' \subset I \subset S.
\end{equation}

We can define similar maps for the line bundle $\cO_X(\sum_i m_i V_i)$ in the following way
	\begin{eqnarray}
	\widehat{\Pi}_{(m_i)_{i \in S};I} : \cO_X(\sum_i m_i V_i)|_{\cap_{i \in S} D_{i;I}}
	\lra{} \cO_X(\sum_i m_i V_i)|_{V_I} \nonumber \\
	\widehat{\Pi}_{(m_i)_{i \in S};I}(\otimes_{i \in S, j \in \{1,\cdots,m_i\}} v_{i,j}) =
	\otimes_{i \in S, j \in \{1,\cdots,m_i\}} \widehat{\Pi}_{V_i;I}(v_{i,j}) \quad \forall \ I \subset S. \nonumber
\end{eqnarray}
Equations (\ref{eqn:liftofinfinitessimalsection})
and
(\ref{eqn:projectioncomposition})
give us the following equations:
\begin{eqnarray}
	\Pi_{(m_i)_{i \in S};I}|_{\Psi_I^{-1}(\cap_{i \in S} D_{i;I})} = \widehat{\Pi}_{(m_i)_{i \in S};I} \circ s_{(m_i)_{i \in S}}  \circ \Psi_I|_{\cap_{i \in S} D_{i;I}} \quad \forall \ I \subset S
	\label{eqn:liftofinfinitessimalsectionmi} \\
	\widehat{\Pi}_{(m_i)_{i \in S};I} \circ \widehat{\Pi}_{(m_i)_{i \in S};I'}|_{\cap_{i \in S} D_{i;I}} = \widehat{\Pi}_{(m_i)_{i \in S};I} \quad \forall \ I' \subset I \subset S. \label{eqn:projectioncompositionmi} 
\end{eqnarray}

Using these equations, we will now prove our lemma by induction on the set of subsets of $S$.
Let $\preceq$ be a total order on the set of subsets of $S$ with the property that if $|I'| < |I|$ then $I \preceq I'$.
We write $I \prec I'$ when $I \preceq I'$ and $I \neq I'$.
Since $U$ is relatively compact, the tube radius of ${\mathcal R}$ along $U$ is positive and
hence we can choose any constant $R>0$ smaller than this tube radius.

Suppose that there is some $I^* \subset S$ and a trivialization
$$\Phi^{\prec} \equiv (\pi,\Phi_2^\prec) : \cO_X(\sum_i m_i V_i) \lra{} X \times \C$$
which is radius $R$ compatible with ${\mathcal R}$ along $U \cap V_I$
for all $I \prec I^*$ and
which is isotopic to $\Phi$ through trivializations of $\cO_X(\sum_i m_i V_i)$.
%
%
We now wish to modify the trivialization $\Phi^{\prec}$ so that these properties hold for all $I \preceq I^*$.
Let $T_{r,I}$ be the radius $r$ tube of $V_I$ as in Equation (\ref{eqn:radiusrtube}) and
define $L_r^\prec \equiv \cup_{I \prec I^*} T_{r,I}$.


First of all, let
$$g : V_{I^*} - L_{\frac{3R}{4}}^\prec \lra{} (0,\infty)$$
be a smooth function whose value at $x \in V_{I^*} - L_{\frac{3R}{4}}^\prec$ is equal to the norm of the linear map
\begin{equation} \label{eqn:normalongistar}
\Phi^\prec_2|_{\otimes_{i \in I^*} (\cN_X V_i|_x)^{\otimes m_i}} : (\otimes_{i \in I^*} \cN_X V_i|_x)^{\otimes m_i} \lra{} \C. \nonumber
\end{equation}
Since the map $\Pi_{V_i;I}$ restricted to each fiber of $Pr_{I;I-i}$ is an isometry for all $i \in I \subset S$,
we get that $g(x) = 1$ for all $x \in
V_{I^*} \cap (L_R^\prec - L_{\frac{3R}{4}}^\prec)$ by our induction hypothesis.
Combining this with the fact that $a_R(s) = 1$ for all $s \geq 3R/4$,
we can choose a smooth function 
$f : X \lra{} (0,\infty)$
which is equal to $1$ in the region
$L_R^\prec$
and equal to $(\pi_{\cN_X V_{I^*}} \circ \Psi_{I^*}^{-1})^*\left(\frac{1}{g}\right)$ inside
$T_{R,I^*} \cap \Psi_{I^*}(\Dom(\Psi_{I^*})|_{V_{I^*} - L_{\frac{3R}{4}}^\prec}).$
This implies that the norm of the linear map:
\begin{equation} \label{eqn:normalongistar}
f(x) \Phi^\prec_2|_{\otimes_{i \in I} (\cN_X V_i|_x)^{\otimes m_i}} : (\otimes_{i \in I} \cN_X V_i|_x)^{\otimes m_i} \lra{} \C
\end{equation}
is $1$ for all $I \preceq I^*$.
Hence Equation (\ref{eqn:normoflinearmap}) holds for $f\Phi_2^\prec$ for all $I \preceq I^*$.

We now wish to modify $f\Phi^\prec$
so that
Equation (\ref{eqn:phiequalsdefinition}) for this new trivialization holds as well.
We let 
\begin{eqnarray}
\Phi^= \equiv (\pi,\Phi_2^=) : \cO_X(\sum_i m_i V_i)|_{\Im(\Psi_{I^*})} \lra{} X \times \C,  \nonumber\\
\Phi^=(v) \equiv f(x) \Phi^\prec(\widehat{\Pi}_{(m_i)_{i \in S};I^*}(v))
\prod_{i \in I^*} \left(\frac{\sqrt{a_R(\rho_i(x))}}{\sqrt{\rho_i(x)}}\right)^{m_i}, \label{eqn:phiequaldefinition}\\
\forall \ v \in \cO_X(\sum_i m_i V_i)|_x, \quad x \in \Im(\Psi_{I^*}). \nonumber
\end{eqnarray}
be a smooth trivialization.
Equation (\ref{eqn:projectioncompositionmi})
combined with the fact that $\Phi_2^\prec$
is radius $R$ compatible with ${\mathcal R}$
along $U \cap V_I$ for all $I \prec I^*$
implies that
$\Phi_2^\prec$ is radius $R$ compatible with ${\mathcal R}$
along $U \cap (L_R^\prec - L_{\frac{3R}{4}}^\prec) \cap V_{I^*}$.
Hence by Equation (\ref{eqn:phiequaldefinition}),
we have
\begin{equation}
\Phi_2^{\prec}(v) = \Phi_2^=(v), \quad \forall \ v \in T_{R,I^*}
\cap (L_R^\prec - L_{\frac{3R}{4}}^\prec).
\nonumber
\end{equation}
Combining this with the fact that
$T_{R,I^*}$
deformation retracts on to
$V_{I^*} \cup (T_{R,I^*} \cap L_R^\prec)$,
we can construct a smooth trivialization
$$\Phi^{\preceq} \equiv (\pi,\Phi_2^\prec) : \cO_X(\sum_i m_i V_i) \lra{} X \times \C$$
homotopic to $\Phi^{\prec}$ so that
\begin{equation} \label{eqn:phipreceqequation}
\Phi^{\preceq}|_{L_R^\prec} = \Phi^{\prec}|_{L_R^\prec} \ \text{and} \ \Phi^{\preceq}|_{T_{R,I^*}} = \Phi^=|_{T_{R,I^*}}.
\end{equation}
Equations (\ref{eqn:liftofinfinitessimalsectionmi}),
(\ref{eqn:phiequaldefinition}) and
(\ref{eqn:phipreceqequation})
tell us that
$\Phi^{\preceq}$ is radius $R$ compatible with ${\mathcal R}$
along $U \cap V_{I^*}$.
Also since $\Phi^{\preceq} = \Phi^{\prec}$
along $L_R^\prec$, we get that
$\Phi^{\preceq}$ is radius $R$ compatible with ${\mathcal R}$
along $U \cap V_I$ for all $I \prec I^*$.
Hence $\Phi^{\preceq}$ is radius $R$ compatible with ${\mathcal R}$
along $U \cap V_I$ for all $I \preceq I^*$.

Because the norm of the linear map
(\ref{eqn:normalongistar}) is $1$ for all $I \preceq I^*$, we have
by
Equations
(\ref{eqn:phiequaldefinition}) and
(\ref{eqn:phipreceqequation})
that the norm of the linear map
$$f \Phi^\preceq_2|_{\otimes_{i \in I} (\cN_X V_i|_x)^{\otimes m_i}} : (\otimes_{i \in I} \cN_X V_i|_x)^{\otimes m_i} \lra{} \C$$
is equal to $1$ for all $I \preceq I^*$.
Hence we are done by induction.

\qed

\subsection{Links of Divisors and Open Books}
\label{section links of divisors and open books}

In this subsection, we first give a purely symplectic definition of a divisor which looks like the resolution divisors of a log resolution of an isolated hypersurface singularity.
We then construct the `link' of this resolution divisor, which corresponds to the embedded link of our isolated singularity.
Finally we construct an open book decomposition of this resolution divisor corresponding the Milnor open book of our hypersurface singularity.

We have the following definition from \cite{McLean:affinegrowth}.
\begin{defn} \label{defn:wrappingnumber}
Let $(X,\omega)$ be a symplectic manifold of dimension $2n$ and let
$\theta \in \Omega^1(X - K)$ satisfy $d\theta = \omega|_{X - K}$
for some compact $K \subset X$.
Suppose that $K$ admits an open neighborhood $U$
which deformation retracts onto $K$.
Let $\rho : X \lra{} [0,1]$ be a smooth function equal to $0$ along $K$ and equal to $1$ outside a compact subset of $U$.
Then the {\it dual} 
$$c(\omega,\theta) \in H_{2n-2}(K;\R) = H_{2n-2}(U;\R)$$
of $(\omega,\theta)$ is defined to be the Lefschetz dual
of $(\omega - d(\rho\theta))|_{U} \in H^2_c(U)$.

Now suppose that $K = \cup_{i\in S} V_i$ where $(V_i)_{i \in S}$
is an SC divisor and each $V_i$ is connected and compact.
Then a Mayor-Vietoris argument tells us that
$H_{2n-2}(\cup_{i\in S} V_i;\R)$ is freely generated by
the fundamental classes $[V_i]$ and hence there are unique real
numbers $(w_i)_{i \in S}$ so that
$c(\omega,\theta) = -\sum_i w_i [V_i]$.
The {\it wrapping number} of $\theta$ around $V_j$ is defined to be
$w(\theta,V_j) \equiv w_j.$
\end{defn}
The wrapping number does not depend on the choice of neighborhood $U$
or bump function $\rho$.
We can calculate the wrapping number $w(\theta,V_j)$
in the following way
(See the discussion after Definition 5.4 in \cite{McLean:isolated}):
Let $D \subset X$ be a small symplectically embedded
disk with polar coordinates
$(r,\vartheta)$
which intersects $V_i$ positively once at $0 \in D$ and does not intersect
$V_j$ for all $j \in S - i$.
Then the wrapping number $w_i$ is the unique number so that $(w_i/2\pi )d\vartheta $ is cohomologous to
$$\theta|_{D - 0} - \frac{1}{2} r^2 d\vartheta$$
inside   $H^1(D - 0;\R)$.
The computation of the wrapping number using the disk $D$ above enables us to define wrapping numbers in the case when each $V_i$ is properly embedded but not necessarily compact.
We will need this broader definition of wrapping number in the proof of Lemma \ref{lemma:locallink} below.

The next definition is supposed to be a way of describing a log resolution
of a pair $(\C^{n+1},f^{-1}(0))$ (as in Definition \ref{defn:logresolution}) in a symplectic way. The key motivating
example for such a definition is given in Example \ref{defn:modelresolutionmotivatingexample} below.

\begin{defn}
A {\it resolution divisor} is a pair $(X,(V_i)_{i \in S})$
where $(V_i)_{i \in S}$ are transversally intersecting properly embedded
codimension $2$ submanifolds of a manifold $X$ so that
there is a unique element $\star_S \in S$
with the property
that $V_{\star_S}$ is non-compact and
$V_i$ is compact for all $i \in S- \star_S$.
We also require that $\cup_i V_i$ is connected and that $V_i$ is connected for each $i \in S - \star_S$ (although $V_{\star_S}$ is allowed to be disconnected).

A {\it model resolution} is a triple $(\cO_X(\sum_{i \in S} m_i V_i), \Phi,\theta)$
where $X$ is a manifold, $(V_i)_{i \in S}$ are codimension $2$
submanifolds, $\theta \in \Omega^1(X - \cup_{i \in S - \star_S} V_i)$
and $\Phi$ is a trivialization of
$$\cO_X({\textstyle \sum}_{i \in S} m_i V_i) \equiv \otimes_{i \in S} \cO_X(V_i)^{\otimes m_i}$$ where
$(m_i)_{i \in S}$ are positive integers satisfying:
\begin{enumerate}
	\item $(X,(V_i)_{i \in S})$ is a resolution divisor as above,
	\item  $d\theta = \omega|_{X - \cup_{i \in S - \star_S}V_i}$ for some symplectic form $\omega$ on $X$,
	\item $(V_i)_{i \in S}$ is an SC divisor with respect to $\omega$,
	\item $m_{\star_S} = 1$
	and the wrapping number $w(\theta,V_i)$ is positive for all $i \in S - \star_S$.
	
	\bigskip 
	The form $\omega$ is called the
	{\it symplectic form associated to 
		$(\cO_X(\sum_{i \in S} m_i V_i),
		\Phi,\theta)$.}
	A {\it grading} on this model resolution is a grading
	on $(X - \cup_{i \in S - \star_S} V_i,\omega)$.
\end{enumerate}

\end{defn}


\begin{defn}
Two model resolutions $$Y \equiv (\cO_X(\sum_{i \in S} m_i V_i),
\Phi,\theta), \quad
\widehat{Y} \equiv (\cO_{\widehat{X}}(\sum_{i \in \widehat{S}} \widehat{m}_i \widehat{V}_i),
\widehat{\Phi},\widehat{\theta})$$
are {\it isotopic} if there is 
\begin{itemize}
	\item a bundle isomorphism
	$\widetilde{\Psi} : \cO_X(\sum_{i \in S} m_i V_i) \lra{}
	\cO_{\widehat{X}}(\sum_{i \in \widehat{S}} \widehat{m}_i \widehat{V}_i)$ covering a diffeomorphism
	$\Psi : X \lra{} \widehat{X}$,
	\item a bijection $\iota : S \lra{} \widehat{S}$
	sending $\star_S$ to $\star_{\widehat{S}}$,
	\item a smooth family of $1$-forms $(\theta_t \in \Omega^1(X - \cup_{i \in S - \star_S} V_i))_{t \in [0,1]}$ joining $\theta$ and $\Psi^* \widehat{\theta}$ and
 trivializations $(\Phi_t)_{t \in [0,1]}$ of
	$\cO_X(\sum_{i \in S} m_i V_i)$ joining $\Phi$
	and $\widehat{\Phi} \circ \widetilde{\Psi} \circ (\Psi^{-1} \times \text{id}_{\C})$
\end{itemize}
 so that $m_i = \widehat{m}_{\iota(i)}$ and $\Psi(V_i) = \widehat{V}_{\iota(i)}$ for all $i \in S$ and
 $Y_t \equiv (\cO_X(\sum_{i \in S} m_i V_i),
 \Phi_t,\theta_t)$
 is a model resolution for all $t \in [0,1]$.
 
 These model resolutions are {\it graded isotopic}
 if they are isotopic as above with the additional property
 that the model resolutions
 $Y_t$
 all admit gradings that smoothly depend on $t \in [0,1]$
 and where the grading on
 $Y_0$
coincides with the grading on 
$Y$
and the grading on
 $Y_1$
coincides with the grading on 
$\widehat{Y}$
under the identification $\Psi$.
\end{defn}

We will now give an example of a model resolution.
\begin{example} \label{defn:modelresolutionmotivatingexample}
Let $f : \C^{n+1} \lra{} \C$ be a polynomial with an isolated singularity
at $0$ and let
$U \subset \C^{n+1}$ be an open set containing $0$
so that $f|_{U - 0}$ is regular.
Let $\pi : Y \lra{} \C^{n+1}$ be a log resolution of $(\C^{n+1},f^{-1}(0))$
obtained by a sequence of blowups along smooth loci
and define $X \equiv \pi^{-1}(U)$ (such a resolution exists by \cite{hironaka:resolution}).
Let $(E_j)_{j \in S}$ be the resolution divisors of
this resolution and let $E_{\star_S} \subset X$
be the proper transform of $f^{-1}(0) \cap U$.
Because such a resolution is obtained by a sequence of blowups along smooth loci,
there are positive integers $(w_j)_{j \in S - \star_S}$
so that $A \equiv -\sum_{i \in S - \star_S} w_i E_i$ is ample on $X$.
By the divisor line bundle correspondence,
let $L \lra{} X$ be the corresponding ample line bundle
with a meromorphic section satisfying
$(s) = A$.
Choose a Hermitian metric $\|\cdot\|$ on $L$
whose curvature form is a positive $1$-$1$ form.
Define $\theta = -d^c \ln(\|s\|)$.
The $2$-form $d\theta$ extends uniquely to a K\"{a}hler form $\omega$
on $X$.
The wrapping number of $\theta$ around $E_j$ is $w_j$ for all $j \in S - \star_S$.

We also let $m_i \in \N_{>0}$ be the multiplicity of $f$ along $E_i$
for all $i \in S$.
Since $\sum_{i \in S} m_i E_i$ is the divisor defined by $f \circ \pi|_X$
we get, by the divisor line bundle correspondence, that the holomorphic line bundle
$\cO_X(\sum_{i \in S} m_i E_i)$ has an induced
trivialization $\Phi : \cO_X(\sum_{i \in S} m_i E_i) \cong X \times \C$
so that the section $s$
corresponding to the holomorphic function
$f \circ \pi|_X$ satisfies $pr_2 \circ \Phi \circ s = f \circ \pi|_X$ where $pr_2 : X \times \C \lra{} \C$ is the natural projection map.
Then $(\cO_X(\sum_{i \in S} m_i E_i),
\Phi,\theta)$
is a model resolution
called a {\it model resolution associated to $f$}.

Such a resolution also has a grading
as follows:
Let $\check{X} \equiv X - \cup_{i \in S-\star_S} E_i$.
Since
$$\pi|_{\check{X}} : \check{X} \lra{} U - 0 \subset \C^{n+1}$$
is a biholomorphism we get a canonical holomorphic trivialization
$$\Phi : T\check{X} \lra{} \check{X} \times \C^{n+1}$$
as a unitary vector bundle coming from the trivialization on $T\C^{n+1}$.
The grading on $T\check{X}$ is equal to the trivial grading on $\check{X} \times \C^{n+1}$ pulled back by $\Phi$.
We will call this the {\it standard grading}.
\end{example}

We now wish to associate a pair of contact manifolds with normal bundle data to a model resolution. In the case of Example \ref{defn:modelresolutionmotivatingexample}, this is the
link of our singularity (as defined in the introduction) with some additional data.

\begin{defn} \label{defn:comptiblewithvstars}
Let $(X,(V_i)_{i \in S})$ be a resolution divisor.
A tuple of regularizations $$(\Psi_i)_{i \in S - \star_S}$$
is {\it compatible with $V_{\star_S}$}
if $\Psi_i$ is a regularization of $V_i$ for each $i \in S - \star_S$
and $$\Psi_i(\Dom(\Psi_i)|_{V_{\star_S} \cap V_i}) \subset V_{\star_S} \  \text{for all} \ i \in S - \star_S.$$
In other words,  $\Psi_i$ restricted to
$\Dom(\Psi_i)|_{V_{\star_S} \cap V_i}$
is a regularization of $V_i \cap V_{\star_S}$ inside $V_{\star_S}$ for all $i \in S - \star_S$.
\end{defn}

\begin{defn}
	Let $\Psi,\check{\Psi}$ be regularizations of a submanifold $V$
	of a manifold $X$.
	Then a smooth family of regularizations $\Psi_t$ of $V$
	{\it connects $\Psi$ and $\check{\Psi}$} if
	$\Psi$ is germ equivalent to $\Psi_0$
	and $\check{\Psi}$ is germ equivalent to $\Psi_1$.
\end{defn}

\begin{lemma} \label{lemma:connectedregularizations}
Let $(X,(V_i)_{i \in S})$ be a resolution divisor.
For any two tuples of regularizations
$(\Psi_i)_{i \in S - \star_S}$, $(\check{\Psi}_i)_{i \in S - \star_S}$
 compatible with $V_{\star_S}$,
there is a smooth family
of such regularizations
$$(\Psi^t_i)_{i \in S - \star_S}, \quad t \in [0,1]$$
compatible with $V_{\star_S}$
which connects
$(\Psi_i)_{i \in S - \star_S}$
and $(\check{\Psi}_i)_{i \in S - \star_S}$.
\end{lemma}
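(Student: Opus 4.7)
The plan is to prove the lemma by observing that the collection of regularizations $(\Psi_i)_{i \in S - \star_S}$ has no cross-compatibility conditions between distinct indices $i, j \in S-\star_S$: the requirement in Definition \ref{defn:comptiblewithvstars} only involves each $\Psi_i$ individually with $V_{\star_S}$. Thus it suffices to prove, for each fixed $i \in S-\star_S$ separately, that any two regularizations $\Psi_i, \check{\Psi}_i$ of $V_i$ satisfying $\Psi_i(\Dom(\Psi_i)|_{V_{\star_S} \cap V_i}) \subset V_{\star_S}$ can be connected through a smooth family of regularizations with this same compatibility. The global family in the lemma is then obtained by running these individual families simultaneously.

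For a fixed $i$, the identity-derivative condition in Definition \ref{defn:regularization} combined with the transverse intersection of $V_i$ and $V_{\star_S}$ implies that $\cN_X V_i|_{V_{\star_S} \cap V_i}$ is naturally identified with $\cN_{V_{\star_S}}(V_{\star_S} \cap V_i)$, and that $\Psi_i|_{\cN_X V_i|_{V_{\star_S} \cap V_i}}$ is itself a regularization of $V_{\star_S} \cap V_i$ inside $V_{\star_S}$. So the data of a compatible regularization of $V_i$ in $X$ consists of (a) a regularization of $V_{\star_S} \cap V_i$ in $V_{\star_S}$ and (b) an extension to a regularization of $V_i$ in $X$.

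To produce the desired interpolation I would follow the standard proof of uniqueness of tubular neighborhoods (see e.g.\ Hirsch, \emph{Differential Topology}, Chapter 4), adapted to the relative setting. First, cover $V_i$ by finitely many open sets $U_\alpha \subset X$ equipped with product coordinates in which $V_i$ is a linear subspace and $V_{\star_S} \cap U_\alpha$, when nonempty, is another linear subspace transverse to $V_i$. Shrinking the domains of $\Psi_i$ and $\check{\Psi}_i$ if necessary (which is permitted since we work up to germ equivalence in Definition \ref{defn:germequivalenctregulariztions}), both maps become expressible in these local charts as fibrewise diffeomorphisms of an open neighborhood of the zero section of $\cN_X V_i$ into $X$, sending the subbundle over $V_{\star_S} \cap V_i$ into $V_{\star_S}$, with identity fibrewise derivative along $V_i$. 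In each chart, the convex combination $(1-t)\Psi_i + t\check{\Psi}_i$ makes sense, remains a regularization for all $t \in [0,1]$ (after sufficient shrinking), and sends $\cN_X V_i|_{V_{\star_S} \cap V_i}$ into $V_{\star_S}$ since both endpoints do and $V_{\star_S}$ is locally a linear subspace. Finally, patch the local interpolations together using a partition of unity subordinate to the cover $\{U_\alpha\}$; the partition-of-unity convex combination still preserves compatibility with $V_{\star_S}$ and still has identity derivative along $V_i$, hence still gives a family of regularizations connecting $\Psi_i$ and $\check{\Psi}_i$ (at least after a further germ-equivalent shrinking near $V_i$).

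The main technical obstacle is ensuring that convex interpolation produces a bona fide regularization rather than merely a smooth map: the intermediate maps could fail injectivity or fail to be open embeddings if the regularizations are not already close. This is standard and is handled by shrinking the domains to a sufficiently small neighborhood of $V_i$, where the identity-derivative condition guarantees that the linear interpolants are local diffeomorphisms with the correct derivative along $V_i$. A secondary subtlety is that the patching via partition of unity must be carried out in the target $X$ in a way that preserves the condition of being a submersion-like map near $V_i$; this is handled by working in the tubular model $\cN_X V_i$ (where convex combination is unambiguous) and then composing with any one fixed regularization to transfer back to $X$, exactly as in the classical uniqueness argument for tubular neighborhoods.
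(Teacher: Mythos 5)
Your overall strategy is the right one and matches the paper's in spirit: reduce to a single index $i$ (correct, since Definition \ref{defn:comptiblewithvstars} imposes no conditions coupling distinct indices), interpolate pointwise between $\Psi_i(v)$ and $\check{\Psi}_i(v)$, use the identity-derivative condition to guarantee the interpolants are embeddings after shrinking the domain, and use some form of ``convexity'' of $V_{\star_S}$ to preserve compatibility. However, your concrete interpolation mechanism has a genuine gap at the patching step. The chart-wise convex combinations $(1-t)\Psi_i + t\check{\Psi}_i$ taken in different product charts $U_\alpha$, $U_\beta$ are \emph{different} maps on the overlap, because convex combination of points is not invariant under nonlinear changes of coordinates; and a ``partition-of-unity convex combination'' of the resulting locally defined manifold-valued maps is simply not defined --- summing points of $X$ with weights $\rho_\alpha$ requires a global affine (or at least exponential) structure that you have not supplied. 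Your fallback remark about working in the tubular model $\cN_X V_i$ does not rescue this as stated either: the total space of $\cN_X V_i$ is a manifold, not a vector space, and $(1-t)v + t F(v)$ only makes sense when $v$ and $F(v)$ lie in the same fiber, which the transition map $F = \check{\Psi}_i^{-1}\circ \Psi_i$ does not guarantee.

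The paper closes exactly this gap with one global auxiliary choice in place of charts and a partition of unity: fix a Riemannian metric on $X$ for which $V_{\star_S}$ is \emph{totally geodesic}, shrink the domain so that $\Psi_i(v)$ and $\check{\Psi}_i(v)$ are within the injectivity radius of each other, and set $\Psi_i^t(v) \equiv \gamma_v(t\,d(v))$ where $\gamma_v$ is the unique short geodesic from $\Psi_i(v)$ to $\check{\Psi}_i(v)$. This is the coordinate-free substitute for your convex combination: it is globally well defined, it preserves compatibility with $V_{\star_S}$ because a short geodesic with both endpoints in a totally geodesic submanifold stays in it, and the identity-derivative condition at points of $V_i$ passes to $\Psi_i^t$, so a further shrinking makes each $\Psi_i^t$ an embedding. (Alternatively, you could correctly execute the Hirsch-style argument you allude to: the map $F = \check{\Psi}_i^{-1}\circ\Psi_i$ fixes $V_i$ with fiberwise derivative the identity and preserves the subbundle $\cN_X V_i|_{V_{\star_S}\cap V_i}$, so the Alexander-trick isotopy $F_t(v) = F(tv)/t$, which uses only the well-defined scalar multiplication on $\cN_X V_i$, connects $F$ to the identity through maps preserving that subbundle, and $\check{\Psi}_i\circ F_t$ gives the desired family. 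But as written, your partition-of-unity step does not produce a map at all.)
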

\proof
Choose a metric making $V_{\star_S}$
into a totally geodesic submanifold.
Define $T^r X \subset TX$ to be the set of tangent vectors of length at most $r$.
Fix $i \in S$.
Choose a relatively compact neighborhood $W_i$ of $V_i$ in $X$
and let $\delta>0$ small enough so that the exponential
map restricted to $T^\delta_w X$ is a diffeomorphism onto its image for all $w$ in $W_i$.
Let $\widetilde{W}_i \subset \Psi_i^{-1}(W_i) \cap (\check{\Psi}_i)^{-1}(W_i)$
be a small enough neighborhood of $V_i$
so that the distance $d(v)$ between
$\Psi_i(v)$ and $\check{\Psi}_i(v)$ is less than $\delta$
for all $v \in \widetilde{W}_i$.
Now let $\gamma_v : [0,d(v)] \lra{} X$
be the unique geodesic of length $<\delta$ joining
$\Psi_i(v)$ and $\check{\Psi}_i(v)$
for all $v \in \widetilde{W}_i$.
Define
$$\widetilde{\Psi}^t_i : \widetilde{W}_i \lra{} X, \quad \widetilde{\Psi}^t_i(v) \equiv \gamma_v(td(v)) \quad \forall \ t \in [0,1].$$
Since $d_v \Psi$ and $d_v \check{\Psi}$
from Definition \ref{defn:regularization}
are the both identity map,
we get that $d_v \widetilde{\Psi}^t_i$ is also the identity map
for all $v \in V_i$.
Hence there is a neighborhood $\widetilde{W}'_i \subset \widetilde{W}_i$
of $V_i$
so that
$\Psi^t_i \equiv \widetilde{\Psi}^t_i|_{\widetilde{W}'}$
is a diffeomorphism onto its image for all $t \in [0,1]$.
Hence $(\Psi^t_i)_{t \in [0,1]}$ is a smooth family of regularizations
compatible with $V_{\star_S}$
which connects $\Psi_i$ and $\check{\Psi}_i$.
Therefore $(\Psi^t_i)_{i \in S - \star_S}, \ t \in [0,1]$ connects
$(\Psi_i)_{i \in S - \star_S}$
and $(\check{\Psi}_i)_{i \in S - \star_S}$.
\qed

\begin{defn} \label{defninition function compatible with}
Let $X$ be a smooth manifold and let
$(V_i)_{i \in \check{S}}$ be transversely intersecting compact codimension $2$ submanifolds of $X$.
A smooth function $$f : X  - \cup_{i \in \check{S}} V_i \lra{} \R$$
is {\it compatible with} $(V_i)_{i \in \check{S}}$ if there is
\begin{itemize}
	\item a regularization $\Psi_i$ of $V_i$,
	\item  a real number $b_i > 0$ and
	\item a
	smooth function $q_i : \Dom(\Psi_i) \lra{} [0,1]$
	equal to the square of some norm on $\cN_X V_i$ near $V_i$, equal to $1$ outside a compact subset of
	$\Dom(\Psi_i)$
	and non-zero on $\Dom(\Psi_i) - V_i$
\end{itemize}
for each $i \in S$ and a smooth function $\tau : X \lra{} \R$
so that
$$f = \sum_{i \in \check{S}} b_i  \log(q_i \circ \Psi_i^{-1}) + \tau$$
where $\log(q_i \circ \Psi_i^{-1})$
is defined to be $0$ outside $\Im(\Psi_i)$ for each $i \in S$. We will call the regularizations $(\Psi_i)_{i \in S-\star_S}$
{\it associated regularizations of $f$}.

Now suppose that we have an additional smooth submanifold $V_{\star_S}$ of $X$ so that
$(V_i)_{i \in S}$ becomes a resolution divisor where $S = \check{S} \sqcup \{\star_S\}$.
We say that $f$ is {\it compatible with $(V_i)_{i \in S}$} if it is compatible with $(V_i)_{i \in S -\star_S}$ as above with the additional property that
the associated regularizations of $f$ are compatible with $V_{\star_S}$.
As a consequence of this we have that $f|_{V_{\star_S}}$ is compatible with $(V_i \cap V_{\star_S})_{i \in S - \star_S}$.

We say that $f$ is {\it strongly compatible}
with $(V_i)_{i \in S}$ if in addition $\tau = 0$.

\end{defn}

\begin{lemma} \label{lemma:compatiblefunctionsareconnected}
	Let $(X,(V_i)_{i \in S})$ be a resolution divisor
	and let
	$$f, g : X  - \cup_{i \in S - \star_S} V_i \lra{} \R$$
	be a pair of smooth functions compatible with $(V_i)_{i \in S}$.
	Then there is a smooth family of functions
	$$f_t : X  - \cup_{i \in S - \star_S} V_i \lra{} \R, \quad t \in [0,1]$$
	compatible with  $(V_i)_{i \in S}$
	so that $f_0 = f$ and $f_1 = g$.
\end{lemma}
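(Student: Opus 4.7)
The plan is to construct the interpolating family by independently deforming each of the three pieces of data describing compatibility: the regularizations $(\Psi_i)_{i \in S - \star_S}$, the positive constants and bump-squared-norm functions $(b_i, q_i)$, and the smooth tail $\tau$. Write
\[ f = \sum_{i \in \check{S}} b_i^f \log(q_i^f \circ (\Psi_i^f)^{-1}) + \tau^f, \qquad g = \sum_{i \in \check{S}} b_i^g \log(q_i^g \circ (\Psi_i^g)^{-1}) + \tau^g \]
in the normal form of Definition \ref{defninition function compatible with}, with associated regularizations compatible with $V_{\star_S}$.

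First I would apply Lemma \ref{lemma:connectedregularizations} to obtain a smooth family $(\Psi_i^t)_{i \in S - \star_S,\ t \in [0,1]}$ of regularizations compatible with $V_{\star_S}$ such that $\Psi_i^0$ is germ-equivalent to $\Psi_i^f$ and $\Psi_i^1$ is germ-equivalent to $\Psi_i^g$. Because the definition of compatibility is unchanged if each $q_i$ is extended by $1$ outside a compact subset of $\Dom(\Psi_i)$, I may shrink every $\Dom(\Psi_i^t)$ to lie in a fixed small neighborhood of $V_i$ on which $\Psi_i^f$ and $\Psi_i^g$ both remain diffeomorphisms onto their images; after this shrinking I may replace $\Psi_i^f$ and $\Psi_i^g$ by $\Psi_i^0$ and $\Psi_i^1$ respectively, preserving the values of $f$ and $g$ on $X - \cup_{i \in S - \star_S} V_i$.

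Next I would set $b_i^t \equiv (1-t) b_i^f + t b_i^g$ (which remains positive by convexity) and $\tau^t \equiv (1-t)\tau^f + t\tau^g$. To interpolate the $q_i$, I would use the diffeomorphism $\Psi_i^t \circ (\Psi_i^0)^{-1}$ to transport $q_i^f$ to a function $\tilde q_i^{f,t}$ on $\Dom(\Psi_i^t)$, and similarly transport $q_i^g$ via $\Psi_i^t \circ (\Psi_i^1)^{-1}$ to $\tilde q_i^{g,t}$, extending each by $1$ outside its natural domain. Since the $\mathfrak{D}\Psi$ maps act as the identity to first order along $V_i$, the pullback of a squared Hermitian norm on $\cN_X V_i$ near the zero section remains a squared Hermitian norm, and the conditions of being $1$ outside a compact subset and nonzero away from $V_i$ are preserved. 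Then I would take $q_i^t \equiv (1-t)\tilde q_i^{f,t} + t\tilde q_i^{g,t}$, which lies in the required class because squared Hermitian norms on each fiber of $\cN_X V_i$ form a convex cone, positive functions are closed under convex combination, and the constant $1$ is fixed under convex combinations.

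Finally I would define
\[ f_t \equiv \sum_{i \in \check{S}} b_i^t \log(q_i^t \circ (\Psi_i^t)^{-1}) + \tau^t, \]
which is smooth in $t$, compatible with $(V_i)_{i \in S}$ by construction, and satisfies $f_0 = f$, $f_1 = g$. The main obstacle I expect is the bookkeeping around the varying domains $\Dom(\Psi_i^t)$: one must verify that the transported bump-norms $\tilde q_i^{f,t}, \tilde q_i^{g,t}$, and hence $q_i^t$, fit together into a function smooth in $(x,t)$ jointly, and that $q_i^t \circ (\Psi_i^t)^{-1}$ gives a globally smooth family on $X - V_i$ after the extension-by-$1$ convention. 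This is handled by shrinking uniformly to a tubular neighborhood on which the family $\Psi_i^t$ is a genuine smooth family of diffeomorphisms, so that all compositions and convex combinations are smooth in $(x,t)$.
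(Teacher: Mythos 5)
Your proposal follows essentially the same route as the paper: write both functions in the normal form of Definition \ref{defninition function compatible with}, connect the associated regularizations with Lemma \ref{lemma:connectedregularizations}, and convexly interpolate the constants $b_i$, the bump-norms $q_i$ and the tails $\tau$ (the paper merely reorders this, first deforming $f$ and $g$ through compatible functions so that $q_i = \check{q}_i$, $b_i = \check{b}_i$ and $\tau = \check{\tau} = 0$, and only then moving the regularizations). The one step you should drop is the ``transport'' of $q_i^f$ and $q_i^g$ along the family $\Psi_i^t$: every $q_i$ already lives on the fixed bundle $\cN_X V_i$, so no transport is needed, and as justified the pullback of a squared norm by the nonlinear transition $(\Psi_i^0)^{-1}\circ\Psi_i^t$ agrees with a squared norm only to second order along $V_i$ rather than exactly; keeping $q_i$ fixed after shrinking the region where it differs from $1$ into $\cap_{t}\Dom(\Psi_i^t)$, as the paper does, avoids this.
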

\proof
For all  $i \in S - \star_S$, there are regularizations
$\Psi_i,\check{\Psi}_i$ of $V_i$,
 smooth functions
$$q_i : \Dom(\Psi_i) \lra{} [0,1], \quad \check{q}_i : \Dom(\check{\Psi}_i) \lra{} [0,1]$$
equal to the square of a norm near $V_i$,
equal to $1$ outside a compact set and
non-zero outside $V_i$,
real numbers $b_i,\check{b}_i > 0$ and smooth functions
$\tau,\check{\tau} : X \lra{} \R$ so that
$$f = \sum_{i \in S -\star_S} b_i  \log(q_i \circ \Psi_i^{-1}) + \tau \quad \text{and} \quad
g = \sum_{i \in S -\star_S} \check{b}_i  \log(\check{q}_i \circ {\check{\Psi}}_i^{-1}) + \check{\tau}.$$
First of all, we can smoothly deform $f$ and $g$ through
smooth functions compatible with $(V_i)_{i \in S}$
by changing $q_i,\check{q}_i$, $b_i$, $\check{b}_i$ and $\tau$ and $\check{\tau}$
so that $q_i = \check{q}_i$, $b_i = \check{b}_i$ and $\tau = \check{\tau} = 0$.
Hence we can assume that
$$f = \sum_{i \in S -\star_S} b_i  \log(q_i \circ \Psi_i^{-1}) \quad \text{and} \quad
g = \sum_{i \in S -\star_S} b_i  \log(q_i \circ {\check{\Psi}}_i^{-1}).$$
Lemma \ref{lemma:connectedregularizations}
tells us that there is a smooth family of regulations $(\Psi_i^t)_{i \in S - \star_S}, \ t \in [0,1]$ compatible with
$V_{\star_S}$ connecting $(\Psi_i)_{i \in S - \star_S}$
and $(\check{\Psi}_i)_{i \in S - \star_S}$ and hence we get a smooth family
of functions
$$f_t = \sum_{i \in S -\star_S} b_i  \log(q_i \circ (\Psi_i^t)^{-1})$$
compatible with $(V_i)_{i \in S}$ (after possibly shrinking the region
on which $q_i$ is not equal to $1$ so that it fits inside $\cap_{t \in [0,1]} \Dom(\Psi_t)$).
This is a smooth family of functions compatible with $(V_i)_{i \in S}$
joining $f$ and $g$.
\qed


\begin{lemma} \label{defn:contacthypersurfacegrading}
Let $(X,\omega)$ be a symplectic manifold with a choice of grading
and $C \subset X$ a contact hypersurface with a contact form
$\alpha$ compatible with the contact structure
satisfying $d\alpha = \omega|_C$.
Then $(C,\ker(\alpha))$ has a natural induced choice of grading.
\end{lemma}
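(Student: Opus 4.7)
The plan is to exploit the symplectic-orthogonal splitting $TX|_C = \xi_C \oplus \xi_C^\omega$, equip the rank-two complement $\xi_C^\omega$ with a canonical symplectic trivialization coming from the Reeb field of $\alpha$, and then restrict the given grading on $TX|_C$ to a grading on $\xi_C$ via the resulting structure-group reduction from $Sp(2n)$ to $Sp(2n-2)$.

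First, since $\omega|_C = d\alpha$ restricts to a non-degenerate form on $\xi_C$, the sub-bundle $\xi_C \subset TX|_C$ is symplectic, so $TX|_C$ splits as $\xi_C \oplus L$ with $L \equiv \xi_C^\omega$ a rank-two symplectic sub-bundle. The Reeb vector field $R$ of $\alpha$ is a nowhere-vanishing section of $L$: it is tangent to $C$ by construction, and for any $v \in \xi_C \subset TC$ we have $\omega(R,v) = d\alpha(R,v) = 0$ because $\iota_R d\alpha = 0$ and $\omega|_C = d\alpha$. Pointwise, the set of $N' \in L$ satisfying $\omega(R,N') = 1$ is a non-empty affine subspace, so a partition-of-unity argument produces a global smooth section $N'$ with this property; the pair $(R,N')$ then yields a global symplectic trivialization $L \cong C \times \R^2$. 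The space of such $N'$ is contractible, so this trivialization, and the resulting trivial grading on $L$, is canonical up to isotopy.

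Next, the splitting $TX|_C = \xi_C \oplus L$ together with this trivialization of $L$ reduces the structure group of $TX|_C$ from $Sp(2n)$ to the block subgroup $Sp(2n-2)$, and the resulting $Sp(2n-2)$-reduction of $Fr(TX|_C)$ is naturally identified with $Fr(\xi_C)$. The key algebraic fact is that the block inclusion $Sp(2n-2) \hookrightarrow Sp(2n)$ induces an isomorphism on $\pi_1$, so its preimage inside $\widetilde{Sp}(2n)$ is exactly $\widetilde{Sp}(2n-2)$. Pulling back the $\widetilde{Sp}(2n)$-bundle $\widetilde{Fr}(TX)|_C$ along this reduction of frame bundles therefore produces an honest $\widetilde{Sp}(2n-2)$-bundle over $C$, and the restriction of the isomorphism $\iota$ from (\ref{defn:grading}) provides the compatibility isomorphism required by the definition of a grading on $\xi_C$.

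The main point to verify is independence of choices up to isotopy, which I would handle by the contractibility principle already used in the paper: for fixed $\alpha$, any two admissible transverse sections $N'$ are joined by a smooth path of admissible sections and hence produce isotopic gradings, and the space of contact forms compatible with $\xi_C$ is contractible, so the construction descends to a well-defined isotopy class of gradings on $\xi_C$. The only place where one needs to be careful is checking that the $\pi_1$-inclusion statement for $Sp(2n-2) \hookrightarrow Sp(2n)$ is used correctly, i.e., that pulling back $\widetilde{Fr}(TX)|_C$ along the $Sp(2n-2)$-reduction really lands in the $\widetilde{Sp}(2n-2)$-component; but this is immediate from the preimage identification $\widetilde{Sp}(2n-2) = \widetilde{Sp}(2n) \times_{Sp(2n)} Sp(2n-2)$, so no further obstruction arises.
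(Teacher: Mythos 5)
Your proof is correct, and the second half takes a genuinely different route from the paper's. Both arguments start from the same splitting $TX|_C \cong \xi_C \oplus L$ with $L$ the rank-two symplectic complement; note, though, that your partition-of-unity construction of $N'$ is unnecessary, since the $\omega$-dual $X_{\alpha}$ of $\alpha$ already satisfies $\omega(X_\alpha,R)=\alpha(R)=1$ and gives a canonical symplectic frame $(X_\alpha,R)$ of $L$ — this is exactly what the paper uses. From there the paper passes through the correspondence of Definition \ref{defn:gradingtrivializationcorrespondence}: it chooses a compatible $J$ splitting as $J_C\oplus i$, converts the grading on $(X,\omega)$ into a trivialization of the canonical bundle $\kappa_J$, restricts to get a trivialization of $\kappa_{J_C}$, and converts back. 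You instead work directly with frame bundles, identifying $Fr(\xi_C)$ with an $Sp(2n-2)$-reduction of $Fr(TX)|_C$ and pulling back $\widetilde{Fr}(TX)|_C$ along it, using that the block inclusion $Sp(2n-2)\hookrightarrow Sp(2n)$ is a $\pi_1$-isomorphism so that the preimage of $Sp(2n-2)$ in $\widetilde{Sp}(2n)$ is connected and equals $\widetilde{Sp}(2n-2)$. This is in effect a direct application of Corollary \ref{corollay:isomorphismduetofundamentalgroups} to the inclusion $Fr(\xi_C)\hookrightarrow Fr(TX)|_C$, and it buys you a construction that avoids choosing an almost complex structure altogether; the paper's detour through $\kappa_J$ has the advantage of producing the explicit canonical-bundle trivialization $\Phi_C$ that is then used in the index computations of Theorem \ref{theorem dynamical properties of monodromy}. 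Your handling of the independence of choices via contractibility is fine.
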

We will call such a grading the {\it induced grading on $C$}.

\proof
Let $X_\alpha$ be a smooth section of the bundle $TX|_C \lra{} C$ equal to the $\omega$-dual of $\alpha$.
Since $d\alpha = \omega|_C$ and $\alpha$ is a contact form,
we get that $X_\alpha$ is transverse to $C$.
Let $R$ be the Reeb vector field of $\alpha$
and define $\xi_C \equiv \ker(\alpha)$.
Let $H \subset TX|_C$ be the $2$-dimensional symplectic vector subbundle
spanned by $X_\alpha$ and $R$.
Then we have the following direct sum decomposition
of symplectic vector bundles
$$(TM,\omega)|_C \cong (\xi_C,d\alpha) \oplus (H,\omega|_H).$$
Since $X_\alpha,R$ is a symplectic basis for $H$ at each point of $C$,
we have a natural symplectic trivialization
of $(H,\omega|_H)$ sending $X_\alpha, R$
to the standard symplectic basis vectors on $\C$.
Hence we have a natural isomorphism:
\begin{equation} \label{eqn:bundlesplittinycontact}
(TM,\omega)|_C \cong (\xi_C,d\alpha) \oplus (\C,\omega_{std}).
\end{equation}
Now choose an almost complex structure $J$ on $M$
compatible with $\omega$ so that its restriction to
$TM|_C$ is equal to $J_C \oplus i$
with respect to the splitting 
(\ref{eqn:bundlesplittinycontact})
where $J_C$ is an almost complex structure on $\xi_C$
compatible with $d\alpha|_{\xi_C}$
and $i$ is the standard complex structure on $\C$.

Now we will use the natural correspondence between
gradings and trivializations of the canonical bundle
as stated in Appendix A.
Let $\Phi : \kappa_J \lra{} X \times \C$
be the choice of trivialization of
the canonical bundle of $(TM,J)$
associated to the grading on $(X,\omega)$
(See Definition \ref{defn:gradingtrivializationcorrespondence}
in Appendix A).
Since $J|_C = J_C \oplus i$
under the splitting (\ref{eqn:bundlesplittinycontact}),
we get a natural trivialization $\Phi_C : \kappa_{J_C} \lra{} C \times \C$ induced from the trivialization $\Phi|_C$.
The induced grading on $(C,\xi_C)$
is then the grading associated to the trivialization $\Phi_C$ as in Definition \ref{defn:gradingtrivializationcorrespondence}.
\qed

\bigskip

We wish to use functions compatible with a resolution divisor to construct it's `link'. The following proposition tells us how to at least start doing this.

\begin{prop} \label{proposition:linkprop}
Let $(\cO_X(\sum_{i \in S} m_i V_i), \Phi,\theta)$
be a model resolution.
Define $K \equiv \cup_{i \in S - \star_S} V_i$.
Let $f : X - K \lra{} \R$
be a smooth function compatible with $(V_i)_{i \in S}$.
Then there is a smooth function $g : X - K \lra{} \R$ and an open neighborhood $U$ of $K$ so that
$df(X^\omega_{\theta + dg})|_U > 0$
and
$df_\star(X^{\omega_\star}_{\theta_\star + dg_\star})|_{U \cap V_{\star_S}} > 0$
where $\omega$ is the symplectic form associated to our model resolution,
$\omega_\star \equiv \omega|_{V_{\star_S}}$,
$f_\star \equiv f|_{V_{\star_S} - K}$ and
$g_\star \equiv g|_{V_{\star_S} - K}$.
\end{prop}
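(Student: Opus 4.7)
The plan is to reduce to a local computation near each stratum of $K := \cup_{i \in S-\star_S}V_i$ using the $\omega$-regularization, and then to use positivity of the wrapping numbers together with the logarithmic behavior of $f$ to force the leading-order term of $df(X^\omega_{\theta+dg})$ to be positive and divergent near $K$. Fix an $\omega$-regularization ${\mathcal R} = ((\rho_i)_{i \in S}, (\Psi_I)_{I \subset S})$ which, using Lemma~\ref{lemma:connectedregularizations}, may be chosen compatible with $V_{\star_S}$. For each $I \subset S-\star_S$ and each point $p$ in the open stratum $V_I^\circ := V_I - \cup_{j \in S-I}V_j$, the regularization $\Psi_I$ produces local coordinates $(z_i)_{i \in I}, x$ on a neighborhood $U_p \subset X$ of $p$, with $z_i = r_ie^{i\vartheta_i}$, in which $V_i \cap U_p = \{r_i = 0\}$, $\rho_i = r_i^2$, and
$$\omega|_{U_p} = \sum_{i \in I} r_i\, dr_i \wedge d\vartheta_i + \pi^*(\omega|_{V_I}),$$
where $\pi(z,x) := x$ is the projection onto $V_I$. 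On $U_p$, after absorbing into the smooth error $\tau$ of Definition~\ref{defninition function compatible with} the difference between its local norm-squared functions and $\rho_i$ (both vanish to order two along $V_i$, so their logs differ by a smooth function), we may write
$$f|_{U_p} = \sum_{i \in I} b_i \log r_i^2 + \tau',$$
with $b_i > 0$ and $\tau' \in C^\infty(U_p)$.

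On $U_p$ the natural model primitive of $\omega$ is
$$\theta^{(I)} := \sum_{i \in I}\Bigl(\tfrac{w_i}{2\pi} + \tfrac{r_i^2}{2}\Bigr)d\vartheta_i + \pi^*\theta_I,$$
where $\theta_I$ is any smooth primitive of $\omega|_{V_I}$ on a neighborhood of $\pi(p)$. A direct check shows $d\theta^{(I)} = \omega$ and that $\theta^{(I)}$ has wrapping number $w_i$ around $V_i$ for each $i \in I$. Consequently $\theta - \theta^{(I)}$ is a closed $1$-form on $U_p - V_I$ with trivial wrapping numbers, hence exact: $\theta - \theta^{(I)} = dh_p$ for some $h_p \in C^\infty(U_p - V_I)$, and setting $g_p := -h_p$ yields $\theta + dg_p = \theta^{(I)}$ on $U_p - V_I$. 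To assemble a single global $g : X - K \to \R$, I would patch the local $g_p$'s by downward induction on $|I|$, mimicking the stratum-by-stratum partition-of-unity argument of Lemma~\ref{lemma:nicetrivialization}. The outcome is a smooth $g$ on $X - K$ such that for every $I$ the difference
$$\xi^{(I)} := \theta + dg - \theta^{(I)}$$
extends as a smooth $1$-form on a neighborhood of $V_I^\circ$ in $X$.

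Given such a $g$, positivity of $df(X^\omega_{\theta+dg})$ is a direct local computation. The convention $\omega(X^\omega_\alpha, \cdot) = \alpha$ gives $df(X^\omega_{\theta+dg}) = -(\theta+dg)(X^\omega_{df})$, and on $U_p$ one finds
$$X^\omega_{df} = -\sum_{i \in I}\frac{2b_i}{r_i^2}\,\partial_{\vartheta_i} + X^\omega_{d\tau'}.$$
Substituting $\theta + dg = \theta^{(I)} + \xi^{(I)}$ and expanding,
$$df(X^\omega_{\theta+dg}) = \sum_{i \in I}\frac{b_i w_i}{\pi r_i^2} + \sum_{i \in I} b_i + R,$$
where $R$ bundles the cross terms $-(\theta^{(I)} + \xi^{(I)})(X^\omega_{d\tau'})$ and $-\xi^{(I)}(-\sum_i 2b_ir_i^{-2}\partial_{\vartheta_i})$. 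The key quantitative point is that any $1$-form $\xi$ smooth in the Cartesian coordinates $(x_i, y_i)$ satisfies $\xi(\partial_{\vartheta_i}) = O(r_i)$ as $r_i \to 0$, while the $\omega$-dual of a smooth $1$-form has $\partial_{\vartheta_i}$-components of size at most $O(1/r_i)$; together these force $R = \sum_{i \in I} O(1/r_i) + O(1)$. Since $b_i, w_i > 0$, the leading term $\sum b_i w_i/(\pi r_i^2)$ dominates and $df(X^\omega_{\theta+dg}) \to +\infty$ as one approaches any stratum of $K$; in particular, it is positive on a sufficiently small neighborhood $U$ of $K$.

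The conclusion for $V_{\star_S}$ follows by repeating the identical argument inside $V_{\star_S}$. Because ${\mathcal R}$ is compatible with $V_{\star_S}$, each chart $U_p$ restricts to a chart of $V_{\star_S}$ adapted to the SC divisor $(V_i \cap V_{\star_S})_{i \in S - \star_S}$; the restriction $\theta_\star$ is a primitive of $\omega_\star$ with the same wrapping numbers $w_i$; $f_\star$ is compatible with $(V_i \cap V_{\star_S})_{i \in S - \star_S}$; and $g_\star$ is simply $g|_{V_{\star_S} - K}$. The main obstacle in the plan is the partition-of-unity construction of the global $g$ in the second paragraph: one must carry out the inductive patching of the local $h_p$'s so that $\xi^{(I)}$ extends smoothly across every stratum $V_I^\circ$ simultaneously, and so that its restriction to $V_{\star_S}$ inherits the analogous property. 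This is where the compatibility of ${\mathcal R}$ with $V_{\star_S}$ from Definition~\ref{defn:comptiblewithvstars}, together with positivity of the wrapping numbers (which ensures the cohomological obstruction in each local step vanishes), becomes essential.
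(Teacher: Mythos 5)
Your outline follows the same strategy as the paper's proof (local primitives adapted to the divisor built from the positivity of the wrapping numbers, then a partition-of-unity patching), but the step you flag as ``the main obstacle'' is in fact the genuinely delicate part of the argument, and as written it does not go through. The difficulty is that your local potentials $h_p$ (with $dh_p = \theta - \theta^{(I)}$ on $U_p - V_I$) are a priori unbounded near $K$: a closed $1$-form with zero periods around each $V_i$ can still have a component of size $1/r_i$ (e.g.\ $dr_i/r_i$), so $h_p$ may grow like $\log r_i$. Consequently the terms $h_p\,d\rho_p$ introduced by the cutoffs are \emph{not} smooth across $V_I$, your decomposition $\theta + dg = \theta^{(I)} + \xi^{(I)}$ with $\xi^{(I)}$ smooth fails after patching, and the error $R$ is no longer $O(1/r_i)+O(1)$. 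The paper resolves this by proving the local positivity in the stronger, scale-invariant form $df(X^\omega_{\theta+dg_i}) > c\,\|\theta+dg_i\|\,\|df\|$ together with the two-sided comparison $c\|df\| < \|\theta+dg_i\| < \check c\|df\|$ (parts (1)--(3) of Lemma \ref{lemma:locallink}, preceded by Lemma \ref{lemma:boundsontheta}); integrating the second estimate gives $|g_i| < C|f| + \check C$, which is exactly what is needed to absorb the $g_i\,d\rho_i$ terms in the final partition-of-unity step. Your qualitative statement ``the leading term dominates'' does not substitute for these comparability estimates, because without them you cannot bound the patching errors against the leading term.

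Two further points. First, the normal form you claim for $\omega$ in a regularization chart, $\omega = \sum_i r_i\,dr_i\wedge d\vartheta_i + \pi^*(\omega|_{V_I})$, is not exact: the regularization only gives $\Psi_I^*\omega = \omega_{(\rho_{I;i},\nabla^{(I;i)})}$, which in a unitary trivialization contains an additional closed $2$-form $\beta$ with nontrivial cross terms (compare Equation (\ref{eqn:sypmlecticforminwx}) in Lemma \ref{lemma:symplecticfibers}); the paper instead obtains honest Darboux coordinates near each compact piece $\overline U\cap V_i$ by a Moser argument, which is what makes the explicit formula for $X^\omega_{df}$ legitimate. Second, your treatment of the $V_{\star_S}$ statement by ``restricting the identical computation'' requires that the relevant Hamiltonian vector fields be tangent to $V_{\star_S}$; in the paper this is not automatic but is engineered into the choice of local primitive via the tangency condition \ref{item:tangencycondition} on $\widetilde g_i$ in Lemma \ref{lemma:locallink}, and a corresponding condition would have to be imposed on your $\theta^{(I)}$ and on the patched $g$.
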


The proof of this proposition
is almost exactly the same as the proof of \cite[Proposition 5.8]{McLean:isolated}.
The only difference is that we have to take into account the additional submanifold $V_{\star_S}$.
For the sake of completeness we have produced the proof below.
We also have a parameterized version of the proposition above.

\begin{prop} \label{proposition:linkpropparameterized}
	Let ${\mathcal M}_t \equiv (\cO_X(\sum_{i \in S} m_i V_i), \Phi,\theta_t), \ t \in [0,1]$
	be a smooth family of model resolutions.
	Define $K \equiv \cup_{i \in S - \star_S} V_i$.
	Let $f_t : X - K \lra{} \R,\ t \in [0,1]$
	be a smooth family of functions compatible with $(V_i)_{i \in S}$.
	Then there is a smooth family of functions $g_t : X - K \lra{} \R, \ t \in [0,1]$ and an open neighborhood $U$ of $K$ so that
	$df_t(X^{\omega_t}_{\theta + dg_t})|_U > 0$
	and
	$df_{\star,t}(X^{\omega_{\star,t}}_{\theta_\star + dg_{\star,t}})|_{U \cap V_{\star_S}} > 0$
	where $\omega_t$ is the symplectic form associated to ${\mathcal M}_t$,
	$\omega_{\star,t} \equiv \omega_t|_{V_{\star_S}}$,
	$f_{\star,t} \equiv f_t|_{V_{\star_S} - K}$ and
	$g_{\star,t} \equiv g_t|_{V_{\star_S} - K}$ for all $t \in [0,1]$.
\end{prop}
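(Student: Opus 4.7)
The plan is to run the argument of Proposition~\ref{proposition:linkprop} fiberwise in $t \in [0,1]$ while ensuring that every auxiliary choice depends smoothly on $t$. Recall that the proof of Proposition~\ref{proposition:linkprop} (following \cite[Proposition 5.8]{McLean:isolated}) is entirely local around each $V_i$ with $i \in S - \star_S$: near such a $V_i$ one uses the regularization data coming from the compatibility of $f$ together with a suitable $\omega$-regularization to bring $\theta$ and $f$ into standard normal form on a tube $T_{r,i}$ of positive radius, then writes down an explicit local correction $g^{(i)}$ making the $\omega$-dual of $\theta + dg^{(i)}$ acquire a positive radial component in the $\rho_i$-direction and so satisfying $df(X^{\omega}_{\theta + dg^{(i)}}) > 0$. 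A partition of unity subordinate to these tubes glues the $g^{(i)}$ into a global $g$. Since the partition of unity may be fixed once and for all, the only task in the parameterized setting is to make the local data and local corrections vary smoothly in $t$.

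First I would upgrade the preliminary Lemmas~\ref{lemma:connectedregularizations} and~\ref{lemma:compatiblefunctionsareconnected} to smooth families. The geodesic-exponential argument used in Lemma~\ref{lemma:connectedregularizations}, applied to a smoothly chosen family of Riemannian metrics for which $V_{\star_S}$ remains totally geodesic, produces a smooth family $(\Psi_i^t)_{i \in S - \star_S}$ of regularizations compatible with $V_{\star_S}$ and smoothly connecting any prescribed endpoints; using the contractibility of the space of $\omega_t$-compatible Hermitian structures on each normal bundle, these can be further refined to $\omega_t$-regularizations smoothly varying in $t$. A family version of Lemma~\ref{lemma:compatiblefunctionsareconnected} then supplies smoothly varying defining data $(b_i^t, q_i^t, \tau_t)$ with $f_t = \sum_{i \in S - \star_S} b_i^t \log(q_i^t \circ (\Psi_i^t)^{-1}) + \tau_t$.

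Second I would carry out the unparameterized local construction in each chart. In the coordinates provided by $\Psi_i^t$, the $1$-form $\theta_t$ takes a standard model determined up to an exact correction by the wrapping number $w(\theta_t, V_i)$, which varies smoothly in $t$; the explicit formula for $g_t^{(i)}$ from Proposition~\ref{proposition:linkprop} is then a smooth function of $(\theta_t, b_i^t, q_i^t, \tau_t)$ and hence depends smoothly on $t$. A fixed partition of unity $(\chi_i)_{i \in S - \star_S}$ subordinate to the tubes defines $g_t \equiv \sum_{i \in S - \star_S} \chi_i g_t^{(i)}$ on $U \equiv \cup_i T_{r,i}$. Because the $(\Psi_i^t)$ are compatible with $V_{\star_S}$, restricting the entire construction to $V_{\star_S}$ is the intrinsic analogue of the same construction for the SC divisor $(V_i \cap V_{\star_S})_{i \in S - \star_S}$ in $V_{\star_S}$, so the stated positivity on $U \cap V_{\star_S}$ follows automatically from the one on $U$.

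The main obstacle will be achieving a single neighborhood $U$ of $K$ on which $df_t(X^{\omega_t}_{\theta_t + dg_t}) > 0$ holds for every $t \in [0,1]$ simultaneously. Pointwise positivity near each $V_i$ is just the computation from Proposition~\ref{proposition:linkprop} performed at a fixed $t$; by openness of the positivity condition and compactness of $[0,1]$, a uniform shrinking of the tube radius $r$ produces a single $U$ that works for every $t \in [0,1]$, which completes the construction.
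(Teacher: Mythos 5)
Your proposal is correct and is essentially the paper's own argument: the paper simply asserts that the proof of Proposition \ref{proposition:linkprop} goes through verbatim with every choice parameterized by $t$, which is exactly what you carry out. The one point worth making explicit is that the uniformity of $U$ over $t\in[0,1]$ does not follow from naive openness alone (the region $U-K$ where positivity is required is non-compact), but rather from the quantitative lower bounds $df(X_{\theta+dg}) > c\,\|\theta+dg\|\,\|df\|$ of Lemma \ref{lemma:locallink} together with the two-sided bounds of Lemma \ref{lemma:boundsontheta}, whose constants can be taken uniform in $t$ by smooth dependence and compactness of $[0,1]$.
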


The proof of this proposition is almost exactly the same as the proof of Proposition \ref{proposition:linkprop} except that all variables are now parameterized by $t$.
Therefore for notational simplicity we will just prove Proposition \ref{proposition:linkprop}.
Before we prove this we need a few preliminary technical lemmas.
The following lemma is very similar to \cite[Lemma 5.12]{McLean:isolated}.
This lemma should be thought of as a local version of Proposition \ref{proposition:linkprop}.

\begin{lemma} \label{lemma:locallink}
Let $(\cO_X(\sum_{i \in S} m_i V_i), \Phi,\theta)$
be a model resolution and fix a metric $\|\cdot\|$ on $X$.  
Define $K \equiv \cup_{i \in S - \star_S} V_i$.
Fix $I \subset S$ and let $\omega$ be the symplectic form associated to our model resolution.
Let $U \subset K$ be an open set with the property that $\overline{U} \cap V_{I'}$ is contained inside a contractible Darboux chart of $V_{I'}$ for all $I' \subset S$
and so that $\overline{U} \cap V_i = \emptyset$ for all $i \in S - I$.

Then there is a smooth function $g : X - K \lra{} \R$ so that for any function $f : X - K \lra{} \R$ compatible with $(V_i)_{i \in S}$,
we have
\begin{enumerate}
\item \label{item:estimate1}
$df(X^\omega_{\theta + dg})|_{W_f} > c_f \| \theta + dg\| \|df\||_{W_f}$ for some constant $c_f > 0$ and some small neighborhood $W_f$ of $\overline{U} \cap V_I$,
\item \label{item:estimate2}
$df_\star(X^{\omega_\star}_{\theta_\star + dg_\star})|_{W_{f,\star}} > c_f \| \theta_\star + dg_{p,\star}\| \|df_\star\||_{W_{f,\star}}$ where $f_\star \equiv f|_{V_{\star_S} - K}$, $W_{f,\star} \equiv W_{f,\star} \cap V_{\star_S}$,  $\theta_\star \equiv \theta|_{V_{\star_S} - K}$ and $g_\star \equiv g|_{V_{\star_S} - K}$ and
\item \label{item:estimate3}
$a_1 \|df\| < \|\theta + dg\| < a_2 \|df\|$ inside $W_f$
for some constants $a_1,a_2>0$.
\end{enumerate}
\end{lemma}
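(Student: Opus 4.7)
\proof[Proof Proposal]
The plan is to work in a single Darboux chart around $\overline{U}\cap V_I$, use the positivity of the wrapping numbers to normalize $\theta$ to a standard local model modulo an exact form, and then read off the three estimates from an explicit computation.

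First I would use the fact that $(V_i)_{i\in S}$ is an SC divisor and that each $\overline{U}\cap V_{I'}$ ($I'\subset I$) lies in a contractible Darboux chart of $V_{I'}$ to choose a single open neighborhood $N\supset\overline{U}$ in $X$ together with complex Darboux coordinates $(z_1,\dots,z_n)$, $z_j=x_j+iy_j=r_je^{i\vartheta_j}$, in which $\omega|_N=\sum dx_j\wedge dy_j$, each $V_i\cap N$ for $i\in I$ is the coordinate hyperplane $\{z_i=0\}$, and $V_{\star_S}\cap N$ (if non-empty) is a further coordinate subspace transverse to these. On $N-K$ the $1$-form
\[
\beta\;\equiv\;\theta-\sum_{i\in I\cap(S-\star_S)}\tfrac{r_i^2}{2}\,d\vartheta_i
\]
is closed, because both $\theta$ and each $\tfrac{r_i^2}{2}d\vartheta_i$ are primitives of $\omega$ on the regions where they are defined. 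By the disk-integral description of the wrapping number (given just after Definition~\ref{defn:wrappingnumber}), the cohomology class of $\beta$ on $N-K$ is $\sum_{i\in I\cap(S-\star_S)}\tfrac{w_i}{2\pi}\,d\vartheta_i$ with $w_i\equiv w(\theta,V_i)>0$. Hence there is a smooth function $h$ on $N-K$ with
\[
\theta\;=\;\sum_{i\in I\cap(S-\star_S)}\!\!\Big(\tfrac{r_i^2}{2}+\tfrac{w_i}{2\pi}\Big)\,d\vartheta_i\;+\;dh.
\]
I then pick a bump function $\chi$ equal to $1$ on a smaller neighborhood $W$ of $\overline{U}\cap V_I$ and compactly supported in $N$, and define $g\equiv -\chi h$, extended by $0$ outside $N$. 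Note that $g$ depends only on $\theta$ and the chart, not on $f$.

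On $W$ one has $\theta+dg=\sum_{i\in I\cap(S-\star_S)}(\tfrac{r_i^2}{2}+\tfrac{w_i}{2\pi})d\vartheta_i$. A direct Darboux computation shows that the $\omega$-dual of $c\,d\vartheta_i$ is $(c/r_i)\partial_{r_i}$, so
\[
X^{\omega}_{\theta+dg}\big|_{W}\;=\;\sum_{i\in I\cap(S-\star_S)}\Big(\tfrac{r_i}{2}+\tfrac{w_i}{2\pi r_i}\Big)\,\partial_{r_i}.
\]
For any $f$ compatible with $(V_i)_{i\in S}$, shrinking $W$ to $W_f$ so that it sits inside the image of each associated regularization $\Psi_i$, one can write $q_i\circ\Psi_i^{-1}=r_i^{2}u_i$ for a smooth positive $u_i$, so
\[
df\;=\;\sum_{i\in I\cap(S-\star_S)}\tfrac{2b_i}{r_i}\,dr_i\;+\;\eta_f,
\]
where $\eta_f$ is a smooth $1$-form bounded on $W_f$. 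Pairing then gives
\[
df(X^{\omega}_{\theta+dg})\;=\;\sum_{i\in I\cap(S-\star_S)} b_i\Big(1+\tfrac{w_i}{\pi r_i^{2}}\Big)\;+\;O(1),
\]
which blows up like $\min_i r_i^{-2}$. Since both $\|df\|$ and $\|\theta+dg\|$ are of order $(\min_i r_i)^{-1}$ on $W_f$ (with bounded ratio), their product is of the same order $\min_i r_i^{-2}$. Dividing gives positive constants $c_f,a_1,a_2$ (the former depending on $f$ through $b_i,\eta_f$, the latter essentially universal) verifying (\ref{item:estimate1}) and (\ref{item:estimate3}). The statement (\ref{item:estimate2}) for $V_{\star_S}$ follows by restricting the entire local model: because the wrapping number of $\theta|_{V_{\star_S}-K}$ around $V_i\cap V_{\star_S}$ in $V_{\star_S}$ equals $w_i$ (a transverse disk to $V_i\cap V_{\star_S}$ in $V_{\star_S}$ is also a transverse disk to $V_i$ in $X$), the restricted $1$-form $\theta_\star+dg_\star$ has the same normal form in the induced Darboux chart on $V_{\star_S}$, and the pairing computation is identical.

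The main obstacle will be the first paragraph: arranging the simultaneous Darboux chart so that every $V_i$ with $i\in I$, as well as $V_{\star_S}$, appears as a coordinate hyperplane, the SC divisor orientations agree, and the wrapping-number identification $\theta=\sum(\tfrac{r_i^2}{2}+\tfrac{w_i}{2\pi})d\vartheta_i+dh$ holds on all of $N-K$ rather than on disjoint local pieces. Once the chart is in place the remaining steps are an explicit computation plus uniform control of the error term $\eta_f$ on the relatively compact set $W_f$, which is routine given the structure of a compatible function.
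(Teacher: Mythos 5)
Your computational core (normalize $\theta$ up to an exact form using the wrapping numbers, compute $X^\omega_{\theta+dg}$ explicitly as a radial field, and pair with $df \approx \sum_i \tfrac{2b_i}{r_i}dr_i$) is the same idea the paper uses, but the step you flag as "the main obstacle" is not a technical arrangement — it is a genuine gap. You need a single chart on a neighborhood of $\overline{U}\cap V_I$ in which $\omega=\sum dx_j\wedge dy_j$ \emph{and} every $V_i$ with $i\in I$ (plus $V_{\star_S}$) is a coordinate subspace. This forces the $V_i$ to intersect $\omega$-orthogonally along $V_I$, which is not part of the definition of an SC divisor: two transverse positively-intersecting symplectic $2$-planes in $(\R^4,\omega_{std})$ need not be $\omega$-orthogonal, and no symplectomorphism can make them so. The paper's proof is structured precisely to avoid this: it takes one Darboux chart $W_i$ per divisor in which only $V_i$ is standard, passes to the cover $\widetilde{W}'_i\to W'_i$ unwinding the loop around $V_i$, and builds the correction $1$-forms $\beta_i$ from functions $\widetilde{g}_i$ subject to the shift condition \ref{eqn:wrappingshiftproperty} (encoding the wrapping number), the positivity condition (b), and crucially the tangency condition \ref{item:tangencycondition}, which guarantees that $X_{d\widetilde{g}_i}$ is tangent to the proper transforms of the other divisors and to $V_{\star_S}$. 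That tangency is what replaces your orthogonality assumption and is what makes estimates (\ref{item:estimate1}) and especially (\ref{item:estimate2}) (the restriction to $V_{\star_S}$) go through; without it your claim that "the pairing computation is identical" on $V_{\star_S}$ has no justification, since the $\omega_\star$-dual inside $V_{\star_S}$ need not be the restriction of the ambient $\omega$-dual.

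Two smaller points. First, your $\beta=\theta-\sum_{i}\tfrac{r_i^2}{2}d\vartheta_i$ is not closed: $\sum_{i\in I\cap(S-\star_S)}\tfrac{r_i^2}{2}d\vartheta_i$ is a primitive only of $\sum_{i\in I\cap(S-\star_S)}dx_i\wedge dy_i$, not of all of $\omega$. You must add a bounded primitive $\theta_1$ of the remaining part (as the paper's $\Theta\equiv\theta_1+\sum_i\beta_i$ does); this contributes a bounded vector field whose pairing with $df$ is $O(1/\min_i r_i)$, lower order than the main term $\gtrsim 1/(\min_i r_i)^2$, so the estimate survives but the bookkeeping must be done. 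Second, $\eta_f$ is not bounded: writing $q_i\circ\Psi_i^{-1}=r_i^2u_i$, the function $u_i$ depends on $\vartheta_i$, so $d\log u_i$ has a $d\vartheta_i$-component of norm $O(1/r_i)$. What saves you is that $X^\omega_{\theta+dg}$ is purely radial, so $\eta_f(X^\omega_{\theta+dg})=O(1/\min_i r_i)$ is still dominated — but you should argue via the pairing, not via boundedness of $\eta_f$.
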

\proof
Define $\widehat{I} \equiv I - \star_S$ and let $n$ be the dimension of $X$ divided by $2$.
Since $\overline{U} \cap V_i$ is contained inside a contractible Darboux chart we have,
by a Moser argument, symplectic coordinates
$x^i_1,y^i_1,\cdots,x^i_n,y^i_n$
defined on some neighborhood $W_i$ of $\overline{U} \cap V_i$ in $X$
so that $V_i \cap W_i = \{x^i_1 = y^i_1 = 0\}$ for each $i \in \widehat{I}$.
We can also choose $W_i$ so that $W_i \cap V_j = \emptyset$ for all $j \in S - I$ and so that $\overline{W}_i$ is a compact contractible codimension $0$ submanifold of $X$ with boundary so that $\partial \overline{W}_i$, $(V_j)_{j \in S}$ are transversely intersecting.
Define $W'_i \equiv W_i - V_i$
and let $P_i : \widetilde{W}'_i \lra{} W'_i$ be the cover corresponding to the subgroup of $\pi_1(W'_i)$
generated by loops wrapping around $V_i$ near $V_i$
for each $i \in \widehat{I}$.
In other words, the cover corresponding to the image of $\pi_1(T_i - V_i)$ in $\pi_1(W_i')$ where $T_i$ is a small tubular neighborhood of $V_i \cap W_i$ in $W_i$.
Let $r_i : W'_i \lra{} \R, \ \vartheta_i : W'_i \lra{} \R/2\pi\Z$
be functions satisfying $x^i_1 = r_i \cos(\vartheta_i)$, $y^i_1 = r_i \sin(\vartheta_i)$.
Define $\rho_i \equiv \frac{1}{2}(r_i)^2$, $\widetilde{\rho}_i \equiv P_i^* \rho_i$,
$\widetilde{y}^i_j \equiv P_i^* y^i_j$
and $\widetilde{x}^i_j \equiv P_i^* x^i_j$ for all $i \in \widehat{I}$ and $j \in \{1,\cdots,n\}$.
Let $\widetilde{\vartheta_i} : \widetilde{W}'_i \lra{} \R$ be smooth function whose value mod $2\pi$ is equal to $P_i^* \vartheta_i$.
Also let $\widetilde{\omega} \equiv P_i^* (\omega|_{W'_i})$ and $\widetilde{\theta} \equiv P_i^*(\theta|_{W'_i})$.

Then $\widetilde{\omega} = d(\widetilde{\rho}_i) \wedge d(\widetilde{\vartheta}^i) + \sum_{j = 2}^n d(\widetilde{x}^i_j) \wedge d(\widetilde{y}^i_j)$.
Therefore there is a natural symplectic embedding of $\iota_i : \widetilde{W}'_i \hookrightarrow \C^{n+1}$
so that the standard symplectic coordinates $x_1,y_1,\cdots,x_n,y_n$ in $\C^n$ restricted to $\widetilde{W}'_i$
are $\widetilde{\rho}_i,\widetilde{\vartheta}_i,\widetilde{x}^i_2,\widetilde{y}^i_2,\cdots,\widetilde{x}^i_n,\widetilde{y}^i_n$ respectively.
%
Let $\widehat{W}'_i \subset \C^{n+1}$ be the closure of $P_i^{-1}(W'_i)$ inside $\C^{n+1}$ and let
$\widehat{V}'_{I-i}$ be the closure of $P_i^{-1}(V_{I - i})$
inside $\C^{n+1}$.
Then $\widehat{W}'_i$ is a codimension $0$ submanifold of $\C^{n+1}$ with boundary and corners
and $\widehat{V}'_{I-i}$
is a codimension $2(|I|-1)$ submanifold of $\widehat{W}'_i$ with boundary and corners where one part of the boundary is
$\check{V}_{I-i}  \equiv \{x_1 = 0\} \cap \widehat{V}'_{I-i}$.
Let $\overline{W}_i$ be the closure of $W_i$ in $X$.
The map $P_i$ extends to a map
$\overline{P}_i : \widehat{W}'_i \lra{} \overline{W}_i$
whose fibers over $\overline{W}_i \cap V_i$ are one dimensional.
Also $\check{V}_{I-i}$ is equal to $\overline{P}_i^{-1}(V_I)$ and $\widehat{V}'_{I-i} = \overline{P}_i^{-1}(V_{I-i})$.

\bigskip

\bigskip

\begin{tikzpicture}[scale = 0.9]

\usetikzlibrary{patterns}

\draw  (0,-0.5) node (v1) {} ellipse (2 and 2);

\draw (-2.6349,-2.3095) -- (3.4721,1.7186);

\draw  (v1) ellipse (0 and 0);

\draw[pattern=crosshatch dots,pattern color=gray] (0,-0.5) ellipse (2 and 1);

\draw [->](0.0234,2.6741) -- (0,1.746);crosshatch dots

\draw (3.0476,5.7698) {} -- (-3,2.3809) -- (-3,8.5) -- (3,12) -- (3.0476,5.7698);

\draw (-0.9524,9.6635) -- (-1.0083,3.5122);

\draw (1.4011,11.0402) -- (1.3673,4.8615);

\draw (-0.9645,9.6783) arc (-150.1722:29.7449:1.3723);
\draw (-1.0083,3.5122) arc (-150.1722:29.7449:1.3723);

\draw[pattern=crosshatch dots,pattern color=gray] (0.099,10.2974) node (v3) {} -- (1.0673,9.2815) -- (0.9403,3.0593) -- (-0.0243,4.0839) -- (0.099,10.2974);

\node at (0.2736,2.218) {$\overline{P}_i$};

\node at (3.8451,8.9481) {$\{x_1=0\}$};

\node at (2.4006,-1.0994) {$\overline{W}_i$};

\node at (-1.5359,6.0117) {$\widehat{W}'_i$};

\node at (-0.4499,0.7549) {$V_{I-i} \cap \overline{W}_i$};

\node at (2.4482,1.5513) {$V_i$};

\node at (2.1943,4.3926) {$\widehat{V}'_{I-i}$};

\draw[->] (1.7657,4.6148) -- (1.0514,5.5355);

\draw [<->](-2.8851,1.3767) -- (-3.4883,2.0752) -- (-3.4771,4.4085);

\node at (-3.6718,1.1387) {$\widetilde{\rho}_i = x_1$};

\node at (-4.2862,4.3926) {$\widetilde{\theta}_i = y_1$};

\draw [->](-2.3592,-2.0788) -- (-1.6608,-2.8089);

\node at (-1.9624,-2.8566) {$r_i$};

\draw [->](-2.1806,-1.888) arc (-174.9461:107.1027:0.2112);

\node at (-2.3037,-1.5825) {$\vartheta_i$};

\node at (-0.4733,6.8981) {$\check{V}_{I-i}$};

\draw [->](-0.4544,7.1569) -- (-0.0576,7.6648);

\draw[fill]  (-0.0297,-0.5781) circle (0.0452);

\end{tikzpicture}

Let $w_i>0$ be the wrapping number of $\theta$ around $V_i$.
Let $H \subset T\C^{n+1}|_{\check{V}_{I-i}}$ be a $2$ dimensional symplectic subbundle over $\check{V}_{I-i}$ containing $\ker{D\overline{P}_i|_{\check{V}_{I-i}}} = \text{Span}(\frac{\partial}{\partial y_1})|_{\check{V}_{I-i}}$
so that $H$ is contained in $T\widehat{V}'_{I-i}$.
Let $T^\perp \widehat{V}'_{I-i}$ be the set of vectors which are symplectically orthogonal to $T\widehat{V}'_{I-i}$ and define
$\widehat{H} \equiv H \oplus T^\perp \widehat{V}'_{I-i}|_{\check{V}_{I-i}}$.

Choose a smooth function $\widetilde{g}_i : \C^{n+1} \lra{} \R$ so that:
\begin{enumerate}[label=(\alph*)]
\item \label{eqn:wrappingshiftproperty}
 $\widetilde{g}_i(x_1,y_1,\cdots,x_n,y_n) = \widetilde{g}_i(x_1,y_1 + 2\pi, x_2,y_2,\cdots,x_n,y_n) + w_i$,
\item $dx_1(X_{d\widetilde{g}_i})>0$
at each point of $\check{V}_{I-i}$ and
\item \label{item:tangencycondition}
the $\omega_{\C^{n+1}}|_{\widehat{H}}$-dual of $d\widetilde{g}_i|_{\widehat{H}}$
is tangent to $\widehat{V}'_{I-i}$ at each point of $\check{V}_{I-i}$ where $\omega_{\C^{n+1}}$ is the standard symplectic structure on $\C^{n+1}$.
\end{enumerate}

Condition \ref{item:tangencycondition}
implies that $X_{d\widetilde{g}_i}$ is tangent
to $\widehat{V}'_{I-i}$ at each point of $\check{V}_{I-i}$.
By \ref{eqn:wrappingshiftproperty}
there is a closed $1$-form $\beta_i \in \Omega^1(W'_i)$
whose pullback to $\widetilde{W}'_i$
is equal to $d\widetilde{g}_i|_{\widetilde{W}'_i}$.

Define $W' \equiv \cap_{i \in \widehat{I}} W_i$.
Let $\theta_1 \in \Omega^1(W')$ be any
$1$-form of bounded norm
satisfying $d\theta_1 = \omega|_{W'}$.
Define
$$\Theta \in \Omega^1(W'-K), \quad \Theta \equiv \theta_1 + \sum_{i \in \widehat{I}} \beta_i.$$
By \ref{eqn:wrappingshiftproperty},
we get that the wrapping number of $\Theta$
around $V_i \cap W'$ is $w_i$ for all $i \in \widehat{I}$.
Hence (after shrinking $(W_i)_{i \in \widehat{I}}$ so that $W'$ deformation retracts on to $W' \cap K$)
there is a function $g : X - K \lra{} \R$
so that $(\theta + dg)|_{W'} = \Theta|_{W'}$.

Let $f : X - K \lra{} \R$ be compatible with
$(V_i)_{i \in S}$
and let $(\Psi_i)_{i \in S - \star_S}$ be its associated regularizations.
Then by definition $f|_{W'} = \sum_{i \in \widehat{I}} b_i \log(q_i \circ \Psi_i^{-1}) + \tau$
where 
\begin{itemize}
\item $\tau : W' \lra{} \R$ is a smooth function and $b_i>0$ are constants for all $i \in \widehat{I}$,
\item $q_i : \Dom(\Psi_i) \lra{} \R$
is equal to a square norm near $V_i$,
equal to $1$ outside a compact subset of $\Dom(\Psi_i)$ and non-zero on $\Dom(\Psi_i) - V_i$ and
\item $\log(q_i \circ \Psi_i^{-1})$
is defined to be zero outside $\Im(\Psi_i)$.
\end{itemize}
Define $s_i \equiv \log(q_i \circ \Psi_i^{-1})$
and $s_{i,\star} \equiv s_i|_{V_{\star_S} - K}$.
Define $\beta_{i,\star} \equiv \beta_i|_{V_{\star_S} \cap (W' - K)}$.
Since
\begin{equation}\label{eqn:boundbyoneoverdistance}
c_1/r_i < \|\beta_i\| < c_2/r_i, \quad
c_1/r_i < \|ds_i\| < c_2/r_i
\end{equation}
for some $c_1,c_2>0$ and  $\|\theta_1\|$ is bounded,
part (\ref{item:estimate3}) of our lemma holds.

Since $\|\tau\|$ and $\|\theta_1\|$ are bounded and since Equation (\ref{eqn:boundbyoneoverdistance}) holds,
it is sufficient for us to prove the following statements:
\begin{enumerate}
\item $ds_i(\sum_{j \in \widehat{I}} X_{\beta_i}) > \frac{c_f}{(r_i)^2}$ inside some small neighborhood $W_f$ of $W' \cap V_I$ and some constant $c_f > 0$  for all $i \in \widehat{I}$,
\item $ds_{i,\star}(\sum_{j \in \widehat{I}} X_{\beta_{j,\star}}) > \frac{c_f}{(r_i)^2}$ inside $W_f \cap V_{\star_S}$   for all $i \in \widehat{I}$.
\end{enumerate}

Let $\widehat{V}_{\star_S}$ be closure of $P_i^{-1}(V_{\star_S})$ inside $\C^{n+1}$.
Let $|\cdot|$ be the standard norm on $\C^{n+1}$.
Since there is  diffeomorphism $\Phi_i : W_i \lra{} W_i$
which is the identity on $V_i \cap W_i$, fixing $V_j \cap W_i$ for all $j \in I-i$
that is also isotopic to the identity through such diffeomorphisms and
pulling back $s_i$ to $\log(\rho_i)$, we have that
inequalities (1) and (2) above can be deduced from:
\begin{enumerate}
	\item $d\widetilde{\rho}_i(D\widetilde{\Phi}_i(X_{d\widetilde{g}_i})) > c'_f$ inside
	$\widetilde{W}_f - \overline{P}_i^{-1}(V_i)$  for some small neighborhood $\widetilde{W}_f$ of $\check{V}_{I-i}$ and some constant $c'_f > 0$ where $\widetilde{\Phi}_i : \widetilde{W}_i' \lra{} \widetilde{W}_i'$ is a lift of $\Phi|_{W_i'} : W_i' \lra{} W_i'$  ,
	\item $d\widetilde{\rho}_i(D\widetilde{\Phi}_i(X_{P_i^*\beta_j})) \to 0$ as we approach 
	$\check{V}_{I-i}$
	for all $j \in \widehat{I} - i$ and
	\item $X_{d\widetilde{g}_i}$  is tangent to $\widehat{V}_{\star_S}$ along $\check{V}_{I-i}$ for all $i \in \widehat{I}$.
\end{enumerate}
These properties follow from \ref{eqn:wrappingshiftproperty}-\ref{item:tangencycondition} above.
\qed

\begin{lemma} \label{lemma:boundsontheta}
Let $(\cO_X(\sum_{i \in S} m_i V_i), \Phi,\theta)$
be a model resolution and fix a metric $\|\cdot\|$ on $X$. 
Define $K \equiv \cup_{i \in S - \star_S} V_i$ and let
$f : X - K \lra{} \R$ be a smooth function
compatible with $(V_i)_{i \in S}$.
Then there is a smooth function $h : X - K \lra{} \R$ and constants $a_1, a_2 > 0$
so that
$a_1 \|df\| < \|\theta + dh\| < a_2 \|df\|$ near $K$.
\end{lemma}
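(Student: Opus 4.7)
The plan is to glue the local potentials provided by Lemma \ref{lemma:locallink} via a partition of unity. Since each $V_i$ with $i \neq \star_S$ is compact, $K = \cup_{i \in S - \star_S} V_i$ is compact. First I would cover $K$ by finitely many open sets $U_1, \dots, U_N \subset X$, each satisfying the hypotheses of Lemma \ref{lemma:locallink} for some index set $I_k \subset S$ (so $\overline{U}_k$ meets $V_i$ only for $i \in I_k$, and $\overline{U}_k \cap V_{I'}$ lies in a contractible Darboux chart of $V_{I'}$ for every $I' \subset S$). The cover would be chosen so that for every $x \in K$ lying in a stratum $V_J^\circ \equiv V_J - \cup_{i \in S - J} V_i$ with $J \subset S - \star_S$, some $k$ has $I_k = J$ and $x$ in the neighborhood $W_{f,k}$ of $\overline{U}_k \cap V_{I_k}$ provided by the lemma.

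Lemma \ref{lemma:locallink} applied to each $U_k$ gives a function $g_k : X - K \to \R$ together with constants $a_1^k, a_2^k > 0$ satisfying $a_1^k \|df\| < \|\theta + dg_k\| < a_2^k \|df\|$ on $W_{f,k}$. Picking a partition of unity $\{\chi_k\}$ on a neighborhood of $K$ subordinate to $\{U_k\}$, I would define $h \equiv \sum_k \chi_k g_k$ on $X - K$, extended arbitrarily elsewhere. Since $\sum_k \chi_k \equiv 1$ near $K$,
\[ \theta + dh = \sum_{k=1}^N \chi_k (\theta + dg_k) + \sum_{k=1}^N g_k \, d\chi_k. \]

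The main point is to control these two terms near $K$; this is the principal obstacle. Inspection of the construction in the proof of Lemma \ref{lemma:locallink} shows that near $\overline{U}_k \cap V_{I_k}$ one has $\theta + dg_k = \sum_{j \in I_k} (w_j/2\pi)\, d\vartheta_j^{(k)} + O(1)$, where $\vartheta_j^{(k)}$ is the local angular coordinate around $V_j$ in the chart $U_k$; this also shows $g_k = O(|\log r|)$ near $K$, since $dg_k = \Theta_k - \theta$ is the difference of two $1$-forms with matching wrapping numbers around each $V_i$. Because $f$ is compatible with $(V_i)_{i \in S}$, its local form $f = \sum b_i \log(q_i) + \tau$ with $q_i \sim r_i^2$ gives $\|df\| \sim \sum_i 1/r_i$ near $K$, so the error term $\sum_k g_k\, d\chi_k = O(|\log r|)$ is negligible compared to $\|df\|$. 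The crucial observation is that two local angular coordinates $\vartheta_j^{(k)}, \vartheta_j^{(k')}$ on $U_k \cap U_{k'}$ differ by a smooth single-valued function---both have winding $1$ around $V_j$, and the $\omega$-regularization pins down the normal-bundle trivialization up to $U(1)$---so $d\vartheta_j^{(k)} - d\vartheta_j^{(k')}$ is bounded. Consequently $\sum_k \chi_k\, d\vartheta_j^{(k)}$ differs from any fixed $d\vartheta_j^{(k_0)}$ by a bounded $1$-form and retains the dominant $\sim 1/r_j$ behavior in norm, which combined with $\|df\| \sim \sum_i 1/r_i$ yields the two-sided estimate $a_1 \|df\| < \|\theta + dh\| < a_2 \|df\|$ near $K$.
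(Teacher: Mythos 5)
Your overall strategy --- cover $K$ by finitely many charts to which Lemma \ref{lemma:locallink} applies, glue the resulting local potentials $g_k$ with cutoff functions, and show that the zeroth-order error $\sum_k g_k\,d\chi_k$ is $O(|f|)=o(\|df\|)$ --- is the same as the paper's (the paper organizes the gluing as an induction over the charts rather than a single partition of unity, but that is cosmetic). The gap is in your treatment of the principal term $\sum_k\chi_k(\theta+dg_k)$ on overlaps, i.e.\ the lower bound. You assert that each $\theta+dg_k$ equals $\sum_{j}(w_j/2\pi)\,d\vartheta_j^{(k)}+O(1)$ and that angle forms from different charts differ by bounded $1$-forms, so the convex combination keeps norm $\gtrsim\|df\|$. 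The construction in Lemma \ref{lemma:locallink} does not deliver this: it produces $\Theta=\theta_1+\sum_j\beta_j$ where $\beta_j$ is the pushdown of $d\widetilde{g}_j$ for a function $\widetilde{g}_j$ constrained only by conditions (a)--(c); the angular part of $\beta_j$ is $(\partial\widetilde{g}_j/\partial y_1)\,d\vartheta_j$ with a \emph{non-constant} coefficient, and only the period (the wrapping number $w_j$) is pinned down. Two closed $1$-forms with equal periods, each of norm $O(1/r_j)$, can differ by an exact form which is itself of size $1/r_j$ (e.g.\ $d(\sin\vartheta_j)=\cos\vartheta_j\,d\vartheta_j$), so the differences $dg_k-dg_{k'}$ need not be bounded; and a pointwise convex combination of $1$-forms each of norm $\geq a_1\|df\|$ can degenerate (take $\alpha$ and $\alpha+v$ with $v=-2\alpha$; at weight $1/2$ the combination vanishes, while all the hypotheses on norms are respected). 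So the step "retains the dominant $\sim 1/r_j$ behavior" would fail as justified.

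The correct repair is to carry along part (\ref{item:estimate1}) of Lemma \ref{lemma:locallink} rather than only part (\ref{item:estimate3}): each $\theta+dg_k$ satisfies $df(X^\omega_{\theta+dg_k})>c\,\|\theta+dg_k\|\,\|df\|$, and since $\beta\mapsto df(X^\omega_\beta)$ is linear, the glued form satisfies $df\bigl(X^\omega_{\sum_k\chi_k(\theta+dg_k)}\bigr)>c\,a_1\|df\|^2$, which together with $|df(X^\omega_\beta)|\leq C\|\beta\|\,\|df\|$ forces $\bigl\|\sum_k\chi_k(\theta+dg_k)\bigr\|\gtrsim\|df\|$. This is precisely how the analogous partition-of-unity gluing is carried out in the proof of Proposition \ref{proposition:linkprop}. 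A smaller point: your bound $g_k=O(|\log r|)$ via $dg_k=\Theta_k-\theta$ presupposes control of $\|\theta\|$ near $K$, which is not given. Since $\sum_k d\chi_k=0$, the error term only involves the differences $g_k-g_{k_0}$, whose differentials equal $(\theta+dg_k)-(\theta+dg_{k_0})$ and are therefore $O(\|df\|)$ on overlaps; integrating (using that $f$ is essentially $\sum_i b_i\log q_i$) gives $|g_k-g_{k_0}|\leq C|f|+\check{C}$ there, which is what the argument actually needs and is how the paper bounds the corresponding term $h_=$.
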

\proof
We will use part (\ref{item:estimate3}) of Lemma \ref{lemma:locallink}
and an induction argument to do this.
Choose open sets $U_1,\cdots,U_m$ in $X$ along with subsets $I_1, \cdots I_m \subset S$
so that 
\begin{itemize}
	\item $\cup_{j=1}^m (U_j \cap V_{I_j}) = K$,
	\item $\overline{U}_j \cap V_{I'}$ is contained inside a contractible Darboux chart of $V_{I'}$
	for all ${I'} \subset S$, $j \in \{1,\cdots,m\}$ and 
	\item so that $\overline{U}_j \cap V_{k} = \emptyset$ for all $j \in S - I_j$ and all $j \in \{1,\cdots,m\}$.
\end{itemize}
Let $\check{U}_1,\cdots,\check{U}_m$ be open sets of $X$ so that the closure of $\check{U}_i$ is contained inside $U_i$ for all $i \in \{1,\cdots,m\}$
and $\cup_{j=1}^m (\check{U}_j \cap V_{I_j}) = K$.
Define $U_{<k} \equiv \cup_{j<k}U_j$ and $\check{U}_{<k} \equiv \cup_{j<k}\check{U}_j$.

Suppose, by induction, there is a smooth function $h_{\prec} : X - K \lra{} \R$ and constants $a_1^{\prec},a_2^{\prec} > 0$ so that
\begin{equation} \label{eqn:hprecinequality}
a_1^{\prec} \|df\| < \|\theta + dh_{\prec}\| < a_2^{\prec} \|df\|
\end{equation}
on a neighborhood $N \subset U_{<k}$ of the closure of $\check{U}_{<k} \cap K$
for some $k \in \{1,\cdots,m\}$.
By Lemma \ref{lemma:locallink},
there is a function $h_= : X - K \lra{} \R$
and constants $a^=_1, a^=_2 > 0$ so that
\begin{equation} \label{eqn:hkinequality}
a^=_1 \|df\| < \|\theta + dh_{\prec} + dh_=\| < a^=_2 \|df\|
\end{equation}
on a neighborhood $W_k$ of $U_k \cap K$ in $X$.
Now let $\rho : X \lra{} [0,1]$
be a smooth function
equal to $0$ on a neighborhood of $\overline{\check{U}_{<k}} \cap K$
and which is $1$ outside a compact subset of $N$.
Define $h_{\preceq} \equiv h_{\prec} + \rho h_=$.
Equations
(\ref{eqn:hprecinequality}) and (\ref{eqn:hkinequality}) tell us that
$\|dh_=\| \leq (a_2^{\prec} + a_2^=)\|df\|$
near $U_k \cap (N - \check{U}_{<k}) \cap K$
 which implies that $|h_=| < C |f| + \check{C}$
 near $(N - \check{U}_{<k}) \cap K$
for some $C,\check{C}>0$.
Hence
$$a^{\preceq}_1 \|df\| < \|\theta + dh_{\preceq}\| < a^{\preceq}_2 \|df\|$$
near $\overline{\check{U}_{<k+1}} \cap K$ for some $a^{\preceq}_1,a^{\preceq}_2>0$
and so we are finished by induction.
\qed

\bigskip

\begin{proof}[Proof of Proposition \ref{proposition:linkprop}]
By Lemma \ref{lemma:boundsontheta}
we can add an exact $1$-form to $\theta$
so that 
\begin{equation} \label{eqn:boundfortheta}
b_1 \|df\|  < \|\theta\| < b_2 \|df\|
\end{equation}
inside a neighborhood $N$ of $K$ for some constants $b_1,b_2>0$.
By Lemma \ref{lemma:locallink} we can find
open sets $W_1,\cdots,W_m$ of $X$ covering $K$,
smooth functions
$g_i : X - K \lra{} \R$, $i = 1,\cdots,m$,
and a constant $c>0$ so that
\begin{enumerate}
\item \label{item:ginequality}
$df(X^\omega_{\theta + dg_i})|_{W_i} > c \| \theta + dg_i\| \|df\||_{W_i}$
\item \label{item:gstarinequality}
$df_\star(X^{\omega_\star}_{\theta_\star + dg_i})|_{W_{\star}} > c \| \theta + dg_{i,\star}\| \|df_\star\||_{W_{i,\star}}$ where $f_\star \equiv f|_{V_{\star_S} - K}$, $W_{i,\star} = W_i \cap V_{\star_S}$,  $\theta_\star = \theta|_{V_{\star_S} - K}$ and $g_{i,\star} = g_i|_{V_{\star_S} - K}$ and
\item \label{item:boundforthetaplusdgi}
$c \|df\| < \|\theta + dg_i\| < \check{c} \|df\|$ inside $W_i$ for some constants $c,\check{c}>0$.
\end{enumerate}

Now choose smooth functions $\rho_i, i = 1,\cdots, m$ 
so that $\sum_{i=1}^m \rho_i|_K = 1$
and $\rho_i = 0$ outside a compact subset of $W_i$ for each $i = 1,\cdots,m$.
We define
$$g : X - K \lra{} \R, \quad g \equiv \sum_{i = 1}^m \rho_i g_i.$$
We define $g_{\star} \equiv g|_{V_{\star_S} - K}$.
The inequality (\ref{eqn:boundfortheta})
combined with property (\ref{item:boundforthetaplusdgi}) above
tells us that
$|g_i| < C|f| + \check{C}$ for some $C,\check{C}>0$ near $W_i \cap K$.
This means that
$df(X^\omega_{\theta + dg}) > c \| \theta + dg\| \|df\|$ near $K$,
$df(X^\omega_{\theta_\star + dg_\star}) > c \| \theta_\star + dg_\star\| \|df\|$
near $V_{\star_S} \cap K$
and $a_1 \|df\|  < \|\theta + dg\| < a_2 \|df\|$ near $K$ for some constants $a_1,a_2>0$.
\end{proof}

\begin{defn} \label{defn:linkofmodelresolution}
	The {\it link} of a model resolution
		$(\cO_X(\sum_{i \in S} m_i V_i),
		\Phi,\theta)$
	is a contact pair with normal bundle data
	$(B \subset C,\xi_C,\Phi_B)$ defined as follows:
	By \cite[Lemma 4.1]{McLeanTehraniZinger:normalcrossings},
	there is a tuple of regularizations $(\Psi_i)_{i \in S- \star_S}$
	compatible with $V_{\star_S}$ as in Definition \ref{defn:comptiblewithvstars}.
	Define $K \equiv \cup_{i \in S - \star_S} V_i$.
	Let $f : X - K \lra{} \R$
	be a smooth function compatible with $(V_i)_{i \in S}$
	so that $(\Psi_i)_{i \in S - \star_S}$ are associated regularizations of $f$.
	Then $f_\star \equiv f|_{V_{\star_S}}$ is a smooth function compatible
	with $(V_i \cap V_{\star_S})_{i \in S - \star_S}$.
	Hence by Proposition \ref{proposition:linkprop}
	we have that
	$df(X_{\theta + dg}) > 0$ and  $df_\star(X_{\theta_\star + dg_\star}) > 0$
	in an open neighborhood $U$ of $K$ for some smooth function $g : X - K \lra{} \R$ where $g_{\star} \equiv g|_{V_{\star_S} - K}$ and $\theta_\star \equiv \theta|_{V_{\star_S} - K}$.
	Let $c \ll -1$ be a constant satisfying
	$f^{-1}(c) \subset U$.
	Define
	$$C \equiv f^{-1}(c), \quad B \equiv C \cap V_{\star_S}, \quad \xi_C \equiv \ker{(\theta+dg)|_C}.$$
	Finally the trivialization $\Phi_B$ of the normal bundle of $B$ in $C$
	is induced from the trivialization $\Phi$ since the normal bundle
	of $B$ is naturally isomorphic as an oriented vector bundle
	to $\cN_X{V_{\star_S}}|_B$ which in turn is naturally isomorphic to
	$\cO_X(\sum_{i \in S} m_i V_i)|_B$ since $m_{\star_S} = 1$.
	
	If $(\cO_X(\sum_{i \in S} m_i V_i),
	\Phi,\theta)$ has a grading
	then this gives us an induced grading
	on the link as in the proof of Lemma \ref{defn:contacthypersurfacegrading} since $C - B$ is a contact hypersurface
	of $(X - \cup_{i \in S - \star_S} V_i, \omega)$
	where $\omega$ is the symplectic form associated to
	our model resolution.
	We will call this the {\it induced grading} on
	$(B \subset C,\xi_C,\Phi_B)$.
	\end{defn}

	The link does not depend on the choice of neighborhood $U$,
	constant $C$ or function $f$ by the following Lemma:
	\begin{lemma} \label{lemma link invariance}
		Suppose that $(\cO_X(\sum_{i \in S} m_i V_i),
		\Phi,\theta)$ and $(\cO_{\widehat{X}}(\sum_{i \in \widehat{S}} \widehat{m}_i \widehat{V}_i),
		\widehat{\Phi},\widehat{\theta})$ are (graded) isotopic.
		Then their links are also (graded) isotopic for any choice of neighborhood $U$ constant $C$ or function $f$ chosen for each of these two model resolutions.
	\end{lemma}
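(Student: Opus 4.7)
The plan is to deduce this lemma from the parameterized Proposition \ref{proposition:linkpropparameterized} together with a Gray-stability argument. First I would reduce to the case where both model resolutions share the same underlying manifold $X$ and divisor $(V_i)_{i \in S}$: using the diffeomorphism $\Psi$ and bijection $\iota$ from the isotopy of model resolutions, pull back the second model resolution to the first. This produces a smooth family $\mathcal{M}_t = (\cO_X(\sum_i m_i V_i), \Phi_t, \theta_t)$, $t \in [0,1]$, of model resolutions joining the two, and in the graded case a smooth family of gradings on $X - \cup_{i \in S - \star_S} V_i$ with respect to the associated symplectic forms $\omega_t$.

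Next I would handle the auxiliary data. At $t = 0, 1$ the links are defined using possibly different compatible functions $f_0, f_1$, auxiliary functions $g_0, g_1$, and constants $c_0, c_1$. By Lemma \ref{lemma:compatiblefunctionsareconnected} I can connect $f_0$ and $f_1$ by a smooth family $f_t$ of functions compatible with $(V_i)_{i \in S}$. Apply Proposition \ref{proposition:linkpropparameterized} to the family $(\mathcal{M}_t, f_t)$ to obtain a smooth family $g_t$ together with an open neighborhood $U$ of $K = \cup_{i \in S - \star_S} V_i$ on which $df_t(X^{\omega_t}_{\theta_t + dg_t}) > 0$ and the analogous estimate holds on $V_{\star_S}$. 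By compactness of $[0,1]$, a single $c \ll -1$ can be chosen with $f_t^{-1}(c) \subset U$ for all $t$. Setting $C_t \equiv f_t^{-1}(c)$, $B_t \equiv C_t \cap V_{\star_S}$ and $\xi_{C_t} \equiv \ker((\theta_t + dg_t)|_{C_t})$ yields a smooth $1$-parameter family of contact pairs with normal bundle data $(B_t \subset C_t, \xi_{C_t}, \Phi_{B_t})$. A separate argument, applied to the constant family of model resolutions but with varying auxiliary data, shows that the link at $t = 0$ produced here is isotopic to the link defined via the original choice $(f_0, g_0, c_0)$, and similarly at $t = 1$; this reduction is the same kind of computation so I only run it once.

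To produce the desired (graded) contactomorphism between the endpoints, I would apply a Gray-stability/Moser argument to the smooth family $(C_t, \xi_{C_t})$. Since $X^{\omega_t}_{\theta_t + dg_t}$ is transverse to $C_t$, one constructs a time-dependent vector field on $X$ whose flow carries $C_0$ diffeomorphically onto $C_t$; by arranging this vector field to be tangent to $V_{\star_S}$ in a neighborhood of $B_t$ and to pull back $(\theta_t + dg_t)|_{C_t}$ to a positive multiple of $(\theta_0 + dg_0)|_{C_0}$, the contact structure is preserved and the binding $B_0$ is carried to $B_1$. The normal bundle trivializations $\Phi_{B_t}$ vary smoothly and come from the smoothly varying trivializations $\Phi_t$ of $\cO_X(\sum_i m_i V_i)$ restricted to $B_t$, so the induced bundle map on $\cN_{C_0}B_0$ is homotopic through symplectic trivializations to $\Phi_{B_0}$. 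For the graded statement, the grading on $C_t - B_t$ induced via Lemma \ref{defn:contacthypersurfacegrading} from the grading on $\mathcal{M}_t$ varies smoothly in $t$, so pulling back along the Moser flow lifts to a grading of the contactomorphism.

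The main obstacle is the last step: arranging the Moser/Gray vector field so that it simultaneously preserves the contact condition, carries $B_0$ to $B_1$ inside $V_{\star_S}$, and respects the framings $\Phi_{B_t}$. The tangency to $V_{\star_S}$ can be arranged because $V_{\star_S}$ itself is $t$-independent and the positivity condition of Proposition \ref{proposition:linkpropparameterized} is imposed both on $X$ and on $V_{\star_S}$, so the Moser vector field naturally splits; the framing compatibility reduces to contractibility of the space of symplectic trivializations of a rank-$2$ bundle over $B_t$, which makes the required homotopy of trivializations automatic.
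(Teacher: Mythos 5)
Your proposal is correct and follows essentially the same route as the paper: the paper's own proof is precisely ``connect the compatible functions by Lemma \ref{lemma:compatiblefunctionsareconnected}, apply Proposition \ref{proposition:linkpropparameterized} to get a smooth family of contact pairs, and conclude,'' with the final Gray-stability step carried by Lemma \ref{lemma:smoothfamilyofcontactpairs}; you have simply filled in the reduction to a common $X$ and the independence of the auxiliary choices $(f,g,c)$. One small correction: the space of symplectic trivializations of a rank-$2$ symplectic bundle over $B$ is homotopy equivalent to $\text{Map}(B,S^1)$ and is \emph{not} contractible in general, so the compatibility of framings is not automatic for that reason; it is automatic because in a smooth family the composed trivialization at time $t$ traces out a path starting at $\Phi_{B_0}$ at $t=0$, which is exactly the homotopy Definition \ref{defn:contactpair} requires.
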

	\proof
		This follows from Lemma 
		\ref{lemma:compatiblefunctionsareconnected}
		and Proposition \ref{proposition:linkpropparameterized}.
	\qed

\subsection{Constructing a Contact Open Book from a Model Resolution}
\label{section making the monodromy nice}

The aim of this section is to construct a contact open book for each model resolution so that the contact pair associated to this open book is the link of our model resolution.

\begin{defn} \label{defn:oneformcompatiblewithregularization}
Let $(\cO_X(\sum_{i \in S} m_i V_i),
\Phi,\theta)$ be a model resolution with associated symplectic structure $\omega$. Suppose
$(V_i)_{i \in S}$ admits an $\omega$-regularization
\begin{equation} \label{eqn:modelregularization}
{\mathcal R} \equiv ((\rho_i)_{i \in S}, (\Psi_I)_{I \subset S})
\end{equation}
with associated Hermitian structures $(\rho_{I;i},\nabla^{(I;i)})$
on $\cN_X V_i|_{V_I}$ for each $i \in I \subset S$.
Let $\alpha_{I;i} \equiv \alpha_{\rho_{I;i},\nabla^{(I;i)}} \in \Omega^1(\cN_X V_i|_{V_I} - V_I)$ 
be the associated Hermitian connection $1$-form
on $\cN_X V_i|_{V_I}$.
Define $K \equiv \cup_{i \in S - \star_S} V_i$.
Let $w_i$ be the wrapping number of $\theta$
around $V_i$ for each $i \in S - \star_S$ and define $a_{\star_S} \equiv 0$.
Let
\begin{equation} \label{eqn:projectionmap}
pr_{I;i} : \cN_X V_I \lra{} \cN_X V_i|_{V_I} 
\end{equation}
be the natural projection map for all $I \subset S$.
We say that $\theta$ is
{\it compatible with ${\mathcal R}$} if the restriction of
\begin{equation} \label{eqn:1formequation}
(\Psi_I)^*\theta
- \sum_{i \in I} pr_{I;i}^* \left(\left(\rho_{I;i} + \frac{w_i}{2\pi}\right) \alpha_{I;i}\right)
\end{equation}
to each fiber of $\pi_{\cN_X V_I}|_{\Psi_I^{-1}(X-K)}$ is $0$
for every $I \subset S$.
\end{defn}

\begin{lemma} \label{lemma:findingcompatibleoneform}
Let $(\cO_X(\sum_{i \in S} m_i V_i),
\Phi,\theta)$ be a model resolution so that
$(V_i)_{i \in S}$ admits an $\omega$-regularization ${\mathcal R}$.
Define $K \equiv \cup_{i \in S - \star_S}V_i$.
Then there is a smooth function
$g : X - K \lra{} \R$ so that
$\theta+dg$ is compatible with $\check{\mathcal R}$
for some regularization $\check{\mathcal R}$ which is germ equivalent
to ${\mathcal R}$.
\end{lemma}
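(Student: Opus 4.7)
The plan is to construct $g$ by downward induction on $|I|$, using a fiberwise Poincar\'e lemma at each stratum, and then to glue the resulting local primitives via a partition of unity with a final germ-equivalent adjustment of $\mathcal{R}$ to absorb residual error.

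For each $I\subset S$, write $\theta_{\mathrm{loc},I}\equiv\sum_{i\in I}pr_{I;i}^{*}\bigl((\rho_{I;i}+\tfrac{w_i}{2\pi})\alpha_{I;i}\bigr)$ on $\cN_X V_I$ and consider the error $1$-form $\gamma_I\equiv\Psi_I^{*}\theta-\theta_{\mathrm{loc},I}$ on $\Psi_I^{-1}(X-K)$. First I would verify two properties: (a) the restriction of $\gamma_I$ to each fiber of $\pi_{\cN_X V_I}$ is closed, which follows because condition \eqref{equation symplectic form} forces the fiber parts of $d(\Psi_I^{*}\theta)=\Psi_I^{*}\omega$ and of $d\theta_{\mathrm{loc},I}$ to agree (any discrepancy involving $pr_{I;i}^{*}d\alpha_{I;i}$ is a horizontal curvature term which annihilates vertical vectors); and (b) the period of $\gamma_I|_{\mathrm{fiber}}$ around a small meridian of $\{v_i=0\}$ vanishes for every $i\in I\cap(S-\star_S)$, which is exactly the content of the wrapping-number computation recorded after Definition \ref{defn:wrappingnumber}. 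For $\star_S\in I$ the axis $\{v_{\star_S}=0\}$ is not removed in forming $X-K$, so no period condition is required there.

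Given (a) and (b), $\gamma_I|_{\mathrm{fiber}}$ is a closed $1$-form on each fiber of $\pi_{\cN_X V_I}|_{\Psi_I^{-1}(X-K)}$ (an open subset of $\C^{|I|}$) with vanishing periods, hence exact. I would choose smoothly varying fiberwise primitives $h_I$, normalized by demanding $h_I=0$ along a chosen smooth section of $\pi_{\cN_X V_I}$, and then proceed by downward induction on $|I|$: for the deepest strata, pick any such $h_I$; for smaller $I$, use the product Hermitian identifications $\mathfrak{D}\Psi_{I';I}$ from condition~(3) of Definition \ref{defn:omegaregularization} (which match $\theta_{\mathrm{loc},I}$ with $\theta_{\mathrm{loc},I'}$ under the embedding of a neighborhood of $V_{I'}\cap V_I$ in $\cN_X V_I$ into $\cN_X V_{I'}$) to choose $h_I$ so that it extends the previously constructed $h_{I'}$ near $V_{I'}\cap V_I$ for every $I'\supsetneq I$.

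Finally I would push forward via $\Psi_I$ to obtain functions on neighborhoods of the $V_I$ in $X$ and assemble them into a global $g:X-K\to\R$ using a partition of unity subordinate to a neighborhood cover of $\cup_{i}V_i$. Because the compatibility condition is fiberwise and linear in $\theta+dg$, the resulting $g$ makes $\theta+dg$ compatible with a regularization $\check{\mathcal{R}}$ obtained from $\mathcal{R}$ by restricting each $\Dom(\Psi_I)$ to the subregion where the inductive gluing is exact; this $\check{\mathcal{R}}$ is germ equivalent to $\mathcal{R}$, as permitted by the statement. The main obstacle will be the compatibility bookkeeping in the inductive step: one must ensure that the fiberwise primitives $h_I$ glue consistently under $\mathfrak{D}\Psi_{I';I}$ simultaneously for every nested pair $I\subset I'$, and that the partition-of-unity cross terms introduced at stratum $I$ do not destroy the exact vanishing already achieved on deeper strata. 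This should follow by iterating the two-stratum compatibility, making uniform choices of the normalizing sections, and using bump functions supported strictly inside the domain of the next-smaller regularization $\check{\Psi}_I$.
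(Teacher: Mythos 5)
Your proposal is correct and follows essentially the same route as the paper's proof: induction over the strata from deepest to shallowest, fiberwise exactness of the error $1$-form via closedness plus the wrapping-number period computation, compatible choices of fiberwise primitives across nested strata, and a final shrinking of the regularization to a germ-equivalent one. The only cosmetic difference is that the paper sidesteps your partition-of-unity bookkeeping by choosing each successive correction $g^{=}$ to vanish identically near the already-corrected strata, which is possible because the induction hypothesis makes the error form zero on those fibers, so its fiberwise primitive is constant there and can be normalized to zero.
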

\proof
This is done by induction on the strata of $\cup_i V_i$.
We will use the notation from Definition
\ref{defn:oneformcompatiblewithregularization} above.
Let $\preceq$ be a total order on the set of subsets of $S$ with the property that if $|I'| < |I|$ then $I \preceq I'$.
We write $I \prec I'$ when $I \preceq I'$ and $I \neq I'$.
Suppose for some $I^* \subset S$,
we have constructed open sets
$U^{\prec}_I$
inside $\Dom(\Psi_I)$ containing $V_I$ for all $I \prec I^*$ and
a smooth function
$g^{\prec} : X - K \lra{} \R$
with the property that
\begin{equation} \label{eqn:1formequationinductionstep}
(\Psi_I)^*(\theta + dg^{\prec})
- \sum_{i \in I} pr_{I;i}^* \left(\left(\rho_{I;i} + \frac{w_i}{2\pi}\right) \alpha_{I;i}\right)
\nonumber
\end{equation}
vanishes along each fiber of
$\pi_{\cN_X V_I}|_{U_I^{\prec} \cap \Dom(\Psi_I)}$
for all $I \prec I^*$.

We now want these properties to hold for all $I \preceq I^*$. 
For all $I \preceq I^*$, let $U^{\preceq}_I \subset \text{Dom}(\Psi_I)$
be open sets  containing $V_I$ whose closure is compact
so that the closure of $U^{\preceq}_I$
is contained in $U^\prec_I$ when $I \prec I^*$
and so that
$\pi_{\cN_X V_I}|_{U_I^{\preceq}}$
has contractible fibers for all $I \preceq I^*$.
Since the wrapping number of
$\theta$ around $V_i$ is $w_i$ for all $i \in I$
and since
$(\Psi_I)^* (d\theta) = \omega_{(\rho_{I;i},\nabla^{(I;i)})_{i \in I}}|_{\Dom(\Psi_I)}$,
we get that the restriction of
$$A \equiv (\Psi_{I^*})^* (\theta + dg^{\prec}) - \sum_{i \in I^*} pr_{I^*;i}^* \left(\left(\rho_{I^*;i} + \frac{w_i}{2\pi}\right) \alpha_{I^*;i}\right)$$
to each fiber of $\pi_{\cN_X V_I}|_{\Dom(\Psi_{I^*})}$
is exact.
Also by our induction hypothesis,
the restriction of $A$ to the fibers of $\pi_{\cN_X V_{I^*}}|_{(\Psi_{I^*})^{-1}(\Psi_I(U_I^{\prec}))}$ is $0$
for all $I \subsetneq I^*$.
This means that
there is a smooth function
$g^= : X - K\lra{} \R$
so that $g^=$ restricted to a small neighborhood of the closure
of $\Psi_I(U_I^{\preceq})$ is $0$
for all $I \subsetneq I^*$
and so that
$A + (\Psi_{I^*})^*dg^=$ restricted
to each fiber of $\pi_{\cN_X V_{I^*}}|_{U_{I^*}^{\preceq} \cap \Dom(\Psi_{I^*})}$ is $0$.
Define $g^\preceq \equiv g^\prec + g^=$.
Then
\begin{equation} \label{eqn:1formequationinductionend}
(\Psi_I)^*(\theta + dg^{\preceq})
- \sum_{i \in I} pr_{I;i}^* \left(\left(\rho_{I;i} + \frac{w_i}{2\pi}\right) \alpha_{I;i}\right)
\nonumber
\end{equation}
vanishes along each fiber of
$\pi_{\cN_X V_I}|_{U_I^{\preceq} \cap \Dom(\Psi_I)}$
for all $I \preceq I^*$.
Hence by induction we have shown that
there is a smooth function $g : X - K \lra{} \R$
and open subsets $U_I \subset \Im(\Psi_I)$
containing $V_I$
so that
$$(\Psi_I)^*(\theta + dg)
- \sum_{i \in I} pr_{I;i}^* \left(\left(\rho_{I;i} + \frac{w_i}{2\pi}\right) \alpha_{I;i}\right)
$$
vanishes along each fiber of
$\pi_{\cN_X V_I}|_{U_I \cap  \Dom(\Psi_I)}$
for all $I \subset S$.
By \cite[Lemma 5.5]{McLeanTehraniZinger:normalcrossings},
we can shrink these open subsets $U_I$
so that
$\check{\mathcal R} \equiv ((\rho_i|_{\Psi_I(U_I)})_{i \in S}, (\Psi_I|_{U_I})_{I \subset S})$
is a regularization.
\qed

\begin{defn} \label{defn:regularizationofmodelresolution}
	Let  
	$${\mathcal M} \equiv (\cO_X(\sum_{i \in S} m_i V_i), \Phi,\theta)$$
	be a model resolution with associated symplectic form $\omega$
	and let $U \subset X$ be an open set.
	A {\it regularization
	of ${\mathcal M}$ of radius $R$ along $U$}
for some $R<1$
	is an $\omega$-regularization ${\mathcal R}$ 
	of $(V_i)_{i \in S}$ as in Equation (\ref{eqn:modelregularization})
	so that
	\begin{enumerate}
		\item the line bundle
		$\cO_X(\sum_{i \in S} m_i V_i)$
		is also defined using the regularization maps $(\Psi_i)_{i \in S}$ from ${\mathcal R}$,
		\item $\Phi$ is radius $R$ compatible with ${\mathcal R}$ along $U$
		as in Definition
		\ref{defn:compatibletrivialization} and
		\item $\theta$ is compatible with ${\mathcal R}$.
	\end{enumerate}
	A {\it regularization
	of ${\mathcal M}$ along $U$}
	is a regularization of ${\mathcal M}$ of radius $R$ along $U$
	for some $R<1$ smaller than the tube radius of ${\mathcal R}$ along $U$.
\end{defn}

We wish to show that every model resolution is isotopic to one admitting a regularization as above. Before we do this we need a preliminary lemma.

\begin{lemma} \label{lemma:thetatmoving}
Let $X$ be a smooth manifold
with a smooth family of cohomologous symplectic forms
$(\omega_t)_{t \in [0,1]}$
and $(V_i)_{i \in S}$ a compact
SC divisor on $X$ with respect to $\omega_t$ for all $t \in [0,1]$.
Define $K \equiv \cup_{i \in S} V_i$ and
let $\theta$ be
a $1$-form on $X - K$ satisfying
$d\theta = \omega_0|_{X - K}$.
Then there exists a smooth family of $1$-forms $(\theta_t)_{t \in [0,1]}$
on $X - K$ so that $d\theta_t = \omega_t|_{X - K}$ for all $t \in [0,1]$
and so that
the wrapping number of $\theta_t$ around $V_j$
does not depend on $t \in [0,1]$ for all $j \in S$.
\end{lemma}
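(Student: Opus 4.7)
The plan is to construct $\theta_t$ by adding to $\theta$ a globally defined correction on $X$ that moves $\omega_0$ to $\omega_t$. Because this correction extends smoothly across $K$, it will not affect the relative cohomology class computing the wrapping numbers, so they will remain constant in $t$.

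First I would produce a smooth family of $1$-forms $\beta_t \in \Omega^1(X)$ with $\beta_0 = 0$ and $d\beta_t = \omega_t - \omega_0$. Since $[\omega_t] \in H^2(X;\R)$ is independent of $t$, each derivative $\dot{\omega}_t \equiv \frac{d}{dt}\omega_t$ is exact; a standard parametrized de Rham argument (apply the Poincar\'e homotopy operator on a finite good cover of a neighborhood of $K$ and patch with a partition of unity) produces a smooth family of primitives $\alpha_t$ of $\dot{\omega}_t$, and then $\beta_t \equiv \int_0^t \alpha_s\,ds$ has the required property. Define
\[
\theta_t \equiv \theta + \beta_t|_{X - K},
\]
so that $d\theta_t = \omega_0|_{X-K} + (\omega_t - \omega_0)|_{X - K} = \omega_t|_{X-K}$ and $\theta_0 = \theta$.

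It remains to check that the wrapping numbers are constant in $t$. Choose a neighborhood $U$ of $K$ that deformation retracts onto $K$ and a bump function $\rho : X \to [0,1]$ vanishing on $K$ and equal to $1$ outside a compact subset of $U$. Since $\rho$ vanishes on $K$, both $\rho\theta$ and $\rho\theta_t = \rho\theta + \rho\beta_t$ extend smoothly by zero across $K$, so on $U$ we may compute
\[
\omega_t - d(\rho\theta_t) = \bigl(\omega_0 + d\beta_t\bigr) - d(\rho\theta) - d(\rho\beta_t) = \bigl(\omega_0 - d(\rho\theta)\bigr) + d\bigl((1-\rho)\beta_t\bigr).
\]
The form $(1-\rho)\beta_t$ has compact support inside $U$ because $1-\rho$ does, so $d((1-\rho)\beta_t)$ represents $0$ in $H^2_c(U;\R)$. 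Taking Lefschetz duals yields $c(\omega_t,\theta_t) = c(\omega_0,\theta)$ in $H_{2n-2}(K;\R)$, and expanding in the basis of fundamental classes $\{[V_i]\}_{i \in S}$ (available by the Mayer--Vietoris argument cited in Definition \ref{defn:wrappingnumber}) shows $w(\theta_t, V_j) = w(\theta, V_j)$ for every $j \in S$ and every $t \in [0,1]$.

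The only nontrivial ingredient is the parametrized de Rham argument producing $\beta_t$; the wrapping number computation is then a direct consequence of the fact that $\beta_t$ is globally defined on $X$ rather than merely on $X - K$, which makes the difference between $\omega_t - d(\rho\theta_t)$ and $\omega_0 - d(\rho\theta)$ compactly supported exact on $U$.
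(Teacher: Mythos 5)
Your proof is correct, and at the decisive step it is actually cleaner than the paper's. Both arguments begin the same way: produce a smooth global family $\beta_t\in\Omega^1(X)$ with $d\beta_t=\omega_t-\omega_0$ and set $\theta_t=\theta+\beta_t|_{X-K}$ (the paper invokes Hodge theory for the existence of $\beta_t$, you invoke a parametrized Poincar\'e lemma; both are standard, though note you need the good cover and primitives on all of $X$, not just on a neighborhood of $K$, since $\theta_t$ must be a primitive of $\omega_t$ on all of $X-K$). The divergence is in verifying that the wrapping numbers are constant. The paper only observes that the relative classes $c_t=[\omega_t-d(\rho\theta_t)]$ and $c=[\omega_0-d(\rho\theta)]$ have the same image in $H^2(X;\R)$, and then uses the long exact sequence of the pair $(X,X-U)$ to manufacture a family of closed $1$-forms $b_t$ on $X-K$ correcting the possible discrepancy, finally taking $\theta_t=\theta+\beta_t+b_t$. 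You instead compute directly that
$$\omega_t-d(\rho\theta_t)=\bigl(\omega_0-d(\rho\theta)\bigr)+d\bigl((1-\rho)\beta_t\bigr),$$
and since $(1-\rho)\beta_t$ is compactly supported in $U$ (because $\beta_t$ is globally smooth and $1-\rho$ vanishes outside a compact subset of $U$), the correction term vanishes already in $H^2_c(U;\R)$; no $b_t$ is needed. What your route buys is the elimination of the long-exact-sequence step and of the auxiliary family $b_t$; what the paper's route buys is robustness, in that it works even if one does not choose the primitive $\theta_t$ in this specific globally-corrected form. One small point of hygiene: for $\rho\theta$ and $\rho\theta_t$ to extend smoothly by zero across $K$ you should require $\rho$ to vanish on a neighborhood of $K$, not merely on $K$ itself, since $\theta$ is only defined (and may be unbounded) on $X-K$; this is also how the paper's proof states the condition.
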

\proof
Since $\omega_t - \omega_0$ is exact for all $t \in  [0,1]$, there is (by exploiting the Hodge decomposition theorem for differential forms)
a smooth family of $1$-forms $(\beta_t)_{t \in [0,1]}$ on $X$ so that
$d\beta_t = \omega_t - \omega_0$.
Let $U \subset X$ be a neighborhood of $K$
whose closure is a compact manifold with boundary
which deformation retracts onto $K$
and $\rho : X \lra{} [0,1]$ a smooth function equal to $0$
near $K$ and equal to $1$ outside a compact subset of $U$.
Define $\check{\beta}_t \equiv \theta + \beta_t \in \Omega^1(X - K)$ for all $t \in [0,1]$.
Since $\rho \check{\beta}_t = 0$ near $K$, we can think of this as a smooth $1$-form on $X$ by defining it to be $0$ along $K$.
Let $c_t \equiv [\omega_t - d(\rho \check{\beta}_t)] \in H^2(X,X-U;\R)$
for all $t \in [0,1]$ and define
$c \equiv [\omega_0 - d(\rho \theta)] \in  H^2(X,X-U;\R)$.
Since $H^2(X,X-U;\R) = H^2(X,X-K;\R)$
we have the long exact sequence:
$$H^1(X-K;\R) \lra{\alpha} H^2(X,X-U;\R) \lra{\check{\alpha}} H^2(X;\R) \lra{} H^2(X - K;\R).$$
Since $\check{\alpha}(c_t) = \check{\alpha}(c) = [\omega_0]$ for all $t \in [0,1]$, we have a smooth family
closed $1$-forms $b_t \in \Omega^1(X - K)$ so that $\alpha(b_t) = c - c_t$ for all $t \in [0,1]$.
Let $\theta_t = \check{\beta}_t + b_t$.
Then $[\omega_t|_U - d(\rho \theta_t)] \in H^2_c(U;\R)$
is independent of $t$ which proves our lemma.
\qed

\begin{lemma} \label{lemma:modelresolutionregulatrizationexistence}
	Let  $(\cO_X(\sum_{i \in S} m_i V_i),
	\Phi,\theta)$ be a model resolution
	and let $U \subset X$ be a relatively compact open set.
	Then $(\cO_X(\sum_{i \in S} m_i V_i),
	\Phi,\theta)$ is isotopic to a model resolution
	$(\cO_X(\sum_{i \in S} m_i V_i),
	\widehat{\Phi},\widehat{\theta})$
	admitting a regularization
	along $U$.
\end{lemma}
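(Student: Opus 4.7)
The plan is to produce the desired isotopy by sequentially invoking Lemma \ref{lemma:findingcompatibleoneform} to adjust the $1$-form $\theta$, and Lemma \ref{lemma:nicetrivialization} to adjust the trivialization $\Phi$, starting from an arbitrary choice of $\omega$-regularization of the SC divisor.

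I would first choose an $\omega$-regularization $\mathcal{R}_0$ of $(V_i)_{i\in S}$; such a regularization exists by the standard neighborhood theorem for SC divisors (cf.\ \cite[Lemma 4.1]{McLeanTehraniZinger:normalcrossings}). Applying Lemma \ref{lemma:findingcompatibleoneform} to $\mathcal{R}_0$ then yields a smooth function $g : X - K \to \R$, where $K \equiv \cup_{i \in S - \star_S} V_i$, together with an $\omega$-regularization $\mathcal{R}$ germ-equivalent to $\mathcal{R}_0$, such that $\widehat{\theta} \equiv \theta + dg$ is compatible with $\mathcal{R}$ in the sense of Definition \ref{defn:oneformcompatiblewithregularization}. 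Because $\mathcal{R}$ and $\mathcal{R}_0$ are germ-equivalent, the line bundles $\cO_X(\sum_i m_i V_i)$ built from either regularization via Equation (\ref{eqn:linebundle}) are canonically isomorphic, so $\Phi$ may be viewed unambiguously as a trivialization of the $\mathcal{R}$-version. Since $U$ is relatively compact, the tube radius of $\mathcal{R}$ along $U$ is positive, so Lemma \ref{lemma:nicetrivialization} provides a trivialization $\widehat{\Phi}$ of $\cO_X(\sum_i m_i V_i)$ that is compatible with $\mathcal{R}$ along $U$, together with a smooth homotopy $(\Phi_t)_{t \in [0,1]}$ of trivializations joining $\Phi$ to $\widehat{\Phi}$.

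The candidate isotopy is
$$\mathcal{M}_t \equiv (\cO_X(\textstyle\sum_{i \in S} m_i V_i), \Phi_t, \theta + t \cdot dg), \quad t \in [0,1].$$
Since $d(\theta + t \cdot dg) = d\theta = \omega$ on $X - K$ for each $t$, the associated symplectic form is constantly $\omega$ and the SC divisor hypothesis persists; the identity $m_{\star_S} = 1$ is preserved; and the class $[\omega - d(\rho(\theta + t dg))] \in H^2_c(U)$ depends affinely on $t$, hence so does each wrapping number $w(\theta + t dg, V_i)$. At $t = 0$ this is the original positive $w_i$; at $t = 1$ compatibility of $\widehat{\theta}$ with $\mathcal{R}$ via formula (\ref{eqn:1formequation}) pins down the local structure of $\widehat{\theta}$ near each $V_i$ and forces $w(\widehat{\theta}, V_i) = w_i$; affine interpolation then ensures the wrapping number remains equal to $w_i > 0$ for every $t \in [0,1]$. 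Thus $\mathcal{M}_1$ is the required model resolution admitting $\mathcal{R}$ as a regularization of radius $R$ along $U$, for any $R$ smaller than the tube radius.

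The chief technical subtlety I anticipate is coherently handling the dependence of $\cO_X(\sum_i m_i V_i)$ on the underlying regularization data: the trivialization $\widehat{\Phi}$ produced by Lemma \ref{lemma:nicetrivialization} lives on the bundle built using $\mathcal{R}$, whereas the original $\Phi$ was defined using whatever regularization is implicit in the given model resolution. This is resolved by the canonical identification of line bundles coming from germ-equivalent regularizations noted above, combined if necessary with Lemma \ref{lemma:connectedregularizations} to smoothly connect the initial regularization to $\mathcal{R}_0$ through a family, thereby making the path $\Phi_t$ simultaneously meaningful across the isotopy; once this bookkeeping is set up, every remaining verification is routine.
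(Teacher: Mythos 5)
Your outline correctly identifies the two adjustment steps (Lemma \ref{lemma:nicetrivialization} for $\Phi$ and Lemma \ref{lemma:findingcompatibleoneform} for $\theta$), but it founders at the very first line: you cannot simply ``choose an $\omega$-regularization $\mathcal{R}_0$'' for the \emph{given} symplectic form $\omega$. An $\omega$-regularization is far more than a tubular neighborhood theorem --- Definition \ref{defn:omegaregularizationforonedivisor} demands the exact identity $\Psi^*\omega = \omega_{(\widetilde{\rho}_i,\nabla^{(i)})_{i\in I}}$, i.e.\ that $\omega$ already has the standard Hermitian-connection normal form near every stratum $V_I$. For a general SC divisor and a general $\omega$ this fails, and the citation you lean on (\cite[Lemma 4.1]{McLeanTehraniZinger:normalcrossings}) is used in this paper only to produce \emph{smooth} regularizations compatible with $V_{\star_S}$, not $\omega$-regularizations. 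The paper instead invokes \cite[Theorem 2.17]{McLeanTehraniZinger:normalcrossings} to produce a smooth family of \emph{cohomologous symplectic forms} $(\omega_t)_{t\in[0,1]}$ with $\omega_0=\omega$ such that $(V_i)_{i\in S}$ admits an $\omega_1$-regularization; the deformation of the symplectic form is the essential, non-cosmetic part of the isotopy, and it is precisely why the lemma is stated as an isotopy statement rather than an existence statement for the given data.

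This omission has a knock-on effect: your candidate isotopy $\mathcal{M}_t=(\cO_X(\sum_i m_iV_i),\Phi_t,\theta+t\,dg)$ keeps $d\theta_t=\omega$ fixed, so it can never reach a model resolution whose associated symplectic form is the deformed $\omega_1$. Once the symplectic form moves, one must also move the primitive: one needs a family $\theta_t\in\Omega^1(X-K)$ with $d\theta_t=\omega_t|_{X-K}$ whose wrapping numbers around each $V_i$ stay constant (so that each intermediate triple remains a model resolution, i.e.\ $w(\theta_t,V_i)>0$). This is exactly the content of Lemma \ref{lemma:thetatmoving}, which your argument never uses and cannot avoid. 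The remaining steps --- isotoping $\Phi$ via Lemma \ref{lemma:nicetrivialization} and then adding $dg$ via Lemma \ref{lemma:findingcompatibleoneform} (after passing to a germ-equivalent regularization) --- match the paper, and your bookkeeping about line bundles built from germ-equivalent regularizations is fine; but as written the proof is incomplete without the deformation $\omega\rightsquigarrow\omega_1$ and the accompanying wrapping-number-preserving deformation of $\theta$.
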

\proof
By \cite[Theorem 2.17]{McLeanTehraniZinger:normalcrossings},
there a smooth family of cohomologous
symplectic forms $(\omega_t)_{t \in [0,1]}$
so that
$\omega_0 = \omega$
and $(V_i)_{i \in S}$ admits an $\omega_1$-regularization
$${\mathcal R} \equiv ((\rho_i)_{i \in S},(\Psi_I)_{I \subset S}).$$
Lemma \ref{lemma:thetatmoving} then tells us that there
is a smooth family of $1$-forms
$(\theta_t)_{t \in [0,1]}$ on $X - \cup_{i \in S - \star_S} V_i$
so that $\theta_0 = \theta$
and $d\theta_t = \omega|_{X - \cup_{i \in S - \star_S} V_i}$
for all $t \in [0,1]$ and so that the wrapping number of $\theta_t$ around $V_i$ is independent of $t$ for each $i \in S - \star_S$.
We can assume that $\cO_X(\sum_{i \in S} m_i V_i)$ is defined using
the regularizations $(\Psi_i)_{i \in S}$ as changing the regularization needed to define a line bundle
as in Equation (\ref{eqn:linebundle})
creates an isomorphic line bundle.
Now we isotope $\Phi$ through trivializations to a trivialization $\widehat{\Phi}$ so that
$\widehat{\Phi}$ is compatible with ${\mathcal R}$ along $U$ by Lemma
\ref{lemma:nicetrivialization}.
By
Lemma \ref{lemma:findingcompatibleoneform} we have, after
replacing ${\mathcal R}$ with a germ equivalent regularization,
that $\widehat{\theta} \equiv \theta_1 + dg$ is compatible with ${\mathcal R}$
for some $g \in C^\infty(X - \cup_{i \in S - \star_S} V_i)$.

Hence
$(\cO_X(\sum_{i \in S} m_i V_i),
\Phi,\theta)$
is isotopic
to
$(\cO_X(\sum_{i \in S} m_i V_i),
\widehat{\Phi},\theta_1)$
which in turn is isotopic to
$(\cO_X(\sum_{i \in S} m_i V_i),
\widehat{\Phi},\widehat{\theta})$
which admits a regularization along $U$.
\qed

\bigskip

\begin{lemma} \label{lemma:symplecticfibers}
	Let $(\cO_X(\sum_{i \in S} m_i V_i),\Phi,\theta)$
	be a model resolution admitting a regularization along $U$
for some relatively compact open set $U \subset X$	as in Definition \ref{defn:regularizationofmodelresolution}.
	Let $\Phi_2$ be the composition of $\Phi$ with the natural projection
	$X \times \C \twoheadrightarrow \C$.
	Define
	$$\pi_\Phi : U \lra{} \C, \quad \pi_\Phi \equiv \Phi_2 \circ s_{(m_i)_{i \in S}}|_U$$
	where $s_{(m_i)_{i \in S}}$
	is the canonical section of
	$\cO_X(\sum_i m_i V_i)$ as defined in 
	Equation (\ref{eqn:canonicalsection}).
	Then there is some $\epsilon>0$ so that
	$\pi_\Phi^{-1}(z)$ is a symplectic submanifold
	of $U$ for all $z \in \D(\epsilon) - \{0\}$.
	Also the restriction of $\theta$ to $\pi_\Phi^{-1}(\{|z| = \epsilon'\})$
	is a contact form for all $0 < \epsilon' \leq \epsilon$.
\end{lemma}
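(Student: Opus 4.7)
My approach is to work locally near each stratum $V_I \cap U$ of the SC divisor, using the three compatibility conditions in Definition \ref{defn:regularizationofmodelresolution} to obtain explicit normal forms for $\omega$, $\theta$, and $\pi_\Phi$, and then to verify the symplectic and contact conditions pointwise via Poisson bracket calculations.

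Fix $I \subset S$ and a point $x_0 \in V_I \cap U - \bigcup_{j \notin I} T_{3R/4, j}$. Using $\Psi_I$ and the associated Hermitian splitting I would choose local coordinates $(z_1, \dots, z_k, w)$ with $z_i = r_i e^{i\vartheta_i}$, $V_i = \{z_i = 0\}$ for $i \in I$, and $\rho_i(x) = |z_i|^2$. Equation (\ref{equation symplectic form}) yields $\omega = \omega_{V_I}(w) + \sum_{i \in I} r_i\,dr_i \wedge d\vartheta_i$, and Equation (\ref{eqn:1formequation}) forces $\theta = \theta_H + \sum_{i \in I}\bigl(\rho_i + \tfrac{w_i}{2\pi}\bigr)\,d\vartheta_i$, where $\theta_H$ vanishes on $\partial_{r_i}$ and $\partial_{\vartheta_i}$ for all $i \in I$. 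Equation (\ref{eqn:phiequalsdefinition}), combined with the unit-norm condition (\ref{eqn:normoflinearmap}) on $\Phi_2$ restricted to $\otimes_{i \in I}(\cN_X V_i|_x)^{\otimes m_i}$, gives
\[
\pi_\Phi(x) = \lambda(w)\prod_{i \in I} e^{im_i\vartheta_i}\prod_{i \in I} a_R(\rho_i)^{m_i/2}, \qquad |\lambda(w)| = 1,
\]
so that $|\pi_\Phi|^2 = \prod_{i \in I} a_R(\rho_i)^{m_i}$ and $\arg\pi_\Phi = \arg\lambda(w) + \sum_{i \in I} m_i\vartheta_i$.

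Writing $\pi_\Phi = u + iv$ and passing to polar coordinates on $\C$ gives $\{u,v\} = |\pi_\Phi|^2\{\ln|\pi_\Phi|, \arg\pi_\Phi\}$. Since $\omega$ is block diagonal in the $w$- and fiber directions, the cross-bracket of $\ln|\pi_\Phi|$ with $\arg\lambda(w)$ vanishes, and a direct computation using $X_{dr_i} = -(1/r_i)\partial_{\vartheta_i}$ produces $\{\ln|\pi_\Phi|,\arg\pi_\Phi\} = -\sum_{i \in I} m_i^2\,a_R'(\rho_i)/a_R(\rho_i)$. For the contact condition I set $f = \ln|\pi_\Phi|$; its $\omega$-dual $X_{df} = -\sum_{i \in I} m_i\,a_R'(\rho_i)/a_R(\rho_i)\,\partial_{\vartheta_i}$ is purely vertical, so $\theta_H(X_{df}) = 0$ and $\theta(X_{df}) = -\sum_{i \in I}\bigl(\rho_i + \tfrac{w_i}{2\pi}\bigr) m_i\,a_R'(\rho_i)/a_R(\rho_i)$. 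Because $X_{df}$ spans the one-dimensional kernel of $\omega|_{\{f = c\}}$, the non-vanishing of $\theta(X_{df})$ is equivalent to $\theta$ being contact on the level set.

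The sign analysis is immediate: $a_R \leq 1$ with equality iff $\rho \geq 3R/4$, and $a_R'(\rho) > 0$ strictly for $\rho < 3R/4$. Hence on the locus $\{|\pi_\Phi| < 1\}$ at least one $\rho_i < 3R/4$, contributing a strictly negative term to each sum; the remaining terms have the same sign (using $m_i \geq 1$, $w_i \geq 0$, and $\rho_i > 0$ off $V_i$). Thus $\{u,v\} < 0$ and $\theta(X_{df}) < 0$ throughout this region. Finally, compactness of $\overline U$ together with $\pi_\Phi^{-1}(0) \cap \overline U = \bigcup_{i \in S} V_i \cap \overline U$ lets us pick $\epsilon \in (0,1)$ small enough that $\pi_\Phi^{-1}(\D(\epsilon)) \cap \overline U$ is contained in the union of the local charts above as $I$ ranges over subsets of $S$; on this region the pointwise calculation proves both assertions. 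The principal obstacle is deriving the explicit product formula for $\pi_\Phi$ from the three abstract compatibility axioms --- once this is done, the symplectic and contact conditions reduce to the sign-definite Poisson bracket computations.
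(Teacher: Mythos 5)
Your route is genuinely different from the paper's. For the symplectic condition the paper does not compute $\{u,v\}$: it observes that near a point $x\in V_I$ one has $a_R(\rho_i)=\rho_i$, writes $\Psi_I^*\omega$ in the form (\ref{eqn:sypmlecticforminwx}) as a standard product form plus a closed $2$-form $\beta$ vanishing on the fibers and on the zero section, notes that the fibers of $\prod_i z_i^{m_i}$ are symplectic for the product form, and concludes by openness of the symplectic condition since $\omega$ agrees with the product form at $x$. Your sign-definite Poisson bracket computation replaces this soft perturbation argument and, when carried out correctly, gives the slightly stronger statement that the fibers are symplectic throughout the tube region rather than only on an unspecified neighborhood of each point of $V_I$. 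For the contact condition both arguments are the same in substance: identify the characteristic line field of the level set as vertical and evaluate the term $\sum_i(\rho_i+\frac{w_i}{2\pi})\alpha_{I;i}$ of (\ref{eqn:1formequation}) on it; your version just makes the line field and the resulting positive sum explicit.

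The one genuine gap is your claimed normal form $\omega=\omega_{V_I}(w)+\sum_i r_i\,dr_i\wedge d\vartheta_i$ and the assertion that $\omega$ is ``block diagonal in the $w$- and fiber directions.'' Equation (\ref{equation symplectic form}) gives $\Psi_I^*\omega=\pi_{\cN_X V_I}^*\omega|_{V_I}+\frac{1}{2}\sum_i pr_{I;i}^*\,d(\rho_i\alpha_{I;i})$, and in a local unitary trivialization $\alpha_{I;i}=d\vartheta_i+\pi^*A_i$ for a connection $1$-form $A_i$ that cannot be made to vanish on a neighborhood; expanding produces cross terms $\frac12 d\rho_i\wedge\pi^*A_i$ and curvature terms $\frac12\rho_i\,\pi^*dA_i$ away from the zero section. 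Your computation survives this, but for a reason you do not give: the only Hamiltonian vector fields you use are those of functions of the $\rho_i$ alone, and $\iota_{\partial_{\vartheta_j}}$ annihilates every summand of $\Psi_I^*\omega$ except $\frac12 d\rho_j\wedge d\vartheta_j$, so $X_{\rho_j}=-2\partial_{\vartheta_j}$ exactly and $X_{df}$ is exactly vertical even in the presence of the cross terms. With that justification substituted for ``block diagonal,'' the vanishing of $d(\arg\lambda(w))$ and of $\theta_H$ on $X_{df}$, and hence your two sign computations, are correct. (Two smaller points to tidy up: the product formula for $\pi_\Phi$ with $|\lambda(w)|=1$ requires both (\ref{eqn:phiequalsdefinition}) and the unit-norm condition (\ref{eqn:normoflinearmap}), and is only valid on $T_{R,I}\cap\bigl(\Im(\Psi_I)|_{V_I\cap U-\cup_{j\in S-I}T_{3R/4,j}}\bigr)$, so the final compactness step must verify that these regions, over all $I$, cover $\pi_\Phi^{-1}(\D(\epsilon)-0)\cap U$ for $\epsilon$ small; and for $i=\star_S$ the coefficient is $\rho_{\star_S}$ alone, which is still strictly positive on the fibers over $\D(\epsilon)-\{0\}$ since these avoid $V_{\star_S}$.)
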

\proof
Since $U$ is relatively compact, it is sufficient for us to show that for every $x \in U \cap (\cup_{i \in S} V_i)$,
there is a small open set $U_x \subset X$ containing $x$
so that $\pi_\Phi|_{U_x \cap \pi_{\Phi}^{-1}(\C - 0)}$ has symplectic fibers and so that $\theta$ restricted to
$\pi_\Phi^{-1}(\{|z|=\epsilon'\}) \cap U_x$ is a contact form.
We will first show that the fibers are symplectic.
Suppose that $I \subset S$ is the largest set satisfying
$x \in V_I$.
Let $a_R$ be the function defined in Definition
\ref{defn:compatibletrivialization} and let
$\Pi_{(m_i)_{i \in S},I}(v)$ be as in Equation
(\ref{equation:projection}).
Near $x$, we have that $a_R(\rho_i) = \rho_i$.
Therefore
$$
\pi_\Phi(y)
=
\Phi_2(\Pi_{(m_i)_{i \in S},I}(\Psi_I^{-1}(y)))
$$
for all $y \in X$ near to $x$.
Since $\Psi_I$ is a regularization,
it is sufficient for us to show that the fibers of
$\Phi_2 \circ \Pi_{(m_i)_{i \in S},I}$
restricted to a small neighborhood of $x$ inside $\cN_X V_I$
are symplectic with respect to
\begin{equation} \label{equation of symplectic form near x}
\omega_{(\rho_{I;i},\nabla^{(I;i)})_{i \in I}} \overset{(\ref{equation symplectic form})}{=} \pi_{\cN_XV_I}^* (\omega|_{V_I}) + \frac{1}{2}  \bigoplus_{i \in I} pr_{I;i}^* \left( d(\rho_{I;i} \alpha_{I;i})\right)
\end{equation}
where
$pr_{I;i}$ is the natural projection map
from Equation (\ref{eqn:projectionmap}).
Let $W_x \subset V_I$ be a small open neighborhood of
$x$ that is contractible and choose unitary trivializations
$T_i : \cN_X V_i|_{W_x} \lra{} W_x \times \C$ for all $i \in I$.
Let $z_i : \cN_X V_I|_{W_x} \lra{} \C$ be the composition of $T_i \circ pr_{I;i}$
with the projection map to $\C$.
Hence along $W_x$, Equation (\ref{equation of symplectic form near x}) becomes:
\begin{equation} \label{eqn:sypmlecticforminwx}
\omega_{(\rho_{I;i},\nabla^{(I;i)})_{i \in I}}|_{W_x} = \pi_{\cN_XV_I}^* (\omega|_{V_I}) + \beta + \frac{i}{2}  \bigoplus_{i \in I} dz_i \wedge d\overline{z}_i
\end{equation}
where $\beta \in \Omega^2(\cN_X V_I|_{W_x})$
is a closed $2$-form whose restriction to the fibers of $\pi_{\cN_XV_I}|_{W_x}$
is zero and whose restriction to the zero section is also zero.
This means that near $x$ we have that
$\omega_{(\rho_{I;i},\nabla^{(I;i)})_{i \in I}}|_{W_x}$ is $C^0$ close to
$$\check{\omega} \equiv \pi_{\cN_XV_I}^* (\omega|_{V_I})+ \frac{i}{2}  \bigoplus_{i \in I} dz_i \wedge d\overline{z}_i.$$
We can choose our trivializations $T_i$
so that $\Phi_2  \circ \Pi_{(m_i)_{i \in S},I}$ is equal to $\prod_{i \in I} z_i^{m_i}$ inside $\pi_{\cN_XV_I}|_{W_x}$.
Since the fibers of $\prod_{i \in I} z_i^{m_i}$
are symplectic with respect to $\check{\omega}$
near $W_x \cap V_I$
and since $\omega$ is equal to $\check{\omega}$ at the point $x$,
there is a small neighborhood $\widetilde{U}_x \subset \cN_X V_I$ containing
$x$ so that the fibers of
$\prod_{i \in I} z_i|_{\widetilde{U}_x}$ are symplectic with respect to
$\omega_{(\rho_{I;i},\nabla^{(I;i)})_{i \in I}}|_{W_x}$.
Hence the fibers of $\pi_\Phi|_{U_x}$
are symplectic where $U_x \equiv \Psi_I(\widetilde{U}_x)$.

We now wish to show that $\theta$ restricted to $Y_r \equiv \pi_\Phi^{-1}(\{|z| = r\}) \cap U_x$ is a contact form for all $r>0$.
Since $\omega$ restricted to the fibers of $\pi_\Phi|_{U_x}$ are symplectic, it is sufficient for us to show that
$\theta$ restricted to the kernel of
$\omega|_{Y_r}$ is non-zero at every point for all $r > 0$.
By Equation (\ref{equation of symplectic form near x})
and the fact that
$\Phi_2 \circ \Pi_{(m_i)_{i \in S,I}}$
is equal to $\prod_{i \in I} z_i^{m_i}$
inside $\cN_X V_I|_{W_x}$,
we get that the kernel of
$\Psi_I^* \omega|_{\Psi_I^{-1}(Y_r)}$
is tangent to the fibers of
$\pi_{\cN_X V_I}$ inside $\Dom(\Psi_I)$.
Therefore,
since the restriction
of the $1$-form (\ref{eqn:1formequation})
to the fibers of $\pi_{\cN_X V_I}$ inside $\Dom(\Psi_I)$
is zero,
$\Psi_I^* \theta$ restricted to the kernel
of $\Psi_I^* \omega|_{\widetilde{U}_x}$
is non-zero so long as $\widetilde{U}_x \subset \Dom(\Psi_I)$.
Hence $\theta$ restricted to $Y_r$ is a contact form for all $r>0$ so long as $U_x \subset \Im(\Psi_I)$.
\qed

\begin{defn} \label{defn:abstractopenbookfrommodelresolution}
	Let $(\cO_X(\sum_{i \in S} m_i V_i),
	\Phi,\theta)$
	be a model resolution admitting a radius $R$ regularization
\begin{equation} \label{eqn:originalregularization}
{\mathcal R}  \equiv ((\rho_i)_{i \in I},(\Psi_I)_{I \subset S})
\end{equation}
along a relatively compact open set $U \subset X$
as in Definition \ref{defn:regularizationofmodelresolution}
where $U$ contains $K \equiv \cup_{i \in S - \star_S} V_i$.
Let $T_{r,I}$ be the radius $r$ tube of $V_I$ as in Equation
(\ref{eqn:radiusrtube}).
Let $\Phi_2 : X \lra{} \C$ be the composition
of $\Phi$ with the natural projection map
$X \times \C \lra{} \C$.
Choose $\epsilon>0$ small enough so that
\begin{equation} \label{eqn:smallenoughepsilon}
(\Phi_2 \circ s_{(m_i)_{i \in S}})^{-1}(\D_\epsilon) \cap U \subset \cup_{i \in S} T_{R,i}
\end{equation}
and so that the fibers
$(\Phi_2 \circ s_{(m_i)_{i \in S}})^{-1}(z) \cap U$
are symplectic for $z \in \D_\epsilon - 0$
by Lemma \ref{lemma:symplecticfibers}.
Let $\check{T}$ be a smoothing of the compact manifold with corners
$\cup_{i \in S - \star_S} T_{R,i}$
so that
\begin{equation} \label{eqn:horizontalboundary}
\partial \check{T} \cap T_{R,\star_S} = \Psi_{I^*}(\pi_{\cN_X V_{\star_S}}^{-1}({V_{I^*} \cap \partial \check{T}})) \cap T_{R,\star_S},
\end{equation}
$X_\theta$ points outwards along
$\partial \check{T} \cap T_{R,\star_S}$
and so that $\check{T} \subset \cup_{i \in S - \star_S} T_{R,i}$.
We also assume that the smoothing is small enough so that $\cup_{i \in S -\star_S} T_{3R/4,i}$ is contained in the interior of $\check{T}$.

Define
$$\pi_\Phi : \check{T} \lra{} \C, \quad \pi_\Phi \equiv \Phi_2 \circ s_{(m_i)_{i \in S}}|_{\check{T}}.$$

The {\it Milnor fiber} of $(\cO_X(\sum_{i \in S} m_i V_i),
\Phi,\theta)$
is the pair
$$(M,\theta_M) \equiv (\pi_\Phi^{-1}(\epsilon),\theta|_{\pi_\Phi^{-1}(\epsilon)}).$$
This is a Liouville domain for $\epsilon>0$ small enough since $X_\theta$ is tangent to $V_{\star_S} - K$ and $X_\theta$ is transverse to $\partial \check{T}$
and pointing outwards.

\begin{tikzpicture}

\draw (0,3) node (v4) {} -- (0,-1.5) node (v1) {};

\draw (0,-1.5) -- (5.5,-1.5) node (v3) {};

\draw [purple](5.5,0) node (v2) {} -- (0,0);

\draw [purple](5.5,0) -- (5.5,-1.5);

\draw [magenta](1.5,-1.5) -- (1.5,3) -- (0,3);

\draw [blue](0,-1.5) -- (0,-0.3) -- (1.8,-0.3) node (v8) {};

\draw [orange](5.5,-1) -- (1.3,-1) node (v6) {};

\draw [cyan](0.5,3) -- (0.5,-0.3) {};

\draw [orange](0.5,-0.3) -- (0.5,-0.5) node (v5) {};

\draw [orange] plot[smooth, tension=.7] coordinates {(v5) (0.6227,-0.9381) (v6)};

\node at (-0.4,1.3) {$V_{\star_S}$};

\node at (2.4695,-1.9437) {$V_i$};

\node at (2.9412,0.2774) {$\color{purple} T_{R,i}$};

\node at (2.111,1.8623) {$\color{magenta} T_{R,\star_S}$};

\node at (-2.0776,2.6925) {$\color{cyan} (\Phi_2 \circ s_{(m_i)_{i \in S}})^{-1}(\epsilon)$};

\draw [->, cyan](-2.1437,2.334) node (v7) {} -- (0.2903,1.7114);

\node at (2.6676,-0.7037) {$\color{orange} \pi_\Phi^{-1}(\epsilon)$};

\draw [->, orange] plot[smooth, tension=.7] coordinates {(v7) (-1.4267,-0.0055) (0.4601,-0.8169)};

\draw [blue](5.5,-0.3) node (v9) {} -- (5.5,-1.5);

\draw [blue] plot[smooth, tension=.7] coordinates {(v8) (2.7,-0.2) (3.6,-0.4) (4.4,-0.3) (v9)};

\node at (4.6,-0.55) {$\color{blue} \check{T}$};

\draw[<->,magenta] (0,3.5) -- (1.5,3.5);

\node at (0.5,4) {$\color{magenta} R$};

\draw[<->, purple] (6,0) -- (6,-1.5);

\node at (6.5,-0.5) {$\color{purple} R$};

\draw[<->,orange] (3.5,-1) -- (3.5,-1.5);

\node at (3.7,-1.3) {$\color{orange} \epsilon$};

\draw[<->,cyan] (0,1.1) -- (0.5,1.1);

\node at (0.2,0.9) {$\color{cyan} \epsilon$};

\end{tikzpicture}

Because
\begin{itemize}
	\item the $1$-form
	(\ref{eqn:1formequation})
	restricted to each fiber of $\pi_{\cN_X V_I}|_{\Psi_I^{-1}(X-K)}$ is $0$
	for every $I \subset S$,
	\item Equation (\ref{eqn:horizontalboundary}) holds
	\item $\Phi$ is radius $R$ compatible with ${\mathcal R}$
	along $U$ and
	\item  $\cup_{i \in S -\star_S} T_{3R/4,i}$ is contained in the interior of $\check{T}$,
\end{itemize}
we get that
the monodromy map $\phi : M \lra{} M$ of $\pi_\Phi$
around the loop
$$[0,1] \lra{} \partial \D(\epsilon), \quad t \lra{} \epsilon e^{2\pi it}$$
with respect to the symplectic connection
associated to $\omega$ 
exists and has compact support.
In addition,
since $\omega|_{\pi_\Phi^{-1}(\partial \D(\epsilon))} = d\theta|_{\pi_\Phi^{-1}(\partial \D(\epsilon))}$,
$\phi$ is an
exact symplectomorphism with compact support.
We call $(M,\theta_M,\phi)$
the {\it abstract contact open book associated to
	$(\cO_X(\sum_{i \in S} m_i V_i),
	\Phi,\theta)$}.

Now suppose that our model resolution has a choice of grading.
Since $\pi_\Phi^{-1}(\partial \D(\epsilon))$
is a contact submanifold of $(X - K,\omega)$ with contact form given by restricting
$\theta$ by Lemma \ref{lemma:symplecticfibers} after possibly shrinking $\epsilon$, we get an induced grading on this contact submanifold by Lemma
\ref{defn:contacthypersurfacegrading}.
Since the contact distribution is isotopic to $Q \equiv \ker(D\pi_\Phi|_{\pi_\Phi^{-1}(\partial(\epsilon))})$ through hyperplane distributions $Q_t, \ t \in [0,1]$ where $\omega|_{Q_t}$ is non-degenerate for all $t$,
we get a grading 
$$
\iota : \widetilde{Fr}(Q) \times_{\widetilde{Sp}(2n)} Sp(2n) \cong Fr(Q)
$$ on $Q$ and hence on $(M,d\theta_M)$.
Since the parallel transport maps of $\pi_\Phi$ along $\partial \D(\epsilon)$
have lifts to $\widetilde{Fr}(Q)$, $\phi$ has an induced grading
and hence $(M,\theta_M,\phi)$ is a graded abstract contact open book.
We will call this the {\it induced grading} on $(M,\theta_M,\phi)$.
\end{defn}

\begin{lemma} \label{lemma:smoothfamilyofcontactpairs}
	Let $(B_t \subset C, \xi_t, \Phi_t), t \in [0,1]$ be a smooth family of contact pairs.
	Then there is a smooth family of contactomorphisms
	$\Psi_t : C \lra{} C$ between
	$(B_t \subset C, \xi_0, \Phi_0)$
	and 
	$(B_t \subset C, \xi_t, \Phi_t)$
	(as in Definition \ref{defn:contactpair})
	for all $t \in [0,1]$
	so that $\Psi_0 = \text{id}$.
\end{lemma}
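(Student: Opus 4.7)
The plan is to combine Gray's stability theorem for contact structures with a contact isotopy extension theorem for codimension $2$ contact submanifolds, and then verify the normal bundle trivialization clause of Definition \ref{defn:contactpair}. I will assume that $C$ is compact (or, equivalently for the applications in this paper, that the construction is confined to a relatively compact region), which is what guarantees integrability of the time-dependent vector fields below; reading the statement as $(B_0 \subset C, \xi_0, \Phi_0) \to (B_t \subset C, \xi_t, \Phi_t)$.

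First I apply Gray stability to the smooth family $(\xi_t)_{t \in [0,1]}$: choose smoothly varying compatible contact forms $\alpha_t$ and integrate the standard time-dependent contact vector field obtained by splitting a primitive of $\dot{\alpha}_t$ using the Reeb vector field of $\alpha_t$. This produces a smooth family of diffeomorphisms $\Psi^{(1)}_t : C \lra{} C$ with $\Psi^{(1)}_0 = \mathrm{id}$ and $(\Psi^{(1)}_t)^* \xi_t = \xi_0$. Setting $\widetilde{B}_t \equiv (\Psi^{(1)}_t)^{-1}(B_t)$ yields a smooth isotopy through codimension $2$ contact submanifolds of the fixed contact manifold $(C, \xi_0)$, with $\widetilde{B}_0 = B_0$.

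Next I apply contact isotopy extension inside the fixed contact manifold $(C, \xi_0)$ to the family $\widetilde{B}_t$: the time derivative of the isotopy defines a smooth section of $(TC|_{\widetilde{B}_t})/T\widetilde{B}_t$ which, via a contact Hamiltonian constructed in a standard tubular neighborhood of $\widetilde{B}_t$ and cut off to the ambient manifold, is realized as the restriction of a global contact vector field on $(C, \xi_0)$. Integrating this vector field gives $\Psi^{(2)}_t : C \lra{} C$ with $\Psi^{(2)}_0 = \mathrm{id}$, $(\Psi^{(2)}_t)^* \xi_0 = \xi_0$ and $\Psi^{(2)}_t(B_0) = \widetilde{B}_t$. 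Then $\Psi_t \equiv \Psi^{(1)}_t \circ \Psi^{(2)}_t$ is a smooth family of contactomorphisms from $(C, \xi_0)$ to $(C, \xi_t)$ with $\Psi_t(B_0) = B_t$ and $\Psi_0 = \mathrm{id}$.

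To verify the normal bundle trivialization clause, observe that the composition
\[
\cN_C B_0 \lra{d\Psi_t|_{B_0}} \cN_C B_t \lra{\Phi_t} B_t \times \C \lra{(\Psi_t|_{B_0})^{-1} \times \mathrm{id}_\C} B_0 \times \C
\]
depends smoothly on $t \in [0,1]$, takes values in symplectic bundle trivializations of $\cN_C B_0$ (with respect to $\xi_0$) throughout, and reduces to $\Phi_0$ at $t=0$; this family is itself the homotopy through symplectic bundle trivializations required by Definition \ref{defn:contactpair}. The main obstacle is the contact isotopy extension step, since constructing a contact vector field realizing a prescribed normal velocity along a codimension $2$ contact submanifold requires a careful local model (essentially a normal form theorem for contact submanifolds together with a contact Hamiltonian cutoff), but this is standard and follows the same pattern used throughout Section \ref{section links of divisors and open books} when comparing germ-equivalent regularizations.
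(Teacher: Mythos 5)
Your proposal is correct and follows essentially the same route as the paper: Gray stability first to reduce to a fixed contact structure $\xi_0$, then a contact isotopy extension realized by a cut-off contact Hamiltonian supported near the moving submanifolds, with the normal bundle clause following because the induced family of trivializations varies smoothly in $t$ and equals $\Phi_0$ at $t=0$. Your explicit verification of the trivialization clause and your reading of the statement as starting from $(B_0 \subset C,\xi_0,\Phi_0)$ are both consistent with the paper's (more terse) argument.
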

\proof
By Gray's stability theorem, there is a smooth family of contactomorphisms
$\check{\Phi}_t : C \lra{} C$ starting from the identity map
so that $\check{\Phi}_t$ is a contactomorphism between $(C,\xi_0)$ and $(C,\xi_t)$.
Therefore by pulling everything back by
$\check{\Phi}_t$,
we can assume that $\xi_t = \xi_0$ for all $t \in [0,1]$.
Also by Gray's stability theorem,
there is a smooth family of embeddings
$\iota_t : B_0 \lra{} C$
mapping $B_0$ to $B_t$ so that
\begin{itemize}
	\item $\iota_0|_{B_0} : B_0 \lra{} B_0$ is the identity map and
	\item $\iota_t|_{B_t} : B_0 \lra{} B_t$
	is a contactomorphism.
\end{itemize}
Again by Gray's stability theorem, there is a neighborhood $N$ of $B$ and a smooth family of contact embeddings
$\widetilde{\iota}_t : (N,\xi_0|_N) \lra{} (C,\xi_C)$
whose restriction to $B_0$ is $\iota_t$
and where $\widetilde{\iota}_0|_N : N \lra{} N$ is the identity map.
Let $H_t : \widetilde{\iota}_t(N) \lra{} \R$ be a smooth family of functions
{\it generating the contact isotopy $\widetilde{\iota}_t$}.
By definition this means that there is a contact form $\alpha$ compatible with
$\xi_0$ so that 
$$i_{\frac{d}{dt}\widetilde{\iota}_t(x)}\alpha = -H_t, \quad i_{\frac{d}{dt}\widetilde{\iota}_t(x)} d\alpha = dH_t - (i_R dH_t) \alpha, \quad \forall \ x \in \iota_t(N), \ t \in [0,1]$$
where $R$ is the Reeb vector field of $\alpha$ (see \cite[Lemma 3.49]{McduffSalamon:sympbook}).
Choose a smooth family of functions
$K_t, \ t \in [0,1]$ equal to $H_t$ near
$\iota_t(B_t)$.
Then $K_t$ generates a smooth family of contactomorphisms $\Psi_t$ satisfying the properties we want.
\qed

\begin{lemma} \label{lemma:farlinksofmodelresolutions}
Let $(\cO_X(\sum_{i \in S} m_i V_i),\Phi,\theta)$
be a model resolution admitting
a regularization $${\mathcal R} \equiv (\rho_i)_{i \in S}, (\Psi_I)_{I \subset S}$$ of radius $R<1$ along an open set $U \subset X$
containing $K \equiv \cup_{i \in S - \star_S} V_i$
as in Definition \ref{defn:regularizationofmodelresolution}.
Since $m_{\star_S} = 1$, we let
$$\Phi_\star \equiv \Phi|_{V_{\star_S} - K}  : \cN_X(V_{\star_S} - K) \lra{} (V_{\star_S} - K) \times \C$$
be the induced trivialization of the normal bundle
$\cN_X(V_{\star_S} - K) = \cO_X(\sum_{i \in S} m_i V_i)|_{V_{\star_S} - K}$ induced by $\Phi$.
Let $C \subset \cup_{i \in S - \star_S} T_{R,i}  - K$ be a closed hypersurface
transverse to $X_{\theta}$ and $V_{\star_S}$
and define $B \equiv C \cap V_{\star_S}$.
Let $\Phi_B$
be a trivialization
of the normal bundle of the contact submanifold $B \subset C$
induced by $\Phi_\star|_B$.

Then the contact pair
$(B \subset C,\ker(\theta|_C),\Phi_B)$
is contactomorphic to the link of the model resolution 
$(\cO_X(\sum_{i \in S} m_i V_i),\Phi,\theta)$.
If our model resolution is graded, then both of these contact pairs have
induced gradings by
Lemma
\ref{defn:contacthypersurfacegrading}
and the above contactomorphism becomes a graded contactomorphism
with respect to these gradings.
\end{lemma}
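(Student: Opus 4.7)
The plan is to construct a smooth family of contact pairs with normal bundle data connecting $(B \subset C, \ker(\theta|_C), \Phi_B)$ to the link of the model resolution and then to invoke Lemma \ref{lemma:smoothfamilyofcontactpairs}. The key simplification is that we may work throughout with the original Liouville form $\theta$. Indeed, the explicit form (\ref{eqn:1formequation}) of $\theta$ near $V_I$ yields, in local radial coordinates $(r_i)_{i \in I}$, the expression $X_\theta = \sum_{i \in I} A_i \partial_{r_i} + \cdots$ with $A_i = r_i/2 + w_i/(2\pi r_i)$ for $i \in I \cap (S - \star_S)$ and $A_{\star_S} = r_{\star_S}/2$. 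Consequently the flow of $X_\theta$ preserves $V_{\star_S}$ inside the regularization domain, and for any function $f$ compatible with $(V_i)_{i \in S}$ (locally of the form $\sum_{i \in S - \star_S} b_i \log(r_i^2) + \tau$) both $df(X_\theta) > 0$ and $df_\star(X_{\theta_\star}) > 0$ hold on a sufficiently small neighborhood $U'$ of $K$, since the logarithmic singularities of $f$ dominate as $r_i \to 0$. By Lemma \ref{lemma link invariance} we may therefore realize the link as $(B_{link} \subset C_{link}, \ker(\theta|_{C_{link}}), \Phi_{B_{link}})$ with $C_{link} \equiv f^{-1}(c) \subset U'$ for $c \ll -1$, effectively taking $g \equiv 0$ in Proposition \ref{proposition:linkprop}.

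Next I would deform $C$ into $U'$ using the backward Liouville flow $\phi^\theta_{-t}$ of $X_\theta$, which is defined on $C$ for all $t \geq 0$ because $X_\theta$ diverges along $K$ and which preserves $V_{\star_S}$ by the tangency above. For each $t \geq 0$ the hypersurface $C_t \equiv \phi^\theta_{-t}(C)$ is transverse to $X_\theta$ and to $V_{\star_S}$, and since the Liouville flow rescales $\theta$ by a positive factor, the contact pair $(C_t \cap V_{\star_S} \subset C_t, \ker(\theta|_{C_t}), \Phi_{C_t \cap V_{\star_S}})$ is contactomorphic (as a contact pair with normal bundle data) to $(B \subset C, \ker(\theta|_C), \Phi_B)$. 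Choose $T$ sufficiently large so that $C_T \subset U'$; both $C_T$ and $C_{link}$ then lie in $U'$ and are transverse to $X_\theta$ and to $V_{\star_S}$.

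Finally, using the local product structure on $U' - K$ arising from the Liouville flow of $X_\theta$ (with $f$ serving as a transverse time function), each such hypersurface is describable as the graph of a smooth function over a reference cross-section adapted to $V_{\star_S}$; linearly interpolating the graphing functions for $C_T$ and $C_{link}$ yields a smooth family $(C'_s)_{s \in [0,1]}$ of closed hypersurfaces connecting $C_T$ to $C_{link}$, each transverse to $X_\theta$ and to $V_{\star_S}$. The associated smooth family of contact pairs, together with Lemma \ref{lemma:smoothfamilyofcontactpairs}, yields the desired contactomorphism. For the graded statement, all the isotopies above take place within the fixed graded symplectic manifold $(X - \cup_{i \in S - \star_S} V_i, \omega)$, so the induced gradings (via Lemma \ref{defn:contacthypersurfacegrading}) on all the contact pairs in the family vary smoothly with the parameter and the resulting contactomorphism is automatically graded. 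The main obstacle will be the last step: constructing the reference cross-section and graphing description globally on $U' - K$ in a manner that respects $V_{\star_S}$ and the stratified tube structure of the regularization, so that the family $C'_s$ remains transverse to both $X_\theta$ and $V_{\star_S}$ throughout the deformation.
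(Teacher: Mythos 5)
Your overall strategy is the same as the paper's: use the compatibility of $\theta$ with ${\mathcal R}$ to realize the link with $g=0$ (the paper does this with the strongly compatible function $f=\sum_{i\in S-\star_S}\log(\alpha_{\rho_i})$, where $\alpha(x)=x$ for $x\le R$), then connect $C$ to the link hypersurface through a family of hypersurfaces transverse to $X_\theta$ and $V_{\star_S}$ and finish with Lemma \ref{lemma:smoothfamilyofcontactpairs}; the grading argument is also as in the paper. However, your mechanism for producing the family has a genuine gap. The claim that the backward Liouville flow ``is defined on $C$ for all $t\ge 0$ because $X_\theta$ diverges along $K$'' is false: by your own local formula, near $V_i$ the radial component of $X_\theta$ is $\rho_i+\tfrac{w_i}{2\pi}$ (in the coordinate $\rho_i$), which tends to the \emph{nonzero} constant $\tfrac{w_i}{2\pi}>0$ as $\rho_i\to 0$ since the wrapping numbers are positive. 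Solving $\dot\rho_i=-(\rho_i+\tfrac{w_i}{2\pi})$ shows a point with initial value $\rho_i$ reaches $K$ at the finite time $\log(1+2\pi\rho_i/w_i)$. Consequently points of $C$ that start close to $K$ crash into $K$ before points that start near $\rho_i=R$ have even entered a small neighborhood $U'$ of $K$, so in general there is no single $T$ with $\phi^\theta_{-T}(C)\subset U'$ and $\phi^\theta_{-T}(C)\subset X-K$. Your step (a) therefore fails as stated.

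The repair is to avoid shrinking to a small $U'$ at all: because $\theta$ is compatible with ${\mathcal R}$ and the paper's $f$ is built exactly from the $\rho_i$ of ${\mathcal R}$ with no bounded remainder $\tau$, the inequality $df(X_\theta)>0$ (and $df_\star(X_{\theta_\star})>0$) holds on the \emph{entire} region $\cup_{i\in S-\star_S}T_{R,i}-K$ containing both $C$ and $\check C=f^{-1}(c)$. One then interpolates directly, flowing each point of $C$ along $X_\theta$ for a \emph{point-dependent} time (equivalently, using $f$ as a global transverse coordinate via the reparametrized flow $X_\theta/df(X_\theta)$), rather than for a uniform time $T$. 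This is exactly what the paper's one-line construction of $(C_t)_{t\in[0,1]}$ amounts to. Your final graphing step also silently assumes that each $X_\theta$-flow line meets $C$ exactly once (transversality alone only forces an odd intersection number); the paper elides this point as well, so I only flag it, but if you want a complete argument you should either impose it or note that it holds for the hypersurfaces arising in the applications.
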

\proof
Choose $\check{R} > R$ smaller than the tube radius of our model resolution along $U$ so that $\check{R} < 1$.
Let
$\alpha : [0,\check{R}] \lra{} [0,1]$
be a smooth function so that
$\alpha' \geq 0$,
$\alpha(x) = x$ for all $x \leq R$
and $\alpha(x) = 1$ near $\check{R}$.
Define $\alpha_{\rho_i} : X - K \lra{} \R$
to be equal to $\alpha(\rho_i)$ inside $T_{\check{R},i} - K$ and $1$ otherwise.
Define
$$f :  X - K \lra{} \R, \quad f \equiv \sum_{i \in S - \star_S} \log(\alpha_{\rho_i}).$$
Then $f$ is compatible with
$(V_i)_{i \in S - \star_S}$ as in Definition \ref{defninition function compatible with}.
Let $c \ll -1$ and define
$$\check{C} \equiv f^{-1}(c), \quad \check{B} \equiv \check{C} \cap V_{\star_S}, \quad \xi_{\check{C}} \equiv \ker{(\theta)|_{\check{C}}}.$$
The normal bundle of $\check{B}$
inside $\check{C}$ has a natural trivialization
$\Phi_{\check{B}}$
induced by the trivialization
$\Phi_\star$.
Since $df(X_\theta)>0$ near
$K$ and $c \ll -1$	,
we get that
$(\check{B} \subset \check{C}, \xi_{\check{C}},\Phi_{\check{B}})$
is the link of
our model resolution
$(\cO_X(\sum_{i \in S} m_i V_i),
		\Phi,\theta)$
by Definition \ref{defn:linkofmodelresolution}.

Since $df(X_\theta) > 0$ inside
$\cup_{i \in S - \star_S} T_{R,i} - K$
and $\cup_i V_i$ is connected, we can choose a smooth family
of hypersurfaces
$(C_t)_{t \in [0,1]}$ joining $C$ and $\check{C}$
so that $C_t$ is transverse to $X_\theta$ and $V_{\star_S}$
for all $t \in [0,1]$.
Define $B_t \equiv C_t \cap V_{\star_S}$
and $\xi_t \equiv \ker(\theta|_{C_t})$.
Also let $\Phi_{B_t}$ be the trivialization
of the normal bundle of $B_t$ inside $C_t$
induced by $\Phi_\star$ so that $\Phi_{B_0} = \Phi_B$ and $\Phi_{B_1} = \Phi_{\check{B}}$.
Then $(B_t \subset C_t, \xi_t, \Phi_{B_t})$
is a smooth family of contact pairs
joining $(B \subset C,\ker(\theta|_C),\Phi_B)$
and
$(\check{B} \subset \check{C}, \xi_{\check{C}},\Phi_{\check{B}})$.
Therefore they are isomorphic by Lemma \ref{lemma:smoothfamilyofcontactpairs}.
Also if
$(\cO_X(\sum_{i \in S} m_i V_i),
		\Phi,\theta)$ is graded then
they are graded isomorphic since
all of our contact pairs have induced gradings
from our model resolution by Lemma
\ref{defn:contacthypersurfacegrading}.
\qed

\begin{lemma} \label{lemma:openbookcontactomorphism}
The link of a (graded) model resolution $(\cO_X(\sum_{i \in S} m_i V_i),
		\Phi,\theta)$
supports a (graded) contact open book which is contactomorphic to
$OBD(M,\theta_M,\phi)$ where
$(M,\theta_M,\phi)$ is the (graded) abstract contact open book
associated to this model resolution
as in Definition \ref{defn:abstractopenbookfrommodelresolution}.
\end{lemma}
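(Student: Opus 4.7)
The plan is to exhibit a hypersurface $C \subset X - K$ (where $K = \cup_{i \in S - \star_S} V_i$) that simultaneously realizes the link by Lemma \ref{lemma:farlinksofmodelresolutions} and visibly carries the open book structure of $OBD(M,\theta_M,\phi)$. Using the notation of Definition \ref{defn:abstractopenbookfrommodelresolution}, I would set $C := C^{mt} \cup C^{bd}$ where $C^{mt} := \pi_\Phi^{-1}(\partial\D(\epsilon)) \cap \check T$ is the mapping-torus region and
$$C^{bd} := \Psi_{\star_S}\bigl(\{(x,z) \in \cN_X V_{\star_S} : x \in \partial\check T \cap V_{\star_S},\ |z| \leq \epsilon\}\bigr) \subset T_{R,\star_S}$$
is the solid-torus region around the binding, glued to $C^{mt}$ along $\pi_\Phi^{-1}(\partial\D(\epsilon)) \cap \partial\check T \cap T_{R,\star_S}$ (the vertical piece of $\partial\check T$, by Equation (\ref{eqn:horizontalboundary})). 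The binding is $B := C \cap V_{\star_S} \subset C^{bd}$ and the candidate fibration is $\pi_C := \frac{1}{2\pi}\arg(\pi_\Phi)$. Since $m_{\star_S} = 1$, radius $R$ compatibility of $\Phi$ with $\mathcal R$ writes $\pi_\Phi$ on $C^{bd}$ as the product of the normal coordinate $z$ and a nonvanishing section pulled back from $V_{\star_S}$, which supplies the disk-normal-bundle normal form around $B$ required by Definition \ref{defn:openbook}; moreover $\Phi_B$ induced from $\Phi|_{V_{\star_S}}$ coincides with the trivialization coming from this $\D(\epsilon)$ coordinate.

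For the supporting contact form take $\alpha_C := \theta|_C$. On $C^{mt}$, Lemma \ref{lemma:symplecticfibers} gives that $d\alpha_C$ is symplectic on each fiber of $\pi_C$, and by construction the symplectic parallel transport of $\pi_\Phi$ along $\partial \D(\epsilon)$ is the monodromy $\phi$; hence $(C^{mt}, \theta|_{C^{mt}})$ is identified with $(T_\phi, \alpha_{T_\phi})$ of Definition \ref{defn:mappingtorusofphi} up to a contact isotopy fixing the open book, with the defect absorbed into the $d(\rho(t)F_\phi) + C\,dt$ adjustment. On $C^{bd}$, compatibility of $\theta$ with $\mathcal R$ (Definition \ref{defn:oneformcompatiblewithregularization}), combined with $w_{\star_S} = 0$, forces $\theta$ near $V_{\star_S}$ to take the model shape $\rho_{\star_S}\,\alpha_{\star_S} + \pi^*(\theta|_{V_{\star_S}})$ along the fibers of the normal projection $\pi := \pi_{\cN_X V_{\star_S}} \circ \Psi_{\star_S}^{-1}$; in $(r,\vartheta)$ coordinates with $r^2/2 = \rho_{\star_S}$ this is precisely of the form $h_1(r)\alpha_M + h_2(r)d\vartheta$ with $h_1\equiv 1$ and $h_2(r) = r^2/2$. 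Since the set of admissible profile pairs in Equation (\ref{eqn:alphaequation}) is nonempty and contractible, a Gray-type isotopy through contact forms supporting the same open book brings $\alpha_C|_{C^{bd}}$ to the profile used in Definition \ref{defn:openbookfromabstractone}.

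The gluing of the two pieces along the common boundary then matches the gluing in Definition \ref{defn:openbookfromabstractone} term by term, so $(C, \xi_C, \pi_C)$ is isotopic to $OBD(M,\theta_M,\phi)$ as a contact open book. Lemma \ref{lemma:farlinksofmodelresolutions} applied to this $C$ (which is transverse to $X_\theta$ and to $V_{\star_S}$ by our construction) identifies the underlying contact pair with the link of the model resolution. In the graded case every identification above preserves the induced gradings: the grading on $(C - B, \xi_C)$ from Lemma \ref{defn:contacthypersurfacegrading} restricts on $C^{mt}$ to the grading on $T_\phi$ of Definition \ref{defn:mappingtorusofphi}, and the solid-torus region $C^{bd}$ is a contractible extension so no new grading data arises. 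The hard part will be the $C^{bd}$ analysis: turning the normal-form description of $\theta$ produced by the compatibilities of $\Phi$ and $\theta$ with $\mathcal R$ into the precise profile $h_1\alpha_M + h_2 d\vartheta$ through contact forms supporting a common open book. This requires both the full strength of the compatibility conditions and a Gray stability argument within the (contractible) space of admissible profile pairs, and is where all the technical preparation of the previous subsections is used.
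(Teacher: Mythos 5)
Your overall strategy coincides with the paper's: build a hypersurface out of the mapping-torus region $\pi_\Phi^{-1}(\partial\D(\epsilon))$ plus a cap near $B$, identify its contact pair with the link via Lemma \ref{lemma:farlinksofmodelresolutions} and Lemma \ref{lemma:smoothfamilyofcontactpairs}, and then deform the contact form until it matches the explicit profile of Definition \ref{defn:openbookfromabstractone}. However, there is a genuine gap at the step you yourself flag as ``the hard part,'' and the fix you propose does not address it. Definition \ref{defn:oneformcompatiblewithregularization} only asserts that the $1$-form (\ref{eqn:1formequation}) vanishes \emph{on the fibers} of $\pi_{\cN_X V_{\star_S}}$; it says nothing about the horizontal components of $\theta$ in the tube around $V_{\star_S}$. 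So compatibility with $\mathcal R$ does \emph{not} force $\theta$ into the product shape $P_{\star_S}^*(\theta|_{V_{\star_S}-K})+\tfrac12\rho_{\star_S}(\Psi_{\star_S}^{-1})^*\Phi_{\star,2}^*(d\vartheta)$, and hence $\theta|_C$ near the binding is not of the form $h_1\alpha_M+h_2\,d\vartheta$ to begin with. A Gray-type isotopy through the (contractible) space of profile pairs $(h_1,h_2)$ cannot repair this, because the obstruction is not the choice of profile but the presence of extra horizontal terms in $\theta$ itself. The paper resolves this by explicitly deforming the ambient primitive through the family $\theta_t$ of Equation (\ref{equation for theta zero and theta one}), cut off by the bump function $\widetilde W$, and then checking three nontrivial things: that $d\theta_t$ stays symplectic and fiberwise symplectic on a suitable neighborhood (for $R_1$ small), that the monodromy of $\pi_\Phi$ computed with $d\theta_t$ is $\phi$ for every $t$, and that the resulting family of contact pairs is constant up to contactomorphism by Lemma \ref{lemma:smoothfamilyofcontactpairs}. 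Your proof needs this deformation (or an equivalent normal-form argument for $\theta$, not just for $\theta$ restricted to normal fibers), together with those verifications.

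Two smaller points. First, your $C=C^{mt}\cup C^{bd}$ is only a hypersurface with corners: the union of normal disks over $B$ meets $\pi_\Phi^{-1}(\partial\D(\epsilon))$ transversally rather than tangentially, whereas Lemma \ref{lemma:farlinksofmodelresolutions} and the contact-pair machinery require a smooth closed hypersurface. The paper's Equation (\ref{equation construction of C}) smooths exactly this corner by taking the level sets $\rho_{\star_S}^{-1}(h_2(s))\cap\widetilde\kappa^{-1}(\log h_1(s))$, so that the cap interpolates between the normal-disk direction at $s=0$ and the level set $\{|\pi_\Phi|=\epsilon\}$ at $s=\delta$; note that the radial coordinate of the abstract solid torus $\partial M\times\D(\delta)$ in Definition \ref{defn:openbookfromabstractone} corresponds to the Liouville collar coordinate of the page (via $\widetilde\kappa$), not to the normal direction of $V_{\star_S}$, which is why the cap must dip into the page direction. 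Second, on the mapping-torus piece the passage from $\theta_1|_{T_\phi}$ to $\theta_M+d(\rho(t)F_\phi)+C\,dt$ is itself a deformation (the paper's family $\alpha_s$ with the sliding constants $c_s$ and profiles $h_i^s$), again mediated by Lemma \ref{lemma:smoothfamilyofcontactpairs}; ``absorbing the defect'' should be spelled out at that level of care, since it changes the contact form on $C$ globally, not just near the binding.
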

\proof
In this proof we will use the same notation as in Definition \ref{defn:abstractopenbookfrommodelresolution}.
We will introduce it again here for the sake of clarity.
By Lemma \ref{lemma:modelresolutionregulatrizationexistence}
we can isotope our model resolution so that it admits
a regularization $${\mathcal R}  \equiv ((\rho_i)_{i \in I},(\Psi_I)_{I \subset S})$$
of radius $R$ along $U$
for some relatively compact open $U$
containing $K \equiv \cup_{i \in S - \star_S} V_i$.
By Lemma \ref{lemma link invariance},
the link does not change after this isotopy.
Let $T_{r,I}$ be the radius $r$ tube of $V_I$ as in Equation
(\ref{eqn:radiusrtube}).
Let $\check{T}$ be a smoothing of the manifold with corners
$\cup_{i \in S - \star_S} T_{R,i}$
as in Definition \ref{defn:abstractopenbookfrommodelresolution}.
In other words,
$\check{T}$ satisfies Equation (\ref{eqn:horizontalboundary}),
$X_\theta$ points outwards along $\partial \check{T}$
and $\check{T} \subset \cup_{i \in S - \star_S} T_{R,i}$.
Also we require that $\check{T}$ contains
$\cup_{i \in S - \star_S} T_{3R/4,i}$.
Define
$$\pi_\Phi : \check{T} \lra{} \C, \quad \pi_\Phi \equiv \Phi_2 \circ s_{(m_i)_{i \in S}}|_{\check{T}}$$
where $\Phi_2 : \cO_X(\sum_{i \in S} m_i V_i) \lra{} \C$ is the composition of $\Phi$ with the natural projection map
$X \times \C \lra{} \C$.
Then we can assume that
$$(M,\theta_M) \equiv (\pi_\Phi^{-1}(\epsilon),\theta|_{\pi_\Phi^{-1}(\epsilon)})$$
for $\epsilon> 0$ small enough 
so that Equation (\ref{eqn:smallenoughepsilon})
is satisfied.
Let $\omega$ be the symplectic form associated to our model resolution.
Here
$\phi : M \lra{} M$
is the monodromy map
around the loop 
\begin{equation} \label{equation path aroudn disk}
[0,1] \lra{} \partial \D(\epsilon), \quad s \lra{} \epsilon e^{2\pi is}
\end{equation}
with respect to the symplectic connection associated to $\omega$. Then $(M,\theta,\phi)$ is the abstract open book associated to our model resolution so long as $\epsilon>0$ is sufficiently small.

Define
$$L_r \equiv \cup_{i \in S - \star_S} T_{r,i}.$$
Let
$$\Phi_\star \equiv \Phi|_{V_{\star_S} - K}  : \cN_X(V_{\star_S} - K) \lra{} (V_{\star_S} - K) \times \C$$
be the trivialization of the normal bundle
$\cN_X(V_{\star_S} - K) = \cO_X(\sum_{i \in S} m_i V_i)|_{V_{\star_S} - K}$ induced by $\Phi$
as defined in the statement of Lemma \ref{lemma:farlinksofmodelresolutions}
and let $\Phi_{\star,2}$ be the composition of $\Phi_\star$ with the natural projection map
$V_{\star_S - K} \times \C \lra{} \C$.
Let
$$P_{\star_S} : \Im(\Psi_{\star_S}) \lra{} V_{\star_S}, \quad P_{\star_S} \equiv \pi_{\cN_X V_{\star_S}} \circ \Psi_{\star_S}^{-1}$$
be the natural projection map
and $(r,\vartheta)$ polar coordinates on $\C$.
Let $W : L_R \cap V_{\star_S} \lra{} [0,1]$
be a smooth function
equal to $0$ inside $L_{4R/5} \cap V_{\star_S}$
and equal to $1$ inside $(L_R - L_{5R/6}) \cap V_{\star_S}$
and define
$$\widetilde{W} : T_{R,\star_S}  \cap L_R \lra{} \R, \quad \widetilde{W} \equiv W \circ P_{\star_S}.$$
We now define $\theta_t \in \Omega^1(((T_{R,\star_S} \cap L_R) \cup L_{4R/5}) - K)$ for $t \in [0,1]$
to be
$\theta$ inside $L_{4R/5} - K$
and
equal to
\begin{equation} \label{equation for theta zero and theta one}
(1-t)\theta + t \left( (1- \widetilde{W})\theta +  \widetilde{W} \left( P_{\star_S}^*(\theta|_{V_{\star_S}-K}) +  \frac{1}{2} \rho_{\star_S} (\Psi_{\star_S}^{-1})^*\Phi_{\star,2}^* (d\vartheta) \right) \right)
\end{equation}
inside $T_{R,\star_S} \cap L_R - K$.
For $R_1>0$ small enough with respect to $R$,
we get that $d\theta_t$ is a symplectic form
inside $L \equiv L_{4R/5} \cup (L_R \cap T_{R_1,\star_s})$ and $d\theta_t$ restricted to $\pi_\Phi^{-1}(x) \cap L$ is a symplectic form for all $x \in \C-0$ and $t \in [0,1]$.

Let $\kappa : V_{\star_S} \lra{} \R$
be a smooth function which is negative
in the interior of $\check{T} \cap V_{\star_S}$ and positive outside $\check{T} \cap V_{\star_S}$
and so that $\kappa^{-1}(0) = \partial \check{T} \cap V_{\star_S}$ is a regular level set.
We can assume that
our perturbation $\check{T}$ from
Definition \ref{defn:abstractopenbookfrommodelresolution}
is small enough so that
$\partial \check{T} \subset L_R - L_{5R/6}$.
Choose a constant $\check{\delta}>0$
small enough so that
$\kappa^{-1}(-\check{\delta},0] \subset L_R - L_{5R/6}$
and so that
$X^{d\theta_t}_{\theta_t}$
is transverse to $\widetilde{\kappa}^{-1}(s) \cap L$
for all $s \in (-\check{\delta},\check{\delta})$.
%
Define
$$\widetilde{\kappa} : \Im(\Psi_{\star_S}) \lra{} \R, \quad \widetilde{\kappa} \equiv  \kappa \circ P_{\star_S}.$$
Define $\delta \equiv 1 - e^{-\check{\delta}}$.
Let
$$h_1, h_2 : [0,\delta) \lra{} \R$$
be smooth functions satisfying:
\begin{enumerate}
	\item $h_1'(r) < 0$, $h_2'(r) \geq 0$ for all $r>0$,
	\item $h_1(r) = 1-r^2$ and $h_2(r) = \frac{1}{2} r^2$ for $r$ near $0$,
	\item $h_1(r) = 1 - r$ and $h_2(r) =\epsilon^2$ for $r$ in $[\delta/2,\delta)$.
\end{enumerate}

\begin{tikzpicture}

\draw[<->] (3.5238,-1) -- (-1.5,-1) -- (-1.5,3.5);

\node at (-1.9606,-0.3272) {$h_2$};
\node at (1.732,-1.4537) {$h_1$};

\node at (2.8046,-1.2387) {$1$};

\node at (0.5,-1.3) {$1-\delta$};

\draw (0.5,-1.1) -- (0.5,-0.9);

\draw (1.2,1.2) node (v1) {} -- (0.4592,1.1853);

\node at (-1.8386,1.1853) {$\epsilon^2$};
\draw (-1.6,1.1853) -- (-1.4,1.1853);

\draw  plot[smooth, tension=.7] coordinates {(v1) (1.6,0.9) (1.9,-0.4) (2.3,-0.8)};

\draw (2.3,-0.8) -- (2.8,-1);
\end{tikzpicture}

Now define
\begin{equation} \label{equation construction of C}
C \equiv (\pi_\Phi^{-1}(\partial \D_\epsilon) - \widetilde{\kappa}^{-1}((-\check{\delta},0])) \cup \bigcup_{s \in [0,\delta]} \left(\rho_{\star_S}^{-1}(h_2(s)) \cap \widetilde{\kappa}^{-1}(\log(h_1(s)))\right).
\end{equation}
This is a smooth hypersurface in $X$ since $\Phi$ is radius $R$ compatible with ${\mathcal R}$
along $U$ and since $\epsilon>0$ can be made small enough so that $\epsilon < 3R/4$.
We can also ensure that $\epsilon>0$ is small enough
so that $\pi_\Phi^{-1}(\D_\epsilon) \subset L$.
This ensures that $d\theta_t$ is a symplectic form
near $\pi_\Phi^{-1}(\D_\epsilon)$ 
and that $d\theta_t$ restricted to the fibers of $\pi_\Phi|_{\pi_\Phi^{-1}(\partial \D_\epsilon)}$ is a symplectic form for all $t$.

Define $B \equiv C \cap V_{\star_S}$.
This is also equal to $\check{T}\cap V_{\star_S} = \kappa^{-1}(0)$.
For $R$ small enough,
we have that $(C,\ker(\theta_t)|_C)$
is a smooth family of contact submanifolds
of $X$.
The trivialization $\Phi_\star$ gives us a trivialization $\Phi_{B,t}$ of the normal bundle
of the contact submanifold $B$
inside $(C,\ker(\theta_t)|_C)$ since $C$ is transverse to $V_{\star_S}$.
Hence we get a smooth family of contact pairs
$$P_t \equiv (B \subset C, \ker(\theta_t)|_C,\Phi_{B,t})$$
which are all contactomorphic by Lemma \ref{lemma:smoothfamilyofcontactpairs}.
Also by Lemma
\ref{lemma:farlinksofmodelresolutions},
the contact pair
$P_0$ is contactomorphic to the link of 
$(\cO_X(\sum_{i \in S} m_i V_i), \Phi,\theta)$
for $R$ small enough
and hence
$P_1$ is contactomorphic to the link of
$(\cO_X(\sum_{i \in S} m_i V_i), \Phi,\theta)$.
Therefore to complete this Lemma, it is sufficient
to show that the contact pair
$P_1$ is contactomorphic
to the contact pair associated to
$OBD(M,\theta_M,\phi)$.
In fact since $(M,\theta_t|_M)$
is a smooth family of Liouville domains
and since the monodromy map of
$\pi_\Phi$ around the path
(\ref{equation path aroudn disk})
with respect to the fiberwise symplectic $2$-form $d\theta_t|_{\pi_\Phi^{-1}(\partial \D(\epsilon)}$
is equal to $\phi$ for all $t$,
it is sufficient for us to show that the
contact pair $P_1$ is contactomorphic
to the contact pair associated to
$OBD(M,\theta_1|_M,\phi)$.
Note that there is a resemblance between the construction of $C$ and the construction of $OBD(M,\theta_1|_M,\phi)$
from Definition \ref{defn:openbookfromabstractone}.
We will now make this precise.

The contact pair $P_1$ can be constructed as follows:
Define $V \equiv \pi_\Phi^{-1}(\partial \D_\epsilon) - \widetilde{\kappa}^{-1}((\log(1-\frac{\delta}{2}),0])$.
Let $T_\phi =  M \times [0,1] / \sim$ be the mapping torus of $\phi$.
We have a diffeomorphism
$\Phi : T_\phi \lra{} \pi_\Phi^{-1}(\partial \D_\epsilon)$
sending $(x,s)$ to the parallel transport of $x \in M$ along
$\partial \D(\epsilon)$ in the counter clockwise direction
from $\epsilon \in \partial \D(\epsilon)$
to $\epsilon e^{is} \in \partial \D(\epsilon)$ with respect to the $2$-form
$d\theta_1|_{\pi_\Phi^{-1}(\partial \D(\epsilon))}$.
Hence we will assume that $T_\phi = \pi_\Phi^{-1}(\partial \D_\epsilon)$ under the identification $\Phi$ and that $V$ is naturally a subset of $T_\phi$.
Since $\phi$ has compact support inside $M$,
we have the standard collar neighborhood

\begin{equation} \label{eqn:neighborhoodidentification}
(1-\delta,1] \times \partial M \times (\R / \Z)
\subset T_\phi
\end{equation}
as in Definition \ref{defn:mappingtorusofphi}
(here $\delta>0$ is the same small constant defined above, which might have to be made smaller).
We can choose $\kappa$
so that $e^{\widetilde{\kappa}}|_{T_\phi}$ is the natural projection to $(1-\delta,1]$
in the neighborhood (\ref{eqn:neighborhoodidentification}).
This means that
$\theta_1$ restricted to the region
(\ref{eqn:neighborhoodidentification})
is equal to
$e^{\widetilde{\kappa}} \alpha_M + \pi \epsilon^2 dt$
where $t$ parameterizes $\R / \Z$
and where $\alpha_M = \theta_1|_{\partial M}$
by Equation (\ref{equation for theta zero and theta one}).

Using the diffeomorphism $\Phi$ and  definition (\ref{equation construction of C}) of $C$, we have that
$C$ is naturally diffeomorphic to
$$\check{C} \equiv (\partial M \times \D(\delta)) \sqcup V / \sim$$
where
$\sim$ identifies
$(x,z) \in \partial M \times (\D(\delta) - \D(\frac{\delta}{2}))$ with
$$(1-|z|,x,\frac{1}{2\pi} arg(z))
\in \left(1-\delta,1-\delta/2\right] \times \partial M \times (\R / \Z) \subset V.$$
Because $\theta_1$ restricted to
$T_{R,\star_S} \cap (L_R - L_{5R/6})$
is equal to
$P_{\star_S}^*(\theta|_{V_{\star_S}-K}) +  \frac{1}{2} \rho_{\star_S} (\Psi_{\star_S}^{-1})^*  \Phi_\star^* (d\vartheta)$
by Equation (\ref{equation for theta zero and theta one}) and because
$P^*_{\star_S}(\theta|_{V_{\star_S}-K})|_{M \cap \widetilde{\kappa}^{-1}(-\check{\delta},0]} = e^{\widetilde{\kappa}} \alpha_M$ inside the cylinder $(1-\delta,1] \times \partial M \subset M$, we have that
the contact form $\theta_1|_C$ inside $\check{C}$ under the above identification
is equal to
\begin{equation}
\alpha_1 \equiv \left\{ 
\begin{array}{ll}
h_1(r)\alpha_M + \frac{1}{2} h_2(r) d\vartheta & \text{inside} \ \partial M \times \D(\delta/2) \\
\theta_1|_{T_\phi} & \text{inside} \ V
\end{array}
 \right. .
\end{equation}
Notice that this description of
$P_1$ resembles the construction of the
open book associated to
the abstract contact open book $(M,\theta_1|_M,\phi)$
as in Definition
\ref{defn:openbookfromabstractone}.
All we need to do is deform the above construction until it is actually equal to $OBD(M,\theta_1|_M,\phi)$.

We will now do this explicitly.
From now on we let $t : V \lra{} \R / \Z$ be the coordinate
$\pi_\Phi^*(\vartheta) / 2\pi$.
Since the monodromy map $\phi$
has compact support, there is a smooth function
$F_\phi : M \lra{} \R$
so that $\phi^*(\theta_1|_M) = \theta_1|_M + dF_\phi$.
Let $\rho : [0,1] \lra{} [0,1]$ be a smooth function equal to $0$ near $0$ and $1$ near $1$.
Since $T_\phi = M \times [0,1] / \sim$
where $\sim$ identifies $(x,1)$ with $(\phi(x),0)$,
we have a well defined $1$-form
$\theta_1|_M +  d (\rho(t)F_\phi)$
on $T_\phi$.
For $s \in [0,1]$, define
$$\alpha_s \in \Omega^1(T_\phi), \quad \alpha_s \equiv (1-s)\theta_1|_{T_\phi} + s(\theta_1|_M + d(\rho(t)F_\phi)) + c_s dt$$
where $(c_s)_{s \in [0,1]}$ is a smooth family of constants
where $c_0 = 1$
and $c_t$ is sufficiently large so that
$\alpha_s$ is a contact form for all $s \in [0,1]$.
Then $(T_\phi,\alpha_1)$ is the mapping torus of $(M,\theta_1|_M,\phi)$
as in Definition \ref{defn:mappingtorusofphi}.

Choose a smooth family of functions
$$h_1^s, h_2^s : [0,\delta) \lra{} [0,\infty), \quad s \in [0,1]$$
satisfying
\begin{enumerate}
	\item $(h_1^s)'(r) < 0$, $(h_2^s)'(r) \geq 0$ for all $r>0$,
	\item $h_1^s(r) = 1-r^2$ and $h_2^s(r) = \frac{1}{2} r^2$ for $r$ near $0$,
	\item $h_1^s(r) = 1 - r$
	and $h_2^s(r) = (1-s) \epsilon^2 + c_s$ for $r$ in $[\delta/2,\delta)$,
	\item $h_1^0(r) = h_1(r)$
	and $h_2^0(r) = h_2(r)$ for all $r \in [0,\delta)$.
\end{enumerate}

Define
\begin{equation}
\alpha_1^s \equiv \left\{ 
\begin{array}{ll}
h_1^s(r) \alpha_M + \frac{1}{2} h_2^s(r) d\vartheta & \text{inside} \ \partial M \times \D(\delta/2) \\
\alpha_s & \text{inside} \ V \subset T_\phi
\end{array}
 \right.
\end{equation}
for all $s \in [0,1]$.
Then $(C, \ker(\alpha_1^s))_{s \in [0,1]}$ is a smooth family
of contact manifolds so that $B \subset C$
is a contact submanifold.
Also we have a smooth family of trivializations
$\Phi^s_1$ of the normal bundle of $B$
inside $(C, \ker(\alpha_1^s))$
so that $\Phi^0_1 = \Phi_{B,1}$.
Therefore
$$\check{P}_t \equiv (B \subset C, \ker(\alpha_1^s), \Phi_1^s)$$
is a smooth family of contact pairs
and so by Lemma \ref{lemma:smoothfamilyofcontactpairs},
they are all contactomorphic.
By construction, $\check{P}_1$ is equal to
$OBD(M,\theta_M,\phi)$.
Since $\check{P}_1$ is contactomorphic to $\check{P}_0 = P_1$
and $P_1$ is contactomorphic to $P_0$ which in turn
is contactomorphic to the link of our model resolution,
we get that
$OBD(M,\theta_M,\phi)$
is contactomorphic to the link of our model resolution.
\qed

\subsection{Dynamics of abstract contact open books associated to model resolutions.} \label{section good dynamical properties}

In this subsection we show that the fixed points of a positive slope perturbation of the symplectomorphism associated to the graded abstract contact open book associated to a model resolution form a union of specific codimension $0$ families of fixed points. We also compute the indices of these fixed points.

\begin{defn} \label{defn:discrepancyofmodelresolution}
Let $(\cO_X(\sum_{i \in S} m_i V_i),\Phi,\theta)$
be a graded model resolution with associated symplectic form $\omega$ on $X$
where $n+1 = \frac{1}{2}\text{dim}(X)$.
%
Let $J$ be an $\omega$-compatible almost complex structure on $X$.
By Definition \ref{defn:canonicalbundledefinition},
the grading on $\check{X} \equiv X - \cup_{i \in S - \star_S} V_i$
corresponds to a trivialization
$\Phi : \kappa_J|_{\check{X}} \lra{} \check{X} \times \C$
of the canonical bundle.
Let $U$ be a small neighborhood of $\cup_{i \in S - \star_S} V_i$
which deformation retracts on to $V_i$.
Choose a smooth section $s$ of $\kappa_J$
which is transverse to $0$ and so that
$\Phi \circ s|_{\check{X} - U}$ is a non-zero constant section of $(\check{X} - U) \times \C$.
By a Mayor-Vietoris argument, the homology group
$H_{2n}(\cup_{i \in S - \star_S} V_i;\Z) = H_{2n}(U;\Z)$ is freely generated by the
fundamental classes $[V_i]$ of $V_i$.
Let $[s^{-1}(0)] \in H_{2n}(U)$ be the
homology class represented by the zero set.
Then $[s^{-1}(0)] = \sum_{i \in S - \star_S} a_i [V_i]$
for unique numbers $a_i \in \Z, \ i \in S - \star_S$.
The {\it discrepancy} of $V_i$
is defined to be $a_i$ for all $i \in S - \star_S$.
\end{defn}

In the case of Example \ref{defn:modelresolutionmotivatingexample},
the discrepancy and multiplicity of $E_i$ as defined in Definition \ref{defn:discrepancyofmodelresolution}
is identical to
the discrepancy and multiplicity of $f$ along $E_i$
as in Definition \ref{defn:logresolution}.
Similarly we have a notion of multiplicity $m$ separating
resolution as in Definition \ref{defn:minimalmultiplicitydiscrepancy} for model resolutions which coincide in the case of Example \ref{defn:modelresolutionmotivatingexample}:
\begin{defn} \label{defn:multiplicitymseparatingformodelresolutions}
A model resolution $(\cO_X(\sum_{i \in S} m_i V_i),\Phi,\theta)$
is called a {\it multiplicity $m$ separating resolution}
if $m_i + m_j > m$ for all $i, j \in S$
satisfying $V_i \cap V_j \neq \emptyset$.
\end{defn}

\begin{defn} \label{defn:naturalmicovering}
Let $(\cO_X(\sum_{i \in S} m_i V_i),\Phi,\theta)$ be a model resolution.
Let $i \in S - \star_S$.
Define $V^o_i \equiv V_i - \cup_{j \in S - i} V_j$
and $X_i \equiv X - \cup_{j \in S - i} V_j$.
Let $U_i$ be an open neighborhood of $V^o_i$ inside $X_i$ which deformation retracts on to $V^o_i$ and let $\iota_i : U_i - V^o_i \lra{} U_i$ be the natural inclusion map.
Let $s_{(m_i)_{i \in S}}$ be the canonical section of $\cO_X(\sum_{i \in S} m_i V_i)$ as in Equation (\ref{eqn:canonicalsection}).
Let $\Phi_2 : \cO_X(\sum_{i \in S} m_i V_i) \lra{} \C$ is the composition of $\Phi$ with the natural projection map to $\C$.
Define
$$Q_i : U_i - V^o_i \lra{} \C^*, \quad Q_i(x) := \Phi_2 \circ s_{(m_i)_{i \in S}}(x).$$
Then the {\it natural $m_i$-fold covering of $V_i^o$}
is the $m_i$-fold covering of $V_i^o$
given by a disjoint union of covers diffeomorphic to the cover corresponding to the normal subgroup
$$G_i := (\iota_i)_*(\ker((Q_i)_*)) \subset \pi_1(U_i) = \pi_1(V^o_i)$$
and the number of such covers is $m_i$ divided by the index of $G_i$ in $\pi_1(V^o_i)$
(see Lemma \ref{lemma:indexdividesmi} below).
Such a cover does not depend on the choice of neighborhood $U_i$.
In fact it is an invariant of the model resolution up to isotopy.
\end{defn}

\begin{lemma} \label{lemma:indexdividesmi}
The index of $G_i$ divides $m_i$.
\end{lemma}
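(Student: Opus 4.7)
The plan is to exhibit an injective homomorphism $\pi_1(V_i^o)/G_i \hookrightarrow \Z/m_i\Z$, from which the claim is immediate.

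First I would exploit the normal bundle structure. Since $V_i^o$ is a smooth closed submanifold of real codimension $2$ with complex (hence oriented) normal bundle in $X_i$, we may take $U_i$ to be a disk bundle neighborhood of $V_i^o$; then $U_i - V_i^o$ is homotopy equivalent to the associated unit circle bundle, and in particular fits into a locally trivial fibration $\C^* \to U_i - V_i^o \to V_i^o$. Its long exact sequence of homotopy groups gives a short exact sequence at the level of fundamental groups
\[
\pi_1(\C^*) \lra{j_*} \pi_1(U_i - V_i^o) \lra{(\iota_i)_*} \pi_1(V_i^o) \lra{} 1,
\]
where $(\iota_i)_*$ is surjective and its kernel equals the image of $\pi_1(\C^*) = \Z$, which is the cyclic (normal) subgroup $\langle \gamma \rangle$ generated by a meridian class $\gamma$ around $V_i^o$.

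Next I would compute $(Q_i)_*(\gamma) \in \pi_1(\C^*) = \Z$. Work in a local chart centered at a point of $V_i^o$ with a trivialization of the normal bundle giving a complex fiber coordinate $z$ vanishing along $V_i$. In this trivialization, the canonical section $s_{V_i}$ of $\cO_X(V_i)$ is identified with $z$, while for $j \in S - i$ the section $s_{V_j}$ is nowhere vanishing near $V_i^o$ because $V_j \cap V_i^o = \emptyset$. Consequently the canonical section $s_{(m_i)_{i \in S}} = \bigotimes_{j \in S} s_{V_j}^{\otimes m_j}$ restricted to a small disk normal to $V_i^o$ is expressed as $g(z)\, z^{m_i}$ for a nowhere-vanishing smooth function $g$, and hence $Q_i = \Phi_2 \circ s_{(m_i)_{i \in S}}$ restricted to such a disk has winding number $m_i$ around $0 \in \C^*$. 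Thus $(Q_i)_*(\gamma) = m_i$.

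Finally I would define a homomorphism $\Psi : \pi_1(V_i^o) \to \Z/m_i\Z$ as follows. Given $\alpha \in \pi_1(V_i^o)$, pick any lift $\tilde{\alpha} \in \pi_1(U_i - V_i^o)$ under the surjection $(\iota_i)_*$ and set $\Psi(\alpha) \equiv (Q_i)_*(\tilde{\alpha}) \pmod{m_i}$. Since two lifts differ by a power of $\gamma$ and $(Q_i)_*(\gamma^k) = k m_i$, this is well defined, and it is plainly a homomorphism. Its kernel consists of those $\alpha$ whose lift can be adjusted by a power of $\gamma$ to lie in $\ker(Q_i)_*$, i.e.\ precisely $G_i = (\iota_i)_*(\ker(Q_i)_*)$. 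Therefore $\pi_1(V_i^o)/G_i$ embeds in $\Z/m_i\Z$, so $[\pi_1(V_i^o) : G_i]$ divides $m_i$. The only step requiring care is the local winding number computation, but this follows directly from Definitions \ref{defn:compatibletrivialization} and the product form of the canonical section, so there is no serious obstacle.
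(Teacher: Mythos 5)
Your proof is correct, and it takes a different formal route from the paper's. The paper works with the fibration $Q_i : Q_i^{-1}(\D(\epsilon)^*) \lra{} \D(\epsilon)^*$: the homotopy exact sequence over the punctured disk identifies $\pi_1(Q_i^{-1}(\epsilon))$ with $\ker((Q_i)_*)$, and then the projection $P = \pi_{\cN_X}\circ \Psi_i^{-1}$ exhibits the nearby fiber $Q_i^{-1}(\epsilon)$ as an $m_i$-fold cover of (a space homotopy equivalent to) $V_i^o$, so that $G_i$ is the image of the fundamental group of a cover whose components all have degree equal to the index of $G_i$; divisibility follows by summing degrees over components. You instead fiber the complementary direction — the punctured normal disk bundle over $V_i^o$ — extract the meridian $\gamma$ as the generator of $\ker((\iota_i)_*)$, compute $(Q_i)_*(\gamma)=m_i$ from the vanishing order of the canonical section (the positive real correction factors in Definition~\ref{defn:compatibletrivialization} do not affect the winding number, and in fact you need no compatibility of $\Phi$ with a regularization at all), and obtain an explicit injection $\pi_1(V_i^o)/G_i \hookrightarrow \Z/m_i\Z$. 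The two arguments rest on the same geometric fact, but yours is more algebraic and slightly more self-contained, while the paper's has the side benefit, used immediately afterwards, of giving the geometric description of $\widetilde V_i^o$ as the fiber $Q_i^{-1}(\epsilon)$; your version does not by itself produce that identification. All the steps you flag as routine (the well-definedness of $\Psi$ modulo $m_i$, the identification of $\ker\Psi$ with $G_i$, and the local winding-number computation) do go through as you describe.
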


The proof of this lemma also gives us a geometric interpretation of $\widetilde{V}^o_i$.

\begin{proof}
After an isotopy, we can assume that our model resolution admits a regularization
$${\mathcal R}  \equiv ((\rho_j)_{j \in I},(\Psi_I)_{I \subset S})$$
of radius $R$ along $U$ for some relatively compact open $U$
containing $\cup_{j \in S - \star_S} V_i$.
Let $T_{r,i}$ be the radius $r$ tube of $V_i$ as in Equation 
(\ref{eqn:radiusrtube}) for some $r < 3R/4$.
We can assume that the open neighborhood $U_i$ from Definition \ref{defn:naturalmicovering} is equal to
$T_{r,i} - \cup_{j \in S - i} V_j$.
We have that our map $Q_i$
is equal to
$$Q_i : U_i - V^o_i \lra{} \C^*, \quad Q_i(x) \equiv \Phi_2 \circ s_{(m_i)_{i \in S}}(x).$$

Define $\D(\epsilon)^* \equiv \D(\epsilon) - 0$ where $\D(\epsilon) \subset \C$ is the $\epsilon$-disk.
Then $Q_i$ restricted to $Q_i^{-1}(\D(\epsilon)^*)$
for $\epsilon>0$ small enough is a fibration whose fibers are smooth manifolds with corners.
Combining this with the fact that $\pi_2(\D(\epsilon)^*) = 0$ we get that
the map
$$\pi_1(Q_i^{-1}(\epsilon)) \lra{} \ker((Q_i|_{Q_i^{-1}(\D(\epsilon)^*)})_*) = \ker((Q_i)_*)$$
is an isomorphism
by a fibration long exact sequence argument.
Therefore the natural map
$$\pi_1(Q_i^{-1}(\epsilon)) \lra{} \pi_1(U_i)$$
has image $G_i$.
Also for $0 < \epsilon \ll r \ll 1$, the map
$$P : Q_i^{-1}(\epsilon) \lra{} V_i^o, \quad P(x) \equiv \pi_{\cN_X} \circ \Psi_i^{-1}|_{Q_i^{-1}(\epsilon)}(x)$$
is covering map of order $m_i$ over $\Im(P)$ and $V_i^o$ is homotopic to the image $\Im(P)$.
Hence the index of $G_i$ divides $m_i$.
\end{proof}

\begin{theorem} \label{theorem dynamical properties of monodromy}
Let $m \in \N_{>0}$ and let $(\cO_X(\sum_{i \in S} m_i V_i),\Phi,\theta)$
be a graded model resolution
that is also a multiplicity $m$
separating resolution and define
$V_i^o \equiv V_i - \cup_{j \in S - i} V_j$ for all $i \in S$ and define
$$S_m \equiv \{ i \in S - \star_S \ : \ m_i \ \text{divides} \ m \}.$$
Let $a_i$ be the discrepancy of $V_i$ for each $i \in S - \star_S$.
Then there is a graded abstract contact open book
$(M,\theta_M,\phi)$ so that the contact pair associated to it
is graded contactomorphic to the link
of our model resolution.
Also there is small positive slope perturbation $\check{\phi}$ of $\phi^m$ so that
the fixed point set of $\check{\phi}$
is a
disjoint union of codimension $0$ families of fixed points $(B_i)_{i \in S_m}$ satisfying 
\begin{enumerate}
\item \label{item:homologyoffixedpointset}
 $H^*(B_i;\Z) = H^*(\widetilde{V}^o_i;\Z)$ where $\widetilde{V}^o_i$ is the natural $m_i$-fold covering of $V^o_i$ as in Definition \ref{defn:naturalmicovering},
\item the action of $B_i$ is equal to $-m_i w_i - \pi (m_i - m) \epsilon^2$ where $w_i$ wrapping number of $\theta$ around $V_i$ and
\item \label{item:czindexoffixedpointset}
$CZ(\check{\phi},B_i) = 2k_i (a_i + 1)$ where $k_i \equiv \frac{m}{m_i}$
\end{enumerate}
for all $i \in S - \star_S$.
\end{theorem}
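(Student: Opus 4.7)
The plan is to use the abstract contact open book $(M,\theta_M,\phi)$ associated to the model resolution via Definition \ref{defn:abstractopenbookfrommodelresolution}, since Lemma \ref{lemma:openbookcontactomorphism} already delivers the graded contactomorphism between its contact pair and the link of the resolution. After invoking Lemma \ref{lemma:modelresolutionregulatrizationexistence} to isotope the model resolution so it admits a regularization ${\mathcal R}=((\rho_i)_{i\in S},(\Psi_I)_{I\subset S})$ of some radius $R$, the charts $\Psi_I$ give explicit local normal forms: $\pi_\Phi=\prod_{i\in I}z_i^{m_i}$ times a trivialization factor equal to $1$ very close to $V_I$ by Definition \ref{defn:compatibletrivialization}, the symplectic form is the K\"ahler form of Equation (\ref{equation symplectic form}), and $\theta$ agrees fiberwise with the $1$-form of Equation (\ref{eqn:1formequation}). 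These data completely determine the symplectic connection on $\pi_\Phi^{-1}(\partial\D(\epsilon))$, and hence the monodromy $\phi$ up to fiberwise Hamiltonian perturbation.

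First I would locate the fixed points of $\phi^m$. In the local model, parallel transport around $\partial\D(\epsilon)$ rotates each coordinate $z_j$ with $j\in I$ by an angle dictated by $m_j$ and the constraint that $\pi_\Phi$ traces $\partial\D(\epsilon)$ once. When $|I|=1$, so we are near $V_i^o$, this reduces to the cyclic permutation of the $m_i$ sheets of $\pi_\Phi^{-1}(\epsilon)|_{V_i^o}$; thus $\phi^m$ fixes this whole piece iff $m_i\mid m$, and the fixed locus is the $m_i$-fold cover of $V_i^o$ whose covering subgroup is $G_i=(\iota_i)_*\ker((Q_i)_*)$, i.e. $\widetilde V_i^o$, exactly as in Definition \ref{defn:naturalmicovering} (cf. the proof of Lemma \ref{lemma:indexdividesmi}). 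When $|I|\geq 2$ the multiplicity $m$ separating hypothesis $m_i+m_j>m$ prevents all the simultaneous rotation angles from being trivial at any iterate $\leq m$, so any would-be fixed locus near a deeper stratum is pushed off by a small autonomous Hamiltonian perturbation. Composing $\phi^m$ with a small Hamiltonian equal to $\delta r_M$ near $\partial M$ (to guarantee small positive slope) and, near each $\widetilde V_i^o$, to a Morse--Bott function $-H_i$ with $H_i\geq 0$ vanishing on a codimension $0$ submanifold with corners homotopy equivalent to $\widetilde V_i^o$, yields $\check\phi$ with fixed locus $\bigsqcup_{i\in S_m}B_i$, establishing property (\ref{item:homologyoffixedpointset}).

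The action and Conley--Zehnder index are then read off from the same local model. Lifting an orbit of $\check\phi|_{B_i}$ to a loop in the mapping torus $T_{\check\phi}$ and integrating $\theta+dg$, the $\tfrac{w_i}{2\pi}\alpha_i$ piece of Equation (\ref{eqn:1formequation}) contributes $-m_iw_i$ because the orbit wraps $m_i$ times around $V_i$, while the $\rho_i\alpha_i$ piece combined with the small-slope correction contributes $-\pi(m_i-m)\epsilon^2$. For the CZ index, $d\check\phi|_p$ at $p\in B_i$ is, after absorbing the negligible tangential perturbation, a pure rotation by $2\pi k_i$ in the $\cN_X V_i$ factor; property \ref{item:cznormalization} accounts for $2k_i$, and the mapping-torus formulation of CZ from Appendix A adds the winding of the canonical-bundle trivialization specifying the grading along a loop around $V_i^o$, which by Definition \ref{defn:discrepancyofmodelresolution} is precisely the discrepancy $a_i$, yielding the extra contribution $2k_ia_i$ and altogether $2k_i(a_i+1)$. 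The main obstacle I expect is this last CZ computation: one has to separate the ``$+1$'' coming from the $\phi^m$-rotation of the normal $\C$-factor $\cN_X V_i$ from the ``$a_i$'' coming from the winding of the canonical-bundle trivialization determining the grading, and identify the latter with the algebraic discrepancy. Appendix A is precisely the technology designed to organize this bookkeeping cleanly.
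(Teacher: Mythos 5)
Your overall strategy coincides with the paper's: the same abstract contact open book from Definition \ref{defn:abstractopenbookfrommodelresolution}, Lemmas \ref{lemma:modelresolutionregulatrizationexistence} and \ref{lemma:openbookcontactomorphism} for the link identification, the local normal form $\pi_\Phi=\prod_{i\in I}z_i^{m_i}$ for locating fixed points, the multiplicity $m$ separating condition to rule out deeper strata, and the canonical-bundle/winding-number computation of Appendix A for the Conley--Zehnder index. The CZ bookkeeping you propose (normal rotation plus grading winding, identified with $a_i$ via Definition \ref{defn:discrepancyofmodelresolution}) is a legitimate reorganization of the paper's computation via Lemma \ref{lemma alternative grading} and gives the same answer.

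However, there is a genuine gap: you never account for the fixed points of $\phi^m$ along the proper transform $V_{\star_S}$. Since $m_{\star_S}=1$ divides every $m$ and the monodromy is trivial near $V_{\star_S}^o$ in the local model, $\phi^m$ has an entire codimension $0$ component $B_{\star_S}=M\cap\check T_{R,\star_S}$ of fixed points there, which does \emph{not} appear in the advertised fixed-point set $(B_i)_{i\in S_m}$ (note $S_m\subset S-\star_S$). Eliminating $B_{\star_S}$ is precisely what the small positive slope perturbation must accomplish, and this is delicate: one has to interpolate between the local Hamiltonian normal form $H_{\star_S}$ near $B_{\star_S}$ and $\delta r_M$ near $\partial M$ by a function with no critical points and no short periodic orbits, so that no new fixed points are created and $\check\phi=\phi^m$ still holds near $\bigcup_{i\in S_m}B_i$. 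Your ``compose with $\delta r_M$ near $\partial M$'' only treats the extreme boundary and leaves $B_{\star_S}$ untouched. A secondary issue: to invoke \ref{item:spectralsequenceproperty} you must verify that each $B_i$ is a codimension $0$ family of fixed points in the technical sense of Definition \ref{defn:definitionfixedpoints}, i.e.\ exhibit an autonomous Hamiltonian $H:N\to(-\infty,0]$ with $H^{-1}(0)=B_i$ whose time $1$ flow equals $\phi^m$ on $N$; this is the content of Lemma \ref{lemma test for codmension zero families} and cannot be arranged by further perturbing near $B_i$ (doing so risks altering the action and index computations, which require $\check\phi=\phi^m$ near these components). Relatedly, your phrase ``any would-be fixed locus near a deeper stratum is pushed off by a small perturbation'' understates what is true and needed: the separating condition forces any $m$-fold wrapping orbit near a stratum $V_I$ with $|I|\ge 2$ to not exist at all, with no perturbation required.
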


\begin{proof}[Proof of Theorem \ref{theorem dynamical properties of monodromy}.]
In this proof we will use the same notation as in Definition \ref{defn:abstractopenbookfrommodelresolution}.
We will introduce it again here for the sake of clarity.
After an isotopy, we can assume that our model resolution admits
a regularization $${\mathcal R}  \equiv ((\rho_i)_{i \in I},(\Psi_I)_{I \subset S})$$
of radius $R$ along $U$
for some relatively compact open $U$
containing $K \equiv \cup_{i \in S - \star_S} V_i$ since
by Lemma \ref{lemma link invariance},
the link does not change after this isotopy.
Our abstract contact open book $(M,\theta_M,\phi)$
will be the graded abstract contact open book associated to this model resolution
as in Definition \ref{defn:abstractopenbookfrommodelresolution}.
By Lemma \ref{lemma:openbookcontactomorphism},
the link of $OBD(M,\theta_M,\phi)$
is contactomorphic to the link of our model resolution.

We now wish to show that $\phi$
satisfies properties
(\ref{item:homologyoffixedpointset})-(\ref{item:czindexoffixedpointset}) listed in the statement of this theorem. To do this, we need
to recall the construction
of $(M,\theta_M,\phi)$.
Let $T_{r,I}$ be the radius $r$ tube of $V_I$ as in Equation
(\ref{eqn:radiusrtube})
and let $T^o_{r,I}$ be the interior of $T_{r,I}$.
Let $\check{T}$ be a smoothing of the manifold with corners
$\cup_{i \in S - \star_S} T_{R,i}$
as in Definition \ref{defn:abstractopenbookfrommodelresolution}.
In other words,
$\check{T}$ satisfies Equation (\ref{eqn:horizontalboundary}),
$X_\theta$ points outwards along $\partial \check{T}$,
$\check{T} \subset \cup_{i \in S - \star_S} T_{R,i}$
and $\cup_{i \in S-\star_S} T_{3R/4,i}$ is contained in the interior of $\check{T}$.
Define
$$\pi_\Phi : \check{T} \lra{} \C, \quad \pi_\Phi \equiv \Phi_2 \circ s_{(m_i)_{i \in S}}|_{\check{T}}$$
where $\Phi_2 : \cO_X(\sum_{i \in S} m_i V_i) \lra{} \C$ is the composition of $\Phi$ with the natural projection map
$X \times \C \lra{} \C$.
Then we can assume that
$$(M,\theta_M) \equiv (\pi_\Phi^{-1}(\epsilon),\theta|_{\pi_\Phi^{-1}(\epsilon)})$$
for some small $\epsilon> 0$.
We will assume that $\epsilon>0$ is small enough 
so that $M \subset \cup_{i \in S} T_{R/4,i}$ and the fibers of $\pi_\Phi|_{\pi_\Phi^{-1}(\D(\epsilon))}$
are symplectic by Lemma \ref{lemma:symplecticfibers}.
Let $\omega$ be the associated symplectic form of our model resolution.
Define
$\phi : M \lra{} M$
to be the monodromy map
around the loop $$[0,1] \lra{} \partial D(\epsilon), \quad t \lra{} e^{2\pi it}$$
with respect to the symplectic connection associated to $\omega$.

First of all, we will compute the fixed points of the map $\phi^m$.
To do this, we will show that they correspond to certain periodic orbits of the flow of a Hamiltonian on $\check{T}$.
Define 
$$H : \Dom(\pi_\Phi) = \check{T} \to \R, \quad H(x) = |F(x)|, \quad \forall \ x \in \check{T}.$$
It is sufficient for us to find the periodic orbits
of $X_H$ starting inside $M$ which map under $\pi_\Phi$ to loops in $\C^*$ which wrap around $0$ exactly $m$ times in the anti-clockwise direction.
This is because there is a $1-1$ correspondence
between fixed points of $\phi^m$
and such orbits. This correspondence
sends a fixed point $p$ of $\phi^m$
to the unique flowline of $X_H$
starting and ending at $p$
whose image under $\pi_\Phi$ wraps around $0$ exactly $m$ times in the anti-clockwise direction.

Define $\check{T}_{R,I} \equiv T^o_{\frac{3R}{4},I} - \cup_{i \in S - I} T^o_{\frac{3R}{4},i}$ for each $I \subset S$.
Since $M \subset \cup_{I \subset S} \check{T}_{R,I}$, it is sufficient for us to calculate the fixed points of $\phi^m$ inside $M \cap \check{T}_{R,I}$ for each $I \subset S$.
Therefore we will now compute the periodic orbits of $X_H$ starting
inside $M \cap \check{T}_{R,I}$ for all $I \subset S$ which project to loops in $\C^*$ wrapping $m$ times around $0$ in an anti-clockwise direction.
Let $a_R : [0,\infty) \to [0,\infty)$
be the smooth function defined in Definition
\ref{defn:compatibletrivialization}.
In other words, $a_R$ satisfies:
\begin{enumerate}
	\item $a'_R(x) > 0$ for $x \in [0,3R/4)$,
	\item $a_R(x) = x$ for $x \leq R/4$,
	\item $a_R(x) = 1$ for $x \geq 3R/4$.
\end{enumerate}

\begin{tikzpicture}

\draw [<->](-1.5,2.5) -- (-1.5,-1.5) node (v1) {} -- (3.5,-1.5);

\draw [] (-1.5,-1.5) -- (0.5,0.5) node (v2) {};

\draw [] plot[smooth, tension=.7] coordinates {(0.5,0.5)(1.1,1) (1.9,1.1)};

\draw (1.9,1.1) -- (3.5,1.1);

\draw (1.6,-1.6) -- (1.6,-1.4);
\draw (-0.5,-1.6) -- (-0.5,-1.4);

\node at (-0.5,-1.8) {$R/4$};

\node at (1.6,-1.8) {$3R/4$};

\node at (1.2,1.5) {$a_R$};

\draw (-1.4,1.1) -- (-1.6,1.1);

\node at (-1.9,1.1) {$1$};

\end{tikzpicture}

Define
$$b_R : [0,\infty) \to [0,\infty), \quad b_R(x) \equiv \sqrt{a_R(x)}.$$
Let
$$p_I : T_{R,I} \to V_I, \quad p_I(x) \equiv \pi_{\cN_X V_I}(\Psi_I^{-1}(x))$$ be the natural projection
map.
Inside $\check{T}_{R,I}$ we have that
$$H(x) = \prod_{i \in I} \left(b_R(\rho_i(x)) \right)^{m_i}, \quad \forall \ x \in \check{T}_{R,I}$$
since the bundle trivialization $\Phi$
is radius $R$ compatible with
our regularization ${\mathcal R}$
along
$T_{R,I}$.
Hence
\begin{equation} \label{equation flow of xh}
X_H|_x = \sum_{i \in I} \left( m_i b_R'(\rho_i(x))b_R(\rho_i(x))^{m_i} \prod_{j \in I - i} b_R(\rho_j(x))^{m_j}  \right) X_{\rho_i}|_x \quad \forall \ x \in \check{T}_{R,I}.
\end{equation}
This means that all the periodic orbits of $X_H$
starting inside $\check{T}_{R,I}$
are contained inside the fibers of $p_I$
since the vector fields $X_{\rho_i}$ are tangent to these fibers.
Also since $b_R(\rho_i(x)) > 0$ and
 $b'_R(\rho_i(x)) > 0$ for all
$x \in \check{T}_{R,I}$ and
all $i \in I$, we have that
any disk contained inside a fiber of $p_I$
bounding any such orbit must intersect
$V_i$ positively for all $i \in I$.
This implies that the projection
of this orbit to $\C^*$
wraps around $0$ more than $m$ times if $|I| > 1$
since our model resolution is a multiplicity $m$ separating resolution.
This means that if the set of periodic orbits
of $X_H$ starting inside $M \cap \check{T}_{R,I}$ whose image in $\C^*$ wraps $m$ times around $0$ is non-empty then $|I| = 1$.
Hence all fixed points of $\phi^m$
are contained inside $\cup_{i \in S} M \cap \check{T}_{R,i}$.
Similar reasoning ensures that $i \in S_m \cup \{\star_S\}$ and that the set of fixed points of $\phi^m$
inside $T_{R,i}$ is
$B_i \equiv M \cap \check{T}_{R,i}$ for all such $i$.

By Lemma \ref{lemma test for codmension zero families} below with $W = T_{R,i}$,
$h = \pi\rho_i|_{T_{R,i}}$,
$B_1 = h^{-1}(\pi\sqrt{\epsilon})$,
$B_2 = T_{R,i} \cap \{|\pi_\Phi| = \epsilon\}$, 
and $f_j \equiv \frac{1}{2\pi}arg(\pi_\Phi)|_{B_j}$ for $j=1,2$, we have that $B_i$ is a codimension $0$ family of fixed points of $\phi^m$ for all $i \in S_m$.
%
Since $B_i$ is homotopic to
$\pi_\Phi^{-1}(\epsilon) \cap T_{R,i}$
which in turn is homotopic to the fiber $Q_i^{-1}(\epsilon)$ constructed in the proof of Lemma \ref{lemma:indexdividesmi}
we have
$H^*(B_i;\Z) = H^*(\widetilde{V}^o_i;\Z)$ for all $i \in S -\star_S$.

%

We now need to construct a small positive slope perturbation $\check{\phi}$ of $\phi^m$ without creating any extra fixed points so that $B_{\star_S}$ disappears and so that $\phi = \check{\phi}$
near $\cup_{i \in S_m} B_i$.
Since $B_{\star_S}$
is a codimension $0$ family of fixed points of $\phi$,
there is a neighborhood $N_{\star_S}$ of $B_{\star_S}$
and a Hamiltonian $H_{\star_S} : N_{\star_S} \lra{} (-\infty,0]$
so that $\phi^m$ is the time $1$-flow of $H_{\star_S}$ inside $N_{\star_S}$
and so that $B_{\star_S} = H_{\star_S}^{-1}(0)$.
Choose $\delta_\star > 0$ small enough
so that $H_{\star_S}$ has no $q$-periodic orbits inside $H_{\star_S}^{-1}(-\delta_\star,0)$
for all $q \in [0,2]$.
Since the vector field (\ref{equation flow of xh}) is tangent to the fibers
of $p_I$ inside $T_{R,I} \cap T_{3R/4,\star_S}$ for all $I \subset S$
and since $\Psi_I$ is a regularization,
we have that $H_{\star_S}$
must be a function
of the variables
$(\rho_i)_{i \in I}$ inside $T_{R,I} \cap T_{3R/4,\star_S}$ only.
This implies that we can construct
a smooth function
$\check{b} : N_{\star_S} \lra{} \R$ for $\delta_\star>0$ small enough
so that 
\begin{itemize}
\item $\check{b} = F \circ H_{\star_S}$ for some smooth function $F : \R \lra{} \R$ 
inside $H_{\star_S}^{-1}([-\delta_{\star_S},-\delta_{\star_S}/3])$ where $F \circ H_{\star_S} = H_{\star_S}$ near $H_{\star_S}^{-1}(-\delta_{\star_S})$,
\item $\check{b}$ is $C^2$ small inside $H_{\star_S}^{-1}([-\delta_{\star_S}/3,0])$
\item $\check{b} = \delta r_M$ near $\partial M$ where $r_M$ is the radial coordinate on $M$ and
\item $\check{b}$ has no critical points.
\end{itemize}
This implies that the time $1$-flow of $\check{b}$ has no fixed points inside $N_{\star_S}$ and is equal to $H_{\star_S}$ outside a compact subset of $N_{\star_S}$.
Define $\check{\phi}$ to be equal to $\phi^m$ outside $N_{\star_S}$ and the time $1$-flow of $\check{b}$ inside $N_{\star_S}$.
This is a positive slope perturbation of $\phi^m$
so that the set of fixed points of $\check{\phi}$
is $\cup_{i \in S - \star_S} B_i$
and
$\check{\phi} = \phi^m$ in a neighborhood of these fixed points.

Next we need to compute the action of $B_i$ for each $i \in S - \star_S$. Let $p \in B_i$ and let $\gamma : \R / \Z \to \check{T}$
be the unique loop starting at $p \in M$
which is symplectically orthogonal to the fibers of
$\pi_\Phi$
and satisfying $\pi_\Phi \circ \gamma(t) = e^{2i\pi m t}$ for all $t \in \R$.
Then the action of $p$ is equal to
$-\int_{0}^{1} \gamma^* \theta + \pi m \epsilon^2 = -m_i w_i - \pi (m_i-m) \epsilon^2$.

We now need to compute the Conley-Zehnder index of $B_i$ for each $i \in S_m$.
Fix $i \in S_m$ and let $p \in B_i \subset M$.
Let $\gamma : \R / \Z \to \check{T}$
be the unique loop starting at $p \in M$
which is symplectically orthogonal to the fibers of
$\pi_\Phi$
and satisfying $\pi_\Phi \circ \gamma(t) = e^{2i\pi m t}$ for all $t \in \R$.
Let $J$ be an $\omega$-compatible almost complex structure on $X$
so that $\pi_\Phi$ becomes $J$-holomorphic.
Let
$$T^{\text{ver}} \check{T} \equiv \text{ker}(D\pi_\Phi)|_{\check{T}-K} \subset T (\check{T}-K)$$ be the vertical tangent bundle.
Let $(T^{\text{ver}} \check{T})^\perp \subset T (\check{T}-K)$
be the set of vectors which are $\omega$-orthogonal to the
vertical tangent bundle.
This is a $J$-holomorphic subbundle of $T(\check{T}-K)$.
Let $\tau_{\C^*} : T \C^* \lra{} \C^* \times \C$ be the holomorphic trivialization which sends $\frac{\partial}{\partial \vartheta}$ to the constant section $1$
and let $\tau_{\C^*,2} : T \C^* \lra{} \C$ be the composition of $\tau_{\C^*}$
with the natural projection map $\C^* \times \C \lra{} \C$.
We then have a trivialization
$$\tau^\perp : (T^{\text{ver}}\check{T})^\perp \lra{} (\check{T} - K) \times \C.$$
$$ \tau_{T^\perp}(Y) \equiv (x,\tau_{\C^*,2}(D\pi_\Phi(Y))), \ \forall \ Y \in (T^{\text{ver}}\check{T})^\perp|_x, \ x \in \check{T}-K.$$
Let $$(\tau^\perp)^* :  ((T^{\text{ver}}\check{T})^\perp)^* \lra{} (\check{T} \cap \check{X}) \times \C$$ be the corresponding trivialization of the dual bundle.

Let $\kappa_{J,\phi}$ be the canonical bundle of $T^{\text{ver}} \check{T}$.
Then we have a canonical isomorphism
\begin{equation} \label{eqn:canonicalbundleidentity}
\kappa_J|_{\check{T}-K} \cong \kappa_{J,\phi} \otimes ((T^{\text{ver}} \check{T})^\perp)^*.
\end{equation}
Since $(X-K,\omega)$ is a graded symplectic manifold, we get a natural choice of trivialization
$\tau  : \kappa_J|_{X-K} \lra{} (X-K) \times \C$ by Definition \ref{defn:gradingtrivializationcorrespondence}.
The trivializations $\tau$
and $(\tau^\perp)^*$ give us a trivialization $\tau^{\text{ver}} : \kappa_{J,\phi} \lra{} (\check{T} - K) \times \C$ of
$\kappa_{J,\phi}$
by the identity (\ref{eqn:canonicalbundleidentity}).

Let $s_\phi$ be a section of $\kappa_{J,\phi}$
so that it is equal to the constant section $1$ with respect to our trivialization
$\tau^{\text{ver}}$.
Let $s$ be a section of $\kappa_J$
so that $s^{-1}(0)$ is transverse to $0$
and contained inside a small neighborhood $N$ of $\cup_{i \in S-\star_S} V_i$
which deformation retracts on to $\cup_{i \in S-\star_S} V_i$ and so that $\tau \circ s|_{X - N}$ is the constant section $1$.
Then by definition $[s^{-1}(0)]$ is a homology class homologous
to $\sum_{i \in S} a_i [V_i]$.
Now define $\check{T}_\epsilon \equiv \pi_\Phi^{-1}(\partial \D(\epsilon))$.
By construction, $\tau^{\text{ver}}|_{\check{T}_\epsilon}$ is homotopic to the induced trivialization from Lemma \ref{defn:contacthypersurfacegrading} (after identifying the contact hyperplane distribution in $\check{T}_\epsilon$ with
$T^{\text{ver}}\check{T}$ using an isotopy between these symplectic subbundles).

We can choose $J$ so that near the image $\gamma$,
the Hamiltonian flow $\phi^{\frac{1}{2}\rho_i}_t : T_{R,i} \lra{} T_{R,i}$ of $\frac{1}{2}\rho_i|_{T_{R,i}}$ is $J$-holomorphic.
Hence on some neighborhood $N_\gamma$ of $\gamma$ invariant under the flow of $X_{\frac{1}{2}\rho_i}$,
we have that $\phi^{\frac{1}{2}\rho_i}_t$
lifts to a map $$\widetilde{\phi}_t : \kappa_J|_x \to \kappa_J|_{\phi^{\frac{1}{2}\rho_i}_t(x)}$$
for all $x \in \check{T}_\epsilon \cap N_\gamma$
given by the highest wedge power
of the $J$-holomorphic bundle map $((D\phi^{\frac{1}{2}\rho_i}_t)^{-1})^*$.
Also since $D\phi^{\frac{1}{2}\rho_i}_t(v) \in T^{\text{ver}} \check{T}_\epsilon$
for all $v \in T^{\text{ver}} \check{T}_\epsilon|_{N_\gamma \cap \check{T}_\epsilon}$,
we get an induced map
$$\widetilde{\phi}^{\text{ver}}_t : \kappa_{J,\phi}|_{\check{T}_\epsilon \cap N_\gamma}
\lra{} \kappa_{J,\phi}|_{\check{T}_\epsilon \cap N_\gamma}.$$
Let $\tau_2 : \kappa_J|_{X-K} \lra{} \C$ and
$\tau^{\text{ver}}_2 : \kappa_{J,\phi} \lra{} \C$ be the composition of $\tau$ and $\tau^{\text{ver}}$ respectively with the natural projection map to $\C$.
The winding number of the map
$$w_\phi : \R / 2\pi m\Z \lra{} \C^* \cong \text{Aut}(\C,\C), \quad w_\phi(t) = \tau^\text{ver}_2 \circ (\widetilde{\phi}^{\text{ver}}_t|_p) \circ (\tau^\text{ver}_2|_{\kappa_{J,\phi}|_p})^{-1}$$
is equal to the winding number of the map
$$w_\tau : \R / 2\pi m\Z \lra{} \C^* \cong \text{Aut}(\C,\C), \quad w_\tau(t) = \tau_2 \circ (\widetilde{\phi}_t|_p) \circ (\tau_2|_{\kappa_J|_p})^{-1}.$$
Since $[s^{-1}(0)]$ is  homologous
to $\sum_{i \in S-\star_S} a_i [V_i]$
and $\gamma(t) = \phi^{\frac{1}{2}\rho_i}_t(p)$ for all $t$,
we have that the winding number of $w_\tau$
is equal to  $$k_i (-1 - a_i).$$

Hence by
Lemma \ref{lemma alternative grading}
and the fact that
the winding number of $w_\phi$ is the winding number of $w_\tau$, we have
that
the Conley-Zehnder index of the fixed point $p$ of $\phi^m$
is
minus $2$ times the winding number of
$w_\tau$ and so
$CZ(\phi^m,B_i) = 2k_i(a_i + 1)$.
\end{proof}

\bigskip

Here is a technical lemma that was used in the proof of Theorem \ref{theorem dynamical properties of monodromy}
above.

\begin{lemma} \label{lemma test for codmension zero families}
Let $(W,\omega)$ be a symplectic manifold admitting
a free Hamiltonian $S^1$-action generated by a Hamiltonian
$h : W \lra{} \R$ (I.e. $\phi^h_1 = \text{id}_W$ and $\phi^h_t(x) \neq x$ for all $x \in W$ and $t \in (0,1)$).
Let $B_1, B_2 \subset W$ be two
real hypersurfaces inside $W$ with maps
$$f_1 : B_1 \lra{} \R/\Z, \quad f_2 : B_2 \lra{} \R / \Z$$
so that
\begin{enumerate}
\item the fibers of $f_i$ are symplectic submanifolds of $W$
and the corresponding monodromy map $\phi_i : f_i^{-1}(0) \lra{} f_i^{-1}(0)$ of $f_i$ around $\R / \Z$ is well defined (I.e. no points
parallel transport off to infinity in finite time)
 for $i = 1,2$,
\item $\phi_1 = id_{B_1}$, $B_1 = h^{-1}(C)$
and $B_2 \subset h^{-1}([C,\infty))$
for some $C>0$,
\item $B_i$ is invariant under our
Hamiltonian $S^1$-action for $i = 1,2$
and for all $t \in S^1 = \R / \Z$ and $x \in B_1$  we have
$f_1(t \cdot x) = f_1(x) + t$ and
\item $f_1^{-1}(0) \cap f_2^{-1}(0)$ is equal to a compact codimension $0$ submanifold of $f_2^{-1}(0)$ with boundary and corners
and $f_1|_{B_1 \cap B_2} = f_2|_{B_1 \cap B_2}$.
\end{enumerate}
Then $f_1^{-1}(0) \cap f_2^{-1}(0)$ is a codimension $0$ family of fixed points of $\phi_2^m$ for all $m > 0$.
\end{lemma}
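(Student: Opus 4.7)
The plan is to realize $F := f_1^{-1}(0) \cap f_2^{-1}(0)$ as a codimension-$0$ family of fixed points of $\phi_2^m$ by exhibiting the autonomous Hamiltonian
\[H := -m(h - C)|_{f_2^{-1}(0)}\]
as the generator of $\phi_2^m$ near $F$. The algebraic properties are immediate from the hypotheses: since $B_2 \subset h^{-1}([C, \infty))$ we have $h \geq C$ on $f_2^{-1}(0)$, hence $H \leq 0$, and since $f_1|_{B_1 \cap B_2} = f_2|_{B_1 \cap B_2}$ we get $H^{-1}(0) = f_2^{-1}(0) \cap B_1 = F$.

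The dynamical core is the identification $\phi_2^m = \phi^H_1$ on a neighborhood of $F$ in $f_2^{-1}(0)$, which I would prove by reducing to a local normal form. At each $x \in F^\circ$ (the interior of $F$ in $f_2^{-1}(0)$), the fiber $f_2^{-1}(0)$ lies inside $B_1$ locally because $F^\circ$ is open in $f_2^{-1}(0)$ and $F \subset B_1$; combined with $B_2 \subset h^{-1}([C, \infty))$ and the smoothness of $B_2$, a tangency argument forces $B_2 = B_1$ in a neighborhood of $\tilde F := S^1 \cdot F^\circ$ in $W$. The equivariant Darboux theorem, applied to the free Hamiltonian $S^1$-action, then provides coordinates $(\theta, p, z)$ on $W$ near a point of $F$ in which $\omega = dp \wedge d\theta + \omega_{\text{std},z}$, $h = p + C$, the $S^1$-action is translation in $\theta$, $B_1 = \{p = 0\}$, and (using $S^1$-invariance of $B_2$ and the local coincidence with $B_1$) $B_2 = \{p = \psi(z)\}$ for some $S^1$-invariant $\psi \geq 0$ vanishing on the $z$-projection of $\tilde F$, with $f_2 = \theta$ on $B_2$. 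A direct calculation then identifies the characteristic vector field of $B_2$ (the horizontal lift of $\partial_t$ normalized by $df_2(Y_2) = 1$) as $Y_2 = \partial_\theta - X_\psi$, where $X_\psi$ is the Hamiltonian vector field of $\psi$ on the symplectic $z$-slice. Integrating, the monodromy on $f_2^{-1}(0) = \{\theta = 0\}$ acts as $z \mapsto \phi^{-X_\psi}_1(z)$, so $\phi_2^m = \phi^{-X_\psi}_m = \phi^{X_H}_1$, where the last equality uses $X_H = -m X_\psi$ on $f_2^{-1}(0)$. Since $H$ is globally defined and this identification is coordinate-independent, the equality $\phi_2^m = \phi^H_1$ holds on a full neighborhood of $F$ in $f_2^{-1}(0)$.

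To conclude, $F$ is compact and path connected by hypothesis, each $x \in F$ has action $0$ (as the generating function of $H$ vanishes on $F$), and a sufficiently small neighborhood $N$ of $F$ in $f_2^{-1}(0)$ contains no fixed points of $\phi^H_1 = \phi_2^m$ outside $F$: the critical set of $H$ is exactly $F$ (since $X_H$ vanishes iff $d(h|_{f_2^{-1}(0)}) = 0$, and $F$ is the minimum locus of $h|_{f_2^{-1}(0)}$), and the Morse--Bott normal form near $F$ prevents $1$-periodic orbits of $X_H$ from appearing in sufficiently small neighborhoods (by shrinking $N$ depending on $m$ if necessary). The hard part will be the local normal form step, namely rigorously establishing that $B_2$ coincides with $B_1$ in a neighborhood of $\tilde F$ in $W$ and then applying the equivariant Darboux theorem to put $\omega$, $h$, the $S^1$-action, $B_1$, $B_2$, and $f_2$ simultaneously into the model above.
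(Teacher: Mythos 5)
Your overall strategy coincides with the paper's: both arguments put a neighborhood of $B_1\cap B_2$ into an equivariant normal form for the free Hamiltonian $S^1$-action (the paper uses an equivariant Moser theorem to produce one product chart $V\times A_{\check{C}-\delta,\check{C}+\delta}$ with $h=\pi r^2$ around all of $B_1\cap B_2$, you use equivariant Darboux charts and patch), compute the characteristic line field $\ker(\omega|_{B_2})$ there, and identify $\phi_2^m$ near $F$ with the time-$1$ flow of $-m(h-C)|_{f_2^{-1}(0)}$. One remark on the normal-form step: the assertion that a ``tangency argument'' gives $B_2=B_1$ on a neighborhood of $\tilde{F}=S^1\cdot F^\circ$ in $W$ is true but for the trivial reason that $B_2$ is $S^1$-invariant and contains $F^\circ$, hence contains the open subset $S^1\cdot F^\circ$ of $B_1$. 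What you actually need, and do use, is the tangency $T_xB_2=T_xB_1=\ker dh_x$ at every point of $B_1\cap B_2$, including over $\partial F$; this follows because $h|_{B_2}\geq C$ attains its minimum along $B_1\cap B_2$, and it is what makes $B_2$ a graph $\{p=\psi(z)\}$ with $\psi\geq 0$ near all of the compact set $F$ rather than only near $F^\circ$.

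The genuine gap is the clause ``with $f_2=\theta$ on $B_2$''. No version of the equivariant Darboux theorem can arrange this: $f_2$ is given data, and the hypotheses only give $f_2=f_1$ on $B_1\cap B_2$ (with $f_1$ standard on $B_1$ by equivariance). Away from $B_1$ --- exactly the region near $\partial F$ where $\psi>0$ and the monodromy is nontrivial --- you know nothing about $f_2$, so neither the identification $f_2^{-1}(0)=\{\theta=0\}\cap B_2$ nor the normalization $df_2(Y_2)=1$ with $Y_2=\partial_\theta-X_\psi$ is justified; without these, your formula $z\mapsto\phi^{-X_\psi}_1(z)$ describes a time-reparameterized return map to a possibly different fiber. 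The paper fills this hole by deforming $f_2$ inside $B_2$ through fibrations whose fibers remain transverse to the line field $\ker(\omega|_{B_2})$: such a deformation is realized by a flow along that line field and therefore changes neither the symplectic forms on the fibers nor the monodromy, after which one may assume $f_2=\vartheta/2\pi$ on a small invariant neighborhood of $B_1\cap B_2$. (Equivalently, one can note that $\phi_2$ is the first-return map of the characteristic foliation of $B_2$ to $f_2^{-1}(0)$ and argue with that directly.) This step must be added; with it, your local computation agrees with the paper's.
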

\begin{proof}[Proof of Lemma \ref{lemma test for codmension zero families}.]
Let $Q \subset B_1$ be an $S^1$ invariant relatively compact
open set containing $B_1 \cap B_2$,
let $V \equiv Q \cap f_1^{-1}(0)$
and $\omega_V \equiv \omega|_V$.
For all $r_1,r_2 > 0$ let
$A_{r_1,r_2} \subset \C$
be the open annulus whose inner radius is $r_1$
and whose outer radius is $r_2$
with the standard symplectic form. 
Let $r : \C \lra{} [0,\infty)$ be the
radial function $z \lra{} |z|$
and $\theta : \C - 0 \lra{} \R / 2\pi \Z$
the angle coordinate.
Define $\check{C} \equiv \sqrt{C/\pi}$.
Let $S_{\check{C}} \subset \C$ be the circle of radius $\check{C}$.
After shrinking $Q$ slightly
we can,
by an equivariant Moser theorem (see \cite{GuilleminSternbergSymplecticTechniques}), find an $S^1$-equivariant
open set $U \subset W$
symplectomorphic to $(V \times A_{\check{C}-\delta,\check{C}+\delta}, \omega_V + \frac{1}{2} d(r^2) \wedge d\vartheta)$
so that
$Q = V \times S_{\check{C}}$
and $h|_{U} = \pi r^2$.

If we smoothly deform $f_2$ inside
$B_2$ through fibrations whose fibers are always transverse to the line
field given by $\ker(\omega|_{B_2})$
then the symplectic form on the fibers and the monodromy map does not change.
This is because such a deformation can be realized by a flow along a vector field tangent to the line field $\ker(\omega|_{B_2})$.
In particular, we can assume that
in some small $S^1$ invariant neighborhood $\check{U} \subset U$
of $B_1 \cap B_2$ that
\begin{equation} \label{eqn:thetaequation}
f_2|_{\check{U} \cap B_2} = (\theta/2\pi)|_{\check{U} \cap B_2}.
\end{equation}
Let $pr_1 : V \times A_{\check{C} - \delta,\check{C} + \delta} \lra{} V$
be the natural projection map.
Define $V_2 \equiv f_2^{-1}(0) \cap \check{U}$ and $\omega_{V_2} \equiv \omega|_{V_2}$.
We can assume that $\check{U}$ is small enough so that
$pr_1|_{V_2} : V_2 \lra{} V$ is a diffeomorphism onto its image.
This map is also a symplectic embedding
by Equation (\ref{eqn:thetaequation}).
Define
$$ H : V_2 \lra{} \R, \quad H(x) \equiv \pi r(x)^2 - 2\pi \check{C}  = h|_{V_2} - 2\pi \check{C}.$$

Since $\omega_V + \frac{1}{2} d(r^2) \wedge d\vartheta = \omega_V + dh \wedge d(\frac{1}{2\pi}\vartheta)$ inside $\check{U}$ and $pr_1|_{V_2}$
is a symplectic embedding,
we get that the vector field
$$-X_{H}^{\omega_{V_2}} + 2\pi \frac{\partial}{\partial \vartheta}$$
is tangent to $\check{U} \cap B_2$
and
lies in the kernel of $\omega|_{B_2 \cap \check{U}}$.
Then for all $m > 0$, $\phi_2^m$ is equal to the time $1$ flow of $-mH$ near $B_1\cap f_2^{-1}(0)$
inside the symplectic manifold $(V_2,\omega_{V_2})$.
Hence $B_1\cap f_2^{-1}(0)$ is a codimension $0$
family of fixed points of $\phi_2$.
\end{proof}

\section{Proof of Theorem \ref{theorem:mainspectralsequence} and Corollary \ref{corollary:spectralsequencenumbers}.} \label{section:proofofmainresult}

\begin{proof}[Proof of Theorem \ref{theorem:mainspectralsequence}.]
Let $L \equiv (L_f \subset S_\epsilon, \xi_{S_\epsilon}, \Phi_f)$
be the contact pair associated to $f$ with the standard grading as in Example \ref{example:contactpairoff}.
Let $(\cO_X(\sum_{i \in S} m_i E_i),
\Phi,\theta)$ be a graded model resolution coming from the log resolution $\pi : Y \lra{} \C^{n+1}$ as in Example \ref{defn:modelresolutionmotivatingexample}.
The wrapping number of $\theta$ around $E_j$ is $w_j$ for all $j \in S - \star_S$.
By using the function $|z|^2$ on $\C^{n+1}$
combined with Lemma \ref{lemma:smoothfamilyofcontactpairs}, one
can show that the link
of this model resolution is contactomorphic to $L$.
Hence Theorem \ref{theorem:mainspectralsequence}
follows from Theorem \ref{theorem dynamical properties of monodromy}, \ref{item:HFOBD} and \ref{item:spectralsequenceproperty}.
\end{proof}

\begin{lemma} \label{lemma:spectralsequencelemma}
Suppose we have a cohomological spectral sequence converging
to a $\Z$-graded abelian group $G^*$ with $E^1$ page $(E^1_{p,q})_{p \in \Z,q\in \Z}$. 
Define $$m \equiv \sup \{p+q \ : \ E_1^{p,q} \neq 0\} \quad \text{and}$$
$$k_p \equiv \sup \{p+q \ : q \in \Z, \ E_1^{p,q} \neq 0\} \quad \forall \ p \in \Z.$$
Suppose that $m$ is finite and $k_p \neq m-1$ for all $p \in \Z$.
Then $G^m \neq 0$ and $G^k = 0$ for all $k > m$.
\end{lemma}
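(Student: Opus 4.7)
The plan is to exhibit a single surviving entry at total degree $m$ that cannot be hit or killed by any differential. The vanishing of $G^k$ for $k>m$ will be essentially automatic, so the real content lies in the first assertion.

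First I would observe that because $p+q$ takes integer values and $m$ is a finite supremum, the set $P_m \equiv \{p \in \Z : k_p = m\}$ is nonempty. Since in the intended application (Theorem \ref{theorem:mainspectralsequence}) the $E_1$ page is supported in finitely many columns, I may assume $P_m$ is bounded below; let $p_0 \equiv \min P_m$ and $q_0 \equiv m - p_0$, so $E_1^{p_0, q_0} \neq 0$. The claim will be that the class at $(p_0,q_0)$ survives to $E_\infty$.

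Next I would analyze the differentials $d_r : E_r^{p,q} \to E_r^{p+r,q-r+1}$ at this bidegree. For differentials \emph{out} of $(p_0,q_0)$, the target $(p_0+r, q_0-r+1)$ lies at total degree $m+1$, and since $m$ is the supremum of $p+q$ on the support of $E_1$, the target vanishes on every page $E_r$ (which is a subquotient of $E_1$); hence all outgoing differentials are zero. For differentials \emph{into} $(p_0,q_0)$, the source lies at $(p_0-r,q_0+r-1)$ in column $p_0-r$. By minimality of $p_0$ one has $k_{p_0-r} < m$, and the hypothesis $k_{p_0-r} \neq m-1$ upgrades this to $k_{p_0-r} \leq m-2$. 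The total degree of the source is $m-1 > k_{p_0-r}$, so $E_1^{p_0-r,q_0+r-1}=0$ and the incoming differential vanishes as well. Thus $E_\infty^{p_0,q_0} = E_1^{p_0,q_0} \neq 0$, and convergence of the spectral sequence forces $G^m \neq 0$.

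Finally, the vanishing of $G^k$ for $k>m$ follows immediately: for any $(p,q)$ with $p+q=k$ we have $E_1^{p,q}=0$ by definition of $m$, so $E_\infty^{p,q}=0$ for the entire diagonal, and the filtration on $G^k$ has trivial associated graded.

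The only step that requires care is the choice of $p_0$: if $P_m$ were unbounded below one could not invoke minimality, but in every spectral sequence arising in this paper the $E_1$ page has finite $p$-support, so this subtlety does not arise. I expect no genuine obstacle; the lemma is essentially a bookkeeping statement about where differentials can land when the top diagonal is separated by a gap of width at least two from anything beneath it.
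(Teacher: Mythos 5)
Your argument is essentially identical to the paper's: the paper sets $p_m \equiv \inf \{p \in \Z : \exists q,\ p+q=m,\ E^{p,q}_1 \neq 0\}$ (your $p_0$) and runs exactly the same analysis, killing outgoing differentials because their targets sit in total degree $m+1$ and incoming ones because $k_{p_m-j}<m$ together with $k_{p_m-j}\neq m-1$ forces $k_{p_m-j}\leq m-2$. The finiteness caveat you flag is implicitly present in the paper's proof as well (it takes the infimum without addressing attainment) and is harmless in the intended application, so your write-up matches the paper's proof.
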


\proof
Let
$$p_m \equiv \inf \{p \in \Z \ : \ \exists q \in \Z, \ p+q=m, \ E^{p,q}_1 \neq 0\}.$$
We will show that each element of $E^{p_m,m-p_m}_j$ can never kill or be killed by the spectral sequence differential for each $j \geq 1$.
Since $k_{p_m-j} \neq m-1$ for all $j$,
we get that $k_{p_m - j} < m-1$ for all $j \geq 1$.
Therefore $E^{p_m-j,m-p_m+j-1}_j = 0$ for all $j \geq 1$.
%
Hence the differential:
$$d^{p_m-j,m-p_m+j-1}_j : E^{p_m-j,m-p_m+j-1}_j \lra{} E^{p_m,m-p_m}_j$$
is zero for all $j$.
Also since
$(p_m+j) + (m-p_m-j+1) = m+1 > m$,
we get that
$E^{p_m+j,m-p_m-j+1}_j = 0$ for all $j$.
Therefore the differential:
$$d^{p_m,m-p_m}_j : E^{p_m,m-p_m}_j \lra{} E^{p_m+j,m-p_m-j+1}_j$$
is zero for all $j$ .
Hence $G^m \neq 0$.
Also $G^k = 0$ for all $k > m$
since $E^{p,q}_1 = 0$ for all $p,q \in \Z$
satisfying $p+q = k$.

%
%
%
%
%
%
%
%
%
%
%
%
%
%

\qed

\begin{proof}[Proof of Corollary \ref{corollary:spectralsequencenumbers}.]
The numbers $\mu_m$ do not depend on the choice of log resolution for all $m > 0$ by Lemmas
\ref{lemma:multiplicitymresolutiondiscrepancycalculation} and
 \ref{lemma:multipicitymresolutionindepcedence}.
Hence Corollary \ref{corollary:spectralsequencenumbers}
follows immediately from Theorem \ref{theorem:mainspectralsequence}
combined with Lemmas \ref{lemma:existenceofmultiplicitymseparating} and \ref{lemma:spectralsequencelemma}. 
\end{proof}

\section{Appendix A: Gradings and Canonical Bundles} \label{section:gradingsandcanonicalbundles}

In this section we will develop tools so that
we can construct
gradings (See Definition \ref{defn:gradedsymplectomorphism})
and relate them to other kinds of topological information.
In this paper we will only need to study gradings up to isotopy
which will be defined now.
We will first give a definition of a grading
for any principal $G$ bundle and then relate
it to gradings of $(E,\Omega)$.
Throughout this section,
 $G$ will be a Lie group, $\widetilde{G}$ its universal cover
and $p : W \lra{} B$ will be a principal $G$ bundle.
Also $\pi : E \lra{} V$,
will be a symplectic vector bundle
with symplectic form $\Omega$
whose fibers have dimension $2n$.

\begin{defn} 
A {\it grading} of $p$
consists of a principal $\widetilde{G}$ bundle $\widetilde{p} : \widetilde{W} \lra{} B$ along with a $G$ bundle isomorphism
$$\iota : \widetilde{W} \times_{\widetilde{G}} G \cong W.$$
Note that a grading of $(E,\Omega)$
is equivalent to a grading of the principal
$Sp(2n)$ bundle $Fr(E)$.
Let 
$$\iota_j : \widetilde{W}_j \times_{\widetilde{G}} G \cong W, \quad j = 0,1$$
be gradings of $p$.
An {\it isotopy}
between these two gradings consists
of a $\widetilde{G}$-bundle isomorphism
$$\Psi : \widetilde{W}_0 \lra{} \widetilde{W}_1$$
together with a smooth family of $G$ bundle isomorphisms:
$$\check{\iota}_t : \widetilde{W}_0 \times_{\widetilde{G}} G \cong W$$
joining $\iota_0$ and $\iota_1 \circ \check{\Psi}$
where $\check{\Psi} : \widetilde{W}_0 \times_{\widetilde{G}} G \lra{} \widetilde{W}_1 \times_{\widetilde{G}} G$ is the natural isomorphism induced by $\Psi$.
An {\it isotopy} between two gradings of $(E,\Omega)$
is an isotopy between the corresponding gradings
on the principal $Sp(2n)$ bundle $Fr(E)$.
We can define isotopies of gradings of symplectic manifolds
and contact manifolds in a similar way.
\end{defn}

\begin{defn}
	Let
	$$\iota : \widetilde{W} \times_{\widetilde{G}} G \cong W$$
	be a grading of $p$.
	The {\it associated covering map} of this grading
	is the natural map:
	$$\widetilde{W} \lra{} \widetilde{W} \times_{\widetilde{G}} G \stackrel{\iota}{\lra{}}
	W.$$
\end{defn}

The following lemma gives a topological characterization of
gradings.
For simplicity we will assume that the base $B$ is connected. Let $\star \in W$ be a choice of basepoint.

\begin{lemma} \label{lemma:normalsubgroupgradingbijection}
	Let $N_W$ be the set of normal subgroups
	$A \triangleleft \pi_1(W,\star)$
	so that
	$$p_* : \pi_1(W,\star) \lra{} \pi_1(B,p(\star))$$
	restricted to $N_W$ is an isomorphism.
	Let $Gr_W$ be the set of isotopy classes of gradings of $W$.
	Then the map
	$Q_W : Gr_W \lra{} N_W$
	sending a grading to
	\begin{equation} \label{eqn:fundamentalgroupimage}
	\Im(P_*) \subset \pi_1(W,\star)
	\end{equation}
	is an isomorphism
	where $P$ is the associated covering map of this grading.
\end{lemma}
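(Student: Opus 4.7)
The plan is to verify bijectivity of $Q_W$ in three steps: well-definedness, surjectivity, and injectivity.

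For well-definedness, fix a grading $\iota:\widetilde{W}\times_{\widetilde{G}}G \cong W$ with associated covering $P:\widetilde{W}\to W$, and set $K := \ker(\widetilde{G}\to G)$. Then $P$ is a regular $K$-cover, so $A := \Im(P_*)$ is automatically normal in $\pi_1(W,\star)$. Because $\widetilde{G}$ is simply connected, the long exact sequence of the fibration $\widetilde{G}\to \widetilde{W}\to B$ collapses to an isomorphism $\widetilde{p}_*:\pi_1(\widetilde{W})\to \pi_1(B)$, and the factorisation $\widetilde{p}_* = p_*\circ P_*$ together with the injectivity of $P_*$ shows $p_*|_A$ is an isomorphism, so $A \in N_W$.

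For surjectivity, given $A \in N_W$ the plan is to first form the connected regular cover $P:\widetilde{W}_A\to W$ corresponding to the subgroup $A$. Its deck group $\pi_1(W,\star)/A$ is canonically identified with $K$ via the internal direct product splitting $\pi_1(W,\star)=A\cdot\ker(p_*)$ forced by the hypothesis on $A$ together with the fibration exact sequence $\pi_1(G)\to \pi_1(W)\to \pi_1(B)\to 0$. I then lift the $G$-action $\mu$ on $W$ to a $\widetilde{G}$-action on $\widetilde{W}_A$ by lifting the composition $\mu\circ(q\times P):\widetilde{G}\times\widetilde{W}_A\to W$ (with $q:\widetilde{G}\to G$ the covering projection) through $P$. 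The lift exists because $\widetilde{G}$ is simply connected, which forces the induced image on $\pi_1$ to land in $A$. Normalising the lift at a basepoint forces it to be a genuine group action by uniqueness of lifts, and routine checks show it is free and fiberwise transitive over $B$; hence $\widetilde{W}_A$ is a principal $\widetilde{G}$-bundle whose associated $G$-bundle is canonically $W$.

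Injectivity is the most delicate step, and I expect it to be the main obstacle. Given two gradings $\iota_0, \iota_1$ with the same image $A$, the Galois correspondence for regular covers supplies an isomorphism $\Psi:\widetilde{W}_0\to \widetilde{W}_1$ over $W$. Any two $\widetilde{G}$-actions on $\widetilde{W}_1$ that extend the $K$-deck-action and descend to the same $G$-action on $W$ differ by a continuous homomorphism $\widetilde{G}\to K$, which must be trivial since $\widetilde{G}$ is connected and $K$ discrete; hence $\Psi$ is automatically $\widetilde{G}$-equivariant and thus a $\widetilde{G}$-bundle isomorphism. The remaining task is to construct the smooth family $\check{\iota}_t$ of $G$-bundle isomorphisms connecting $\iota_0$ and $\iota_1\circ\check{\Psi}$: these two isomorphisms differ by a gauge transformation of $W$, and one must arrange, by modifying $\Psi$ by a deck transformation if necessary, that this gauge transformation lies in the identity component of the gauge group before interpolating through a path of $G$-bundle automorphisms.
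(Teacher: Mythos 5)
Your overall strategy coincides with the paper's: both proofs establish the bijection by building an explicit inverse out of covering space theory. The only packaging difference is in surjectivity, where the paper exhibits the $\widetilde{G}$-structure fiberwise (for $A\in N_W$ the restriction of the cover $\widetilde{W}_A\to W$ to each fiber of $p$ is the universal cover of that fiber, hence canonically a $\widetilde{G}$-torsor), whereas you lift the global action map $\mu\circ(q\times P)$; these are equivalent. One caveat applies to both arguments equally: the ``routine check'' that the lifted action is free (equivalently, that $P^{-1}(F)\to F$ is the \emph{universal} cover) is exactly the assertion that the orbit map $\pi_1(G)\to\pi_1(W)$ is injective, i.e.\ that $\partial:\pi_2(B)\to\pi_1(G)$ vanishes; for the Hopf bundle this fails, $N_W$ is nonempty while $Gr_W$ is empty, and both proofs break at this point. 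Since the paper's own proof carries the same implicit hypothesis, I only flag it.

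The genuine problem in your write-up is the last step of injectivity. Modifying $\Psi$ by a deck transformation does nothing to $\check{\Psi}$: the deck group is $K=\ker(\widetilde{G}\to G)$, which is a discrete normal (hence central) subgroup acting through the principal action, so in the associated bundle $[wk,g]=[w,q(k)g]=[w,g]$ and the induced map $\check{\Psi}$ is unchanged. Thus your proposed mechanism cannot move the gauge transformation $\Theta:=\iota_1\circ\check{\Psi}\circ\iota_0^{-1}$ into the identity component if it were to lie outside it, and as written the argument is incomplete. Fortunately no adjustment is needed: if $\Psi$ is chosen (via the Galois correspondence) to be an isomorphism of covers \emph{over} $W$, then restricting $\iota_j$ to $\widetilde{W}_j\times\{e\}$ recovers $P_j$, so $\Theta\circ P_0=P_1\circ\Psi=P_0$; since $P_0$ is surjective this forces $\Theta=\mathrm{id}$, and the constant family $\check{\iota}_t=\iota_0$ already exhibits the isotopy. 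With that substitution your proof closes up, and your preceding observation that $\Psi$ is automatically $\widetilde{G}$-equivariant (two lifts of the same $G$-action agreeing at $e$ and differing by a locally constant map into the discrete group $K$ on a connected space must coincide) is correct and is the part the paper leaves implicit in ``this is an inverse to $Q_W$.''
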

\proof
We will first show that the map $Q_W$ is well defined
by showing that the image
(\ref{eqn:fundamentalgroupimage}) is a normal subgroup.
Let
$$\iota : \widetilde{W} \times_{\widetilde{G}} G \cong W$$
be a grading of $p$.
The covering map $P$ of such a grading
is isomorphic (using the map $\iota$) to the natural map
$$\widetilde{W} \lra{} \widetilde{W} \times_{\widetilde{G}} G.$$
The deck transformations of this map are equal to
$\ker(\widetilde{G} \lra{} G)$
and these act transitively.
Hence the image (\ref{eqn:fundamentalgroupimage})
is a normal subgroup. Combining this with the fact that the fibers of the natural fibration $\widetilde{W} \lra{} B$ are simply connected, the image (\ref{eqn:fundamentalgroupimage}) is contained in $N_W$ and hence the map $Q_W$ is well defined.

We will now construct an inverse to $Q_W$.
Let $N \triangleleft \pi_1(W,\star)$
be an element of $N_W$.
Let $$P : \widetilde{W} \lra{} W$$
be a covering map with a choice of basepoint $\widetilde{\star} \in \widetilde{W}$ mapping to $\star$
so that the map
$$P_* : \pi_1(\widetilde{W},\widetilde{\star}) \lra{} \pi_1(W,\star)$$
has image equal to $N$.
Let $F$
be a fiber of $p$.
Let $\iota_F : F \hookrightarrow W$ be the natural inclusion map.
Since $P_*|_{P_*^{-1}(N)}$ is injective,
we get that $(\iota_F)_*^{-1}(P_*^{-1}(N)) = \{id\}$
and hence $P|_{P^{-1}(F)} : P^{-1}(F) \lra{} F$
is the universal covering map.
This implies that each fiber has a natural
$\widetilde{G}$ action and hence
$$p \circ P : \widetilde{W} \lra{} B$$ is a $\widetilde{G}$ bundle
with a natural isomorphism
$$\widetilde{W} \times_{\widetilde{G}} W \cong W.$$
We define $Q_W^{-1}(N)$ to be the above grading.
This is an inverse to $Q_W$.
\qed

\bigskip

We have the following immediate corollary of Lemma
\ref{lemma:normalsubgroupgradingbijection}.

\begin{corollary} \label{corollay:isomorphismduetofundamentalgroups}
Suppose that $p_j : W_j \lra{} B$
is a principal $G_j$ bundle for some Lie group $G_j$
for $j = 1,2$.
Let $\Phi : W_1 \lra{} W_2$ be a map of fiber bundles
so that the induced map on the fibers is a fundamental group
isomorphism.
Then the map $\Phi_* : N_{W_1} \lra{} N_{W_2}$
induces a natural bijection between
isotopy classes of gradings
on $W_1$ and isotopy classes of gradings on $W_2$.
\end{corollary}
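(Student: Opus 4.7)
The plan is to use Lemma \ref{lemma:normalsubgroupgradingbijection} to reduce the claim to a statement about fundamental groups. That lemma identifies isotopy classes of gradings of $W_j$ bijectively with $N_{W_j}$, so it suffices to show that $\Phi_*$ restricts to a bijection $N_{W_1} \to N_{W_2}$.

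First I would show that $\Phi_* : \pi_1(W_1) \to \pi_1(W_2)$ is itself a group isomorphism. This follows from the Five Lemma applied to the ladder of long exact sequences of homotopy groups associated to the fibrations $p_1$ and $p_2$:
$$\pi_2(B) \to \pi_1(F_j) \to \pi_1(W_j) \to \pi_1(B) \to \pi_0(F_j).$$
The identity maps on $\pi_2(B)$ and $\pi_1(B)$, together with the hypothesized $\pi_1$-isomorphism on the fibers, force $\Phi_*$ to be an isomorphism on $\pi_1(W_j)$, provided the induced map $\pi_0(F_1) \to \pi_0(F_2)$ is injective. In the applications of this corollary elsewhere in the paper the structure groups $G_j$ are connected (typically $Sp(2n)$ or $\widetilde{Sp}(2n)$), so $\pi_0(F_j)$ is trivial and this condition is vacuous.

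Second, given $N \in N_{W_1}$, I would verify that $\Phi_*(N) \in N_{W_2}$. Normality of $\Phi_*(N) \triangleleft \pi_1(W_2)$ is immediate from the surjectivity of $\Phi_*$. Using $p_2 \circ \Phi = p_1$ together with the injectivity of $\Phi_*|_N$, the restriction $(p_2)_*|_{\Phi_*(N)}$ is identified via $\Phi_*|_N$ with $(p_1)_*|_N$, and is therefore an isomorphism onto $\pi_1(B)$. The inverse map $N_2 \mapsto \Phi_*^{-1}(N_2)$ is well-defined and lies in $N_{W_1}$ by the symmetric argument applied to $(\Phi_*)^{-1}$, yielding the desired bijection. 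The main technical delicacy I expect is the analysis of $\pi_0$ of the fibers when the structure groups are disconnected; since this does not arise in the intended applications, a uniform treatment under the connectedness assumption suffices.
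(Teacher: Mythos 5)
Your proof is correct and follows the route the paper intends: the paper states this as an immediate consequence of Lemma \ref{lemma:normalsubgroupgradingbijection}, and your argument simply supplies the omitted details (the five-lemma step showing $\Phi_*$ is an isomorphism on $\pi_1$, and the check that the defining conditions of $N_W$ are preserved). Your caveat about $\pi_0$ of the fibers is reasonable and harmless, since all structure groups appearing in the paper's applications ($Sp(2n)$, $U(n)$, $U(1)$) are connected.
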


Here the sets $N_{W_1}$ and $N_{W_2}$ in this corollary
are defined as in Lemma \ref{lemma:normalsubgroupgradingbijection}.

\begin{lemma} \label{lemma:trivializationsandgradingsofu1bundles}
Let $\pi_K : K \lra{} B$ be a principal $U(1)$ bundle.
Then there is a natural $1-1$ correspondence between
homotopy classes of trivializations of $\pi_K$
and isotopy classes of gradings of $\pi_K$.
\end{lemma}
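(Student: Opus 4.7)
The plan is to exploit the fact that the universal cover $\R$ of $U(1)$ is contractible, so that every principal $\R$-bundle over $B$ is trivializable. This makes the data of a grading of $\pi_K$ essentially equivalent to the data of a trivialization, up to the usual homotopy ambiguities.

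First I would construct the map from homotopy classes of trivializations to isotopy classes of gradings. Given a trivialization $\Phi : K \to B \times U(1)$, set $\widetilde{K} \equiv B \times \R$ with its natural principal $\R$-bundle structure and define $\iota : \widetilde{K} \times_\R U(1) \cong K$ to be the composition of the tautological isomorphism $B \times \R \times_\R U(1) \cong B \times U(1)$ (coming from $(b,t) \mapsto (b, e^{2\pi i t})$) with $\Phi^{-1}$. A homotopy between two trivializations induces an isotopy of the corresponding gradings in the obvious way, so this descends to a well-defined map on isotopy classes.

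For the reverse direction, given a grading $\iota : \widetilde{K} \times_\R U(1) \cong K$ with $\widetilde{K}$ a principal $\R$-bundle over $B$, I would use that the classifying space of $\R$ is contractible (since $\R$ is a contractible topological group, so all principal $\R$-bundles are trivializable) to choose an $\R$-bundle trivialization $\widetilde{\Phi} : \widetilde{K} \cong B \times \R$; then the induced isomorphism $K \cong \widetilde{K} \times_\R U(1) \cong B \times U(1)$ gives a trivialization of $\pi_K$. The main technical point will be well-definedness: any two trivializations of $\widetilde{K}$ differ by an element of $\mathrm{Map}(B, \R)$, which is contractible, and after projecting down to $U(1)$ the induced change in the trivialization of $K$ is null-homotopic through maps $B \to U(1)$ factoring through $\R$. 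Hence the homotopy class of the resulting trivialization depends only on the isotopy class of the grading.

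Finally I would verify by direct inspection that the two constructions are mutually inverse on isotopy and homotopy classes: starting from a trivialization $\Phi$, the grading produced uses $\widetilde{K} = B \times \R$ with the canonical trivialization, and the reverse procedure recovers $\Phi$ up to homotopy; starting from a grading, the chosen trivialization of $\widetilde{K}$ and the resulting trivialization of $K$ assemble back to an isotopic grading. I do not anticipate a serious obstacle here, since the whole argument reduces to standard facts about principal bundles with contractible structure group; the only subtlety is the contractibility of $\mathrm{Map}(B, \R)$ used in the well-definedness step, and this is immediate from the contractibility of $\R$.
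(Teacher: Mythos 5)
Your proof is correct, but it takes a genuinely different route from the paper's. You work directly with the bundle data: a trivialization of $K$ yields the grading $(B\times\R,\ \iota)$ via the tautological map, and conversely you trivialize the principal $\R$-bundle $\widetilde K$ (possible since $\R$ is contractible) and push down; well-definedness follows because the gauge group $\mathrm{Map}(B,\R)$ is convex, so any two trivializations of $\widetilde K$ induce homotopic trivializations of $K$ through loops $e^{2\pi i t f}$. The paper instead factors the correspondence through two intermediate objects: it first identifies trivializations up to homotopy with sections up to homotopy (via $\Phi^{-1}(1)$), and then invokes Lemma \ref{lemma:normalsubgroupgradingbijection} to identify isotopy classes of gradings with the set $N_K$ of normal subgroups of $\pi_1(K,\star)$ mapping isomorphically onto $\pi_1(B)$; the bijection between sections and $N_K$ is given by $s \mapsto \mathrm{Im}(s_*)$, with the inverse built by lifting to $\widetilde K$ and extracting a smooth section (citing Palais and the Steenrod approximation theorem for smoothness). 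Your argument is more self-contained and avoids the covering-space formalism entirely, at the cost of not producing the fundamental-group description of the correspondence, which is the form the paper actually exploits later (e.g.\ in Definition \ref{defn:gradingtrivializationcorrespondence} via Corollary \ref{corollay:isomorphismduetofundamentalgroups}, and in the proof of Lemma \ref{lemma:invarianceforlinks}). The one point you should make explicit in the smooth category is that a principal $\R$-bundle admits a \emph{smooth} global section (e.g.\ by a partition-of-unity argument for affine bundles, or by the Palais/Steenrod references the paper uses); otherwise the argument is complete.
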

\proof
There is a $1-1$
correspondence between
trivializations $\Phi : K \lra{} B \times U(1)$
of $K$ up to homotopy
and sections of $\pi_K$ up to homotopy given
by the map sending
$\Phi$ to the section whose image is $\Phi^{-1}(1)$.
Hence all we need to do is construct a natural $1-1$
correspondence between sections up to homotopy
and isotopy classes of gradings.
Let $S$ be the set of sections up to homotopy
and $Gr_K$ the set of isotopy classes of gradings.
By Lemma \ref{lemma:normalsubgroupgradingbijection},
it is sufficient for us to construct a bijection between
$S$ and $N_K$.
We define
$$\Psi : S \lra{} N_K, \quad \Psi(s) = \Im(s_* : \pi_1(B,\star) \lra{} \pi_1(K,s(\star))).$$
The inverse of this map is constructed as follows:
we start with a normal subgroup $N \in N_K$.
This gives us a grading
$$\iota : \widetilde{K} \times_{\R} U(1) \cong K$$
by Lemma \ref{lemma:normalsubgroupgradingbijection}
since $\widetilde{U(1)} = \R$.
Since the fibers of $\widetilde{K}$ are contractible,
there is a smooth section $\widetilde{s} : B \lra{} \widetilde{K}$
by \cite[Theorem 9]{palaishomotopytheoryinfinite} combined with
the Steenrod approximation theorem \cite[Section  6.7, Main Theorem]{steenrodfiberbundle}.
The composition $s \equiv P_K \circ \widetilde{s}$
where $P_K : \widetilde{K} \lra{} K$ is the associated covering
map is then a smooth section of $K$.
We then define $\Psi^{-1}(N) \equiv s$.
This is the inverse of $\Psi$.
\qed

\bigskip

We will now focus on the principal $Sp(2n)$ bundle $Fr(E)$.

\begin{defn} \label{defn:canonicalbundledefinition}
Let $J$ be a complex structure on $E$ compatible with $\Omega$.
The  {\it frame bundle} $Fr(E,\Omega,J)$ of
the unitary bundle $(E,\Omega,J)$ is the principal $U(n)$-bundle whose fiber over $v \in V$ is the space of unitary bases $e_1,\cdots,e_n$ of $(E,\Omega,J)|_v$.
The {\it anti-canonical bundle} $\kappa^*_J$ of $(E,J)$
is the highest exterior power of the complex vector bundle
$(E,J)$.
The associated
$U(1)$-bundle $\kappa_{\Omega,J}^* \subset \kappa_J^*$ of $\kappa_J^*$
has a fiber at $v \in V$ equal to the subset of elements
$e_1 \wedge \cdots \wedge e_n$
where $e_1,\cdots,e_n$ is a unitary basis for $(E,\Omega,J)|_v$.
Therefore we have a natural map
	$det_{\Omega,J} : Fr(E,\Omega,J) \lra{} \kappa_{\Omega,J}$.

The {\it canonical bundle} $\kappa_J$ of $(E,J)$ is the dual of $\kappa^*_J$ (or equivalently the anti-canonical bundle of the dual bundle of $(E,J)$).
In a similar way, we can define the {\it (anti-)canonical bundle}
of a symplectic manifold with a choice of compatible almost complex structure,
or of a contact manifold $(C,\xi_C)$ with a choice of compatible
contact form $\alpha$ and a $d\alpha|_{\xi_C}$-compatible
almost complex structure $J$ on $\xi_C$.
\end{defn}

\begin{defn} \label{defn:gradingtrivializationcorrespondence}

	Let $J$ be an $\Omega$-compatible complex structure
	on $E$.
	Let $\iota_J : Fr(E,\Omega,J) \lra{} Fr(E,\Omega)$ be the natural inclusion map.
	By \cite[Proposition 2.22, 2.23]{McDuffSalamon:Jholomorphiccurves},
	the natural maps
	$det_{\Omega,J}$ and $\iota_J$
	above are bundle maps whose restriction each fiber
	is a fundamental group isomorphism.
	Hence by Corollary \ref{corollay:isomorphismduetofundamentalgroups}
	and Lemma \ref{lemma:trivializationsandgradingsofu1bundles}
	there is a natural $1-1$ correspondence
	between isotopy classes of gradings of $(E,\Omega)$
	and homotopy classes of trivializations of $\kappa_{\Omega,J}^*$.
	Combining this with the fact that there is a natural $1-1$ correspondence
	homotopy classes of trivializations of $\kappa_{\Omega,J}^*$
	and homotopy classes of trivializations of $\kappa_J^*$ and hence of trivializations of the canonical bundle $\kappa_J$,
	we get a natural $1-1$ correspondence:
\begin{equation} \label{equation:gradingtrivializationcorrespondence}
Gr : \{\text{Trivializations of} \ \kappa_J\} / \text{homotopy}
 \lra{1-1}	
 	\{\text{Gradings of} \ (E,\Omega)\} / \text{isotopy}.
\end{equation}
	
	Given a trivialization $\Phi$ of $\kappa_J$
	we will call the grading $Gr(\Phi)$
	the {\it grading associated to $\Phi$}.
	Given a grading $g$ of $(E,\Omega)$,
	we will call $Gr^{-1}(g)$
	the {\it trivialization associated to this grading}.

%
%
%
%
%
%
%
%
%
\end{defn}
%
%
%
%
%
%

\bigskip

The above discussion enables us to compute the Conley-Zehnder index of a fixed point of a graded symplectomorphism in some nice cases.
Let $\phi : M \lra{} M$ be a graded exact symplectomorphism of a Liouville domain $(M,\theta_M)$.
Let $V$ be the unique vector field on the mapping torus $T_\phi$ from Definition \ref{defn:mappingtorusofphi}
given by the lift of the vector field $\frac{d}{dt}$ on $\R / \Z$ satisfying $\iota_V d\alpha_{T_\phi} = 0$.
Let $\phi^V_t$ be the time $t$ flow of $V$.
Let $x$ be a fixed point of $\phi$.
Suppose that there is a compatible complex structure $J$
on the vertical tangent bundle
$(T^{\text{ver}} T_\phi \equiv \ker(D\pi_{T_\phi}),d\alpha_{T_\phi})$
so that $D\phi^V_t$ restricted to $T^{\text{ver}} T_\phi|_x$ is $J$-holomorphic for all $t \in [0,1]$.
Then $\phi^V_t$ lifts to a map
$$\widetilde{\phi}_t : \kappa_J|_x \lra{} \kappa_J|_{\phi^V_t(x)}, \quad \widetilde{\phi}_t(\wedge_{i=1}^n e_i^*) = \wedge_{i=1}^n(\phi_t^*)^{-1} e_i^*, \quad \forall \ e_1^*,\cdots,e_n^* \in T_x^* M.$$
Since $\phi$ is graded, we see
by Definition \ref{defn:mappingtorusofphi}
that there is a natural grading on the vertical tangent bundle.
Therefore by Definition \ref{defn:gradingtrivializationcorrespondence},
there is a natural trivialization $\Phi : \kappa_J \lra{} T_\phi \times \C$ of $\kappa_J$
associated to this grading.
Let $\Phi_2 : \kappa_J \lra{} \C$ be the composition of $\Phi$ with the natural projection map to $\C$.

\begin{lemma} \label{lemma alternative grading}
Let $x \in M = \pi_{T_\phi}^{-1}(0)$ be a fixed point of $\phi$
and suppose that $D\phi|_x : T_x M \lra{} T_x M$ is the identity map.
Then $CZ(\phi,x)$ is equal to minus $2$ times the winding number of the map:
$$w : \R / \Z \lra{} \C^* = \text{Aut}(\C,\C), \quad t \lra{} \Phi_2 \circ \widetilde{\phi}_t \circ (\Phi_2|_{\kappa_J|_x})^{-1}.$$
\end{lemma}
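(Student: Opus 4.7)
The strategy is to reduce the Conley-Zehnder index to a winding number computation via the correspondence of Definition \ref{defn:gradingtrivializationcorrespondence} between gradings of a symplectic vector bundle and homotopy classes of trivializations of its canonical bundle. The key point will be that the lift of the symplectic path $D\phi^V_t|_x$ to $\widetilde{Sp}(2n)$ prescribed by the grading is intertwined, via this correspondence, with the $\C^*$-valued path $w(t)$.

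The first step is to identify the path of symplectic matrices underlying $CZ(\phi,x)$ with the path $A_t \equiv D\phi^V_t|_x$ in $Sp(2n)$, in some continuous gauge along the loop $\gamma(t) = \phi^V_t(x)$ in $T_\phi$. Unwinding the construction of the induced grading on $T_\phi$ in Definition \ref{defn:mappingtorusofphi} --- where the grading on the pullback $pr^* TM$ over $M \times [0,1]$ is the obvious one and the gluing at $t=1$ uses the graded lift $\widetilde{\phi}$ of $D\phi$ --- one sees that the $\widetilde{Sp}(2n)$-reduction of the vertical frame bundle along $\gamma$ is trivial in the pulled-back coordinates and acquires the twist $\widetilde{\phi}$ only upon crossing the gluing. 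Consequently, the canonical lift of $A_t$ to $\widetilde{Sp}(2n)$ starting from $\id$ ends at $\widetilde{\phi}(\id)$ under the identification of $\widetilde{Sp}(2n)$ with $\widetilde{Fr}(TM)|_x$ used in the definition of $CZ(\phi,x)$, so by property \ref{item:homotopyinvariance} one has $CZ(\phi,x) = CZ(A_t)$.

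Since $D\phi|_x = A_1 = \id$, the path $A_t$ is a loop in $Sp(2n)$ based at the identity, and since $A_t$ is $J$-holomorphic by hypothesis it actually lies in $U(n) \subset Sp(2n)$. Using properties \ref{item:cznormalization}, \ref{item:czadditive}, \ref{item:catenationczproperty}, \ref{item:homotopyinvariance} together with the fact that $U(1) \hookrightarrow U(n)$ by scalar matrices is a $\pi_1$-isomorphism with $\det_\C$ inverse, one obtains the standard identity $CZ(A_t) = 2\deg(\det_\C A_t)$ for such a loop. Next, by the definition of $\widetilde{\phi}_t$, its action on $\kappa_J|_x = \wedge^n T_x^\ast M$ equals $\wedge^n(A_t^\ast)^{-1}$; since $A_t^\ast$ is the complex-linear dual with $\det_\C A_t^\ast = \det_\C A_t$, this action is multiplication by $(\det_\C A_t)^{-1}$. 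Under the trivialization $\Phi$ of $\kappa_J$ associated by Definition \ref{defn:gradingtrivializationcorrespondence} to the induced grading, this complex scalar is precisely $w(t)$, so the winding of $w$ equals $-\deg(\det_\C A_t)$. Combining gives $CZ(\phi,x) = -2 \cdot \mathrm{wind}(w)$.

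The main obstacle is the naturality statement in the first step: the continuous $\widetilde{Sp}(2n)$-lift of $A_t$ through the induced grading on $T^{\text{ver}}T_\phi$ must really terminate at $\widetilde{\phi}(\id)$. This requires careful bookkeeping of the pullback-and-reglue construction of Definition \ref{defn:mappingtorusofphi}, and of the fact that the parallel transport along $\gamma$ with respect to the symplectic connection on $T_\phi$ agrees, modulo the gluing identification, with the trivial transport in $pr^*TM$. A closely related naturality, needed in the third paragraph, is that the correspondence of Definition \ref{defn:gradingtrivializationcorrespondence} intertwines the $\widetilde{Sp}(2n)$-lift of $A_t$ with the determinant lift on $\kappa_J$ under $\Phi$; this is essentially the assertion that passing from a grading to a trivialization of $\kappa_J$ is functorial with respect to paths of bundle automorphisms. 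Once these naturality statements are in place, the remainder of the argument is purely algebraic.
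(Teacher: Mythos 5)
Your proof is correct and follows essentially the same route as the paper's: both reduce $CZ(\phi,x)$ to the Conley--Zehnder index of the unitary loop $A_t = D\phi^V_t|_x$ via the grading/trivialization correspondence of Definition \ref{defn:gradingtrivializationcorrespondence}, then use that $CZ$ of a unitary loop based at the identity is twice the winding of $\det_\C$, together with the observation that the induced action on $\kappa_J|_x$ is multiplication by $(\det_\C A_t)^{-1}$. The only cosmetic difference is that the paper packages your first ``naturality'' step by choosing a unitary trivialization of $\gamma^* T^{\text{ver}} T_\phi$ compatible with both the grading and $\Phi$, whereas you unwind the mapping-torus gluing directly.
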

\proof
Let $\gamma : \R / m\Z \lra{} T_\phi$
be the $m$-periodic orbit of $V$ whose initial point is $x$.
Then there is a unique (up to homotopy) unitary trivialization
$T$ of $\gamma^* T^{\text{ver}} T_\phi$ 
so that $\Phi$ is equal to the highest wedge
power of $T$.
Because of the correspondence (\ref{equation:gradingtrivializationcorrespondence}),
we can also ensure that $T$ maps the grading on
$\gamma^* T^{\text{ver}} T_\phi$
(given by pulling back the grading on $T^{\text{ver}} T_\phi$ via $\gamma$)
to the trivial grading
on $(\R/m\Z) \times \C^{n+1}$ (maybe after changing the grading to an isotopic one).

Under this trivialization, the flow of $V$
corresponds to a smooth family of Hermitian matrices
$(A_t)_{t \in [0,1]}$
and the degree of $w$ is $-1$ times the winding number of $t \lra{} \text{det}_\C(A_t)$.
Using the correspondence (\ref{equation:gradingtrivializationcorrespondence}) and the trivialization $T$,
such a family of matrices corresponds to a point
in the universal cover
$\widetilde{Fr}(TM)|_x$ of $Fr(TM)|_x$.
Hence the Conley-Zehnder index of $(A_t)_{t \in [0,1]}$
is equal to $CZ(\phi,x)$.
Since $A_t$ are unitary matrices,
we get that $CZ((A_t)_{t \in [0,1]})$ is equal to twice the 
winding number of $t \lra{} \text{det}_\C(A_t)$.
Hence $CZ(\phi,x)$ is $-2$ times the winding number of $w$.
\qed

\section{Appendix B: Contactomorphisms of Mapping Tori and Floer Cohomology} \label{section:appendix}

The aim of this section is to show that Property
\ref{item:HFOBD} holds. Here is a statement of this property:

{\it Suppose that $(M_1,\theta_{M_1},\phi_1)$,
$(M_2,\theta_{M_2},\phi_2)$ are graded abstract
contact open books
so that
the graded contact pairs associated to them are graded contactomorphic.
Then $HF^*(\phi_0,+) = HF^*(\phi_1,+)$.
}

We will prove this by using an intermediate Floer cohomology group called $S^1$-equivariant Hamiltonian Floer cohomology on a certain mapping cylinder of our symplectomorphism.  

\begin{defn} \label{defn:mappingcylinder}
Let $(M,\theta_M,\phi)$ be an abstract contact open book.
Let $\check{\phi}$ be a small positive slope perturbation of $\phi$.
The {\it mapping cylinder of $\check{\phi}$}
is a triple $(W_{\check{\phi}},\pi_{\check{\phi}},\theta_{\check{\phi}})$
where
\begin{enumerate}
\item $W_{\check{\phi}} \equiv (\R \times \R \times M) / \Z$
where the $\Z$ action on $(\R \times \R \times M)$ has the property that $1 \in \Z$
sends $(s,t,x)$ to $(s,t-1,\check{\phi}(x))$,
\item $\pi_{\check{\phi}} : W_{\check{\phi}} \lra{} \R \times (\R/\Z)$ sends $(s,t,x)$ to $(s,t) \in \R \times \R/\Z$ and
\item $\theta_{\check{\phi}} = sdt + \kappa \theta_M + \kappa d(\rho(t)F_{\check{\phi}})$
where \begin{itemize}
\item $F_{\check{\phi}} : M \lra{} \R$ is a smooth function with support in the interior of $M$ satisfying $(\check{\phi})^* \theta_M = \theta_M + dF_{\check{\phi}}$,
\item $\rho: [0,1] \lra{} [0,1]$ is a  a smooth function equal to $0$ near $0$ and $1$ near $1$
and
\item $\kappa>0$ is a constant small enough to ensure that $d\theta_{\check{\phi}}$ is symplectic.
\end{itemize}  
%
\end{enumerate}

Let $r_M : (0,1] \times \partial M \lra{} (0,1]$
be the cylindrical coordinate on $M$.
Let $\delta_{\check{\phi}} > 0$ be small enough so that the symplectomorphism $\check{\phi}$
is equal to the time $1$ flow of $\delta r_M$
inside $(1-\delta_{\check{\phi}},1] \times \partial M$ for some $\delta>0$.
Let $\phi^{\delta r_M}_t : (1-\delta_{\check{\phi}},1] \times \partial M \lra{} (1-\delta_{\check{\phi}},1] \times \partial M$
be the time $t$ flow of $\delta r_M$.
Then we have a natural embedding:
$$\iota_{\check{\phi}} : C_{\check{\phi}} \equiv (\R \times (\R/\Z) \times (1-\delta_{\check{\phi}},1] \times \partial M) \  \hookrightarrow W_{\check{\phi}}, \quad \iota_{\check{\phi}}(s,t,r_M,y) \equiv (s,t,(\phi^{\delta r_M}_{-t}(r_M,y))$$
called the {\it vertical cylindrical end of $W_{\check{\phi}}$}.
The coordinate
\begin{equation} \label{eqn:verticalcylindralcoordinate}
r_{\check{\phi}} : C_{\check{\phi}} \lra{} (1-\delta_{\check{\phi}},1], \quad r_{\check{\phi}}(s,t,x) \equiv r_M(x)
\end{equation}
is called the {\it vertical cylindrical coordinate}.
A {\it grading} on a mapping cylinder $(W_{\check{\phi}},\pi_{\check{\phi}},\theta_{\check{\phi}})$
is a grading on the symplectic manifold
$(W_{\check{\phi}},d\theta_{\check{\phi}})$.
%
%

Two mapping cylinders 
$(W_{\check{\phi}_1},\pi_{\check{\phi}_1},\theta_{\check{\phi}_1})$,
$(W_{\check{\phi}_2},\pi_{\check{\phi}_2},\theta_{\check{\phi}_2})$
are {\it isomorphic}
if there is a diffeomorphism $\Phi : W_{\check{\phi}_1} \lra{} W_{\check{\phi}_2}$
and a constant $\delta > 0$ so that
\begin{itemize}
\item $\Phi^* \theta_{\check{\phi}_2} = \theta_{\check{\phi}_1} + dq$ for some $q : W_{\check{\phi}_1} \lra{} \R$ where $q$ has support in $W_{\check{\phi}_1} - \{r_{\check{\phi}_1} \geq 1-\delta\}$ and
\item $\pi_{\check{\phi}_1}|_{\{r_{\check{\phi}_1} \geq 1-\delta\}} = (\pi_{\check{\phi}_2} \circ \Phi)|_{\{r_{\check{\phi}_1} \geq 1-\delta\}}$.
\end{itemize}
They are {\it graded isomorphic} if, in addition, $\Phi$ is a graded symplectomorphism from $(W_{\check{\phi}_1},d\theta_{\check{\phi}_1})$
to
$(W_{\check{\phi}_2},d\theta_{\check{\phi}_2})$.

\end{defn}

Note that the definition above has many similarities with the definition of
the mapping torus from Definition \ref{defn:mappingtorusofphi}.
Also if we define the mapping torus
$$\pi_{T_{\check{\phi}}} : T_{\check{\phi}} \lra{} \R/\Z, \ \alpha_{T_{\check{\phi}}}$$
of our positive slope perturbation $\check{\phi}$
in exactly the same way as in Definition \ref{defn:mappingtorusofphi},
then $W_{\check{\phi}} = \R \times T_{\check{\phi}}$, $\theta_{\check{\phi}} = (s - \kappa) dt + \frac{\kappa}{C} \alpha_{\check{\phi}}$ for some $C>0$ and $\pi_{\check{\phi}} = \text{id}_\R \times \pi_{T_{\check{\phi}}}$.
The following calculation will be useful later on.
If we have a Hamiltonian $H$ equal to $\pi_{\check{\phi}}^* K$ for some $K : \R \times (\R/\Z) \lra{} \R$ then $X_H$ is equal to the horizontal lift of $X^{ds \wedge dt}_K$ with respect to the symplectic connection associated to $d\theta_{\check{\phi}}$.


\begin{lemma} \label{lemma:obdisomorphism}
Let $$(B_1 \subset C_1, \xi_1, \tau_1), \quad (B_2 \subset C_2, \xi_2, \tau_2)$$
be the (graded) contact pairs associated to the (graded) abstract contact open books
$(M_1,\theta_{M_1},\phi_1)$, $(M_2,\theta_{M_2},\phi_2)$ respectively.
%
Let $\check{\phi}_1$ be a small positive slope perturbation of $\phi_1$.
If the above contact pairs are (graded) contactomorphic then $(M_2,\theta_{M_2},\phi_2)$
is (graded) isotopic to an abstract contact open book
$(M_3,\theta_{M_3},\phi_{M_3})$
so that
the mapping cylinders
$$(W_{\check{\phi}_1},\pi_{\check{\phi}_1},\theta_{\check{\phi}_1}), \quad (W_{\check{\phi}_3},\pi_{\check{\phi}_3},\theta_{\check{\phi}_3})$$
are (graded) isomorphic where $\check{\phi}_3$ is a small positive slope perturbation of $\phi_3$.
\end{lemma}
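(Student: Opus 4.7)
The plan is to first use Theorem \ref{Theorem:obdisomorphism} (Giroux's theorem) to promote the graded contactomorphism of contact pairs to a graded isotopy of abstract contact open books between $(M_1,\theta_{M_1},\phi_1)$ and a suitable $(M_3,\theta_{M_3},\phi_3)$ graded isotopic to $(M_2,\theta_{M_2},\phi_2)$, and then to convert this isotopy into the required graded isomorphism of mapping cylinders.

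For the first step, I would use the graded contactomorphism $\Psi : C_1 \to C_2$ of contact pairs to build a graded isotopy of contact open books $OBD(M_1,\theta_{M_1},\phi_1) \sim OBD(M_2,\theta_{M_2},\phi_2)$. The map $\Psi$ respects the normal bundle data up to homotopy through symplectic trivializations (Definition \ref{defn:contactpair}), so after modifying $\Psi$ on a tubular neighborhood of $B_1$ I may assume $\Psi$ strictly intertwines the standard tubular neighborhoods used in Definition \ref{defn:openbook} and hence matches $\pi_1$ with $\pi_2 \circ \Psi$ near the binding. Away from the binding, $\pi_1$ and $\pi_2 \circ \Psi$ are two fibrations to $\R/\Z$ with Liouville pages for the same contact structure, and I would interpolate them by a smooth family $(\pi_t)_{t\in[1,2]}$ of open book projections using the path-connectedness of the space of Liouville pages of a contact open book with binding and near-binding behavior fixed. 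This produces a graded isotopy of contact open books, and applying Theorem \ref{Theorem:obdisomorphism} lifts it to a graded isotopy of abstract contact open books between $(M_1,\theta_{M_1},\phi_1)$ and the required $(M_3,\theta_{M_3},\phi_3)$.

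For the second step, the isotopy of abstract contact open books provides a diffeomorphism $\Phi : M_1 \to M_3$, a smooth family of Liouville forms $(\theta_t)_{t\in[0,1]}$ joining $\theta_{M_1}$ and $\Phi^* \theta_{M_3}$, and a smooth family of symplectomorphisms $(\psi_t)_{t\in[1,2]}$ joining $\phi_1$ and $\Phi^{-1} \circ \phi_3 \circ \Phi$, all with support in a fixed compact subset of the interior. After choosing the positive slope perturbations $\check\phi_1$ and $\check\phi_3$ compatibly (so that $\Phi \circ \check\phi_1 \circ \Phi^{-1}$ agrees with $\check\phi_3$ near $\partial M_3$), I would define the isomorphism $W_{\check\phi_1} \to W_{\check\phi_3}$ by $[s,t,x] \mapsto [s,t,\Phi(x)]$, and use a Moser-type argument on the family $(\theta_t)$ to ensure that $\Phi^* \theta_{\check\phi_3} - \theta_{\check\phi_1}$ is exact with support disjoint from the cylindrical end and that the projections $\pi_{\check\phi_1}$ and $\pi_{\check\phi_3} \circ \Phi$ agree on the cylindrical end $\{r_{\check\phi_1} \geq 1-\delta\}$. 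The preservation of the grading is automatic from the graded nature of the isotopy.

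The principal obstacle is the construction of the interpolating family $(\pi_t)$ of open book projections on $C_1 - B_1$ with fixed behavior near the binding. This reduces to a parameterized path-connectedness statement for the space of Liouville fibrations of a contact open book with prescribed near-binding behavior, which should follow from the contact-geometric arguments underlying Giroux's Theorem \ref{Theorem:obdisomorphism}, whose detailed form is developed in the proof of the inverse construction in \cite[Theorem 3.1.22]{MaxDornerThesis}.
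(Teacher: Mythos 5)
Your proof has a genuine gap at its central step, and the gap is precisely the one the paper's mapping-cylinder machinery is designed to avoid. You propose to upgrade the contactomorphism $\Psi$ of contact pairs to a graded \emph{isotopy of contact open books} by interpolating $\pi_1$ and $\pi_2\circ\Psi$ through a family of open book projections with Liouville pages, and then to invoke Theorem \ref{Theorem:obdisomorphism} to get an isotopy of abstract contact open books. But Theorem \ref{Theorem:obdisomorphism} only says $OBD$ is a bijection on isotopy classes; it does not say that two open book projections supported by contactomorphic contact pairs are isotopic as open books, and your appeal to ``path-connectedness of the space of Liouville pages with fixed binding and near-binding behavior'' is exactly that unproven uniqueness statement. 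Linear interpolation of the projections has no reason to stay a submersion with symplectic pages, and no argument is offered. Note moreover that if your Step 1 were available, the lemma (and all of Appendix B) would be superfluous: an isotopy of abstract contact open books between $(M_1,\theta_{M_1},\phi_1)$ and something isotopic to $(M_2,\theta_{M_2},\phi_2)$ would give $HF^*(\phi_1,+)=HF^*(\phi_2,+)$ directly from Lemma \ref{lemma:isotopicimpliesisomorphicHF}, with no need for mapping cylinders or $S^1$-equivariant Floer cohomology.

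The paper's proof deliberately proves something weaker. It first rescales $\theta_{M_2}$ near $\partial M_2$ (an explicit isotopy of abstract contact open books) to produce $(M_3,\theta_{M_3},\phi_3)$ whose boundary contact form matches $\alpha_{M_1}$ strictly under a contactomorphism $\check\Psi:\partial M_3\to\partial M_1$. The contactomorphism of contact pairs then restricts, on the complement of the bindings, to a contactomorphism $Q:T_{\phi_3}\to T_{\phi_1}$ of mapping tori that intertwines the projections \emph{only near the boundary} --- and this is all that Definition \ref{defn:mappingcylinder} requires of an isomorphism of mapping cylinders, since $\pi_{\check\phi_1}$ and $\pi_{\check\phi_3}\circ\Phi$ need only agree on $\{r_{\check\phi_1}\geq 1-\delta\}$ and the Liouville forms need only agree up to $dq$ away from that region (arranged by the final Moser argument). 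No interpolation of fibrations is ever performed. A secondary issue with your Step 2: the formula $[s,t,x]\mapsto[s,t,\Phi(x)]$ does not descend to the quotient defining $W_{\check\phi_1}$ unless $\Phi\circ\check\phi_1=\check\phi_3\circ\Phi$ on the nose, which an isotopy of abstract contact open books does not provide; this is fixable, but the Step 1 gap is not, at least not by the route you indicate.
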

\proof
Since the corresponding open books are  contactomorphic, and the boundary
$\partial M_j$ is contactomorphic to the binding of $OBD(M_j,\theta_{M_j},\phi_j)$
for $j=1,2$,
we get that $\partial M_1$ is contactomorphic to $\partial M_2$.
Hence there is a diffeomorphism
$\Psi : \partial M_2 \lra{} \partial M_1$
so that $\Psi^* \alpha_1 = f \alpha_2$
where $\alpha_j = \theta_{M_j}|_{\partial M_j}$ for $j=1,2$ and $f : \partial M_2 \lra{} (0,\infty)$.
After multiplying $\theta_2$ by a positive constant, we can assume that $f > 1$.
Choose $\delta>0$ small enough so that the subset of the cylindrical end
$(1-\delta,1] \times \partial M_2 \subset M_2$
is disjoint from the support of $\phi_2$
and let $r_{M_2} : (1-\delta,1] \times \partial M_2 \lra{} (1-\delta,1]$ be the associated cylindrical coordinate.
Let $\rho : (1-\delta,1] \lra{} [0,1]$
be a smooth function with non-negative derivative so that it
is equal to $0$ near $1-\delta$ and $1$ near $1$.
Let $F_t : M_2 \lra{} \R$, $t \in [0,1]$
be a smooth family of functions equal to
$1 + t \rho(r_{M_2}) (f-1)$ inside $(1-\delta] \times \partial M_2$ and $1$ otherwise.
Then
$(M_2,F_t\theta_{M_2},\phi_2)$
is an isotopy of abstract contact open books.
Define $(M_3,\theta_{M_3},\phi_3) \equiv (M_2,F_1\theta_{M_2},\phi_2)$.
Hence there is a diffeomorphism $\check{\Psi} : \partial M_3 \lra{} \partial M_1$ so that $(\check{\Psi})^* \alpha_{M_1} = \alpha_{M_3}$
where $\alpha_{M_3} \equiv \theta_{M_3}|_{\partial M_3}$.

Choose a small positive slope perturbation $\check{\phi}_3$ of $\phi_3$
so that $\check{\phi}_3|_{(1-\check{\delta},1] \times \partial M_3}$ is equal to $(\text{id}_{(1-\check{\delta},1]} \times \check{\Psi})^* \check{\phi}_1|_{(1-\check{\delta},1] \times \partial M_1}$ for some $\check{\delta} > 0$ smaller than $\delta$.
Let $(T_{\phi_j},\pi_{T_{\phi_j}},\alpha_{\phi_j})$ be the mapping torus of $\phi_j$ and let
$(T_{\check{\phi}_j},\pi_{T_{\check{\phi}_j}},\alpha_{\check{\phi}_j})$ be the mapping torus of $\check{\phi}_j$ for $j=1,3$.
Let $(B_3 \subset C_3, \xi_3, \tau_3)$
be the contact pair associated to
$(M_3,\theta_{M_3},\phi_3)$.
By lemma \ref{lemma:smoothfamilyofcontactpairs},
we get that the contact pair associated to
$(M_3,\theta_{M_3},\phi_3)$
is contactomorphic to the contact pair associated to
$(M_2,\theta_{M_3},\phi_2)$ which in turn is contactomorphic to the contact pair associated to
$(M_1,\theta_{M_1},\phi_1)$.
There is a contactomorphism
$Q : T_{\phi_3} \lra{} T_{\phi_1}$
so that $Q|_{B_3} : B_3 \lra{} B_1$ is equal to $\check{\Psi}$ under the identification $B_i = M_i$ for $i = 1,3$ and so that $\pi_{T_{\phi_3}} = \pi_{T_{\phi_1}} \circ Q$
near $\partial T_{\phi_3}$.
Hence we can find a contactomorphism
$\check{Q} : T_{\check{\phi}_3} \lra{} T_{\check{\phi}_1}$
satisfying  $(\check{Q})^* \alpha_{\check{\phi}_1} = \alpha_{\check{\phi}_3}$ near $\partial T_{\check{\phi}_3}$ and so that $\pi_{T_{\check{\phi}_3}} = \pi_{T_{\check{\phi}_1}} \circ \check{Q}$
near $\partial T_{\check{\phi}_3}$.

Since $W_{\check{\phi}_j}$ is naturally diffeomorphic to $\R \times T_{\check{\phi}_j}$ for $j=1,3$ we can define
$$W : W_{\check{\phi}_3} \lra{} W_{\check{\phi}_1}, \quad W \equiv (\text{id}_\R,\check{Q}).$$
Now $W^* \theta_{\check{\phi}_3} = \theta_{\check{\phi}_1}$ outside a subset $K \subset W_{\check{\phi}_3}$ whose intersection with each fiber of $\pi_{\check{\phi}_3}$ is compact.
Since $\theta_{\check{\phi}_3}$ and $W^* \theta_{\check{\phi}_1}$ scale at most linearly in $C^1$ norm as we translate in the $s$ coordinate direction,
we can use a Moser argument applied to
$\tau \theta_{\check{\phi}_3} + (1-\tau) W^* \theta_{\check{\phi}_1}, \tau \in [0,1]$ giving us our isomorphism.
\qed

\bigskip

\begin{lemma} \label{lemma:isotopicimpliesisomorphicHF}
	Suppose that $(M_1,\theta_{M_1},\phi_1)$, $(M_2,\theta_{M_2},\phi_2)$ are isotopic abstract contact open books.
	Then $HF^*(\phi_1,+) = HF^*(\phi_2,+)$.
\end{lemma}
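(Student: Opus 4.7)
The plan is to reduce the statement to a standard parametric continuation-map argument in fixed point Floer cohomology. By pulling back via the diffeomorphism $\Phi$ from the definition of isotopy, I may assume $M_1 = M_2 = M$ and that we are given a smooth family $(\theta_t,\check{\phi}_t)_{t \in [0,1]}$ with $(M,\theta_t,\check{\phi}_t)$ an abstract contact open book for every $t$, with endpoints agreeing with the given data and the supports of the $\check{\phi}_t$ contained in a fixed compact subset of the interior of $M$. To set up the continuation argument cleanly, I would first straighten out the boundary behavior: since $X_{\theta_t}$ points outward along $\partial M$ for each $t$, a Moser-type argument relative to $\partial M$ yields a smooth family of diffeomorphisms equal to the identity in the support of the $\check{\phi}_t$ so that, after conjugation, $\theta_t$ agrees with $\theta_0$ on a uniform collar $(1-\delta,1]\times\partial M$. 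A further parametric Moser argument on the cohomologous family $d\theta_t$ (which already agree on the collar) lets me assume the symplectic form $\omega = d\theta_t$ is independent of $t$; the one-forms $\theta_t$ then differ from $\theta_0$ by exact forms, and the $\check{\phi}_t$ become a smooth family of exact symplectomorphisms of the fixed Liouville domain $(M,\theta_0)$ with smoothly varying primitives $F_{\check{\phi}_t}$.

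With this reduction, I would perturb slightly so that each $\check{\phi}_t$ is a small positive slope perturbation with the correct non-degenerate fixed-point set at $t=0,1$, which is possible because the positive slope condition is open. Then I would choose a generic smooth family $(J_t^s)_{s \in [0,1],\, t \in [0,1]}$ of cylindrical almost complex structures satisfying $\check{\phi}_t^*J_t^0 = J_t^1$, and define a continuation map
\[
C : CF^*(\check{\phi}_0,J_0^s) \lra{} CF^*(\check{\phi}_1,J_1^s)
\]
by counting solutions to the parameter-dependent Floer equation on $\R \times [0,1]$ obtained by interpolating $(\check{\phi}_t,J_t^s)$ via a monotone cut-off function $\beta : \R \to [0,1]$, with twisted boundary condition $u(s,0) = \check{\phi}_{\beta(s)}(u(s,1))$. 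The standard arguments of \cite[Section 3]{DostoglouSalamon:instantonscurves}, adapted to the Liouville setting as in \cite[Section 4]{Seidel:morevanishing}, show that $C$ is a chain map and that the reverse interpolation provides a chain-homotopy inverse, giving an isomorphism on cohomology. The grading matches through the isotopy because a grading on a graded symplectomorphism is preserved under smooth deformation through symplectomorphisms.

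The main obstacle is $C^0$-compactness of the parameterized moduli spaces, which is non-trivial because both the Liouville form and the symplectomorphism vary. After the boundary-straightening step, however, $\theta_t$, $\check{\phi}_t$, and the cylindrical behavior of $J_t^s$ all agree with a fixed model on the uniform collar $(1-\delta,1]\times \partial M$, so the standard maximum principle applied to the plurisubharmonic function $r_M$ confines parametric Floer trajectories to a fixed compact subset of the interior uniformly in $t \in [0,1]$. Combined with the uniform action bounds from the smooth family of primitives $F_{\check{\phi}_t}$ and standard Gromov compactness, this yields the required isomorphism; transversality, gluing, and orientation arguments are identical to the non-parameterized case.
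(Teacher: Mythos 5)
Your overall strategy --- reduce to a single $M$, normalize the data near $\partial M$, and run a parametric continuation argument whose compactness comes from a maximum principle in the collar --- is essentially the same as the paper's. The gap is in the reduction step. After the parametric Moser argument that makes $\omega = d\theta_t$ independent of $t$, the forms $\theta_t - \theta_0$ are only \emph{closed}, not exact: their classes in $H^1(M;\R)$ start at $0$ but may vary continuously with $t$, and nothing in the definition of an isotopy of abstract contact open books forces them to vanish. This matters, because when the primitive of a fixed $\omega$ is changed by a non-exact closed $1$-form the action functional changes in an essential way, and with it the energy identities underlying your continuation maps, the meaning of ``small positive slope perturbation,'' and the action filtration implicit in $HF^*(\cdot,+)$. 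Invariance under exactly this kind of deformation is the nontrivial input that the paper imports from \cite[Theorem 2.34]{uljarevic:symplectichomologysymplectomorphisms}, after arranging that $\check{\theta}_s = \theta_0 + \beta_s$ with $\beta_s$ a smooth family of compactly supported \emph{closed} $1$-forms; the ``standard'' continuation arguments of \cite{DostoglouSalamon:instantonscurves} that you invoke do not cover it.

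A second, smaller problem is the boundary straightening itself. The contact forms $\theta_t|_{\partial M}$ genuinely vary along the isotopy; Gray stability only matches the contact \emph{structures}, producing $\psi_t^*(\theta_t|_{\partial M}) = f_t\,(\theta_0|_{\partial M})$ for varying positive functions $f_t$, so no conjugation makes $\theta_t$ literally agree with $\theta_0$ on a uniform collar. The paper instead modifies $\theta_s$ inside the collar to a form $\check{\theta}_s$ that is $s$-independent near $\partial M$ and agrees with $\theta_s$ deeper in, and then uses the maximum principle of \cite[Lemma 7.2]{SeidelAbouzaid:viterbo} together with the absence of fixed points of the perturbed symplectomorphisms in the collar to show that this modification does not change the Floer cohomology. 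Some version of that step is needed before your uniform-in-$t$ maximum principle can be applied.
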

\proof
Since the above abstract contact open books are isotopic,
we can assume that $M_1 = M_2$ and that there is a smooth family of Liouville forms $(\theta_s)_{s \in [0,1]}$ so that $\theta_0 = \theta_{M_1}$ and $\theta_1 = \theta_{M_2}$.
Also there is a smooth family of exact symplectomorphisms $\psi_s : M_1 \lra{} M_1, \ s \in [0,1]$
with respect to $\theta_s$ with support in a fixed compact set joining $\phi_1$ and $\phi_2$.
Let $r_s$ be the cylindrical coordinate for $(M_1,\theta_s)$
and choose $\delta>0$ small enough so that
$\{r_s \geq 1-\delta\}$ is disjoint from the support of $\psi_s$ for all $s \in [0,1]$.
By pulling back $r_s$ and $\theta_s$ and $\psi_s$ by a smooth family of diffeomorphisms starting at the identity and parameterized by $s \in [0,1]$,
we can assume that $r_s = r_0$
inside the region $\{r_0 \geq 1-\delta\} = (1-\delta,1] \times \partial M_1$.
By Gray's stability theorem, we can also assume
that $\theta_s = r_0 f_s \alpha$
inside $\{r_0 \geq 1-\delta\} = (1-\delta,1] \times \partial M_1$
for some contact form $\alpha$ on $\partial M_1$ and some smooth family of functions $f_s : \partial M_1 \lra{} (0,\infty), \ s \in [0,1]$. 

Now choose a smooth family of functions
$g_s : (1-\delta,1] \times \partial M_1 \lra{} (0,\infty), \ s \in [0,1]$
so that $\frac{\partial g_s}{\partial r_0} > 0$, $g_s$ is equal to $f_s$ inside ${(1-\delta,1-\delta/2] \times \partial M_1}$,
and $g_s = Cf_0$ inside $(1-\delta/4,1] \times \partial M_1$ for some large constant $C>0$ and for all $t \in [0,1]$.
Define $\check{\theta}_s$ to be equal to $\theta_s$ outside $(1-\delta,1] \times \partial M_1$ and equal to $r_0 g_s \alpha$ inside this region. Then $(M_1,\check{\theta}_s), \ s \in [0,1]$ is a smooth family of Liouville domains so that $\check{\theta}_s = \theta_s$ inside $(1-\delta,1-\delta/2] \times \partial M_1$ and $\check{\theta}_s$ is independent of $s$ near $\partial M_1$.

Let $(K_{s,t})_{(s,t) \in [0,1]^2}$ be a smooth family of almost complex structures compatible with $d\theta_s$ so that $K_{s,t}$ is cylindrical inside $(1-\delta] \times M_1$ with respect to $\theta_s$ for all $(s,t) \in [0,1]^2$.
Choose a smooth family of almost complex structures $(J_{s,t})_{(s,t) \in [0,1]^2}$ compatible with $d\check{\theta}_s$
equal to $K_{s,t}$ outside $(1-\delta/2,1] \times \partial M_1$
and equal to $K_{0,t}$ inside $(1-\delta/4,1] \times \partial M_1$.
Let $\check{\psi}_s$ be a smooth family of exact symplectomorphisms with respect to $\check{\theta}_s$ which are small positive slope perturbations of $\psi_s$ 
so that $\check{\psi}_s$ is the time $1$ flow of $\eta r_s$ inside 
$(1-\delta,1] \times \partial M_1$ for some very small $\eta>0$ (so that there are no fixed points of this symplectomorphism in this region).
Let $\widehat{\psi}_s$ be a smooth family of positive slope perturbations of $\psi_s$ with respect to $\theta_s$ which are equal to the time $1$ flow of $\eta r_s$ inside $(1-\delta,1] \times \partial M_1$ with respect to the symplectic structure $d\theta_s$.
We assume that $\eta$ is small enough so that  $\widehat{\psi}_s$ has no fixed points inside $(1-\delta,1] \times \partial M_1$ for all $s \in [0,1]$.

Since \begin{itemize}
\item  $\check{\psi}_s,\widehat{\psi}_s$ is the time $1$ flow of a linear Hamiltonian inside $(1-\delta,1-\delta/2) \times \partial M_1$,
\item $J_{s,t}$ and $K_{s,t}$ are cylindrical inside this region and 
\item $(\check{\psi}_s,(J_{s,t})_{t \in [0,1]})$,
$(\widehat{\psi}_s,(K_{s,t})_{t \in [0,1]})$ are equal outside $(1-\delta,1] \times \partial M_1$ and
\item $\check{\psi}_s$ and $\widehat{\psi}_s$ has no fixed points inside $(1-\delta,1] \times \partial M_1$
\end{itemize} 
 then a maximum principle (\cite[Lemma 7.2]{SeidelAbouzaid:viterbo})
tells us that
\begin{equation} \label{eqn:equalityofHFs}
HF^*(\check{\psi}_s,(J_{s,t})_{t \in [0,1]}) = HF^*(\widehat{\psi}_s,(K_t)_{t \in [0,1]})
\end{equation}
for all $s \in [0,1]$.

Since $\check{\theta}_s$ is independent of $s$ near $\partial M_1$, we can assume, by a Moser argument, that
$\check{\theta}_s = \theta_0 + \beta_s$ for some smooth family of compactly supported closed $1$-forms $\beta_s, \ s \in [0,1]$.
Then by \cite[Theorem 2.34]{uljarevic:symplectichomologysymplectomorphisms},
we get that
$HF^*(\check{\psi}_s,(J_{s,t})_{t \in [0,1]})$
is independent of $s \in [0,1]$.
Hence
$HF^*(\widehat{\psi}_s,(K_s)_{t \in [0,1]})$
is independent of $s$ by Equation (\ref{eqn:equalityofHFs}) which implies that
$HF^*(\phi_1,+) = HF^*(\phi_2,+)$.
\qed

\begin{defn}
Let $(W_{\check{\phi}},\pi_{\check{\phi}},\theta_{\check{\phi}})$
be a mapping cylinder.
An almost complex structure $J$ on $W_{\check{\phi}}$
is {\it strictly compatible with}
$(W_{\check{\phi}},\pi_{\check{\phi}},\theta_{\check{\phi}})$
if
\begin{enumerate}
\item $J$ is compatible with $d\theta_{\check{\phi}}$,
\item $\pi_{\check{\phi}} : W_{\check{\phi}} \lra{} \R \times \R/\Z$
is $(J,j)$-holomorphic
(I.e. $D\pi_{\check{\phi}} \circ J = j \circ D\pi_{\check{\phi}}$)
where $j$ is the complex structure sending $\frac{\partial}{\partial s}$ to $\frac{\partial}{\partial t}$ where $(s,t)$ parameterizes $\R \times \R/\Z$,
\item the restriction of $J$ to the cylindrical end $C_{\check{\phi}}$
is a product $j \oplus J_M$ where $J_M$ is a fixed cylindrical almost complex structure
inside the cylindrical end $(1-\delta_{\check{\phi}},1] \times \partial M$ and 
\item $J$ is invariant under translations in the $s$ coordinate.
\end{enumerate}
We will call $J_M$ the {\it associated cylindrical almost complex structure on $M$}.
An almost complex structure $J$ on $W_{\check{\phi}}$ is {\it compatible with
$(W_{\check{\phi}},\pi_{\check{\phi}},\theta_{\check{\phi}})$} if there is an almost complex structure $\check{J}$ compatible with
 $(W_{\check{\phi}},\pi_{\check{\phi}},\theta_{\check{\phi}})$
 and a compact subset $K$ in the interior of $W_{\check{\phi}}$ so that $J|_{W_{\check{\phi}}-K} = \check{J}|_{W_{\check{\phi}}-K}$.
\end{defn}

\begin{defn}
	Recall that a $1$-periodic orbit of a time dependent Hamiltonian $H_t : W_{\check{\phi}} \lra{} \R$ is a smooth map
	$\gamma : \R/\Z \lra{} M$
	satisfying $\frac{d}{dt}\gamma = X^{d\theta_{\check{\phi}}}_{H_t}$.
	Since there is a natural $1$-$1$ correspondence between $1$-periodic orbits and fixed points of the corresponding Hamiltonian symplectomorphism $\phi^{H_t}_1$,
	we will call $\gamma$ the {\it $1$-periodic orbit associated to the fixed point $\gamma(0)$}.
	A Hamiltonian is {\it autonomous} if it does not depend on time.
	An {\it $S^1$-family of $1$-periodic orbits} of $H$ is a family $(\gamma_t)_{t \in \R/\Z}$ of $1$-periodic orbits where $\gamma_t(\check{t}) = \gamma_0(\check{t}+t)$
	for all $t,\check{t} \in \R/\Z$.
	
	Let $(H,J)$, $(\check{H},\check{J})$ be pairs consisting of autonomous Hamiltonians $H$, $\check{H}$ and almost complex structures $J$, $\check{J}$.
	A smooth family of pair $(H_s,J_s)_{s \in \R}$ {\it joins $(H,J)$ and $(\check{H},\check{J})$}
	if $(H_s,J_s) = (H,J)$ for all sufficiently negative $s$
	and $(H_s,J_s) = (\check{H},\check{J})$ for all sufficiently positive $s$.

Let $h : \R \lra{} \R$ be a smooth function which is bounded from below
with positive derivative satisfying $h''(s) = 0$ whenever $s$ is sufficiently positive and $h'(s) < 1$ for $s$ sufficiently negative.
 The value of $h'(s)$ for large enough $s$ is called the {\it slope of $h$}.
A Hamiltonian is {\it strictly compatible with $(W_{\check{\phi}},\pi_{\check{\phi}},\theta_{\check{\phi}})$}
if it is equal to $\pi_{\check{\phi}}^* h(s)$ everywhere.
A Hamiltonian is {\it compatible with $(W_{\check{\phi}},\pi_{\check{\phi}},\theta_{\check{\phi}})$}
if it is equal to $\pi_{\check{\phi}}^* h(s)$
outside a compact subset of the interior of $W_{\check{\phi}}$.
The slope of such a Hamiltonian is defined to be the slope of $h$.

All the $1$-periodic orbits of $h(s)$
 on the symplectic manifold
 $(\R \times \R/\Z, ds \wedge dt)$
 wrapping around $\R \times \R/\Z$ once come in $S^1$ families in the region $h'(s) = 1$
and for the unique $s$ satisfying $h'(s) = 1$
we have $1$-periodic orbits
$$\gamma_{s,q} : \R/\Z \lra{} \R \times \R/\Z, \quad \gamma_q(t) = (s,t+q)$$
for all $q \in [0,1)$.
Also the $1$-periodic orbits of $\pi_{\check{\phi}}^* h(s)$ project to $1$-periodic orbits of $h(s)$.

A pair $(H,J)$ is {\it (strictly) compatible with  $(W_{\check{\phi}},\pi_{\check{\phi}},\theta_{\check{\phi}})$}
if $H$ is a Hamiltonian (strictly) compatible with
 $(W_{\check{\phi}},\pi_{\check{\phi}},\theta_{\check{\phi}})$
and $J$ is an almost complex structure (strictly) compatible with
 $(W_{\check{\phi}},\pi_{\check{\phi}},\theta_{\check{\phi}})$.
A smooth family $(H_s,J_s)_{s \in \R}$
of pairs compatible with
$(W_{\check{\phi}},\pi_{\check{\phi}},\theta_{\check{\phi}})$
has {\it non-increasing slope}
if the slope of $H_s$
is greater than or equal to the slope of
$H_{\check{s}}$ for all $s \leq \check{s}$.

\end{defn}

\begin{defn} \label{defn:maximumprincipal}
Let $(H_{\sigma},J_{\sigma})_{\sigma \in \R}$ be a smooth family of pairs of Hamiltonians and almost complex structures compatible with
a mapping cylinder $(W_{\check{\phi}},\pi_{\check{\phi}},\theta_{\check{\phi}})$.
An open subset $V \subset W_{\check{\phi}}$
satisfies the {\it maximum principal with respect to $(H_{\sigma},J_{\sigma})_{\sigma \in \R}$}
if for every compact codimension $0$ submanifold
$\Sigma \subset \R \times \R/\Z$ and every smooth map
$u : \Sigma \lra{} W_{\check{\phi}}$
satisfying
\begin{enumerate}
\item $u(\partial \Sigma) \subset V$
\item  \label{eqn:floerequation}
$\partial_\sigma u(\sigma,\tau) + J_\sigma \partial_\tau u(\sigma,\tau) = J_\sigma X_{H_\sigma}$,
\end{enumerate}
also satisfies $\Im(u) \subset V$.

We say that $(H_{\sigma},J_{\sigma})_{\sigma \in \R}$
satisfies the {\it maximum principle}
if there is a sequence of relatively compact open sets $(V_i)_{i \in \N}$
whose union is $W_{\check{\phi}}$
so that $(H_\sigma,J_\sigma)_{\sigma \in \R}$
satisfies the maximum principle with respect to $V_i$ for all $i \in \N$.
\end{defn}

\begin{lemma} \label{lemma:maximumprincipalregion}
	Let $(M,\theta_M,\phi)$ be an abstract contact open book and 	let $W \equiv (W_{\check{\phi}},\pi_{\check{\phi}},\theta_{\check{\phi}})$ be a mapping cylinder of some positive slope perturbation of $\phi$.
	Let $(K_\sigma \equiv \pi_{\check{\phi}}^*(k_\sigma(s)))_{\sigma \in \R}$ be a smooth family of Hamiltonians strictly compatible with $W$ so that $\frac{dk_\sigma}{d\sigma} \leq 0$,
	$\frac{dk'_\sigma}{d\sigma} \leq 0$
and $\frac{dk''_\sigma}{d\sigma} \leq 0$.
	Let $Y$ be an almost complex structure strictly compatible with $W$.
	
	Let $\delta, S > 0$ and let $r_{\check{\phi}}$ be the vertical cylindrical coordinate of $W$.
	Let $(H_\sigma,J_\sigma)_{\sigma \in \R}$ be a smooth family of pairs compatible with $W$ which are equal to $(K_\sigma,Y)$ near the boundary of the set
	$V_{\delta,S} \equiv \pi_{\check{\phi}}^{-1}((-S,S) \times (\R/\Z)) \subset W_{\check{\phi}}$
	and also in the region $\{r_{\check{\phi}} \geq 1-\delta\}$.
	Then $V_{\delta,S}$ satisfies the maximum principal with respect to $(H_\sigma,J_\sigma)_{\sigma \in \R}$. 
\end{lemma}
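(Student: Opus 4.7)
The plan is to project the Floer equation to the base $\R \times \R/\Z$ of the fibration $\pi_{\check{\phi}}$, derive an elliptic differential inequality for the $s$-coordinate of the projection, and apply the weak maximum principle on the region where $(H_\sigma, J_\sigma)$ coincides with $(K_\sigma, Y)$.

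Set $\bar u := \pi_{\check{\phi}} \circ u$ and write $\bar u = (a, b)$ with respect to the coordinates $(s, t)$ on the base. Choose $\epsilon > 0$ small enough that $(H_\sigma, J_\sigma) = (K_\sigma, Y)$ on $\pi_{\check{\phi}}^{-1}(\{|s| > S - \epsilon\})$. There, the strict compatibility of $Y$---which makes $\pi_{\check{\phi}}$ a $(Y, j)$-holomorphic map---together with the fact that $X_{K_\sigma}$ is the horizontal lift of $X_{k_\sigma} = k'_\sigma(s)\, \partial_t$ (as recalled immediately after Definition \ref{defn:mappingcylinder}), implies that $\bar u$ satisfies the base equation $\partial_\sigma \bar u + j\, \partial_\tau \bar u = -k'_\sigma(a)\, \partial_s$. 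In components this reads $a_\sigma = b_\tau - k'_\sigma(a)$ and $a_\tau = -b_\sigma$.

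Differentiating these relations and cancelling the mixed partials $b_{\sigma \tau}$ yields the identity
\[
\Delta a + k''_\sigma(a)\, a_\sigma = -(\partial_\sigma k'_\sigma)(a),
\]
whose right-hand side is nonnegative by the hypothesis $\partial_\sigma k'_\sigma \leq 0$. Hence $La \geq 0$ for the linear elliptic operator $L := \Delta + k''_\sigma(a)\, \partial_\sigma$, which has bounded continuous coefficients and no zeroth-order term. Applying the weak maximum principle to $a$ on the relatively compact open set $\Omega_+ := \{a > S - \epsilon\} \subset \Sigma$, whose boundary lies in $\partial \Sigma \cup \{a = S - \epsilon\}$, gives $\sup_{\overline{\Omega_+}} a = \sup_{\partial \Omega_+} a$; combined with the uniform bound $a|_{\partial \Sigma} \leq S - \eta$ (for some $\eta > 0$, by compactness of $\partial \Sigma$ and $u(\partial \Sigma) \subset V_{\delta, S}$) this forces $a < S$ throughout $\Sigma$.

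The main obstacle is the symmetric lower bound $a > -S$. Because $k'_\sigma > 0$ the Floer equation is genuinely asymmetric in $s$, and the naive substitution $\tilde a = -a$, $\tilde b = -b$ produces an inequality of the opposite sign for the analogous operator, which recovers only the same upper bound rather than a lower bound. The plan for this step is to exploit the full system of constraints on $(a, b)$---in particular the fact that at any interior critical point of $a$ the component equations force $b_\tau = k'_\sigma(a) > 0$ and $b_\sigma = 0$---together with the two monotonicity hypotheses $\dot k_\sigma, \dot k''_\sigma \leq 0$ not used for the upper bound, in order to derive a second inequality of the form $\tilde L(-a) \geq 0$ with a different elliptic operator $\tilde L$ whose first-order coefficient is controlled by the $\sigma$-variation of $k_\sigma$ and $k''_\sigma$. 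Tracking the signs through this derivation---and verifying that all three monotonicity hypotheses combine to close the lower bound---is the key technical difficulty; once the parallel inequality is in place the argument concludes by the same weak-maximum-principle reasoning applied to $\Omega_- := \{a < -S + \epsilon\}$.
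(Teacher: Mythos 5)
Your reduction to the base is correct as far as it goes: with the paper's conventions one does get $a_\sigma = b_\tau - k'_\sigma(a)$, $a_\tau = -b_\sigma$, hence $\Delta a + k''_\sigma(a)\,a_\sigma = -(\partial_\sigma k'_\sigma)(a) \geq 0$, and the weak maximum principle controls $\max a$ on any region where this inequality is known to hold. But there are genuine gaps. First, the hypothesis only gives $(H_\sigma,J_\sigma)=(K_\sigma,Y)$ \emph{near} $\partial V_{\delta,S}$ and on $\{r_{\check{\phi}}\ge 1-\delta\}$, not on all of $\pi_{\check{\phi}}^{-1}(\{|s|>S-\epsilon\})$: a pair that is merely ``compatible'' with $W$ may differ from $(K_\sigma,Y)$ on a large compact set sitting deep inside $\{s>S\}$, and this actually occurs in the applications (e.g.\ the pairs $(\widehat{H}_i,\widehat{J}_i)$ in Lemma \ref{lemma:isomorphicsymplecticcohomologyps}). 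So your set $\Omega_+=\{a>S-\epsilon\}$ can contain points where the elliptic inequality is unavailable, and the weak maximum principle cannot be applied on it. A correct argument may only use the standard structure on a thin annulus around $\{|s|=S\}$.

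Second --- and this is the heart of the lemma --- the lower bound $a>-S$ is not proved, and the route you sketch for it is a dead end. From the projected equation $a$ is only a subsolution of an operator with no zeroth-order term; subsolutions obey a maximum principle but can have interior minima, and every monotone substitution $g(a)$ reproduces the same one-sided inequality (the unused hypotheses $\tfrac{dk_\sigma}{d\sigma}\le 0$, $\tfrac{dk''_\sigma}{d\sigma}\le 0$ enter with the same sign and do not help). The missing input is not a second elliptic operator but the \emph{integrated} positivity of horizontal energy, which has no pointwise analogue distinguishing the two walls. The paper's proof is a Stokes/``no escape'' argument: for a cutoff $\beta(s)$ with $\beta'\ge 0$ supported in thin annuli just outside $\{|s|=S\}$ and $q'_\sigma=\beta' k'_\sigma$ one writes
$$0\le\int_{\check{\Sigma}}\beta'(s(u))\,\bigl|(\pi_{\check{\phi}})_*(\partial_\sigma u)\bigr|^2\,d\sigma\wedge d\tau=\int_{\check{\Sigma}}u^*d(\beta(s)\,dt)-d\bigl(q_\sigma(s(u))\,d\tau\bigr)+u^*\Bigl(\tfrac{dq_\sigma}{d\sigma}\Bigr)\,d\sigma\wedge d\tau,$$
Stokes kills the exact terms (using $u(\partial\Sigma)\subset V_{\delta,S}$ and closedness of $dt$, $d\tau$), and $\partial_\sigma k'_\sigma\le 0$ controls the last term; hence the horizontal energy outside $V_{\delta,S}$ vanishes and the curve cannot cross either wall. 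This treats $\pm S$ symmetrically and only uses the standard structure where $\beta'\neq 0$, which also repairs the first gap. Finally, you omit the first half of the paper's proof, which applies the maximum principle of \cite[Lemma 7.2]{SeidelAbouzaid:viterbo} to the projection of $u$ to $(1-\delta_{\check{\phi}},1]\times\partial M$ in order to keep the curve out of $\{r_{\check{\phi}}\ge 1-\delta\}$.
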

\proof
Let $u : \Sigma \lra{} W_{\check{\phi}}$
be as in Definition \ref{defn:maximumprincipal} with $V$ replaced by $V_{\delta,S}$.
Let $$\iota_{\check{\phi}} : C_{\check{\phi}} \equiv (\R \times (\R/\Z) \times (1-\delta_{\check{\phi}},1] \times \partial M) \  \hookrightarrow W_{\check{\phi}}, \quad \iota_{\check{\phi}}(s,t,r_M,y) \equiv (s,t,(\phi^{\delta r_M}_{-t}(r_M,y))$$
be the vertical cylindrical end of $W_{\check{\phi}}$.
Then $r_{\check{\phi}} : C_{\check{\phi}} \lra{} (1-\delta_{\check{\phi}},1]$ 
is the natural projection map.
Let $P_M : C_{\check{\phi}} \lra{} (1-\delta_{\check{\phi}},1] \times \partial M)$
be the natural projection map.
Consider the map
$$\widehat{u} : u^{-1}(C_{\check{\phi}}) \lra{} (1-\delta_{\check{\phi}},1] \times \partial M), \quad \widehat{u}(\sigma,\tau)(x) \equiv P_M \circ u(x).$$
Let $J_{M,\sigma,t}$ be the natural almost complex structure on $(1-\delta_{\check{\phi}}) \times \partial M$ induced by $J_\sigma|_{\pi_{\check{\phi}}^{-1}(s,t)}$ for some $s$. Such an almost complex structure does not depend on $s$ and is cylindrical by definition.
Since $J_\sigma X_{H_{\sigma}}$ is a multiple of $\frac{\partial}{\partial s}$,
we get that $\widehat{u}$ satisfies
$$\frac{\partial \widehat{u}}{\partial \sigma} + J_M \frac{\partial \widehat{u}}{\partial \tau} - J_MX_{\delta r_M} = 0$$ where $r_M$ is the cylindrical coordinate on $M$ and $\delta>0$ is some constant.
Therefore by applying
\cite[Lemma 7.2]{SeidelAbouzaid:viterbo}
to $\widehat{u}$ we see that such a map cannot intersect the region
$(1-\delta,1] \times \partial M)$
and hence
the image of $u$ cannot intersect the region
$\{r_{\check{\phi}} \geq 1-\delta\}$.
%
%
%

Therefore we only need to show that the image of $u$ is contained inside $\pi_{\check{\phi}}^{-1}((-S,S) \times \R/\Z)$.
First of all we can make $S$ very slightly smaller so that $u$ is transverse to $\pi_{\check{\phi}}^{-1}(\{S,-S\})$ and so that $u(\partial \Sigma) \subset V_{\delta,S}$.
This implies that $\check{\Sigma} \subset \R \times \R/\Z$ is a smooth submanifold with boundary.
Suppose for a contradiction that $\check{\Sigma}$ is nonempty.
Choose $\delta_1 > 0$ small enough so that
$(H_\sigma,J_\sigma) = (K_\sigma,Y_\sigma)$ inside
$\pi_{\check{\phi}}^{-1}(([-S-\delta_1,-S] \cup [S,S+\delta_1]) \times \R/\Z)$ for all $\sigma \in \R$
and so that $u$ intersects $\pi_{\check{\phi}}^{-1}(\{s\} \times \R/\Z)$ transversely
for all $s \in [-S-\delta_1,-S] \cup [S,S+\delta_1]$.
Now let $\beta : \R \lra{} \R$ be a smooth function satisfying $\beta' \geq 0$,
$\beta|_{(-S-\delta_1/2,S + \delta_1/2)} = 0$,
$\beta'|_{(-S-3\delta_1/4,-S-\delta_1/2) \cup (S+\delta_1/2,S+3\delta_1/4)} > 0$
and
$\beta$ is constant outside $(-S-3\delta_1/4,S + 3\delta_1/4)$.
Choose a smooth function
$q_\sigma : \R \lra{} \R$
so that $q_\sigma|_{(-S-\delta_1/2,S + \delta_1/2)} = 0$
and $q_\sigma' = \beta' k'_\sigma$.
Then $\frac{dq'_\sigma}{d\sigma} \leq 0$.
Hence
$$0 < \int_{\check{\Sigma}} \beta'(s(u)) \left|(\pi_{\check{\phi}})_* \left(\frac{\partial u}{\partial \sigma}\right) \right|^2 d\sigma \wedge d\tau = \int_{\check{\Sigma}} u^*d(\beta(s)dt))\left(\frac{\partial u}{\partial \sigma},\frac{\partial u}{\partial \tau} - X_{H_{\sigma}}\right) d\sigma \wedge d\tau$$
$$
= \int_{\check{\Sigma}} u^*d(\beta(s)dt)) - u^* (\beta'(s) dH_{\sigma}) \wedge d\tau
= $$
$$
\int_{\check{\Sigma}} u^*d(\beta(s)dt)) -  d \left( u^* \left(q_{\sigma}(s)\right)\right) \wedge d\tau  + u^* \left( \frac{dq_{\sigma(s)}}{d\sigma} \right) d\sigma \wedge d\tau
$$
$$
\leq \int_{\check{\Sigma}} u^*d(\beta(s)dt) - d(q_\sigma(s(u)) d\tau)
= \int_{\partial \check{\Sigma}} u^*(\beta(s)dt) - q_\sigma(s(u)) d\tau = 0,
$$
giving us a contradiction.
\qed

\begin{corollary} \label{corollary:themaximumprinciap}
	Any smooth family $(H_\sigma,J_\sigma)_{\sigma \in \R}$
	compatible with $(W_{\check{\phi}},\pi_{\check{\phi}},\theta_{\check{\phi}})$
	with non-increasing slope
	satisfies the maximum principle.
\end{corollary}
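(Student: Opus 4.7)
The strategy is to exhaust $W_{\check{\phi}}$ by a sequence of relatively compact open sets $V_i := V_{\delta_i, S_i} = \pi_{\check{\phi}}^{-1}((-S_i, S_i) \times \R/\Z)$ with $\delta_i \searrow 0$ and $S_i \nearrow \infty$; each $V_i$ is relatively compact because the fibers of $\pi_{\check{\phi}}$ are copies of the compact Liouville domain $M$, and their union is all of $W_{\check{\phi}}$. It therefore suffices to verify the maximum principle with respect to each $V_i$ separately, which I do by reducing to Lemma \ref{lemma:maximumprincipalregion}.

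For each $i$, I must produce an auxiliary strictly compatible family $(K^{(i)}_\sigma = \pi_{\check{\phi}}^* k^{(i)}_\sigma(s), Y^{(i)})$ that agrees with $(H_\sigma, J_\sigma)$ near $\partial V_i$ and in $\{r_{\check{\phi}} \geq 1 - \delta_i\}$ and whose profile satisfies the three pointwise monotonicity conditions $\frac{d k^{(i)}_\sigma}{d\sigma}, \frac{d (k^{(i)}_\sigma)'}{d\sigma}, \frac{d (k^{(i)}_\sigma)''}{d\sigma} \leq 0$. Compatibility of the given family supplies a fixed compact $C \subset W_{\check{\phi}}$, a fixed strictly compatible almost complex structure $Y$, and a smooth family of strictly compatible Hamiltonians $\pi_{\check{\phi}}^* h_\sigma(s)$ with $(H_\sigma, J_\sigma) = (\pi_{\check{\phi}}^* h_\sigma(s), Y)$ outside $C$. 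Choosing $\delta_i$ small and $S_i$ large enough that $C \subset V_i \cap \{r_{\check{\phi}} < 1 - \delta_i\}$, the cylindrical shell $\{r_{\check{\phi}} \geq 1 - \delta_i\}$ lies entirely outside $C$; since both $K^{(i)}_\sigma$ and $H_\sigma$ are then pullbacks under $\pi_{\check{\phi}}$ in the shell, the matching requirement forces $Y^{(i)} = Y$ and $k^{(i)}_\sigma(s) = h_\sigma(s)$ pointwise in $s$.

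The principal obstacle is that the non-increasing slope hypothesis controls only the asymptotic slope $\frac{d a_\sigma}{d \sigma} \leq 0$, where $a_\sigma = \lim_{s \to +\infty} h'_\sigma(s)$, while the lemma demands pointwise monotonicity of $h_\sigma$, $h'_\sigma$, and $h''_\sigma$ at every $s$. For large positive $s$ the conditions on $h'_\sigma$ and $h''_\sigma$ hold automatically, since $h_\sigma$ is linear with slope $a_\sigma$ and $h''_\sigma = 0$ there; the remaining condition $\frac{d h_\sigma}{d\sigma} \leq 0$ is arranged globally by replacing $(H_\sigma)$ with $(H_\sigma + C_\sigma)$ for a smooth $\sigma$-dependent constant shift $C_\sigma$ with $\frac{d C_\sigma}{d\sigma}$ sufficiently negative, which does not alter $X_{H_\sigma}$ nor any Floer trajectory and hence preserves the maximum principle. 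The monotonicity conditions on $h'_\sigma$ and $h''_\sigma$ at finite $s$ (in particular near $s = -S_i$, where $h_\sigma$ is less constrained) are handled by a further compactly supported homotopy of the family within $V_i \cap \{r_{\check{\phi}} < 1 - \delta_i\}$ that reshapes the profile $h_\sigma$ to satisfy the monotonicity conditions; a standard continuation-map argument transfers the maximum principle for the reshaped family, supplied by Lemma \ref{lemma:maximumprincipalregion}, back to the original $(H_\sigma, J_\sigma)$ on $V_i$. Taking unions over $i$ then completes the proof.
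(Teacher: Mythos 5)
Your overall plan --- exhaust $W_{\check{\phi}}$ by the relatively compact sets $V_{\delta_i,S_i}=\pi_{\check{\phi}}^{-1}((-S_i,S_i)\times\R/\Z)$ and reduce each one to Lemma \ref{lemma:maximumprincipalregion} --- is the right one, and you have correctly isolated the real difficulty: the lemma demands the pointwise monotonicity of $k_\sigma$, $k'_\sigma$, $k''_\sigma$ in $\sigma$, whereas ``non-increasing slope'' only controls the asymptotic slope. Your constant-shift observation is also correct: replacing $H_\sigma$ by $H_\sigma+C_\sigma$ changes neither $X_{H_\sigma}$ nor the continuation Floer equation, so it is a legitimate normalization. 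The proof breaks down, however, at the step where you dispose of the remaining conditions on $h'_\sigma$ and $h''_\sigma$. First, a modification supported in a compact subset of $V_i\cap\{r_{\check{\phi}}<1-\delta_i\}$ cannot ``reshape the profile $h_\sigma$'' at all: the shell $\{r_{\check{\phi}}\geq 1-\delta_i\}$ surjects onto all of $\R\times\R/\Z$ under $\pi_{\check{\phi}}$, so $h_\sigma(s)$ is determined for \emph{every} $s$ by the restriction of $H_\sigma$ to the shell, which your modification leaves untouched. For the same reason the matching requirement of the lemma on the shell forces $k_\sigma=h_\sigma$ identically, so there is no auxiliary profile left to choose: the lemma applies to the given family only if $h_\sigma$ itself (after the constant shift) satisfies all three monotonicity conditions, and you have not shown this.

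Second, the appeal to ``a standard continuation-map argument'' to transfer the maximum principle from a reshaped family back to $(H_\sigma,J_\sigma)$ is not a valid move. The maximum principle is a pointwise confinement statement about the solutions of the $\sigma$-dependent Floer equation for one specific family; changing the family changes the equation and its solution set, and continuation maps --- which compare Floer cohomology groups and whose very construction presupposes such confinement --- give no information about where solutions of the original equation can go. A correct completion has to work with the given family itself: for instance, observe that the Stokes computation in the proof of Lemma \ref{lemma:maximumprincipalregion} only uses the $\sigma$-monotonicity of $k'_\sigma$ on the support of $\beta'$, i.e.\ in a thin annulus $|s|\in[S_i,S_i+\delta_1]$, and then choose $S_i$ beyond the region where the profiles $h_\sigma$ fail to be linear (there $h''_\sigma=0$ and the non-increasing slope hypothesis does give $\frac{\partial h'_\sigma}{\partial\sigma}\leq 0$), treating the negative end separately; equivalently, rerun the confinement estimate of the lemma directly for $(H_\sigma+C_\sigma,J_\sigma)$. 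As written, the final step of your argument is a genuine gap.
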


\begin{defn}
	For any Mapping cylinder
	$W \equiv (W_{\check{\phi}},\pi_{\check{\phi}},\theta_{\check{\phi}})$, define
	$\beta_{\check{\phi}} \subset H_1(W_{\check{\phi}})$
	to be the set of homology classes represented by loops which project under $\pi_{\check{\phi}}$ to loops homotopic to
	$$\R/\Z \lra{} \R \times \R/\Z, \quad t \lra{} (0,t).$$
	For each $a, b \in [-\infty,\infty]$ where $a < b$,
	any non-degenerate Hamiltonian $(H_t)_{t \in [0,1]}$ and smooth family of almost complex structures $(J_t)_{t \in [0,1]}$ compatible with $W$
	we can define
	$HF^*_{[a,b],\beta_{\check{\phi}}}(\phi_1^{H_t})$
	in the same way as
	Floer cohomology of $\phi^{H_t}_1$
	except that we only consider fixed points of action in $[a,b]$ and whose associated $1$-periodic orbits represent an element of $\beta_{\check{\phi}}$.
	We also define
	$HF^*_{[-\infty,\infty],\beta_{\check{\phi}}}(\phi_1^{H_t}) \equiv HF^*_{\beta_{\check{\phi}}}(\phi_1^{H_t})$.

		A {\it non-degenerate autonomous Hamiltonian} is an autonomous Hamiltonian $H : W_{\check{\phi}} \lra{} \R$
		so that for every fixed point $p$ of the time $1$ flow $\phi^H_1$, the eigenspace of
		$D\phi^H_1(p)$ associated with the eigenvalue $1$ is one dimensional (this eigenspace is the tangent line to the $1$-periodic orbit associated to $p$).
		Now suppose that the mapping cylinder $(W_{\check{\phi}},\pi_{\check{\phi}},\theta_{\check{\phi}})$ is graded with grading
		$$\iota : \widetilde{Fr}(TW_{\check{\phi}}) \times_{\widetilde{Sp}(2n)} Sp(2n) \cong Fr(TW_{\check{\phi}}).$$
		Since any Hamiltonian $H_t$ is isotopic to the identity map $\text{id}_{W_{\check{\phi}}}$,
		we get that $\phi^{H_t}_1$ is naturally graded since the identity map on $\widetilde{Fr}(TW_{\check{\phi}})$
		makes
		$\text{id}_{W_{\check{\phi}}}$ into a graded symplectomorphism. We will call this the {\it standard grading}
		and from now on we will assume that every graded Hamiltonian symplectomorphism has the standard grading.
		A {\it standard perturbation} of a non-degenerate autonomous Hamiltonian $H$ where $\phi^H_1$ is graded is a time dependent Hamiltonian $(H_t)_{t \in [0,1]}$ which is $C^\infty$ close to $H$ and equal to $H$ outside a compact set where
		\begin{itemize}
			\item $(H_t)_{t \in [0,1]}$ is non-degenerate,
			\item every $1$-periodic orbit
			$\gamma$ of $(H_t)_{t \in [0,1]}$ is a $1$-periodic orbit of $H$ and
			\item for every $S^1$ family of $1$-periodic orbits $\gamma$ of $H$ there are exactly two $1$-periodic orbits $\gamma_-$ and $\gamma_+$
			in this family which are also $1$-periodic orbits of $H_t$.
			These orbits satisfy  $CZ(\phi^{H_t}_1,\gamma_\pm) = CZ(\phi^H_1,\gamma) \pm 1/2$.
		\end{itemize}
		Such a perturbation exists by \cite[Proposition 2.2]{CieliebakFloerHofer:SymhomII}.
\end{defn}

In order to prove our theorem we need another group called
{\it $S^1$-equivariant Hamiltonian Floer cohomology.}
See \cite{Viterbo:functorsandcomputations} and
\cite{OanceaBourgeois:s1equviariant} for a definition.
We will not define this here but we will just state some of the properties that we need.
We write these groups as $HF^*_{S^1,[a,b],\beta_{\check{\phi}}}(H,J)$
for any non-degenerate autonomous Hamiltonian $H$ and almost complex structures $J$ compatible with
$(W_{\check{\phi}},\pi_{\check{\phi}},\theta_{\check{\phi}})$ and any $a,b \in [-\infty,\infty]$
satisfying $a < b$.
We also define
$HF^*_{S^1,\beta_{\check{\phi}}}(H,J) \equiv HF^*_{S^1,[-\infty,\infty],\beta_{\check{\phi}}}(H,J).$

These groups satisfy the following properties.

\begin{S1HF}
\item \label{item:propertychaincomplex}
Let $S_H$ be the set of
$S^1$ families
of $1$-periodic orbits of $H$ 
with action in $[a,b]$ representing a class in $\beta_{\check{\phi}}$.
Let $(H_t)_{t \in [0,1]}$ be a $C^\infty$ small standard
perturbation of $H$.
This means that for each $\gamma \in S_H$,
there are two $1$-periodic orbits $\gamma_-, \gamma_+$ of $(H_t)_{t \in [0,1]}$ which are also orbits in $\gamma$
 satisfying
$CZ(\phi^{H_t}_1,\gamma_\pm) = CZ(\phi^H_1,\gamma) \pm 1/2$.
Let $S_{H_t}$ be the set of such orbits $\gamma_\pm$.

Then the chain complex  $CF^*_{S^1,[a,b],\beta_{\check{\phi}}}(H)$ defining $HF^*_{S^1,[a,b],\beta_{\check{\phi}}}(H,J)$ is a free $\Z[u]$-module generated by $S_{H_t}$ and graded by the Conley-Zehnder index taken with negative sign
and where the degree of $u$ is $-2$.
Let $C^*_{[a,b],\beta_{\check{\phi}}}(H) \subset C^*_{S^1,[a,b],\beta_{\check{\phi}}}(H)$
be the $\Z$-submodule generated by elements of $S_{H_t}$.
Then the differential $\partial$ on $CF^*_{S^1,[a,b],\beta_{\check{\phi}}}(H)$ is equal to $\partial_0 + \partial_1$
where $\partial_0(u^i C^*_{[a,b],\beta_{\check{\phi}}}(H)) \subset u^iC^*_{[a,b],\beta_{\check{\phi}}}(H)$
and $\partial_1(u^i C^*_{[a,b],\beta_{\check{\phi}}}(H)) \subset \oplus_{j=0}^{i-1} u^j C^*_{[a,b],\beta_{\check{\phi}}}(H)$ for all $i$.
Here $\partial_0$ is equal to the differential defining
$HF^*_{[a,b],\beta_{\check{\phi}}}(H,J)$.
Also $\partial(u^i \gamma_-) = u^{i-1}\gamma_+$ plus $1$-periodic orbits of higher action for all $i \geq 1$.
The differential is $\Z$-linear but not necessarily
$\Z[u]$-linear.

%
%
%
\item \label{item:propertycontinuation}
 If $(H_\sigma,J_\sigma)_{\sigma \in \R}$
is a smooth family of pairs compatible with $(W_{\check{\phi}},\pi_{\check{\phi}},\theta_{\check{\phi}})$
with non-increasing slope
joining
$(H,J)$ and $(\check{H},\check{J})$ 
then there is a group homomorphism
$$HF^*_{S^1,\beta_{\check{\phi}}}(\check{H},\check{J}) \lra{} HF^*_{S^1,\beta_{\check{\phi}}}(H,J).$$
If in addition $\frac{d}{d\sigma}H_\sigma \leq 0$,
then we have a group homomorphism
$$HF^*_{S^1,[a,b],\beta_{\check{\phi}}}(\check{H},\check{J}) \lra{} HF^*_{S^1,[a,b],\beta_{\check{\phi}}}(H,J)$$
for all $a < b$.
These are called {\it continuation maps}.
They do not depend on the choice of path $(H_\sigma,J_\sigma)_{\sigma \in \R}$
and the composition of two continuation maps is a continuation map.
Also if $(H_\sigma,J_\sigma)$ does not depend on $\sigma \in \R$ then the associated continuation map is the identity map.
If $a = -\infty$ and $b = \infty$ and if $H_\sigma = H + f(\sigma)$ for some function $f : \R \lra{} \R$ then the corresponding continuation map is also an isomorphism.
\item \label{item:actionrestrictionproperty}
 If $(H,J)$, $(\check{H},\check{J})$
 are compatible with
 $W \equiv (W_{\check{\phi}},\pi_{\check{\phi}},\theta_{\check{\phi}})$ and
 \begin{itemize}
\item  satisfy the maximum principle with respect to some $V \subset W_{\check{\phi}}$, 
 	\item all of $H,\check{H}$'s $1$-periodic orbits of action in $[a,b]$ representing elements of $\beta_{\check{\phi}}$ are contained in $V$ and
 	\item  $(H,J)|_V = (\check{H},\check{J})|_V$,
 \end{itemize}
then
$$HF^*_{S^1,[a,b],\beta_{\check{\phi}}}(\check{H},\check{J}) \cong HF^*_{S^1,[a,b],\beta_{\check{\phi}}}(H,J).$$
This is due to the fact that their chain complexes are identical.
Also if we have two additional pairs $(H',J'),(\check{H}',\check{J}')$ satisfying the same properties
and a smooth non-decreasing family of pairs $(H'_\sigma,J'_\sigma)_{\sigma \in \R}$ and
$(\check{H}'_\sigma,\check{J}'_\sigma)_{\sigma \in \R}$ compatible with $W$ joining
$(H,J)$ and $(H',J')$ and joining
$(\check{H},\check{J})$ and $(\check{H}',\check{J})'$ respectively,
satisfying the maximum principle with respect to $V$
and which are equal inside $V$ for all $\sigma$ then the induced continuation maps commute with the above isomorphisms.
\end{S1HF}

\begin{defn}
	We define
	$$SH^*_{S^1,\beta_{\check{\phi}}}(W_{\check{\phi}},\pi_{\check{\phi}},\theta_{\check{\phi}}) \equiv \varinjlim_{(H,J)} HF^*_{S^1,\beta_{\check{\phi}}}(H,J)$$
	where the direct limit is taken over all pairs
	$(H,J)$ compatible with
	$(W_{\check{\phi}},\pi_{\check{\phi}},\theta_{\check{\phi}})$
	using the partial ordering $\leq$ on $H$.
	
	Let $\preceq$ be a partial order on a set $S$.
	A {\it cofinal family} is a subset $S' \subset S$
	so that for all $s \in S$, there exists
	an $s' \in S'$ so that $s \preceq s'$.
	In the definition above, it is sufficient to
	compute $SH^*_{S^1,\beta_{\check{\phi}}}(W_{\check{\phi}},\pi_{\check{\phi}},\theta_{\check{\phi}})$
	by taking the direct limit over some cofinal family of pairs $(H,J)$ as above.
\end{defn}

\begin{lemma} \label{lemma:fixedhamiltonian}
If the slope of a pair $(H,J)$ compatible with
$(W_{\check{\phi}},\pi_{\check{\phi}},\theta_{\check{\phi}})$
is greater than $1$ then the natural map
$$HF^*_{S^1,\beta_{\check{\phi}}}(H,J) \lra{} SH^*_{S^1,\beta_{\check{\phi}}}(W_{\check{\phi}},\pi_{\check{\phi}},\theta_{\check{\phi}})$$
is an isomorphism.
\end{lemma}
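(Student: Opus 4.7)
The plan is to exhibit, starting from $(H,J)$, a sequence of pairs $(H_k,J_k)$ with $H_k \leq H_{k+1}$ and slope tending to infinity, along which every continuation map is an isomorphism, and which (after suitable identifications) is cofinal in the directed system computing $SH^*_{S^1,\beta_{\check{\phi}}}$.

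The starting observation is that every $\beta_{\check{\phi}}$-orbit of $H$ lies in a bounded region. Since $(H,J)$ is compatible with $W$, outside a compact set we have $H = \pi_{\check{\phi}}^* h(s)$, and the one-periodic orbits of $\pi_{\check{\phi}}^* h(s)$ that wrap once around $\R/\Z$ occur precisely where $h'(s) = 1$. Because the slope of $h$ exceeds $1$ and $h'(s) < 1$ for $s$ very negative, this locus is a compact subset of $\R$. I would fix an open region $V = V_{\delta,S}$ of the form appearing in Lemma \ref{lemma:maximumprincipalregion} that contains every $\beta_{\check{\phi}}$-orbit of $H$ and on which $(H,J)$ is strictly compatible near $\partial V$ and in $\{r_{\check{\phi}} \geq 1-\delta\}$.

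Next I would build $(H_k, J_k)$ with $(H_1, J_1) = (H,J)$ by modifying the profile $h(s)$ only for $s$ beyond the location of its $\beta_{\check{\phi}}$-orbits, extending it convexly to have slope $k$ at infinity, so that $H_k \leq H_{k+1}$ pointwise, $(H_k,J_k) = (H,J)$ on $V$, and by convexity no new $\beta_{\check{\phi}}$-orbits are created outside $V$. Lemma \ref{lemma:maximumprincipalregion} then ensures $V$ satisfies the maximum principle with respect to each $(H_k,J_k)$ and with respect to a monotone homotopy between consecutive pairs that is constant on $V$. The chain-level and parameterized statements in property \ref{item:actionrestrictionproperty} identify the chain complexes $CF^*_{S^1,\beta_{\check{\phi}}}(H_k, J_k)$ and show that every continuation map in the sequence is an isomorphism; in particular $HF^*_{S^1,\beta_{\check{\phi}}}(H,J) \to HF^*_{S^1,\beta_{\check{\phi}}}(H_k, J_k)$ is an isomorphism for each $k$. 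To compare with the full directed limit, observe that any compatible $(H',J')$ with $H \leq H'$ is forced to have slope at least that of $H$ and thus strictly greater than one, so the same construction can be applied to $(H',J')$ producing a parallel sequence $(H'_\ell, J'_\ell)$. By sandwiching both sequences under a common dominating pair of arbitrarily large slope and applying property \ref{item:actionrestrictionproperty} a second time, I would identify the two sequences of chain complexes and conclude that the continuation map $HF^*_{S^1,\beta_{\check{\phi}}}(H,J) \to HF^*_{S^1,\beta_{\check{\phi}}}(H',J')$ is an isomorphism. Passing to the direct limit yields the claim.

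The main obstacle is this last comparison. One must arrange that $(H,J)$ and $(H',J')$ can be joined by a zig-zag of compatible pairs, each agreeing with its neighbors on a region containing all the relevant $\beta_{\check{\phi}}$-orbits, each satisfying the hypotheses of Lemma \ref{lemma:maximumprincipalregion} on a common region, and with consecutive pairs connected by a monotone homotopy whose induced continuation map matches the chain-level identification furnished by property \ref{item:actionrestrictionproperty}. Executing this requires careful interpolation of the base profiles $h(s)$, careful local modifications in the compact part of $W_{\check{\phi}}$, and tracking commutativity of all the resulting continuation squares, and is the point where all the preparatory maximum-principle and filtration machinery must be brought together.
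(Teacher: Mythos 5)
The first half of your plan reproduces the core of the paper's argument: all $\beta_{\check{\phi}}$-orbits live where $h'(s)=1$, which is a bounded region since the slope exceeds $1$; one extends $h$ convexly outside a region $V=V_{\delta,S}$ containing all orbits to get slopes tending to infinity; and Lemma \ref{lemma:maximumprincipalregion} together with \ref{item:actionrestrictionproperty} shows the resulting continuation maps are isomorphisms. (Your shortcut of absorbing the compact perturbation region of a merely compatible $(H,J)$ into $V$, rather than first reducing to a strictly compatible pair, is fine.) The gap is in the final comparison with the full directed system, and it is genuine. Your family $(H_k,J_k)$ is \emph{not} cofinal: it equals $H$ on $V$ for every $k$, so it never dominates, say, $H+c$ for $c$ large. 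Your proposed repair --- running the same construction from an arbitrary $(H',J')\geq (H,J)$ and then ``identifying the two sequences of chain complexes'' by applying \ref{item:actionrestrictionproperty} a second time --- cannot work, because \ref{item:actionrestrictionproperty} only identifies chain complexes of pairs that \emph{literally agree} on a region containing all their relevant orbits. The pairs $(H_k,J_k)$ and $(H'_\ell,J'_\ell)$ agree nowhere in general (already for $H'=H+c$), have different generators and different differentials, and no zig-zag of such agreements connects them. Dominating both by a common pair of large slope produces continuation maps \emph{into} that pair, but gives you no reason those maps are isomorphisms.

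The missing ingredient is the constant-shift isomorphism in the last sentence of \ref{item:propertycontinuation}. The paper uses it twice: first in a two-out-of-six interleaving
$HF^*_{S^1,\beta_{\check{\phi}}}(H,J)\lra{} HF^*_{S^1,\beta_{\check{\phi}}}(\check H+c,\check J)\lra{} HF^*_{S^1,\beta_{\check{\phi}}}(H+2c,J)\lra{} HF^*_{S^1,\beta_{\check{\phi}}}(\check H+3c,\check J)$,
where the composites of consecutive arrows are constant-shift isomorphisms, to reduce to a strictly compatible pair; and second, crucially, to replace the non-cofinal family $\pi_{\check\phi}^*h_\sigma$ by $\pi_{\check\phi}^*h_\sigma+\sigma$, which \emph{is} cofinal (the added constants dominate any fixed compatible Hamiltonian on the bounded region, while the growing slope dominates it at infinity) and still receives isomorphic continuation maps from $(H,J)$. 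Once you have a single cofinal family along which all maps from $(H,J)$ are isomorphisms, the statement follows; you do not need, and should not try, to compare $(H,J)$ directly with every larger pair.
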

\proof
Let $(\check{H},\check{J})$ be a pair which is strictly compatible with our mapping cylinder and so that the slope of $\check{H}$ is equal to the slope of $H$.
Then there is a constant $c > 0$ so that
$\check{H} + c > H$ and $H + c > \check{H}$.
Consider the continuation maps
$$HF^*_{S^1,\beta_{\check{\phi}}}(H,J) \lra{\alpha} HF^*_{S^1,\beta_{\check{\phi}}}(\check{H}+c,\check{J}) \lra{} HF^*_{S^1,\beta_{\check{\phi}}}(H+2c,J)
\lra{} HF^*_{S^1,\beta_{\check{\phi}}}(\check{H}+3c,\check{J}).$$
By \ref{item:propertycontinuation},
the composition of any two such maps is an isomorphism
and hence the continuation map
$\alpha$ is an isomorphism.
Therefore it is sufficient for us to assume that $(H,J)$ is strictly compatible with our mapping cylinder.
We can also assume that  $H =  \pi_{\check{\phi}}^* h(s)$ where $h''(s) \geq 0$.

Choose $S>0$ large enough so that $\pi_{\check{\phi}}^{-1}((-S,S) \times \R/\Z)$ contains all the $1$-periodic orbits of $H$ representing a class in $\beta_{\check{\phi}}$.
Let $h_\sigma : \R \lra{} \R, \ \sigma \in [0,\infty)$ be a smooth family of functions so that
\begin{itemize}
\item $h_\sigma(s) = h(s)$ for all $s \in (-S,S)$
and $h_0(s) = h(s)$ for all $s \in \R$,
\item $h'_\sigma(s),h''_\sigma(s),\frac{d}{d\sigma}h_\sigma(s) \geq 0$,
\item $h''_\sigma(s) = 0$ for all large enough $s$ and
\item the slope of $h_\sigma$ tends to infinity as $\sigma$ tends to infinity.
\end{itemize}
By \ref{item:actionrestrictionproperty} combined with Lemma \ref{lemma:maximumprincipalregion},
the natural continuation map
$$HF^*_{S^1,\beta_{\check{\phi}}}(H,J) \lra{} HF^*_{S^1,\beta_{\check{\phi}}}(\pi_{\check{\phi}}^* h_\sigma(s),J)$$
is an isomorphism for all $\sigma \geq 0$.
Also by \ref{item:propertycontinuation},
the natural continuation map
$$HF^*_{S^1,\beta_{\check{\phi}}}(H,J) \lra{} HF^*_{S^1,\beta_{\check{\phi}}}(\pi_{\check{\phi}}^* h_\sigma(s) + \sigma,J)$$
is an isomorphism for all $\sigma \geq 0$.
Since $(\pi_{\check{\phi}}^* h_\sigma(s) + \sigma,J)$
is a cofinal family of pairs with respect to the ordering $\leq$, we get our result by \ref{item:propertycontinuation}.
\qed

\begin{lemma} \label{lemma:symplecticohomologynopositiveaction}
Fix $q \in \R$.
We have
$$SH^*_{S^1,\beta_{\check{\phi}}}(W_{\check{\phi}},\pi_{\check{\phi}},\theta_{\check{\phi}}) = \varinjlim_{(H,J)} HF^*_{S^1,(-\infty,0],\beta_{\check{\phi}}}(H,J)$$
where the direct limit is taken over pairs
$(H,J)$
compatible with our mapping cylinder
satisfying $H|_{\pi_{\check{\phi}}^{-1}((-\infty,q] \times \R / \Z)} < 0$.
\end{lemma}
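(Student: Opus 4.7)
The plan is to exhibit a cofinal family of pairs inside the restricted direct limit whose $1$-periodic orbits in class $\beta_{\check\phi}$ all have action at most $0$, so that the truncation $(-\infty,0]$ becomes vacuous on this family and the restricted limit agrees with the full symplectic cohomology. Specifically, I will construct a sequence $(H_n,J_n)$ of strictly compatible pairs of the form $H_n=\pi_{\check\phi}^*h_n(s)$ with slopes $\lambda_n\to\infty$, satisfying (i) $h_n(s)<0$ for all $s\le q$, and (ii) every $1$-periodic orbit of $H_n$ representing a class in $\beta_{\check\phi}$ has action $\le 0$. Given such a family, property (ii) gives $HF^*_{S^1,(-\infty,0],\beta_{\check\phi}}(H_n,J_n)=HF^*_{S^1,\beta_{\check\phi}}(H_n,J_n)$, and property (i) together with cofinality (via slopes) identifies the direct limit with $SH^*_{S^1,\beta_{\check\phi}}$ by Lemma \ref{lemma:fixedhamiltonian} and \ref{item:propertycontinuation}.

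The key step is the construction of $h_n$. For a strictly compatible Hamiltonian $H=h(s)$, the $1$-periodic orbits in $\beta_{\check\phi}$ occur precisely at levels $s_0$ with $h'(s_0)=1$, and their action has the form $A=s_0-h(s_0)+R_\gamma$, where $R_\gamma$ is a contribution from the $\kappa(\theta_M+d(\rho F_{\check\phi}))$ terms along the orbit. Since the fixed points of $\check\phi$ form a fixed finite set and the mapping cylinder data $\theta_{\check\phi}$ is fixed, $R_\gamma$ is bounded above by some constant $R_0$ depending only on $\check\phi$. The goal is therefore to arrange $h_n<0$ on $(-\infty,q]$ while $h_n(s_0)\ge s_0+R_0$ at every slope-$1$ crossing, and to have terminal slope $\lambda_n\to\infty$. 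I will achieve this by taking $h_n$ to be a very negative constant $-N_n$ on $(-\infty,q]$, transitioning via a carefully chosen non-convex shape to linear slope $\lambda_n$ on a right half-line; the non-convexity is needed because a convex shape with $h_n'\le 1$ on $[q,s_0]$ would force $h_n(s_0)\le h_n(q)+s_0-q<s_0-q$, obstructing $h_n(s_0)\ge s_0+R_0$ whenever $q+R_0\ge 0$. Instead, $h_n'$ is taken to overshoot $1$ quickly past $q$ (integrating up $h_n$ rapidly) and then descend smoothly to cross $1$ transversely at a single $s_0^{(n)}$ large enough that $h_n(s_0^{(n)})\ge s_0^{(n)}+R_0$, after which $h_n'$ rises monotonically to $\lambda_n$. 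Multiple slope-$1$ crossings introduced by this non-monotone profile are all controlled by the same inequality and can simultaneously be made to satisfy $h_n(s_i)\ge s_i+R_0$ by enlarging $N_n$ and the scale parameters.

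With the family $(H_n,J_n)$ in hand, cofinality in the restricted limit follows because $\lambda_n\to\infty$ and, given any $(H,J)$ compatible with the mapping cylinder with $H<0$ on $\{s\le q\}$, one can find $n$ and a smooth family of pairs with non-increasing slope joining $(H,J)$ and $(H_n,J_n)$ that stays in the restricted family, using Corollary \ref{corollary:themaximumprinciap} to ensure continuation maps are defined. By \ref{item:propertycontinuation}, these continuation maps assemble into a natural transformation between the two direct systems, and the same argument as in Lemma \ref{lemma:fixedhamiltonian} (shifting the Hamiltonian by a constant, which is a chain-level invariance up to action shift) shows the limit on the subsequence is the full $SH^*_{S^1,\beta_{\check\phi}}$.

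The main obstacle will be the simultaneous satisfaction of the negativity condition on $(-\infty,q]$ and the action condition at every slope-$1$ crossing of $h_n$. The convex construction that typically suffices for standard symplectic cohomology cofinality breaks down here because the constraints pull in opposite directions (shifting $h_n$ down negates it on $\{s\le q\}$ but raises the actions), so one is forced into a non-convex profile, and one must then rule out or control the additional slope-$1$ crossings this creates. Once this technical construction is carried out, the rest of the proof is a straightforward cofinality/direct-limit argument assembling properties \ref{item:propertychaincomplex}--\ref{item:actionrestrictionproperty} and Lemma \ref{lemma:fixedhamiltonian}.
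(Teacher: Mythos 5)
There is a genuine gap, and it is a sign error that drives the entire construction in the wrong direction. For a Hamiltonian $\pi_{\check{\phi}}^*h(s)$, the $1$-periodic orbits in class $\beta_{\check{\phi}}$ sit over circles $\{s_0\}\times\R/\Z$ with $h'(s_0)=1$ and pass through fixed points $p$ of $\check{\phi}$; with the paper's conventions ($\mathcal{A}(\gamma)=-\int\gamma^*\theta_{\check{\phi}}+\int_0^1H$, and $\theta_{\check{\phi}}=s\,dt+\kappa\theta_M+\kappa\,d(\rho F_{\check{\phi}})$) the action is
$$\mathcal{A}=h(s_0)-s_0-\kappa F_{\check{\phi}}(p),$$
i.e.\ $h(s_0)-s_0$ plus a term bounded in terms of $\check{\phi}$ alone — not $s_0-h(s_0)+R_\gamma$ as you assert. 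The quantity $h(s_0)-s_0h'(s_0)=h(s_0)-s_0$ is the $y$-intercept of the tangent line at the slope-$1$ crossing, and for a \emph{convex} $h$ this intercept is bounded above by the values of $h$ to the left of $s_0$. Hence the two constraints you believe are in tension actually reinforce each other: taking $h$ convex and equal to a sufficiently negative constant on $(-\infty,q]$ simultaneously achieves $h<0$ on $\{s\le q\}$ and makes every orbit action non-positive. This is exactly what the paper does: it first uses constant shifts (the last clause of \ref{item:propertycontinuation}) to reduce the untruncated limit to the restricted index set, and then takes the cofinal family of convex ``compatible functions with respect to $q$'' ($h',h''\ge 0$, negative on $(-\infty,q]$, with the upper bound $h(x)<x+\kappa A$ at slope-$1$ points), for which the action truncation at $0$ is vacuous by \ref{item:actionrestrictionproperty}. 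This is the same mechanism by which convex Hamiltonians that are negative near the bottom of the cone have only negative-action orbits in ordinary symplectic cohomology; your claimed obstruction to convexity is an artifact of the flipped sign.

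Worse, your construction is not merely unnecessary — it proves the wrong thing. Arranging $h_n(s_0)\ge s_0+R_0$ with $R_0$ large makes $\mathcal{A}=h_n(s_0)-s_0-\kappa F_{\check{\phi}}(p)$ large and \emph{positive} at every crossing, so $HF^*_{S^1,(-\infty,0],\beta_{\check{\phi}}}(H_n,J_n)$ vanishes on your family and the restricted limit computes $0$, not $SH^*_{S^1,\beta_{\check{\phi}}}$. Secondarily, even if the sign were as you claim, the non-convex profile creates problems you do not resolve: the continuation maps of \ref{item:propertycontinuation} and the maximum principle of Lemma \ref{lemma:maximumprincipalregion} are only available for the specific monotone families considered there, and your assertion that all the additional slope-$1$ crossings of a non-monotone $h_n'$ ``can simultaneously be made to satisfy'' the action bound is stated without proof. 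Correcting the sign removes all of these difficulties and collapses your argument to the paper's short one.
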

\proof
Since the continuation map
$$HF^*_{S^1,\beta_{\check{\phi}}}(H,J) \lra{} HF^*_{S^1,\beta_{\check{\phi}}}(H+c,J)$$
is an isomorphism by \ref{item:propertycontinuation}
for every pair $(H,J)$ compatible with our mapping cylinder, we have
$$SH^*_{S^1,\beta_{\check{\phi}}}(W_{\check{\phi}},\pi_{\check{\phi}},\theta_{\check{\phi}}) = \varinjlim_{(H,J)} HF^*_{S^1,\beta_{\check{\phi}}}(H,J)$$
where the direct limit is taken over pairs
$(H,J)$
compatible with our mapping cylinder
satisfying $H|_{\pi_{\check{\phi}}^{-1}((-\infty,q] \times \R / \Z)} < 0$.
Let $A$ be a constant smaller than the action of all the $1$-periodic orbits of $\check{\phi}$.
We say that $h : \R \lra{} \R, \ i \in \N$
is a {\it compatible function with respect to $q$} if
\begin{itemize}
\item $h', h'' \geq 0$, $h$ is bounded below,
\item if $h'(x) = 1$ then $h(x) < x + \kappa A$ and
\item $h_i|_{(-\infty,q]} < 0$.
\end{itemize}
Then $$SH^*_{S^1,\beta_{\check{\phi}}}(W_{\check{\phi}},\pi_{\check{\phi}},\theta_{\check{\phi}}) = \varinjlim_{h} HF^*_{S^1,\beta_{\check{\phi}}}(\pi_{\check{\phi}}^* h(s),J)$$
where the direct limit is taken over compatible functions with respect to $q$ ordered by $\leq$ and where $J$ is an almost complex structure compatible with our mapping cylinder.
Since $\pi_{\check{\phi}}^*h(s)$ has no $1$-periodic orbits representing $\beta_{\check{\phi}}$ of positive action where $h$ is any compatible function with respect to $q$ we have by \ref{item:actionrestrictionproperty},
$$\varinjlim_h HF^*_{S^1,\beta_{\check{\phi}}}(\pi_{\check{\phi}}^* h(s),J)
=
\varinjlim_h HF^*_{S^1,(-\infty,0],\beta_{\check{\phi}}}(\pi_{\check{\phi}}^* h(s),J)$$
proving our result.
\qed

\begin{lemma} \label{lemma:symplecticminusisomorphism}
Let $q \in \R$.
Let $(\check{H},\check{J})$ be compatible with $(W_{\check{\phi}},\pi_{\check{\phi}},\theta_{\check{\phi}})$
then
$$SH^*_{S^1,\beta_{\check{\phi}}}(W_{\check{\phi}},\pi_{\check{\phi}},\theta_{\check{\phi}}) = \varinjlim_{(H,J)} HF^*_{S^1,(-\infty,0],\beta_{\check{\phi}}}(H,J)$$
where the direct limit is taken over pairs $(H,J)$
compatible with our mapping cylinder
satisfying $H|_{\pi_{\check{\phi}}^{-1}((-\infty,q] \times \R / \Z)} < 0$
and $(H,J)|_{\pi_{\check{\phi}}^{-1}([q+1,\infty) \times \R/\Z)} = (\check{H} + C_H,J)|_{\pi_{\check{\phi}}^{-1}([q+1,\infty) \times \R/\Z)}$
for some constant $C_H \in \R$.
\end{lemma}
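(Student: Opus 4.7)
The plan is to deduce Lemma \ref{lemma:symplecticminusisomorphism} from Lemma \ref{lemma:symplecticohomologynopositiveaction} by recognizing that the restricted family of pairs $(H,J)$ appearing here is cofinal in the larger family from Lemma \ref{lemma:symplecticohomologynopositiveaction} at the level of the action-truncated Floer groups. Write $\mathcal{F}$ for the family of pairs $(H,J)$ compatible with $(W_{\check{\phi}},\pi_{\check{\phi}},\theta_{\check{\phi}})$ satisfying only $H|_{\pi_{\check{\phi}}^{-1}((-\infty,q]\times\R/\Z)}<0$, and $\mathcal{F}'\subset\mathcal{F}$ for the further restricted family in our statement. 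Lemma \ref{lemma:symplecticohomologynopositiveaction} identifies $SH^*_{S^1,\beta_{\check\phi}}$ with the colimit over $\mathcal{F}$, and the inclusion $\mathcal{F}'\hookrightarrow\mathcal{F}$ induces a natural map from the colimit over $\mathcal{F}'$ into this group; the goal is to show this map is an isomorphism.

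For surjectivity, I would produce, for each $(H_0,J_0)\in\mathcal{F}$, a pair $(H_1,J_1)\in\mathcal{F}'$ together with a continuation map $HF^*_{S^1,(-\infty,0],\beta_{\check\phi}}(H_0,J_0)\to HF^*_{S^1,(-\infty,0],\beta_{\check\phi}}(H_1,J_1)$ belonging to the colimit system. The construction: first pick $S>q+1$ large enough that every action-$\leq 0$ $\beta_{\check\phi}$-orbit of $(H_0,J_0)$ lies in the compact region $V=\pi_{\check{\phi}}^{-1}([-S,S]\times\R/\Z)\cap\{r_{\check\phi}\leq 1-\delta\}$, and so that outside some larger compact subset of $W_{\check\phi}$ both $H_0$ and $\check H$ coincide with their asymptotic forms $\pi_{\check\phi}^*h(s)$ and $\pi_{\check\phi}^*h_{\check H}(s)$. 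Choose a constant $C_{H_1}$ large enough that $\check H+C_{H_1}\geq H_0$ on $\pi_{\check{\phi}}^{-1}([q+1,\infty)\times\R/\Z)$. Then set $H_1$ equal to $H_0$ on $\pi_{\check{\phi}}^{-1}((-\infty,q]\times\R/\Z)$ and equal to $\check H+C_{H_1}$ on $\pi_{\check{\phi}}^{-1}([q+1,\infty)\times\R/\Z)$, smoothly interpolating over $\pi_{\check{\phi}}^{-1}([q,q+1]\times\R/\Z)$ so that $H_1\geq H_0$ pointwise while $H_1|_{\pi_{\check{\phi}}^{-1}((-\infty,q]\times\R/\Z)}<0$; interpolate $J_0$ with an almost complex structure strictly compatible with the cylinder to obtain $J_1$.

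With this $(H_1,J_1)$ in hand, property \ref{item:propertycontinuation} supplies a continuation map in the desired direction for any smooth family $(H_\sigma,J_\sigma)_{\sigma\in\R}$ running from $(H_1,J_1)$ at $\sigma\to-\infty$ to $(H_0,J_0)$ at $\sigma\to+\infty$ with non-increasing slope and $\frac{d}{d\sigma}H_\sigma\leq 0$. Corollary \ref{corollary:themaximumprinciap} ensures such a family satisfies the maximum principle, and combined with Lemma \ref{lemma:maximumprincipalregion} the relevant Floer trajectories stay inside $V$. Since $(H_0,J_0)$ and $(H_1,J_1)$ coincide on $V$ and (by making $C_{H_1}$ sufficiently large) all action-$\leq 0$ $\beta_{\check\phi}$-orbits of either pair lie in $V$, property \ref{item:actionrestrictionproperty} gives a canonical identification of the truncated chain complexes showing that the continuation map fits into a commuting diagram with the natural projection to $SH^*_{S^1,\beta_{\check\phi}}$. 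Injectivity is handled symmetrically, passing from $(H_1,J_1)\in\mathcal{F}'$ back to an enlargement $(H_0,J_0)\in\mathcal{F}$.

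The main technical obstacle is bookkeeping the $\beta_{\check\phi}$-orbits that our construction might introduce on $\pi_{\check\phi}^{-1}([q,\infty)\times\R/\Z)$: although the asymptotic slope of $H_1$ at $s\to+\infty$ is forced to equal the (possibly small) slope of $\check H$, the constant $C_{H_1}$ shifts the actions of any orbits sitting on $\pi_{\check\phi}^{-1}([q+1,\infty)\times\R/\Z)$ far below $0$, and the maximum principle traps continuation trajectories inside the compact region $V$ where the two Hamiltonians agree. Verifying carefully that any new orbits created by the interpolation on $[q,q+1]$ either lie in $V$ or have action $>0$ (hence contribute trivially to the truncated complex) is the delicate point, and it is what decouples the colimit over $\mathcal{F}'$ from the specific asymptotic slope of $\check H$ and forces the two colimits to agree.
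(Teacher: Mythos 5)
Your overall strategy---reducing to Lemma \ref{lemma:symplecticohomologynopositiveaction} by comparing the two directed systems---is the right starting point, but the surjectivity step has a genuine gap, and it is precisely the point this lemma exists to address. Every pair in your restricted family $\mathcal{F}'$ has slope equal to the fixed (possibly very small) slope of $\check{H}$, whereas $(H_0,J_0)\in\mathcal{F}$ may have arbitrarily large slope. If the slope of $H_0$ exceeds that of $\check{H}$, then $H_0-\check{H}\to+\infty$ as $s\to\infty$, so \emph{no} constant $C_{H_1}$ achieves $\check{H}+C_{H_1}\geq H_0$ on $\pi_{\check{\phi}}^{-1}([q+1,\infty)\times\R/\Z)$; your interpolated $H_1$ cannot satisfy $H_1\geq H_0$ pointwise, and its slope is strictly smaller than that of $H_0$. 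Property \ref{item:propertycontinuation} therefore supplies no continuation map $HF^*_{S^1,(-\infty,0],\beta_{\check{\phi}}}(H_0,J_0)\to HF^*_{S^1,(-\infty,0],\beta_{\check{\phi}}}(H_1,J_1)$: both the monotonicity $\frac{d}{d\sigma}H_\sigma\leq 0$ and the non-increasing-slope condition fail in the required direction. You acknowledge that the slope of $H_1$ is ``forced to equal the (possibly small) slope of $\check{H}$,'' but shifting by $C_{H_1}$ does not repair this, and \ref{item:actionrestrictionproperty} does not apply either, since your $H_1$ agrees with $H_0$ only on $s\leq q$ while the action-$\leq 0$ orbits of $H_0$ may sit in the region $s>q+1$ where the two Hamiltonians differ. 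In short, $\mathcal{F}'$ is not cofinal in $\mathcal{F}$, and the induced map of colimits cannot be shown to be an isomorphism by domination.

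The paper runs the comparison in the opposite direction. It takes a countable cofinal family $(H_i,J_i)$ in $\mathcal{F}'$, arranges $C_{H_i}>i$, and adds to $H_i$ the Hamiltonian $\delta i K$, where $K=\pi_{\check{\phi}}^*k(s)$ vanishes for $s\leq q+1$ and has positive slope beyond $q+1$. Because $C_{H_i}>i$, for $\delta$ small this modification creates no new $1$-periodic orbits of non-positive action, so by \ref{item:actionrestrictionproperty} the truncated groups of $H_i$ and $H_i+\delta iK$ coincide; the modified family has slopes tending to infinity and is cofinal in the directed system of Lemma \ref{lemma:symplecticohomologynopositiveaction}, which gives the result. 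To repair your argument, replace the attempted domination of $H_0$ by elements of $\mathcal{F}'$ with this upward modification of the elements of $\mathcal{F}'$ themselves.
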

\proof
Let $(H_i,J_i)_{i \in \N}$ be a cofinal family of pairs with respect to the directed system mentioned in the statement of this lemma.
Such a countable family exists since $$\sup H|_{\pi_{\check{\phi}}^{-1}((-\infty,q] \times (\R/\Z))} < 0$$ for any $(H,J)$ in the directed system above.
We have that $(H_i,J_i)$
is compatible with our mapping cylinder
and $H_i|_{(\pi_{\check{\phi}}^{-1}((-\infty,q] \times \R / \Z))} < 0$
and $(H_i,J_i)|_{\pi_{\check{\phi}}^{-1}([q+1,\infty) \times \R/\Z)} = (\check{H} + C_{H_i},J_i)|_{\pi_{\check{\phi}}^{-1}([q+1,\infty) \times \R/\Z))}$
for some constant $C_{H_i} \in \R$.
We can assume that $H_i < H_{i+1}$ and hence $C_{H_i} < C_{H_{i+1}}$ for all $i \in \N$.
After passing to a subsequence, we can assume that $C_{H_i} > i$ for all $i \in \N$.

Let $(s,t)$ be standard coordinates for the base $\R \times \R/\Z$.
Let $K : W_{\check{\phi}} \lra{} \R$ be a Hamiltonian
equal to $\pi_{\check{\phi}}^* k(s)$
where $k(s) = 0$ for $s \leq q+1$,
$k'(s) > 0$ for $s > q+1$,
and $k'(s)$ is constant for $s > q + 2$.
Since $C_{H_i} > i$ for all $i$, there is a
$\delta > 0$ small enough so that
the set of $1$-periodic orbits of $H_i$
of non-positive
action are equal to the set of $1$-periodic orbits of $H_i + \delta i K$ of non-positive action.
Hence by \ref{item:actionrestrictionproperty},
$$HF^*_{S^1,(-\infty,0],\beta_{\check{\phi}}}(H_i,J_i) = HF^*_{S^1,(-\infty,0],\beta_{\check{\phi}}}(H_i+\delta i K,J_i).$$
Combining this with
Lemma \ref{lemma:symplecticminusisomorphism}
gives us our result.
%
\qed

\begin{lemma} \label{lemma:isomorphicsymplecticcohomologyps}
Suppose that the mapping cylinders
$(W_{\check{\phi}_1},\pi_{\check{\phi}_1},\theta_{\check{\phi}_1})$,
$(W_{\check{\phi}_2},\pi_{\check{\phi}_2},\theta_{\check{\phi}_2})$
are isomorphic.
Then $$SH^*_{S^1,\beta_{\check{\phi}_1}}(W_{\check{\phi}_1},\pi_{\check{\phi}_1},\theta_{\check{\phi}_1}) = SH^*_{S^1,\beta_{\check{\phi}_2}}(W_{\check{\phi}_2},\pi_{\check{\phi}_2},\theta_{\check{\phi}_2}).$$
\end{lemma}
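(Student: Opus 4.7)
The plan is to isotope $\Phi$ through isomorphisms of mapping cylinders to an isomorphism $\Phi'$ preserving the projection globally (i.e.\ $\pi_{\check{\phi}_2}\circ\Phi'=\pi_{\check{\phi}_1}$ everywhere on $W_{\check{\phi}_1}$), and then use $\Phi'$-pullback to identify cofinal families of compatible pairs on the two mapping cylinders. A key preliminary observation is that $\Phi^*d\theta_{\check{\phi}_2}=d\theta_{\check{\phi}_1}$, so $\Phi$ is a symplectomorphism, and in fact an exact one with primitive $q$ that vanishes on the vertical cylindrical end $\{r_{\check{\phi}_1}\geq 1-\delta\}$. It is also immediate from $\pi_{\check{\phi}_1}=\pi_{\check{\phi}_2}\circ\Phi$ on this end that $\Phi_*\beta_{\check{\phi}_1}=\beta_{\check{\phi}_2}$.

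To construct $\Phi'$, I would run a parametric Moser argument in the category of exact symplectomorphisms of mapping cylinders, deforming $\Phi$ while keeping its restriction to $\{r_{\check{\phi}_1}\geq 1-\delta\}$ fixed (where $\Phi$ already has the required $\pi$-preserving form) until it is fibered over the identity of $\R\times\R/\Z$ everywhere. Given such a $\Phi'$, pullback is manifestly compatibility-preserving: if $H_2=\pi_{\check{\phi}_2}^*h(s)$ outside a compact set of $W_{\check{\phi}_2}$, then $(\Phi')^*H_2=\pi_{\check{\phi}_1}^*h(s)$ outside a compact set of $W_{\check{\phi}_1}$; similarly for strictly compatible almost complex structures. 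Applying $\Phi'$-pullback to a cofinal family on $W_{\check{\phi}_2}$ (say the family provided by Lemma \ref{lemma:fixedhamiltonian}, or more finely by Lemma \ref{lemma:symplecticminusisomorphism} with an appropriately chosen fixed pair $(\check{H}_2,\check{J}_2)$) yields a cofinal family on $W_{\check{\phi}_1}$, and since $\Phi'$ is a symplectomorphism, it identifies the chain complexes $CF^*_{S^1,\beta_{\check{\phi}_j}}(H_{j,i},J_{j,i})$ (with an action shift by the exact primitive, which does not affect the direct limit). These identifications commute with continuation maps by \ref{item:propertycontinuation}, so taking direct limits yields $SH^*_{S^1,\beta_{\check{\phi}_1}}(W_{\check{\phi}_1})\cong SH^*_{S^1,\beta_{\check{\phi}_2}}(W_{\check{\phi}_2})$.

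The main obstacle is the first step: arranging $\Phi'$ to preserve $\pi$ globally. The strategy is a Moser-type interpolation, deforming $\Phi$ along a path of exact symplectomorphisms by integrating a time-dependent vector field whose generating Hamiltonian is controlled by $q$ and arranged to have support off the vertical cylindrical end. The exactness of $\Phi^*\theta_{\check{\phi}_2}-\theta_{\check{\phi}_1}$ and the vanishing of $q$ on $\{r_{\check{\phi}_1}\geq 1-\delta\}$ are precisely what allow this vector field to vanish on the cylindrical end, so that the deformation remains an isomorphism of mapping cylinders throughout. If this globalization proves recalcitrant, a fallback is to work with $\Phi$ directly, use \ref{item:actionrestrictionproperty} together with the maximum principle of Lemma \ref{lemma:maximumprincipalregion} and Corollary \ref{corollary:themaximumprinciap} to confine relevant $1$-periodic orbits in $\beta$ (of action at most $0$) and Floer trajectories between them to a region in which $\Phi^*(H_{2,i},J_{2,i})$ can be modified to a compatible pair on $W_{\check{\phi}_1}$ without altering the chain complex, and then conclude via Lemma \ref{lemma:symplecticminusisomorphism}.
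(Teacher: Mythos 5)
Your primary route has a genuine gap at its first and crucial step. The definition of an isomorphism of mapping cylinders only forces $\pi_{\check{\phi}_1}=\pi_{\check{\phi}_2}\circ\Phi$ on the vertical cylindrical end $\{r_{\check{\phi}_1}\geq 1-\delta\}$; on the complement (which is non-compact, being all of $\R$ in the $s$-direction) the two fibrations $\pi_{\check{\phi}_1}$ and $\pi_{\check{\phi}_2}\circ\Phi$ of $W_{\check{\phi}_1}$ over $\R\times\R/\Z$ can be genuinely different, and there is no path of fibrations along which to run a Moser argument. The quantity $q$ measures the failure of $\Phi$ to preserve the Liouville form, not the failure to preserve the projection, so a deformation "controlled by $q$" does not address the relevant obstruction: even an exact symplectomorphism with $q\equiv 0$ need not carry fibers to fibers. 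Arranging $\pi_2\circ\Phi'=\pi_1$ globally would require showing that the two families of symplectic fibers (hence, essentially, the two monodromies) are intertwined by a symplectomorphism rel the vertical end — this is morally the uniqueness half of Giroux's correspondence and is precisely what one does not have and does not need here. A second, independent problem with the pullback step: even granting a fiber-preserving $\Phi'$, compatibility of $(H,J)$ with a mapping cylinder constrains the pair on the non-compact ends $s\to\pm\infty$ (translation invariance of $J$, the exact form $\pi^*h(s)$ of $H$), and $\Phi'^*$ of a compatible pair need not satisfy these conditions unless $\Phi'$ is also translation-equivariant for $|s|$ large, which nothing guarantees.

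Your fallback is the paper's actual proof, and it is the right idea: one never straightens $\Phi$. Instead, one uses Lemma \ref{lemma:symplecticminusisomorphism} to compute both $S^1$-equivariant symplectic cohomologies as direct limits over Hamiltonians that are negative on $\{s\leq 0\}$, pinned to a fixed pair (up to additive constants) on $\{s\geq 1\}$, and restricted to the action window $(-\infty,0]$; the relevant $1$-periodic orbits in class $\beta_{\check{\phi}}$ then all live in a middle region $\pi^{-1}((-1,3)\times\R/\Z)$. One builds the two cofinal families so that they literally coincide on that middle region while each is honestly compatible with its own mapping cylinder outside it (this requires an interpolating pair $(H_3,J_3)$ with no $1$-periodic orbits, gluing a $W_2$-compatible pair in the middle to a $W_1$-compatible pair at the ends). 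The maximum principle of Lemma \ref{lemma:maximumprincipalregion} together with \ref{item:actionrestrictionproperty} then identifies the truncated chain complexes and the continuation maps, and the direct limits agree. So the conclusion is reachable by the route you name as a fallback, but the route you present as primary does not go through as described.
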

\proof
Since these mapping cylinders are isomorphic, we can assume that $W_{\check{\phi}_1} = W_{\check{\phi}_2}$, $\pi_{\check{\phi}_2} = \pi_{\check{\phi}_2}$ inside $\{r_{\check{\phi}_1} \geq 1-\delta\}$ for some $\delta > 0$ and $\theta_{\check{\phi}_1} = \theta_{\check{\phi}_2} + k$ where $k : W_{\check{\phi}_1} \lra{} \R$ has support disjoint from $\{r_{\check{\phi}_1} \geq 1-\delta\}$.

Let $(H_1,J_1)$ (resp. $(H_2,J_2)$) be a pair strictly compatible with
$$W_1 \equiv (W_{\check{\phi}_1},\pi_{\check{\phi}_1},\theta_{\check{\phi}_1}) \quad \text{(resp.} \ W_2 \equiv (W_{\check{\phi}_2},\pi_{\check{\phi}_2},\theta_{\check{\phi}_2}) \text{),}$$
of slope less than some small $\check{\delta}>0$.
If $\check{\delta}>0$ is small enough then we can construct a pair
$(H_3,J_3)$ compatible with
$W_1$,
which is equal to
$(H_2,J_2)$ in the region
$\pi_{\check{\phi}_2}^{-1}((-1,3) \times \R / \Z)$
and equal to $(H_1,J_1)$ inside
$\pi_{\check{\phi}_2}^{-1}(((-\infty,-2),(4,\infty)) \times \R / \Z)$ so that $H_3$ has no $1$-periodic orbits.

Let $(\check{H}_i,\check{J}_i)_{i \in \N}$ be a family of pairs
strictly compatible with
$W_2$
so that 
\begin{itemize}
\item $(\check{H}_i,\check{J}_i)$ is equal to $(H_2 + C_{\check{H}_i},J_2)$ inside $\pi_{\check{\phi}_2}^{-1}([1,\infty) \times \R/\Z)$  for some constants $C_{\check{H}_i} \in \R$,
\item the restriction $\check{H}_i|_{\pi_{\check{\phi}_2}^{-1}((-\infty,0) \times \R/\Z)}$ is negative and uniformly tends to $0$ in the $C^1$ norm as $i$ tends to infinity and $\check{H}_i(x) \to \infty$ as $i \to \infty$ for all $x$ in ${\pi_{\check{\phi}_2}^{-1}((0,\infty) \times \R/\Z)}$ . 
\end{itemize}
Let $(\widehat{H}_i,\widehat{J}_i)_{i \in \N}$ be a family of pairs
 compatible with $W_1$ so that 
\begin{itemize}
\item $(\widehat{H}_i,\widehat{J}_i)$ is equal to $(H_3 + C_{\check{H}_i},\check{J}_2)$ inside $\pi_{\check{\phi_1}}^{-1}([1,\infty) \times \R/\Z)$,
\item $(\widehat{H}_i,\widehat{J}_i)$ equals $(\check{H}_i,\check{J}_i)$ inside
$\pi_{\check{\phi}_2}^{-1}((-1,3) \times \R/\Z)$,
\item the restriction $\widehat{H}_i|_{\pi_{\check{\phi}_1}^{-1}((-\infty,0) \times \R/\Z)}$ is negative and uniformly tends to $0$ in the $C^1$ norm as $i$ tends to infinity
and $\widehat{H}_i(x) \to \infty$ as $i \to \infty$ for all $x$ in ${\pi_{\check{\phi_1}}^{-1}((0,\infty) \times \R/\Z)}$.
\end{itemize}


Then by Lemma \ref{lemma:symplecticminusisomorphism},
\begin{equation} \label{eqn:secondsymplectichomologygroup}
SH^*_{S^1,\beta_{\check{\phi}_1}}(W_{\check{\phi}_2},\pi_{\check{\phi}_2},\theta_{\check{\phi}_2}) \equiv \varinjlim_{i \in \N} HF^*_{S^1,(-\infty,0]}(\check{H}_i,\check{J}_i).
\end{equation}
and
\begin{equation} \label{eqn:firstsymplectichomologygroup}
SH^*_{S^1,\beta_{\check{\phi}_1}}(W_{\check{\phi}_1},
\pi_ {\check{\phi}_1},\theta_{\check{\phi}_1}) \equiv \varinjlim_{i \in \N} HF^*_{S^1,(-\infty,0]}(\widehat{H}_i,\widehat{J}_i).
\end{equation}

Also by property \ref{item:actionrestrictionproperty},
$HF^*_{S^1,(-\infty,0],\beta_{\check{\phi}_1}}(\widehat{H}_i,\widehat{J}_i) \cong HF^*_{S^1,(-\infty,0],\beta_{\check{\phi}_1}}(\check{H}_i,\check{J}_i)$
for all $i$
and the continuation maps between these groups commute with these isomorphisms.
Hence
$$SH^*_{S^1,\beta_{\check{\phi}_1}}(W_{\check{\phi}_1},\pi_{\check{\phi}_1},\theta_{\check{\phi}_1}) \cong
SH^*_{S^1,\beta_{\check{\phi}_1}}(W_{\check{\phi}_2},\pi_{\check{\phi}_2},\theta_{\check{\phi}_2})$$
by Equations
(\ref{eqn:firstsymplectichomologygroup})
and 
(\ref{eqn:secondsymplectichomologygroup}).

\qed

\begin{lemma} \label{lemma:homologycalculation}
Let $A_-,A_+$ be free abelian groups.
Define $B_- \equiv A_- \otimes \Z[u], \ B_+ \equiv A_+ \otimes \Z[u]$.
Let 
$$\partial \equiv \partial_0 + \partial_1 : B_- \oplus B_+ \lra{} B_- \oplus B_+$$
be a $\Z$-linear differential 
where $	\partial_0(A_-) \subset A_-$
and $\partial_1(A_-) = 0$.
Now suppose that we have a filtration 
$B_- \oplus B_+ = F_0 \supset F_1 \supset F_2 \cdots$
for the chain complex $(B_- \oplus B_+, \partial) $
so that if $V^i_\pm \equiv (B_\pm \cap F_i)/(B_\pm \cap F_{i+1})$
then $\partial_1(uV^i_-) \subset V^i_+$ and
\begin{equation} \label{equation:mapupartial1}
\partial_1|_{uV^i_-} : uV^i_- \lra{} V^i_+
%
\end{equation}
is an isomorphism for all $i \geq 0$.
Then
$$H(B_- \oplus B_+,\partial) = H(A_-,\partial_0).$$
If these groups are graded then the above isomorphism respects this grading.

\end{lemma}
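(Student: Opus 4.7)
The first observation is that $A_-$ is itself a subcomplex of $(B_-\oplus B_+,\partial)$: the hypotheses $\partial_0(A_-)\subset A_-$ and $\partial_1(A_-)=0$ combine to give $\partial(A_-)\subset A_-$, with $\partial|_{A_-}=\partial_0|_{A_-}$. Hence we have a short exact sequence of chain complexes
\[
0\longrightarrow (A_-,\partial_0)\longrightarrow (B_-\oplus B_+,\partial)\longrightarrow (uB_-\oplus B_+,\bar\partial)\longrightarrow 0,
\]
where $\bar\partial$ is the induced quotient differential. Via the long exact sequence in homology, the lemma reduces to showing that the quotient complex $(uB_-\oplus B_+,\bar\partial)$ is acyclic.

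The plan for proving acyclicity is to use the induced filtration $\bar F_i:=F_i\cap(uB_-\oplus B_+)$ on the quotient. Its associated graded pieces are $uV^i_-\oplus V^i_+$, using that multiplication by $u$ preserves the filtration. This gives a spectral sequence converging to $H(uB_-\oplus B_+,\bar\partial)$ whose $E_0$ page is $\bigoplus_i (uV^i_-\oplus V^i_+)$, with an induced differential $d_0$ whose leading component is precisely the given isomorphism $\partial_1^{\mathrm{gr}}:uV^i_-\to V^i_+$. I would then introduce a secondary filtration by $u$-degree on each summand and argue that the secondary associated graded differential is exactly $\partial_1^{\mathrm{gr}}$; by the standard fact that the mapping cone of an isomorphism of complexes is contractible, each $(uV^i_-\oplus V^i_+,d_0)$ is acyclic, so $E_1=0$, the outer spectral sequence collapses, and $H(uB_-\oplus B_+,\bar\partial)=0$.

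Grading compatibility follows automatically because every map in sight has a specified integer degree (with $u$ of degree $-2$ and $\partial_1$ of degree $+1$). The main obstacle will be making the secondary $u$-filtration argument rigorous given only the hypotheses stated: in particular one needs $\partial_0$ to be suitably compatible with the $u$-grading on $V^i_\pm$ so that it does not disrupt the cancellation via $\partial_1^{\mathrm{gr}}$. In the $S^1$-equivariant Floer application, property \ref{item:propertychaincomplex} supplies exactly this structure, since there $\partial_0$ preserves each $u^j$-weight and $\partial_1$ maps $u^iC^*$ into $\bigoplus_{j<i}u^jC^*$ with leading term $u^{i-1}\gamma_+$; in the abstract algebraic setting one must extract enough of this compatibility from the given data, or equivalently realize $(uB_-\oplus B_+,\bar\partial)$ as the total complex of a double complex whose horizontal differential is the given isomorphism, so that acyclicity is immediate from the double-complex spectral sequence.
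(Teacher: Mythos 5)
Your opening reduction is sound and is essentially the same first move the paper makes: $\partial_0(A_-)\subset A_-$ and $\partial_1(A_-)=0$ make $A_-$ a subcomplex with induced differential $\partial_0$, so everything comes down to showing that the complementary piece carries no homology. Where your proposal diverges — and where it has a genuine gap — is in how you propose to establish that acyclicity. You set up a spectral sequence for the induced filtration on the quotient $uB_-\oplus B_+$ and then want a \emph{secondary} filtration by $u$-degree on each associated graded piece, and you yourself concede that you cannot justify the compatibility of $\partial_0$ with that secondary filtration from the stated hypotheses. That concern is real as far as your chosen route goes: the abstract lemma gives you no control over $\partial_0$ on $uB_-$ or over $\partial$ on $B_+$, so the "double complex whose horizontal differential is the given isomorphism" you hope to exhibit is not available from the hypotheses. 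As written, the acyclicity step is not proved.

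The gap is closable, and the paper's proof shows the much shorter way. First, the single hypothesis that $\partial_1\vert_{uV^i_-}:uV^i_-\to V^i_+$ is an isomorphism for every $i$ upgrades, by induction on the (exhaustive, and in the application finite) filtration, to the statement that the composite $uB_-\xrightarrow{\partial}B_-\oplus B_+\to (B_-\oplus B_+)/(B_-\oplus 0)\cong B_+$ is an isomorphism of abelian groups. Second — and this is the point your secondary filtration was trying to reach — no further compatibility is needed: if a complex $X\oplus Y$ has differential with block form $\left(\begin{smallmatrix}a&b\\ c&d\end{smallmatrix}\right)$ and the block $c:X\to Y$ is an isomorphism, the complex is acyclic regardless of $a,b,d$ (given $\partial^2=0$ one checks directly that any cycle $(x,y)$ equals $\partial(c^{-1}y,0)$). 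The paper packages this as a change of basis: setting $\check{B}\equiv\partial(uB_-)$, one gets $B_-\oplus B_+= A_-\oplus uB_-\oplus\check{B}$ with $\partial$ lower triangular, equal to $\partial_0$ on $A_-$, an isomorphism $uB_-\to\check{B}$, and zero on $\check{B}$; the acyclic summand $uB_-\oplus\check{B}$ splits off and the homology is $H(A_-,\partial_0)$. So: keep your first paragraph, discard the nested spectral sequences, and replace them with the filtration-induction-plus-Gaussian-elimination argument above.
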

\proof
Define $\check{B} \equiv \partial(uB_-)$
and define
$\partial_{\check{B}} : uB_- \lra{} \check{B}, \quad \partial_{\check{B}}(x) = \partial(x)$.
Since the map (\ref{equation:mapupartial1})
is an isomorphism for all $i \geq 0$, we get that
$\partial_1|_{uB_-} : u B_- \lra{} (B_- \oplus B_+) / (B_- \oplus 0)$ is an isomorphism.
Hence $B_- \oplus B_+ \cong A_- \oplus u B_- \oplus \check{B}$
and the differential with respect to this splitting is the matrix
$$\left(
\begin{array}{ccc}
\partial_0 & 0 & 0 \\
0 & 0 & 0 \\
0 & \partial_{\check{B}} & 0
\end{array}
\right).$$
Computing the homology of this chain complex using the above matrix gives us our result since
$\partial_{B'}$ is an isomorphism.
\qed

\begin{lemma} \label{lemma:HFSHisomorphism}
Let $(W_{\check{\phi}},\pi_{\check{\phi}},\theta_{\check{\phi}})$
be a mapping cylinder.
Then
$$SH^*_{\beta_{\check{\phi}}}(W_{\check{\phi}},\pi_{\check{\phi}},\theta_{\check{\phi}}) = HF^*(\phi,+).$$
\end{lemma}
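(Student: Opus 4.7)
The plan is to compute $SH^*_{S^1,\beta_{\check{\phi}}}(W_{\check{\phi}},\pi_{\check{\phi}},\theta_{\check{\phi}})$ by a single pair $(H,J)$ of fixed slope, identify its chain complex explicitly using Morse--Bott methods, and then collapse the $\Z[u]$-structure via Lemma \ref{lemma:homologycalculation} to recover $HF^*(\phi,+)$. Concretely, choose a smooth convex $h:\R\to\R$ with $h'(-\infty)$ close to $0$, a single interval on which $h''>0$, and asymptotic slope $\lambda\in(1,2)$ at $+\infty$, and put $H=\pi_{\check{\phi}}^*h(s)$, $J$ strictly compatible with the mapping cylinder. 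By Lemma \ref{lemma:fixedhamiltonian} the natural map $HF^*_{S^1,\beta_{\check{\phi}}}(H,J)\to SH^*_{S^1,\beta_{\check{\phi}}}$ is an isomorphism.

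Next, analyze the orbits. Because $X_H$ projects to $h'(s)\partial_t$ and the monodromy of the mapping cylinder around $\R/\Z$ is $\check{\phi}$, the $1$-periodic orbits of $H$ in class $\beta_{\check{\phi}}$ all sit at the unique $s_0$ with $h'(s_0)=1$, and on each fiber above $\{s_0\}\times\R/\Z$ they are recorded by fixed points of $\check{\phi}$. Thus they form a disjoint union of Morse--Bott $S^1$-families $\Gamma_p$, one for each fixed point $p$ of $\check{\phi}$. A standard perturbation $(H_t)$ then converts each $\Gamma_p$ into two non-degenerate orbits $\gamma_{p,-},\gamma_{p,+}$ with $CZ(\gamma_{p,\pm})=CZ(\Gamma_p)\pm\tfrac12$ and with actions differing by a small positive amount $\epsilon_p$. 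A direct computation (using the splitting of the linearization into the fiberwise part, which has Conley--Zehnder index $CZ(\check{\phi},p)$, and the tangential $S^1$ direction) yields $CZ(\Gamma_p)=CZ(\check{\phi},p)+\tfrac12$, so that $-CZ(\gamma_{p,-})=-CZ(\check{\phi},p)$, matching the grading on $HF^*(\phi,+)$.

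By property \ref{item:propertychaincomplex}, $CF^*_{S^1,\beta_{\check{\phi}}}(H)$ is a free $\Z[u]$-module on $\{\gamma_{p,-},\gamma_{p,+}\}_{p\in\mathrm{Fix}(\check{\phi})}$, with differential $\partial=\partial_0+\partial_1$ where $\partial_0$ preserves $u$-degree and $\partial_1$ strictly lowers it. Set $A_\pm:=\bigoplus_p\Z\langle\gamma_{p,\pm}\rangle$ and $B_\pm:=A_\pm\otimes\Z[u]$, and filter by action $F_i:=\{\text{generators of action}\geq a_i\}$ for a sufficiently fine sequence $a_i\to\infty$ interleaving the values $\{\mathrm{action}(\gamma_{p,-}),\mathrm{action}(\gamma_{p,+})\}$. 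The two facts to verify are:
\begin{enumerate}
\item[(a)] $\partial_0$ preserves $A_-$, and under the identification $\gamma_{p,-}\leftrightarrow p$ the restricted complex $(A_-,\partial_0)$ is the Floer complex $CF^*(\check{\phi},J_M)$ computing $HF^*(\phi,+)$, where $J_M$ is the cylindrical almost complex structure on $M$ associated with $J$. This uses a maximum principle argument in the spirit of Lemma \ref{lemma:maximumprincipalregion}: for strictly compatible $(H,J)$, Floer cylinders with both ends at slope $1$ project to $j$-holomorphic cylinders in $\R\times\R/\Z$ that are forced to be constant, so the cylinders lie in a single fiber where the Floer equation becomes exactly the defining equation for $HF^*(\check{\phi})$.
\item[(b)] The leading Morse--Bott contribution of $\partial_1$ sends $u\gamma_{p,-}\mapsto\pm\gamma_{p,+}$ modulo terms of strictly higher action, and hence induces an isomorphism $\partial_1\colon uV^i_-\to V^i_+$ at every level of the action filtration. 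This is the standard BV-type relation in $S^1$-equivariant Floer theory for Morse--Bott $S^1$-orbits.
\end{enumerate}

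Granted (a) and (b), Lemma \ref{lemma:homologycalculation} yields
\[H(B_-\oplus B_+,\partial)\;=\;H(A_-,\partial_0)\;=\;HF^*(\check{\phi},J_M)\;=\;HF^*(\phi,+),\]
which combined with Lemma \ref{lemma:fixedhamiltonian} gives the desired identification. The main obstacle is part (a): one must verify rigorously that for this particular pair $(H,J)$ the Floer moduli spaces of cylinders between $\gamma_{p,-}$ and $\gamma_{q,-}$ are, after the standard Morse--Bott perturbation, canonically identified with the moduli spaces of Floer strips for $\check{\phi}$ on the fiber $M$ with almost complex structure $J_M$; this is where signs, orientations, and the use of a slope in $(1,2)$ (to rule out orbits that wrap more than once) all become essential.
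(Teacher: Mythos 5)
Your overall architecture is the same as the paper's: fix a single pair $(H=\pi_{\check{\phi}}^*h(s),J)$ of slope in $(1,2)$ and invoke Lemma \ref{lemma:fixedhamiltonian}; observe via a standard perturbation that the generators split into two families $A_-\oplus A_+$ indexed by fixed points of $\check{\phi}$; use \ref{item:propertychaincomplex} to write the equivariant differential as $\partial_0+\partial_1$; and collapse with Lemma \ref{lemma:homologycalculation}. Note that your step (b) requires no proof at all --- the relation $\partial(u^i\gamma_-)=u^{i-1}\gamma_+$ plus higher-action terms is asserted verbatim in \ref{item:propertychaincomplex}, so the isomorphism $\partial_1\colon uV^i_-\to V^i_+$ comes for free once the action filtration is chosen generically (you do need, as the paper does, that $\check{\phi}$ is generic enough that the pairs $\gamma_{p,-},\gamma_{p,+}$ are isolated in action from all other generators).

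The genuine gap is in your step (a), and it is not only a matter of ``signs and orientations.'' Your maximum-principle argument --- projected cylinders between slope-$1$ orbits have zero energy downstairs, hence are constant, hence the Floer equation localizes to a fiber --- only controls cylinders whose two asymptotes project to the \emph{same} orbit $\gamma_-$ (or the same $\gamma_+$). It establishes that the diagonal blocks of $\partial_0$ are the Floer differential of $\check{\phi}$ on $M$, but it says nothing about the off-diagonal entries: after the standard perturbation, $\gamma_-$ and $\gamma_+$ are distinct orbits of $h_t$ downstairs with a small action gap, and there \emph{are} non-constant Floer cylinders in $\R\times\R/\Z$ between them (indeed two of them, which is precisely what feeds $\partial_1$). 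These lift to cylinders in $W_{\check{\phi}}$ contributing potential terms of $\partial_0$ from $A_-$ to $A_+$ or vice versa, and Lemma \ref{lemma:homologycalculation} needs $\partial_0(A_-)\subset A_-$ together with $H(A_-,\partial_0)=HF^*(\phi,+)$. Killing these cross-terms requires showing that the two families of lifted cylinders cancel in pairs in the non-equivariant count. This is exactly the content of \cite[Theorem 1.3]{McLean:monodromy}, which the paper cites at this point instead of reproving; your proof is incomplete unless you either supply that cancellation argument or cite that result.
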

\proof
Let $(H = \pi_{\check{\phi}}^* h(s),J)$ be a pair strictly compatible with
$(W_{\check{\phi}},\pi_{\check{\phi}},\theta_{\check{\phi}})$ where $H$ has slope $1.5$, $h'' \geq 0$ and where $h'|_{(-\infty,0)} < 1$.
Let $h_t : \R \times \R/\Z \lra{} \R, \quad t \in [0,1]$
be a standard perturbation of $h(s)$ viewed as a Hamiltonian on the symplectic manifold $(\R \times \R/\Z, ds \wedge dt)$.
Then $h_t$ has exactly two $1$-periodic orbits
$\gamma_-,\gamma_+$.
Also $H_t \equiv \pi_{\check{\phi}}^* h_t$ is a standard perturbation of $H$ and the $1$-periodic orbits of $H_t$ project to $\gamma_-$ or $\gamma_+$.
A compactness argument (\cite{BEHWZ:compactnessfieldtheory}) tells us that $(H_t,J)$ satisfies the maximum principle so long as $H_t$ is sufficiently $C^\infty$ close to $H$ and hence we can define $HF^*_{\beta_{\check{\phi}}}(\phi^{H_t}_1)$ in the usual way.
We can also define
$HF^*_{S^1,\beta_{\check{\phi}}}(H,J)$ using the standard perturbation $H_t$ by the same compactness argument.

By \cite[Theorem 1.3]{McLean:monodromy},
we have that
$HF^*_{\beta_{\check{\phi}}}(\phi^{H_t}_1,J)$
is isomorphic to $HF^*_{\beta_{\check{\phi}}}(\phi^{H_t}_1,+) \oplus HF^{*+1}_{\beta_{\check{\phi}}}(\phi^{H_t}_1,+)$.
In fact, in the proof of the above Theorem it was shown that if $A_-$ (resp. $A_+$)
is the free abelian group generated by $1$-periodic orbits of $H_t$ which project to $\gamma_-$ (resp. $\gamma_+$) then the differential $$\partial_{H_t,J} : A_- \oplus A_+ \lra{} A_- \oplus A_+$$
satisfies $\partial_{H_t,J}(A_\pm) \subset A_\pm$
and the homology of
$$\partial_{H_t,J}|_{A_-} : A_- \lra{} A_-$$
is equal to $HF^*(\phi^{H_t}_1,J)$.

If $\check{\phi}$ is a sufficiently generic positive slope perturbation of $\phi$,
then we can find
a sequence $(\alpha_i)_{i \in \N_{\geq 0}}$
so that there are exactly two $1$-periodic orbits
$\check{\gamma}_-,\check{\gamma}_+$ of $H_t$
of action in the interval
$[\alpha_i,\alpha_{i+1})$ and all $1$-periodic orbits are contained in one such interval.
Let $B_\pm \equiv A_\pm \otimes \Z[u]$ where $u$ has degree $-2$.
Let $B_- \oplus B_+ = F_0 \supset F_1 \supset \cdots$
be a filtration where $F_i$ is the $\Z[u]$-submodule
generated by orbits of action $\geq \alpha_i$.
Define $V^i_\pm \equiv (B_\pm \cap F_i)/(B_\pm \cap F_{i+1})$ for all $i \in \N_{\geq 0}$.
By \ref{item:propertychaincomplex}, the differential
$\partial : B_- \oplus B_+ \lra{} B_- \oplus B_+$
computing $HF^*_{S^1,\beta_{\check{\phi}}}(H,J)$
is equal to $\partial_0 + \partial_1$
where $\partial_0(A_-) \subset A_-$,
$\partial_1(A_-) = 0$,
$\partial_0|_{A_-} = \partial_{H_t,J}$,
$\partial_1(uV^i_-) \subset V^i_+$
and $\partial_1|_{uV^i_-} : uV^i_- \lra{} V^i_+$ is an isomorphism for all $i \in \N_{\geq 0}$.

Therefore by Lemma \ref{lemma:homologycalculation}
we have that $HF^*_{S^1,\beta_{\check{\phi}}}(H,J) = H(A_-,\partial_0) = HF^*(\phi,+)$.
Also
$HF^*_{S^1,\beta_{\check{\phi}}}(H,J) = SH^*_{S^1,\beta_{\check{\phi}}}(W_{\check{\phi}},\pi_{\check{\phi}},\theta_{\check{\phi}})$
by Lemma \ref{lemma:fixedhamiltonian} and hence
$SH^*_{S^1,\beta_{\check{\phi}}}(W_{\check{\phi}},\pi_{\check{\phi}},\theta_{\check{\phi}}) = HF^*(\phi,+)$.
\qed

\begin{proof}[Proof of Property \ref{item:HFOBD}.]
This follows from Lemmas
\ref{lemma:obdisomorphism}, \ref{lemma:isotopicimpliesisomorphicHF},
 \ref{lemma:isomorphicsymplecticcohomologyps} and
\ref{lemma:HFSHisomorphism}.
\end{proof}

Now the only issue is that if we have two polynomials with embedded contactomorphic links.
Then we need to show that the associated
contact pairs are isomorphic.
In other words, we need to show that
the normal bundles coincide up to homotopy.
This is contained in the proof of the following lemma.

\begin{lemma} \label{lemma:invarianceforlinks}
Let $f,g : \C^{n+1} \lra{} \C$ be a polynomials with isolated singularities at $0$ with embedded contactomorphic links where $n \geq 1$.
Then $HF^*(\phi^m,+) = HF^*(\psi^m,+)$
where $\phi$ (resp. $\psi$) is the monodromy map of the Milnor open book associated to $f$ (resp. $g$)
as in Example \ref{example:milnoropenbook}.
\end{lemma}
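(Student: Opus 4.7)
The plan is to reduce to property \ref{item:HFOBD} via the Giroux correspondence. By Example \ref{example:milnoropenbook}, the Milnor open book of $f$ is a graded contact open book whose associated graded contact pair with normal bundle data is $(L_f \subset S_\epsilon, \xi_{S_\epsilon}, \Phi_f)$ with the standard grading, and similarly for $g$. By Theorem \ref{Theorem:obdisomorphism}, the graded abstract contact open books $(M_f, \theta_{M_f}, \phi)$ and $(M_g, \theta_{M_g}, \psi)$ are therefore determined up to graded isotopy by their associated graded contact pairs. Once a graded contactomorphism between $(L_f, \xi_{S_\epsilon}, \Phi_f)$ and $(L_g, \xi_{S_\epsilon}, \Phi_g)$ is produced, these abstract open books are graded isotopic; iterating the monodromy throughout the isotopy yields a graded isotopy between $(M_f, \theta_{M_f}, \phi^m)$ and $(M_g, \theta_{M_g}, \psi^m)$ (which makes sense because $\phi^m$ is a graded exact symplectomorphism with compact support in the interior of $M_f$ whenever $\phi$ is, and the same holds along the isotopy), and Lemma \ref{lemma:isotopicimpliesisomorphicHF} then gives $HF^*(\phi^m,+) = HF^*(\psi^m,+)$.

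The substantive task is to upgrade the given embedded contactomorphism $\Phi: (S_\epsilon, \xi_{S_\epsilon}) \to (S_\epsilon, \xi_{S_\epsilon})$ with $\Phi(L_f) = L_g$ to a contactomorphism of graded contact pairs with normal bundle data. The grading compatibility comes for free: since $H^1(S_\epsilon;\Z) = H^2(S_\epsilon;\Z) = 0$ for $n \geq 1$, the standard grading on $\xi_{S_\epsilon}$ is the unique grading up to homotopy (by the discussion in Appendix A following Lemma \ref{lemma:normalsubgroupgradingbijection}), so $\Phi$ automatically carries the induced standard grading on $S_\epsilon \setminus L_f$ to the induced standard grading on $S_\epsilon \setminus L_g$. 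It remains to show that the pushforward symplectic trivialization $\Phi_* \Phi_f$ of $\cN_{S_\epsilon} L_g$ is homotopic through symplectic trivializations to $\Phi_g$.

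To do this, I would compare both trivializations with those induced by supporting open books. The pushforward $\Phi_*(\tfrac{1}{2\pi}\arg f)$ is a contact open book on $(S_\epsilon,\xi_{S_\epsilon})$ with binding $L_g$ supporting the contact pair $(L_g, \xi_{S_\epsilon}, \Phi_* \Phi_f)$, while the Milnor open book of $g$ supports $(L_g, \xi_{S_\epsilon}, \Phi_g)$. By condition (ii) of the definition of a supporting open book, each trivialization is forced (up to homotopy through orientation-preserving and hence symplectic trivializations) to agree with the trivialization induced by a tubular neighborhood $L_g \times \D$ coming from the open book. For $n \geq 3$ the discrepancy lives in $H^1(L_g;\Z)$, which vanishes by Milnor's $(n-2)$-connectedness of the link, so the two trivializations are automatically homotopic. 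For $n \in \{1,2\}$, one has to show that both $\Phi_* \Phi_f$ and $\Phi_g$ coincide with the trivialization determined by the outward normal to a page along the binding, and that this trivialization depends only on the co-orientation of $L_g$ inside $(S_\epsilon, \xi_{S_\epsilon})$, which is preserved by $\Phi$; this follows from a Moser-type argument in a tubular neighborhood of $L_g$ applied to the smooth family of open books interpolating between $\Phi_*(\arg f)$ and $\arg g$ relative to the fixed binding. The main obstacle is precisely this low-dimensional case; everything else is formal bookkeeping via the Giroux correspondence and Lemma \ref{lemma:isotopicimpliesisomorphicHF}.
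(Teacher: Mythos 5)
Your overall reduction is the same as the paper's: the grading compatibility is automatic because $H^1(S_\epsilon;\Z)=H^2(S_\epsilon;\Z)=0$ for $n\geq 1$, and the entire content of the lemma is showing that the pushed-forward trivialization $\Phi_*\Phi_f$ of $\cN_{S_\epsilon}L_g$ is homotopic through symplectic trivializations to $\Phi_g$; the passage from a graded contactomorphism of contact pairs to $HF^*(\phi^m,+)=HF^*(\psi^m,+)$ via \ref{item:HFOBD} (or, as you spell out, via Theorem \ref{Theorem:obdisomorphism} and Lemma \ref{lemma:isotopicimpliesisomorphicHF} applied to the iterates) is fine. For $n\geq 3$ your argument is complete: homotopy classes of trivializations of $\cN_{S_\epsilon}L_g$ form a torsor over $H^1(L_g;\Z)$, which vanishes by the $(n-2)$-connectedness of the link.

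The gap is in the cases $n=1,2$, where $H^1(L_g;\Z)$ can be nonzero (already for $n=2$: the link of $x^d+y^d+z^d$ is a circle bundle over a curve of genus $\binom{d-1}{2}$). There your argument rests on two unsubstantiated claims. First, that the page framing ``depends only on the co-orientation of $L_g$'': a trivialization of an oriented rank-$2$ bundle is a framing, not a co-orientation, and two framings of a knot in $S^3$ with the same co-orientation differ by an arbitrary element of $H^1$, so this cannot be the mechanism. Second, you invoke ``the smooth family of open books interpolating between $\Phi_*(\arg f)$ and $\arg g$ relative to the fixed binding''; the existence of such a family is precisely the kind of uniqueness statement (two open books with the same binding supporting the same contact structure are isotopic rel binding) that does not follow from Theorem \ref{Theorem:obdisomorphism} and is essentially equivalent to what you are trying to prove, since an isotopy of contact open books already carries the normal-bundle trivialization with it. The paper closes this uniformly in $n$ by a homological characterization: $\Phi_f$ is the unique trivialization up to homotopy whose associated pushoff section $\Psi_f\circ s : L_f \lra{} S_\epsilon - L_f$ induces the zero map $H_1(L_f;\Q)\lra{} H_1(S_\epsilon-L_f;\Q)$ --- a generalized Seifert framing. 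Since this characterization is intrinsic to the embedded pair $L\subset S_\epsilon$, it is preserved by the contactomorphism $\Psi$, which is exactly the statement you need; substituting this observation for your low-dimensional argument repairs the proof.
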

\proof
Let $(L_f \subset S_\epsilon, \xi_{S_\epsilon},\Phi_f)$ and
$(L_g \subset S_\epsilon, \xi_{S_\epsilon},\Phi_g)$
be the contact pairs associated to $f$ and $g$ respectively as in Example \ref{example:contactpairoff}.
Let $\Psi : S_\epsilon \lra{} S_\epsilon$ be the contactomorphism sending $L_f$ to $L_g$.
We need to show that $\Psi$ is in fact a contactomorphism of graded contact pairs by \ref{item:HFOBD}.
Since $H_1(S_\epsilon;\Q) = 0$, we get that $\Psi$ is a graded contactomorphism by Equation (\ref{equation:gradingtrivializationcorrespondence}) in Definition \ref{defn:gradingtrivializationcorrespondence}.
Therefore we just need to show that the composition
$$\cN_{S_\epsilon}L_f \lra{d\Psi|_{L_f}} \cN_{S_\epsilon} L_g \lra{\Phi_g} L_g \times \C \lra{\Psi^{-1} \times \text{id}_\C} L_f \times \C$$
is homotopic to $\Phi_f$.

This is true since the trivialization $\Phi_f$ (and similarly $\Phi_g$) is uniquely determined by the following topological property:
Let $\Psi_f : \check{\cN}_{S_\epsilon} L_f \lra{} S_\epsilon$
be a regularization of $L_f$ as in Definition \ref{defn:regularization}.
Then the trivialization $\Phi_f$
gives us a section $s$ of $\check{\cN}_{S_\epsilon} L_f$
whose image under the trivialization
$\Phi_f$ is a constant section.
Then $s$ is the unique section up to homotopy with the property that the image of $H_1(L_f;\Q) \lra{\Psi_f \circ s} H_1(S_\epsilon - L_f;\Q)$ is zero.
This trivialization could be thought of as a generalization of the Seifert framing of links.

\qed

\section{Appendix C: A Morse-Bott Spectral Sequence}

In this section, we will show that property
\ref{item:spectralsequenceproperty} holds.
Here is a statement of this property:

{\it Let $(M,\theta_M,\phi)$ be a graded abstract contact open book where $\text{dim}(M) = 2n$.
Suppose that the set of fixed points of
a small positive slope perturbation
$\check{\phi}$ of $\phi$ is a disjoint union of
codimension $0$ families of fixed points $B_1,\cdots,B_l$ and let $\iota : \{1,\cdots,l\} \lra{} \N$
be a function where
\begin{itemize}
\item $\iota(i) = \iota(j)$ if and only if the action of $B_i$ equals the action of $B_j$ and
\item $\iota(i) < \iota(j)$ if the action of $B_i$ is less than the action of $B_j$.
\end{itemize}
Then there is a cohomological spectral sequence converging to $HF^*(\phi,+)$ whose $E_1$ page is equal to
\begin{equation} \label{eqn:morsebottspectralsequence}
E_1^{p,q} = 
\bigoplus_{ \{i \in \{1,\cdots,l\} \ : \ \iota(i) = p\} }
H_{n-(p+q)-CZ(\phi,B_j)}(B_p;\Z)
.
\end{equation}

\bigskip

The spectral sequence above is an example of a {\it Morse-Bott spectral sequence}. 
Before we prove this statement we need some preliminary definitions and lemmas.

\begin{defn} \label{defn:localfloer}
$(M,\theta_M,\phi)$ be a graded abstract contact open book.
Let $\check{\phi} : M \lra{} M$ be a small positive slope perturbation of $\phi$ and $(J_t)_{t \in [0,1]}$ a $C^\infty$ generic family of $d\theta_M$-compatible almost complex structures.
Let $a,b \in \R$ be real numbers with the property that no fixed point of $\check{\phi}$
has action equal to $a$ or $b$.

We define $HF^*_{[a,b]}(\check{\phi},(J_t)_{t \in [0,1]})$ in the following way:
Let $\check{\phi}'$ be a $C^\infty$ small generic perturbation of $\check{\phi}$ inside a compact set
so that all of the fixed points of $\check{\phi}'$ are non-degenerate.
Then $HF^*_{([a,b]}(\check{\phi},(J_t)_{t \in [0,1]})$ is defined in the same way as
$HF^*(\check{\phi}',(J_t)_{t \in [0,1]})$
except that we only consider orbits inside the action window $[a,b]$.
This group does not depend on the choice of perturbation $\check{\phi}'$
so long as no fixed point of $\check{\phi}$ has action equal to $a$ or $b$.

We can define this group in the following equivalent way:
Let $CF^*(\check{\phi}')$ be the chain complex for $\check{\phi}'$.
Then the subspace $CF^*_{[a,\infty]}(\check{\phi}')$ consisting of fixed points of action $\geq a$
is a subcomplex.
We define $HF^*_{[a,b]}(\check{\phi}',(J_t)_{t \in [0,1]})$
to be the homology of the quotient complex
$CF^*_{[a,\infty]}(\check{\phi}')/ CF^*_{[b,\infty]}(\check{\phi}')$.

Suppose that $B$ is the set of fixed points of $\check{\phi}$ of action $c$ and suppose that there is some $a<c<b$ so that there are no fixed points of action in $[a,b]-c$.
We define
$$HF^*(\check{\phi},B) \equiv HF^*_{[a,b]}(\check{\phi},(J_t)_{t \in [0,1]}).$$
This does not depend on the choice of $a,b$ or $(J_t)_{t \in [0,1]}$.
\end{defn}

\begin{lemma} \label{lemma:smallperturbationinneighborhood}
$(M,\theta_M,\phi)$ be a graded abstract contact open book.
Let $\check{\phi} : M \lra{} M$ be the composition of $\phi$ with a $C^\infty$ small Hamiltonian so that $\check{\phi}$ has small positive slope.
Let $B \subset M$ be an isolated family of fixed points of $\check{\phi}$.

Let $(J_t)_{t \in [0,1]}$ be a smooth family of almost complex structures cylindrical near $\partial M$.
Then there is an neighborhood $N_B \subset M $ of $B$
so that for any sufficiently small $C^\infty$
perturbation $\check{\phi}'$,
any Floer trajectory of $(\check{\phi}',(J_t)_{t \in [0,1]})$ connecting non-degenerate fixed points $p,\check{p} \in N_B$ of $\check{\phi}'$
is contained inside $N_B$.
\end{lemma}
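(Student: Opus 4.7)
The plan is a standard isolation argument by contradiction combined with Gromov--Floer compactness and a long--neck estimate. Since $\check{\phi}$ has small positive slope, the fixed points of $\check{\phi}$ near $\partial M$ all lie inside the interior of $M$ (the Reeb flow has no short periodic orbits). Hence $B$ is compact and contained in the interior of $M$, so I can choose open neighborhoods $N'_B \Subset N_B$ of $B$, both relatively compact in $\text{Int}(M)$, so that $\overline{N_B}$ contains no fixed point of $\check{\phi}$ outside $B$. Let $c$ denote the action of $B$, and choose $\delta>0$ small enough that the only fixed points of $\check{\phi}$ with action in $[c-2\delta,c+2\delta]$ are the points of $B$, all of which lie in $N'_B$.

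Suppose for contradiction that the lemma fails. Then there is a sequence of $C^\infty$ perturbations $\check{\phi}'_k \to \check{\phi}$ with all fixed points non-degenerate, together with fixed points $p_k,\check{p}_k \in N'_B$ of $\check{\phi}'_k$ and Floer trajectories $u_k$ of $(\check{\phi}'_k,(J_t)_{t\in[0,1]})$ joining $p_k$ to $\check{p}_k$ whose image is not contained in $N_B$. After passing to a subsequence, $p_k \to p_\infty$ and $\check{p}_k \to \check{p}_\infty$ in $\overline{N'_B}$, and by continuity $p_\infty,\check{p}_\infty$ are fixed points of $\check{\phi}$, hence lie in $B$. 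In particular the actions of $p_k$ and $\check{p}_k$ both converge to $c$, so the energies satisfy
\[
E(u_k) \;=\; A_{\check{\phi}'_k}(p_k) \;-\; A_{\check{\phi}'_k}(\check{p}_k) \;\longrightarrow\; 0.
\]
The maximum principle (using that $(J_t)$ is cylindrical near $\partial M$ and $\check{\phi}'_k$ is, for $k$ large, the time--$1$ flow of a small positive linear Hamiltonian near $\partial M$, as in the proof of Lemma \ref{lemma:maximumprincipalregion}) keeps each $u_k$ inside a fixed compact subset of $M$ independent of $k$.

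Now I apply Gromov--Floer compactness for Floer trajectories with varying data $(\check{\phi}'_k,(J_t))$ converging in $C^\infty$ to $(\check{\phi},(J_t))$. A subsequence converges, in the Gromov--Floer sense, to a broken Floer trajectory for $(\check{\phi},(J_t))$ whose breaking occurs at fixed points of $\check{\phi}$ of action in $[c-2\delta,c+2\delta]$, and hence inside $B$. Since total energy is the limit of $E(u_k)$, it equals $0$, so every component of the limit is a constant map at a point of $B$. Combining this with the long--neck (small--energy) estimate for Floer cylinders -- which says that a Floer trajectory of sufficiently small energy on a long cylinder is uniformly $C^0$--close to a constant at a critical point -- I conclude that for all $k$ sufficiently large, the entire image $u_k(\R\times[0,1])$ is contained in an arbitrarily small tubular neighborhood of $B$, and in particular in $N_B$. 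This contradicts the choice of $u_k$.

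The one genuinely delicate step is the very last one: passing from Gromov--Floer convergence, which a priori only gives uniform convergence on compact subsets of $\R\times[0,1]$ after suitable $s$--translations, to a global $C^0$ estimate on $u_k$. This is precisely what the long--neck lemma supplies: because the total energy tends to zero, the energy distributed on each neck region between translated compact pieces also tends to zero, and standard mean value / $\epsilon$--regularity estimates for $J$--holomorphic sections then force the image of the neck region into a small neighborhood of the corresponding fixed point of $\check{\phi}$ in $B$. Together with the fact that $\overline{N_B}$ contains no fixed points of $\check{\phi}$ other than those of $B$, this closes the argument.
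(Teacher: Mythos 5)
Your overall strategy --- show the limiting energy is zero and use compactness to force localization near $B$ --- is the same as the paper's, but two of the black boxes you invoke do not apply as stated, and this is exactly where the real work lies. First, Gromov--Floer compactness ``to a broken Floer trajectory for $(\check{\phi},(J_t))$ whose breaking occurs at fixed points of $\check{\phi}$'' presupposes non-degenerate (or at least Morse--Bott) asymptotics for the limiting data. Here $B$ is merely an isolated family of fixed points --- a compact path-connected set, not even assumed to be a manifold in this lemma --- so the standard breaking statement is not available for the degenerate limit $\check{\phi}$. Second, your closing ``long-neck'' step asserts that mean-value/$\epsilon$-regularity estimates force each neck region into a small neighborhood of ``the corresponding fixed point''; but in the degenerate setting there is no single corresponding fixed point, and $\epsilon$-regularity on unit windows only yields gradient bounds, i.e.\ small oscillation per window. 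Small oscillation per window does not prevent an infinitely long trajectory from drifting out of $N_B$; the localization must come from somewhere else.

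The paper supplies exactly that missing ingredient and avoids breaking compactness altogether. It fixes nested neighborhoods $\check{N}_B \subset N_B$ of $B$ such that $\overline{N_B}$ contains no fixed points of $\check{\phi}$ outside $B$. If $u_k$ escapes $N_B$ while its endpoints lie near $B$, it must cross the annulus $N_B - \check{N}_B$ at some point $(s_k,t_k)$; after translating so that $s_k = 0$, one passes to the graph $u_k^W$ in $\R\times[0,1]\times M$ and restricts to the unit window $[-1,1]\times[0,1]$. The target-local compactness theorem of \cite{fish:compactness} --- which requires no asymptotic data and no global confinement of the curves --- produces a $C^0$-limit $v$ on the window; since $E(u_k)\to 0$, the limit has zero vertical energy, hence $\pi_M\circ v$ is constant, and the twisted boundary condition forces that constant to be a fixed point of $\check{\phi}$ lying in $\overline{N_B}-\check{N}_B$, a region containing no fixed points. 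This annulus argument is what your long-neck step is implicitly relying on; as written, your proof has a gap precisely there, and the appeal to breaking compactness is both invalid in this degenerate setting and unnecessary.
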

\proof
We choose a relatively compact open neighborhood $N_B$ of $B$ so that any fixed point of $\check{\phi}$ inside $\overline{N_B}$ is actually contained inside $B$.
Let $\check{N}_B \subset M$ be an open neighborhood of $B$
whose closure is contained in $N_B$.

Let $(\phi_k)_{k \in \N}$ be a sequence of symplectomorphisms of $M$ which $C^\infty$ converge to $\check{\phi}$.
Suppose (for a contradiction) that $\phi_k$ has a fixed point
$p_k \in N_B - \check{N}_B$ for all $k$.
Then after passing to a subsequence,
we have that $p_i$ converges to some $p \in \overline{N_B} - \check{N}_B$.
Since $p$ is a fixed point of $\phi$,
we get that $p \in B$ which is impossible.
Therefore $\phi_k$ has no fixed points inside $N_B - \check{N}_B$ for all sufficiently large $k$.

Now suppose that $p_k,\check{p}_k$ are fixed points of $\phi_k$ and suppose that
we have a sequence of Floer trajectories
$$u_k : \R \times [0,1] \lra{} M$$
of $(\check{\phi}',(J_t)_{t \in [0,1]})$
joining $p_k$ and $\check{p}_k$.
Define $W \equiv \R \times [0,1] \times M$
with a symplectic form $\omega_W \equiv ds \wedge dt + d\theta_M$ where $s,t$ are the standard coordinates on $\R \times [0,1]$.
Let $i$ be the standard complex structure on $\R \times [0,1]  \subset \C$
where $(s,t)$ is identified with $s + it$.
Define $J^W|_{(s,t,x)} \equiv i|_{(s,t)} \oplus J_t|_{(s,t)}$.
Define
$$u^W_k : \R \times [0,1] \lra{} W, \quad u^W_k(s,t) \equiv (s,t,u_k(s,t))$$
for all $k \in \N$.
This is a sequence of $J^W$-holomorphic maps.

Now suppose (for a contradiction)
that the image of $u_k$ is not contained inside $N_B$ for all $k$.
Then after passing ot a subsequence,
there is a sequence of points
$(s_k,t_k) \in \R \times [0,1]$
so that $u_k(s_k,t_k) \in N_B - \check{N}_B$
and $u_k(s_k,t_k)$ converges to some point
$q \in \overline{N_B} - \check{N}_B$.
After reparameterizing the domain by translations in the $s$ direction,
we can assume that $s_k = 0$ for all $k$.
Also after passing to a subsequence we can assume that $t_k \lra{} \check{t} \in [0,1]$ for some $\check{t}$.
Define $w_k \equiv u^W_k|_{[-1,1] \times [0,1]}$.
Then by the main result in
\cite{fish:compactness},
we get that $w_k$ $C^0$ converges to a continuous map
$v : [-1,1] \times [0,1] \lra{} W$ which is smooth and $J^W$-holomorphic on a dense open subset of its domain.

Let $\pi_M : W \lra{} M$ be the natural projection map.
Since $p_k$ and $\check{p}_k$ converge to points in $B$, their difference in action converges to zero which implies that $\int_{[-1,1] \times \R/\Z} v^* d\theta_M = 0$.
Hence $\pi_M \circ v$ is constant.
Since $\check{\phi}(\pi_M(v(0,1))) = \pi_M(v((0,0)))$ and $\pi_M \circ v$ is constant, we get that the image of $\pi_M \circ v$ is a fixed point 
of $\check{\phi}$ inside $N_B - \check{N}_B$.
But this is impossible since $v(0,t) \in N_B - \check{N}_B$.
\qed

\bigskip

As a result of the above lemma, we have the following definition:
\begin{defn}
Let $B \subset M$ be an isolated family of fixed points of some positive slope perturbation $\check{\phi}$ of $\phi$ and let $N_B$ be a neighborhood of $B$ as in Lemma \ref{lemma:smallperturbationinneighborhood}.
Let $\check{\phi}'$
be a $C^\infty$ small perturbation so that all the fixed points of $\check{\phi}'$ inside $N_B$ are non-degenerate.
Since all Floer trajectories of $(\check{\phi}',(J_t)_{t \in [0,1]})$ are contained inside $N_B$, we can define the Floer cohomology group
$HF^*(\check{\phi},B)$ to be defined in the usual way where we only consider fixed points inside $N_B$.
Such a group is called the {\it local Floer cohomology of $B$}. Again it does not depend on the choice of perturbation $\check{\phi}'$ or $(J_t)_{t \in [0,1]}$
although we will not need this fact here.

Note that if $B$ is the only set of fixed points of $\check{\phi}$ of action in the interval $[a,b]$,
then
the above definition coincides with the definition of
$HF^*(\check{\phi},B)$ from Definition \ref{defn:localfloer}.
More generally, if $B$ is a union of isolated families of fixed points $B_1,\cdots,B_l$ all of the same action
then $HF^*(\check{\phi},B) = \bigoplus_{i=1}^l HF^*(\check{\phi},B_i)$.
\end{defn}

\begin{lemma} \label{lemma:codimension0familyoffixedpoints}
$(M,\theta_M,\phi)$ be a graded abstract contact open book.
Let $\check{\phi} : M \lra{} M$ be a small positive slope perturbation of $\phi$.
Suppose that all the fixed points of $\check{\phi}$ of action in $[a,b]$ is equal to $B = \sqcup_{i = 1}^l B_i$ where $B_1,\cdots,B_l$ are codimension $0$ families of fixed points, all of the same action.
Then 
\begin{equation} \label{eqn:localfloerformula}
HF^*(\check{\phi},B) \equiv \bigoplus_{i=1}^l H_{n-*-CZ(\phi,B_i)}(B_i;\Z).
\end{equation}
\end{lemma}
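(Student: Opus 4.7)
First I would reduce to the case $l=1$. Since all the $B_i$ have the same action, a standard neck-stretching/localization argument (using Lemma \ref{lemma:smallperturbationinneighborhood} applied to each component separately) shows that any Floer trajectory between generators supported near different components must be constant after a further small perturbation, so
$$HF^*(\check{\phi},B) = \bigoplus_{i=1}^l HF^*(\check{\phi},B_i).$$
It then suffices to prove $HF^*(\check{\phi},B_i)=H_{n-\ast-CZ(\phi,B_i)}(B_i;\Z)$ for a single codimension $0$ family $B_i$.

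Next I would construct a Morse-theoretic perturbation adapted to $B_i$. By Definition \ref{defn:definitionfixedpoints} there is a neighborhood $N\supset B_i$ and an autonomous Hamiltonian $H:N\to(-\infty,0]$ with $H^{-1}(0)=B_i$ (a compact manifold with boundary and corners) such that $\check{\phi}|_N=\phi^H_1$. Choose a Morse function $f:B_i\to\R$ which is well-adapted to the boundary stratification (e.g.\ constant and pointing inwards along each face, with no boundary critical points), extend $f$ to a function $\widetilde{f}$ on a small neighborhood of $B_i$ that is constant in the directions normal to $B_i$, and replace $\check{\phi}$ with $\check{\phi}'\equiv\phi^{H-\epsilon\widetilde{f}\cdot\rho}_1$ for $\epsilon>0$ very small and $\rho$ a bump function equal to $1$ near $B_i$ and supported in $N_{B_i}$. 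For $\epsilon$ small enough, Lemma \ref{lemma:smallperturbationinneighborhood} applies to $\check{\phi}'$, and the set of fixed points of $\check{\phi}'$ in $N_{B_i}$ is exactly $\mathrm{Crit}(f)$, all non-degenerate.

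Now I would compute the Conley--Zehnder index of each $p\in\mathrm{Crit}(f)$. Since the perturbation only adds a $C^2$-small term tangent to $B_i$ near the $H=0$ locus, the linearized flow at $p$ splits as the identity-perturbed-by-$-\epsilon\mathrm{Hess}\,f$ along $TB_i$, and the original linearization of $\check{\phi}$ along $\nu_{B_i}$. Property \ref{item:catenationczproperty}, \ref{item:czadditive} and \ref{item:homotopyinvariance} of the Conley--Zehnder index, together with the normalization in \ref{item:signproperty} applied to a small negative-definite Hessian gadget on $TB_i$, give
$$CZ(\check{\phi}',p) = CZ(\phi,B_i) - (\dim B_i - \mathrm{ind}_f(p)) + \dim B_i - \dim B_i = CZ(\phi,B_i) + \mathrm{ind}_f(p) - n,$$
so that with the convention that $CF^*$ is graded by $-CZ$, the generator $p$ sits in degree $n-\mathrm{ind}_f(p)-CZ(\phi,B_i)$, matching the claimed grading on the right hand side of (\ref{eqn:localfloerformula}) once we identify singular homology with Morse homology.

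The hard part will be identifying the Floer differential on generators near $B_i$ with the Morse differential of $f$; the CZ computation and splitting above are essentially linear algebra, but the differential comparison requires a genuine transversality/compactness argument. The plan is to choose a family of almost complex structures $(J_t)_{t\in[0,1]}$ that, near $B_i$, is compatible with the product structure implicit in Lemma \ref{lemma:smallperturbationinneighborhood}, and to argue that for $\epsilon\to 0$ every Floer trajectory between $p_-$ and $p_+$ has energy $O(\epsilon)$ and hence must, by Gromov compactness and the localization already established, lie in a shrinking neighborhood of $B_i$; a Pozniak-type local model (\cite{pozniak}) or the Floer--Hofer--Salamon argument for Morse--Bott data then identifies the moduli space with the space of $-\nabla f$ flowlines between $p_-$ and $p_+$ in $B_i$, and an implicit function theorem/gluing step makes this bijective for $\epsilon$ small. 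The boundary and corners of $B_i$ are handled by our choice of $f$ pointing inwards along each face, which forces all negative gradient trajectories, and hence (for $\epsilon$ small) all Floer trajectories, to remain in $B_i$. Combining the index computation with this differential identification gives $HF^*(\check{\phi},B_i)\cong H^{\mathrm{Morse}}_{n-\ast-CZ(\phi,B_i)}(B_i;\Z)=H_{n-\ast-CZ(\phi,B_i)}(B_i;\Z)$, as required.
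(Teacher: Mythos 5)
Your decomposition step is exactly the paper's: both proofs shrink the neighborhoods $N_{B_i}$ until they are disjoint and invoke Lemma \ref{lemma:smallperturbationinneighborhood} to confine every Floer trajectory between perturbed generators to a single $N_{B_j}$, giving $HF^*(\check\phi,B)=\bigoplus_i HF^*(\check\phi,B_i)$. (One quibble: a trajectory joining distinct components is not ``constant after a further perturbation''---it simply cannot exist, because the equal actions of the $B_i$ force its energy to zero in the limit; that is the content of the cited lemma.) Where you diverge is the local computation: the paper disposes of $HF^*(\check\phi,B_i)$ in one line by citing \cite[Theorem 7.1]{SZ:morsetheory} together with \ref{item:catenationczproperty}, whereas you propose to reprove that theorem from scratch---Morse perturbation, index computation, and a Pozniak-type identification of the Floer and Morse differentials. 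That is a legitimate route, but the adiabatic-limit and gluing analysis you defer to your last paragraph is precisely what the citation buys; if you insist on it you must also rule out spurious fixed points of $\phi^{H-\epsilon\widetilde f\rho}_1$ appearing just outside $\partial B_i$, where $dH$ is small and could cancel $\epsilon\,d(\widetilde f\rho)$, and you must check that your inward-pointing boundary condition on $f$ really yields the absolute (not relative) homology of $B_i$.

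The one concrete error is in your index computation. The displayed expression $CZ(\phi,B_i)-(\dim B_i-\mathrm{ind}_f(p))+\dim B_i-\dim B_i$ simplifies to $CZ(\phi,B_i)+\mathrm{ind}_f(p)-\dim B_i=CZ(\phi,B_i)+\mathrm{ind}_f(p)-2n$, which is off by $n$ from the (correct) formula you then assert. The clean derivation uses \ref{item:catenationczproperty} and \ref{item:czadditive}: since $D\check\phi=\mathrm{id}$ on the interior of $B_i$, the path of linearizations at $p$ is the catenation of the path $\beta$ computing $CZ(\phi,B_i)$ with the linearized flow of the $C^2$-small autonomous Hamiltonian $-\epsilon f$, whose Conley--Zehnder index is $\tfrac12\,\mathrm{sign}\,\mathrm{Hess}(-\epsilon f)(p)=n-\mathrm{ind}_{-f}(p)=\mathrm{ind}_f(p)-n$; hence $CZ(\check\phi',p)=CZ(\phi,B_i)+\mathrm{ind}_f(p)-n$ and $p$ sits in degree $n-\mathrm{ind}_f(p)-CZ(\phi,B_i)$, as required.
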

\proof
Let $N_{B_i} \subset M$ be a small neighborhood of $B_i$ with the property that $\check{\phi}$ is the time $1$ flow of a Hamiltonian
$H_{B_i} : N_{B_i} \lra{} (-\infty,0]$
satisfying $B_i = H_{B_i}^{-1}(0)$ for each $i$.
Let $\phi^{H_{B_i}}_1 : N_{B_i} \lra{} N_{B_i}$ be the time $1$ flow of $H_B$ for each $i$.
After possibly shrinking each neighborhood $N_{B_i}$,
we can assume that $N_{B_1},\cdots,N_{B_l}$ are all disjoint.
Let $(J_t)_{t \in [0,1]}$ be a generic
smooth family
almost complex structures cylindrical near $\partial M$.
By Lemma \ref{lemma:smallperturbationinneighborhood},
any sufficiently small $C^\infty$ perturbation $\check{\phi}'$ of $\check{\phi}$
has the property that any Floer trajectory
connecting fixed points inside $\cup_{i =1}^l N_{B_i}$ is actually contained inside $N_{B_j}$ for some $j$.
Therefore
$$HF^*(B) = \bigoplus_{i=1}^l HF^*(\phi^{H_{B_i}}_1,B_i).$$
Hence by \cite[Theorem 7.1]{SZ:morsetheory}
combined with \ref{item:catenationczproperty},
we have that Equation (\ref{eqn:localfloerformula}) holds.
\qed

\bigskip

\begin{proof}[Proof of Property \ref{item:spectralsequenceproperty}.]

Let $\check{\phi}'$ be a $C^\infty$ small perturbation
of $\check{\phi}$ and let $(J_t)_{t \in [0,1]}$ be a $C^\infty$ generic smooth family of almost complex structures cylindrical near $\partial M$.
Let $\alpha_i$ be the action of $B_i$ for each $i \in \{1,\cdots,l\}$.
For each $p \in \N$, choose $\beta_p \in \R$
so that $\alpha_i \neq \beta_p$ for all $i \in \{1,\cdots,l\}$ and so that $\alpha_i > \beta_p$ if and only if $\iota(i) \geq p$.
Let $F_p$ be the subgroup of the chain complex $CF^*(\check{\phi}')$
generated by fixed points of action greater than $\beta_p$.
Then $(F_p)_{i \in p}$ is a filtration on this chain complex.
By Lemma \ref{lemma:codimension0familyoffixedpoints},
$$H^*(F_p/F_{p-1}) =
HF^*_{[\beta_{p-1},\beta_p]}(\check{\phi},B_p) =
\bigoplus_{ \{i \in \{1,\cdots,l\} \ : \ \iota(i) = p\} }
H_{n-*-CZ(\phi,B_j)}(B_p;\Z)$$
for all $p = 1,\cdots, l$
and
$H^*(F_p/F_{p-1})=0$ if $p \in \N - \{1,\cdots,l\}$.
Therefore the spectral sequence associated to the filtration $(F_p)_{p \in \N}$ is (\ref{eqn:morsebottspectralsequence}).
\end{proof}

\bibliography{references}

\end{document}